\newcommand{\Z}{{\mathbb Z}}
\newcommand{\C}{{\mathbb C}}
\newcommand{\Q}{{\mathbb Q}}
\newcommand{\R}{{\mathbb R}}
\newcommand{\fR}{{\mathfrak R}}
\newcommand{\I}{{\mathcal O}}
\newcommand{\bG}{{\mathbb G}}
\newcommand{\G}{{\mathcal G}}
\newcommand{\g}{{\mathfrak{g}}}
\newcommand{\fa}{{\mathfrak{a}}}
\newcommand{\B}{{\mathscr{B}}}
\newcommand{\Hom}{{\rm Hom}}
\newcommand{\ba}{\mbox{\boldmath{$\alpha$}}}
\newcommand{\bm}{\mbox{\boldmath{$\mu$}}}
\newtheorem{thm}{Theorem}[subsection]
\newtheorem{lemma}[thm]{Lemma}
\newtheorem{prop}[thm]{Proposition}
\newtheorem{cor}[thm]{Corollary}
\newcommand{\St}{{\rm St}}
\begin{document}
\title[Abstract homomorphisms and applications]{Abstract homomorphisms of algebraic groups and applications}

%On linear representations of Chevalley groups over commutative rings}

\thanks{2000 {\it Mathematics Subject Classification.} Primary 20G35, Secondary 20G05.}

\begin{abstract}
We give a survey of recent rigidity results for linear representations of elementary subgroups of Chevalley groups over commutative rings, and of other similar groups, and also discuss some applications to deformations of representations of elementary groups over finitely generated commutative rings.
\end{abstract}

\author[I.A.~Rapinchuk]{Igor A. Rapinchuk}

\address{Department of Mathematics, Yale University, New Haven, CT 06502}

\email{igor.rapinchuk@yale.edu}

\maketitle

\section{Introduction}\label{S:I}

This paper is an expanded version of the author's talk at the Conference on Group Actions and Applications in Geometry, Topology, and Analysis held in Kunming. Our main goal is to give an overview of recent rigidity results for abstract representations of elementary subgroups of Chevalley groups over commutative rings that were proved using algebraic rings and some results from algebraic $K$-theory. We also describe applications of these results to the study of character varieties of elementary subgroups over finitely generated commutative rings.

We would like to begin with a brief outline of the general philosophy in the study of abstract homomorphisms between the groups of rational points of algebraic groups. Suppose
$G$ and $G'$ are algebraic groups that are defined over infinite fields $K$ and $K',$ respectively. Let
$$
\varphi \colon G(K) \to G'(K')
$$
be an abstract homomorphism between their groups of rational points. Then, under appropriate assumptions, one
expects to be able to write $\varphi$ essentially as a composition $\varphi = \beta \circ \alpha,$ where $\alpha \colon G(K) \to _{K'}\!G(K')$ is induced by a field homomorphism $\tilde{\alpha} \colon K \to K'$ (and $_{K'}\!G$ is the group obtained from $G$ by base change via $\tilde{\alpha}$), and $\beta \colon _{K'}\!G(K') \to G'(K')$ arises from a $K'$-defined \emph{morphism of algebraic groups} $_{K'}\!G \to G'$.
Whenever $\varphi$ admits such a decomposition, we will say that it has a {\it standard description}, and we will use the term {\it rigidity statement} to refer to any result that asserts that all members in a certain class of abstract homomorphisms admit a standard description.

The study of abstract homomorphisms was initiated in the first part of the $20^{{\rm th}}$ in the work of O.~Schreier and B.L.~van der Waerden \cite{SvdW} on abstract automorphisms of projective special linear groups over fields, and that of E.~Cartan \cite{C} on abstract homomorphisms of semisimple Lie groups into compact groups.
Afterwards, there was a great deal of activity in describing
the abstract automorphisms of various classical groups. One of the landmarks was the work of Dieudonn\'e \cite{D},
which contains a number of results for classical groups not only over fields, but also over division rings.
Later, his ideas were further developed by O'Meara and his collaborators (see \cite{HO} for a comprehensive discussion of various rigidity statements in this context).
Since our focus in this survey will not specifically be on classical groups, we will not give precise statements of these result, but would only like to point out that they are all consistent with the general philosophy outlined above.

%After a period of focus on abstract homomorphisms and automorphisms of various classical groups, where a number of results were obtained by Dieudonn\'e \cite{D} and later by O'Meara and his collaborators \cite{HO}, Steinberg,
In his 1967 {\it Lectures on Chevalley Groups} \cite{Stb1}, Steinberg shifted the focus from homomorphisms and automorphisms of classical groups, where results were generally proved on a case-by-case basis using {\it ad hoc} techniques, to a systematic study of abstract homomorphisms in the context of general semisimple algebraic groups;
one of his results was a standard description for {\it abstract automorphisms} of (the groups of points of) universal Chevalley groups over perfect fields (see \cite{Stb1}, Theorem 30, pg. 150). The proof was based on an analysis of the action of an abstract group automorphism on the canonical unipotent one-parameter root subgroups.

%initiated
%a systematic study of abstract homomorphisms in the context of more general algebraic groups; one of his principal results was a standard description for abstract automorphisms of (the groups of points) of universal Chevalley groups over perfect fields (see \cite{Stb1}, Theorem 30, pg 150).

Several years later, Steinberg's work was substantially generalized by Borel and Tits in their fundamental paper \cite{BT}. They considered the following situation: suppose $G$ and $G'$ are algebraic groups defined over infinite fields $k$ and $k'$, respectively, with $G$ absolutely almost simple $k$-isotropic and $G'$ absolutely simple adjoint. Denote by $G^+$ the subgroup of $G(k)$ generated by the $k$-points of split (smooth) connected unipotent $k$-subgroups. Then, they showed that any abstract homomorphism $\rho \colon G(k) \to G'(k')$ such that $\rho (G^+)$ is Zariski-dense in $G'(k')$ has a standard description (see \cite{BT}, Theorem A, for a precise statement). In fact, they
obtained a similar result in the case that $G'$ is only assumed to be reductive.

On the other hand, Borel and Tits noted that the reductiveness of $G'$ is (in general) not a consequence of the Zariski-density of $\varphi (G^+)$ and that without this assumption, $\rho$ may fail to have the above description (\cite{BT}, 8.18). They considered the following example. Suppose $G$ is an absolutely almost simple group defined over an infinite field $k$ (e.g., $G = SL_n$), and let $K$ be a field extension of $k$ which admits a nontrivial $k$-derivation $\delta \colon K \to K$ (for example, one can take $\delta$ to be the usual operator of differentiation on the field of rational functions $K = k(x)$).
Let $\g$ be the Lie algebra of $G$ and set
$$
G' = \g \ltimes G,
$$
with $G$ acting on $\g$ via the adjoint representation. Next, define a group homomorphism $\alpha \colon G(K) \to G'(K)$ by
$$
G(K) \ni g \mapsto (g^{-1} \cdot \Delta(g), g) \in G'(K),
$$
where $\Delta$ denotes the map induced by the derivation $\delta$ (for example, if $G = SL_n$, then $\Delta$ is obtained by simply applying $\delta$ to the matrix entries). Since $K$ is infinite and $\delta$ is a nontrivial derivation, it is easy to see that $\alpha$ has Zariski-dense image, and, furthermore, the unipotent radical of $G'$ is $\g$.
%(in fact, if $k = K$ and $\delta$ is a nontrivial derivation on $K$, in the case that $G$ is isotropic, one can also show that $\alpha (G^+)$ is Zariski-dense).

This example can be interpreted more conceptually as follows. Let $A = K[\varepsilon]$, where $\varepsilon^2 = 0.$ It is clear that the map
$$
f \colon K \to A, \ \ \ x \mapsto x + \delta(x) \varepsilon
$$
is a homomorphism of $k$-algebras, and hence induces a group homomorphism $F \colon G(K) \to G(A)$ on the groups of rational points. On the other hand, we have the well-known identification (cf. \cite{Bo}, 3.20)
$$
G(A) \stackrel{t}{\longrightarrow} \g(K) \ltimes G(K) = G'(K), \ \ \ X + Y \varepsilon \mapsto (X^{-1} \cdot Y, X).
$$
One then easily checks that $\alpha = t \circ F$. Thus, we see that $\alpha$ essentially arises from a {\it homomorphism of $k$-algebras}.

The nature of this example led Borel and Tits to formulate the following conjecture (see \cite{BT}, 8.19):

\vskip3mm

\noindent (BT) \ \ \parbox{14cm}{Let $G$ and $G'$ be algebraic groups defined over infinite fields $k$ and $k'$, respectively. If $\rho \colon G(k) \to G'(k')$ is any abstract homomorphism such that $\rho(G^+)$ is Zariski-dense in $G'(k'),$ then {\it there exists a commutative finite-dimensional $k'$-algebra $B$ and a ring homomorphism $f \colon k \to B$ such that  $$\rho \vert_{G^+} = \sigma \circ r_{B/k'} \circ F$$ where $F \colon G(k) \to _{B}\!G(B)$ is induced by $f$ ($_{B}\!G$ is the group obtained by change of scalars), $r_{B/k'} \colon _{B}\!G(B) \to R_{B/k'} (_{B}\!G)(k')$ is the canonical isomorphism  (here $R_{B/k'}$ denotes the functor of restriction of scalars), and $\sigma$ is a rational $k'$-morphism of $R_{B/k'} (_{B}\!G)$ to $G'.$}}

\vskip3mm

\noindent{\bf Remark.} It was pointed out to us by B.~Conrad and G.~Prasad that, using techniques from the theory of pseudo-reductive groups (developed in \cite{CGP}, Chapter 9), one can construct counterexamples to (BT) over all local and global function fields of characteristic 2 (or, more generally, over any field $k$ of characteristic 2 such that $[k:k^2] = 2$). The groups that arise in these counterexamples are perfect and $k$-simple. So, one should exclude fields of characteristic 2 in
%even though this case was not excluded by Borel and Tits, one should require that char $k \neq 2$ in
the statement of (BT).
\vskip3mm

%\vskip2mm

%\noindent {\bf Conjecture (BT)} {\it Let $G$ and $G'$ be algebraic groups defined over infinite fields $k$ and $k'$, respectively. If $\rho \colon G(k) \to G'(k')$ is any abstract homomorphism such that $\rho(G^+)$ is Zariski-dense in $G'(k'),$ then there exists a commutative finite-dimensional $k'$-algebra $B$ and a ring homomorphism $f \colon k \to B$ such that  $$\rho \vert_{G^+} = \sigma \circ r_{B/k'} \circ F$$ where $F \colon G(k) \to _{B}\!G(B)$ is induced by $f$ ($_{B}\!G$ is the group obtained by change of scalars), $r_{B/k'} \colon _{B}\!G(B) \to R_{B/k'} (_{B}\!G)(k')$ is the canonical isomorphism  (here $R_{B/k'}$ denotes the functor of restriction of scalars), and $\sigma$ is a rational $k'$-morphism of $R_{B/k'} (_{B}\!G)$ to $G'.$}

%\vskip2mm

Shortly after the conjecture was formulated, Tits \cite{T} sketched a proof of (BT) in the case that $k = k' = \R.$ Prior to our work, the only other available result was due to L.~Lifschitz and A.S.~Rapinchuk \cite{LR}, where the conjecture was essentially proved in the case that $k$ and $k'$ are fields of characteristic 0, $G$ is a universal Chevalley group, and $G'$ is an algebraic group with commutative unipotent radical.

While the above results only deal with abstract homomorphisms of groups of points over {\it fields}, it should be pointed out that there has also been considerable interest and activity in analyzing
abstract homomorphisms of higher rank arithmetic groups and lattices. For example,
Bass, Milnor, and Serre \cite{BMS} used their solution of the congruence subgroup problem for $G = SL_n (n \geq 3)$ and $Sp_{2n} (n \geq 2)$ to prove that any representation $\rho \colon G(\Z) \to GL_m (\C)$ coincides on a subgroup of finite index $\Gamma \subset G(\Z)$ with the restriction of some rational morphism $\sigma \colon G(\C) \to GL_m(\C)$ of algebraic groups. A few years later,
Serre \cite{Serre} established a similar result for the group $SL_2 (\Z[1/p])$.
(In fact, the results of \cite{BMS} and \cite{Serre} apply to the groups of points over rings of $S$-integers in arbitrary number fields, but their statements in those cases are a bit more technical).
The most general results about representations of higher rank arithmetic groups and lattices are contained in Margulis's Superrigidity Theorem (cf. \cite{Mar}, Chap. VII). On the other hand, Steinberg \cite{Stb2} showed that the above results for representations of $SL_n (\Z),$ with $n \geq 3$, can be derived directly from the commutator relations for elementary matrices.

However, relatively little was previously known about abstract homomorphisms of groups of points over {\it general commutative rings}, which has been the primary focus of our work in this area. We should point out that our methods are inspired, to some degree, by Steinberg's generators-relations approach.

Before formulating our results, we need to fix some notations and recall a few definitions concerning Chevalley-Demazure group schemes. For details on the latter, we refer the reader to Borel's article \cite{Bo1}; a systematic development of the theory of split reductive group schemes over arbitrary nonempty schemes can be found in B.~Conrad's notes \cite{BCRed}. Let $\Phi$ be a reduced irreducible root system of rank $\geq 2$ and let $G$ be the corresponding universal Chevalley-Demazure group scheme over $\Z$ (i.e., $G$ is an affine group scheme over $\Z$ which is reduced and of finite type, such that for any algebraically closed field $L$, the group of rational points $G(L)$ coincides with the usual simply connected Chevalley group $G_L$ over $L$ with root system $\Phi$).
Then, in particular, for every root $\alpha \in \Phi$, we have a canonical morphism of group schemes $e_{\alpha} \colon \mathbb{G}_a \to G$, where $\mathbb{G}_a = {\rm Spec} \  \Z[T]$ is the standard additive group scheme over $\Z.$ For any commutative ring $R$, the group of $R$-points $G(R)$ is usually referred to as the {\it universal Chevalley group of type $\Phi$ over $R$}.
For $\alpha \in \Phi$, the morphism $e_{\alpha}$ induces a group homomorphism $R^+ \to G(R)$, which will also be denoted $e_{\alpha}$ (rather than $(e_{\alpha})_R$) whenever this does not cause confusion. Then $e_{\alpha}$ is an isomorphism between $R^+$ and the subgroup $U_{\alpha} (R) := e_{\alpha} (R)$ of $G(R).$ The subgroup of $G(R)$ generated by the $U_{\alpha} (R)$, for all $\alpha \in \Phi$, will be denoted by $G(R)^+$ and called the {\it elementary subgroup of $G(R)$}. In our results, we will usually need to assume that $(\Phi, R)$ is a {\it nice pair}: namely, $2 \in R^{\times}$ if $\Phi$ contains a subsystem of type $B_2$ and $2, 3 \in R^{\times}$ if $\Phi$ is of type $G_2.$

Our first result is a rigidity statement for abstract representations $\rho \colon G(R)^+ \to GL_n (K)$, where $K$ is an algebraically closed field. In the statement below, for a finite-dimensional commutative $K$-algebra $B$, we view the group of rational points $G(B)$ as an algebraic group over $K$ using the functor of restriction of scalars (see \S \ref{S:Rat} for further details).

\vskip2mm

\noindent {\bf Theorem 1.} (\cite{IR}, Main Theorem) {\it Let $\Phi$ be a reduced irreducible root system of rank $\geq 2$, $R$ a commutative ring such that $(\Phi, R)$ is a nice pair, and $K$ an algebraically closed field. Assume that $R$ is noetherian if $\mathrm{char} \: K > 0.$ Furthermore let $G$ be the universal Chevalley-Demazure group scheme of type $\Phi$ and let $\rho \colon G(R)^+ \to GL_n (K)$ be a finite-dimensional linear representation over $K$ of the elementary subgroup $G(R)^+ \subset G(R)$. Set $H = \overline{\rho (G(R)^+)}$ (Zariski closure), and let $H^{\circ}$ denote the connected component of the identity of $H$. Then in each of the following situations
\vskip1mm

\noindent {\rm (1)} $H^{\circ}$ is reductive;

\vskip1mm

\noindent {\rm (2)} $\mathrm{char} \: K = 0$ and $R$ is semilocal;

\vskip1mm

\noindent {\rm (3)} $\mathrm{char} \: K = 0$ and the unipotent radical $U$ of $H^{\circ}$ is commutative,

\vskip1mm

\noindent there exists a commutative finite-dimensional $K$-algebra $B$, a ring homomorphism $f \colon R \to B$ with Zariski-dense image, and a morphism $\sigma \colon G(B) \to H$ of algebraic $K$-groups such that for a suitable subgroup $\Delta \subset G(R)^+$ of finite index, we have
$$
\rho \vert_{\Delta} = (\sigma \circ F) \vert_{\Delta},
$$
where $F \colon G(R)^+ \to G(B)^+$ is the group homomorphism induced by $f$.}

\vskip2mm

\noindent Notice that if $R = k$ is a field of characteristic $\neq 2$ or 3, then $R$ is automatically semilocal and $(\Phi, R)$ is a nice pair, so Theorem 1 provides a proof of Conjecture (BT) in the case that $G$ is split and $k' = K$ is an algebraically closed field of characteristic zero.

One of the essential ingredients in the proof of Theorem 1 is the use of the notion of an {\it algebraic ring}, i.e. an affine algebraic variety endowed with the structure of a ring defined by regular maps
(see \S \ref{S:AR}). More precisely, by extending a construction initially carried out by Kassabov and Sapir \cite{Kas} for $SL_n$, we show that if $(\Phi, R)$ is a nice pair and $G$ is a universal Chevalley group scheme, then to any representation $\rho$ of $G(R)^+$, one can associate an algebraic ring $A$, together with a ring homomorphism $f \colon R \to A$ having Zariski-dense image. The construction of $A$ relies on explicit computations with Steinberg commutator relations in $G(R)^+.$ Furthermore, a structural result that is needed in our arguments is that
if $K$ is an algebraically closed field of characteristic 0, then there is an equivalence of categories between the category of connected algebraic rings over $K$ (i.e. those algebraic rings whose underlying variety is irreducible) and the category of finite-dimensional $K$-algebras. Yet another ingredient in the proof is the analysis of the group $K_2$ of an algebraic ring, for which we rely on Stein's \cite{St2} description of $K_2$ of a semilocal commutative ring.

Our methods also allow us to analyze abstract representations of elementary subgroups of groups of type $A_n$ over associative, but not necessarily commutative, rings. In this case, however, no direct analogues of Stein's results were immediately available, so we developed the necessary $K$-theoretic machinery for the noncommutative case using computations by Bak and Rehmann \cite{BR} of relative $K_2$ groups.
%direct analogues of Stein's results were not available; so, using some computations by Bak and Rehmann \cite{BR} of relative $K_2$ groups, we developed the necessary $K$-theoretic machinery for the noncommutative case.
For simplicity, we only give the statement of one of our results concerning (BT) here, and refer the reader to \S \ref{S:NC} for the most general statements. First, recall that
if $D$ is a finite-dimensional central division algebra over a field $k$, then $G = \mathbf{SL}_{n,D}$ is the algebraic $k$-group such that $G(k) = SL_n (D),$ the group of elements of $GL_n (D)$ having reduced norm one (it is well-known that $G$ is an inner form of type $A_{\ell}$ -- cf. \cite{KMRT} or \cite{PR} for the details).

\vskip2mm

\noindent {\bf Theorem 2.} (\cite{IR1}, Theorem 1) {\it Let $D$ be a finite-dimensional central division algebra over a field $k$ of characteristic 0, and let $G = \mathbf{SL}_{n,D}$, where $n \geq 3.$ Let $\rho \colon G(k) \to GL_m (K)$ be a finite-dimensional linear representation of $G(k)$ over an algebraically closed field $K$ of characteristic 0, and set $H = \overline{\rho (G(k))}$ (Zariski-closure). Then the abstract homomorphism $\rho \colon G(k) \to H(K)$ has a standard description.}

\vskip2mm

Now, one motivation for studying
representations with non-reductive image comes from the fact that such representations arise naturally in the analysis of {\it deformations of representations}. Formally, over a field of characteristic 0, deformations of (completely reducible) $n$-dimensional representations of a finitely generated group $\Gamma$ can be understood in terms of the corresponding character variety $X_n (\Gamma).$
We have used Theorem 1 to estimate the dimension of the character variety $X_n (\Gamma)$ as a function of $n$ in the case that $\Gamma$ is an elementary subgroup of a Chevalley group over a finitely generated commutative ring.
%to estimate the dimension of the character variet for elementary group over finitely generated commutative rings.
Before formulating our result, let us first recall the relevant definitions.

Suppose $\Gamma$ is a finitely generated group and $K$ is an algebraically closed field of characteristic 0. For a fixed integer $n \geq 1$, the set $R_n (\Gamma)$ of all $n$-dimensional linear representations $\rho \colon \Gamma \to GL_n (K)$ is an algebraic variety over $K$ called the $n$-{\it th representation variety}. Furthermore, there is a natural conjugation action of $GL_n (K)$ on $R_n (\Gamma)$, and the corresponding quotient variety $X_n (\Gamma)$ is called the $n$-{\it th character variety} (concretely, the points of $X_n (\Gamma)$ correspond to the isomorphism classes of semisimple $n$-dimensional representations of $\Gamma$ -- see \cite{LM}, Chapter 1, for the details). Now,
if $\Phi$ is a reduced irreducible root system of rank $\geq 2$, $G$ the universal Chevalley-Demazure group scheme of type $\Phi$, and $R$ a finitely generated commutative ring, then it has been shown by Ershov, Jaikin, and Kassabov \cite{EJK} that the elementary subgroup $G(R)^+ \subset G(R)$ has Kazhdan's property (T); in particular, it is a  finitely generated group, so its representation and character varieties are defined. Our result is as follows.

\vskip2mm

\noindent {\bf Theorem 3.} (\cite{IR1}, Theorem 2) {\it Let $\Phi$ be a reduced irreducible root system of rank $\geq 2$, $R$ a finitely generated commutative ring such that $(\Phi, R)$ is a nice pair, and $G$ the universal Chevalley-Demazure group scheme of type $\Phi$.
Denote by $\Gamma$ the elementary subgroup $G(R)^+$ of $G(R)$ and consider $n$-th character variety
$X_n (\Gamma)$ of $\Gamma$ over an algebraically closed field $K$ of characteristic 0. Then there exists a constant $c = c(R)$ (depending only on $R$) such that $\varkappa_n (\Gamma) := \dim X_n (\Gamma)$ satisfies
$$
\varkappa_n (\Gamma) \leq c \cdot n
$$
for all $n \geq 1.$}

\vskip2mm

To put Theorem 3 into perspective, recall that for $\Gamma = F_d,$ the free group on $d > 1$ generators,
we have
$$
\varkappa_n (\Gamma) = (d-1)n^2 + 1,
$$
i.e. the growth of $\varkappa_n (\Gamma)$ is {\it quadratic} in $n.$ It follows that the rate of growth cannot be more than quadratic for {\it any} finitely generated group (and it is indeed quadratic in some important situations, such as $\Gamma = \pi_g$, the fundamental group of a compact orientable surface of genus $g > 1$, cf. \cite{RBC}). At the other end of the spectrum are the groups $\Gamma$, called $SS$-{\it rigid}, for which $\varkappa_n (\Gamma) = 0$ for all $n \geq 1$. For example, according to Margulis's Superrigidity Theorem (\cite{Mar}, Ch. VII, Theorem 5.6 and 5.25, Theorem A), all irreducible higher-rank lattices are $SS$-rigid. Now, one can show using (\cite{AR1}, \S 2) that if $\Gamma$ is not $SS$-rigid, then the rate of growth of $\varkappa_n (\Gamma)$ is at least linear. It follows that unless $\Gamma$ is $SS$-rigid, the growth rate of $\varkappa_n (\Gamma)$ is between linear and quadratic, and Theorem 3 shows that it is the minimal possible for $\Gamma = G(R)^+.$

The proof of Theorem 3 is
based on a suitable variation of the approach, going back to A.~Weil \cite{W}, of bounding the dimension of the tangent space to $X_n (\Gamma)$ at a point $[\rho]$ corresponding to a representation $\rho \colon \Gamma \to GL_n (K)$ by the dimension of the cohomology group $H^1 (\Gamma, \mathrm{Ad}_{GL_n} \circ \rho).$ Using Theorem 1,
we describe the latter space in terms of certain spaces of derivations of $R.$  This leads to the conclusion that the constant $c$ in Theorem 3 does not exceed the minimal number of generators $d$ of $R$ (i.e. the smallest integer such that there exists a surjection $\Z[X_1, \dots, X_d] \twoheadrightarrow R$). In fact, if $R$ is the ring of integers or $S$-integers in a number field $L$, then $c = 0$, so we obtain that
$\varkappa_n (\Gamma) = 0$ for all $n$, i.e. $\Gamma$ is $SS$-rigid. We should add that
Theorem 1 can be used to show that $\Gamma = G(R)^+$ is actually {\it superrigid} in this case.

Now, let us observe that groups of the form $G(R)^+$ account
for most of the known examples of linear Kazhdan groups that are not lattices, so it is natural to ask if the conclusion of Theorem 3 can be extended to \emph{all} discrete linear Kazhdan groups (note that the result of Ershov, Jaikin, and Kassabov \cite{EJK} mentioned above yields a large number of examples of Kazhdan groups that are \emph{not} $SS$-rigid)
%Kazhdan groups over general rings are \emph{not} $SS$-rigid).

%as we indicated in the discussion
%preceding the statement of
%Theorem 3, one important element in this set-up is the validity of Kazhdan's property (T) for the groups $G(R)^+.$ On the other hand, groups of this form account for most of the known examples of linear Kazhdan groups, so it is natural to ask if the conclusion of Theorem 3 can be extended to all discrete linear Kazhdan groups.

\vskip2mm

\noindent {\bf Conjecture.} {\it Let $\Gamma$ be a discrete linear group having Kazhdan's property (T). Then there exists a constant $c = c(\Gamma)$ such that $$\varkappa_n (\Gamma) \leq c \cdot n$$ for all $n \geq 1.$}

\vskip3mm

We should point out that if the assumption that $\Gamma$ is a linear is dropped, then one can produce counterexamples to the conjecture (see \cite{EJK}).

In summary, our approach to rigidity questions for Chevalley groups has two aspects:
first, we analyze individual abstract representations of elementary subgroups of Chevalley groups and obtain standard descriptions in many situations;
then we apply these results to study character varieties of elementary subgroups, thereby obtaining information about the totality of {\it all} representations.

%To summarize, our analysis of rigidity questions for Chevalley groups has two aspects:
%on the one hand, by associating an algebraic ring to a representation of the elementary subgroup of a Chevalley group,
%we are able to analyze individual representations and obtain standard descriptions in many situations. On the other hand, we can then apply these results to study character varieties of elementary subgroups and thus say something about the totality of {\it all} representations.

\vskip2mm

The paper is organized as follows. We begin by giving an overview in \S \ref{S:AR} of the basic algebraic and geometric properties of algebraic rings that are needed for the proofs of our main results. Next,
in \S \ref{S:KT}, we summarize some facts about Steinberg groups and $K_2$ that are used in the proof of Theorem 1, and also indicate how to obtain the necessary analogues of Stein's results in the noncommutative setting. In \S \ref{S:T1, T2}, we indicate the main steps in the proof of Theorem 1, and then also
%we present a fairly detailed account of the proof of Theorem 1, and
give some remarks regarding the proof of Theorem 2. Finally, we sketch the proof of Theorem 3 in \S \ref{S:T-3}.
%\S \ref{S:T-3} is devoted to the proof of Theorem 3.

\vskip2mm

\noindent {\bf Notations and conventions.} Throughout this paper, $\Phi$ will denote a reduced irreducible root system of rank $\geq 2.$ All of our rings are unital and associative.
We let $\mathbb{G}_a = {\rm Spec} \ \Z[T]$ and $\mathbb{G}_m = {\rm Spec} \ \Z[T, T^{-1}]$ be the standard additive and multiplicative group schemes over $\Z$, respectively. Also, as noted earlier, if $R$ is a commutative ring, we say that the pair $(\Phi, R)$ is {\it nice} if $2 \in R^{\times}$ whenever $\Phi$ contains a subsystem of type $B_2$, and $2, 3 \in R^{\times}$ if $\Phi$ is of type $G_2.$ Finally, given an algebraic group $H$ (resp., an algebraic ring $A$), we let $H^{\circ}$ (resp., $A^{\circ}$) denote the connected component of the identity (resp., of zero).

\vskip2mm

\noindent {\bf Acknowledgements.} I would like to thank Shing-Tung Yau and Lizhen Ji for the opportunity to speak at the Conference on Group Actions and Applications in Geometry, Topology, and Analysis during a wonderful visit to Kunming. The results discussed in this survey paper are part of my PhD thesis, and I would like to thank my advisor Professor Gregory A. Margulis for his guidance over the last several years. I would like to thank Martin Kassabov for his interest in my work and for useful discussions. Finally, I would like to thank Brian Conrad and Gopal Prasad for informative discussions about pseudo-reductive groups.

\vskip5mm

\section{On algebraic rings}\label{S:AR}

In this section, we present a survey of some basic properties of algebraic rings. This material originally appeared in our paper \cite{IR}, where our motivation was to extend a construction of Kassabov and Sapir \cite{Kas}
in order to obtain rigidity statements over general commutative rings; later, it was pointed out to us by Kassabov that some, but not all, of these results could be found in M.~Greenberg's paper \cite{Gr}. In our exposition, we follow \cite[\S 2]{IR}, to which we refer the reader for full details.
%summarize some basic algebraic and geometric properties of algebraic rings that will be needed in the proofs of our main results. We refer the reader to \cite[\S 2]{IR} for the full details; a somewhat different perspective on some, but not all, of these results, can be found in Greenberg's paper \cite{Gr} (which was brought to the author's attention by M.~Kassabov after the publication of \cite{IR})
All algebraic varieties considered in this section will be over a fixed algebraically closed field $K$.

\vskip5mm

\subsection{Definitions and first properties} An algebraic ring is essentially an algebraic variety with a ring structure defined by regular maps. More formally, we have the following definition.

\vskip2mm

\noindent {\bf Definition 2.1.1.} An {\it algebraic ring} is a triple $(A, \ba, \bm)$ consisting of an {\it affine} algebraic variety $A$ and two regular maps $\ba \colon A \times A \to A$ and $\bm \colon A \times A \to A$ (``addition" and ``multiplication") such that

\vskip1mm

\noindent (I) \parbox[t]{15cm}{$(A, \ba)$ is a commutative algebraic group (in particular, there exists an element $0_A \in A$ such that $\ba (x, 0_A)= \ba (0_A, x) = x$ and there is a regular map $\iota \colon A \to A$ such that $\ba (x, \iota (x)) = \ba (\iota(x), x) = 0_A$, for all $x \in A$);}

\vskip1mm

\noindent (II) \parbox[t]{15cm}{$\bm (\bm(x,y), z) = \bm (x, \bm (y,z))$, for all $x, y, z \in A$ (``associativity");}

\vskip1mm

\noindent (III) \parbox[t]{15cm}{$\bm (x, \ba (y,z)) = \ba (\bm(x,y), \bm (x, z))$ and $\bm (\ba (x,y), z) = \ba (\bm (x,z), \bm (y,z))$, for all $x, y, z \in A$ (``distributivity").}

\vskip2mm
\noindent An algebraic ring $(A, \ba, \bm)$ is called {\it commutative} if $\bm (x,y) = \bm (y,x)$, for all $x,y \in A$.

\vskip1mm

\noindent The triple $(A, \ba, \bm)$ is an {\it algebraic ring with identity} if in addition to (I)-(III), we have

\vskip1mm

\noindent (IV) \parbox[t]{15cm}{there exists an element $1_A \in A$ such that $ \bm  ( 1_A, x) = \bm (x, 1_A) = x$, for all $x \in A.$}

\vskip5mm

\noindent As a matter of convention, all algebraic rings considered in this paper will be assumed to have an identity element.

\vskip2mm

\noindent We will write $x+y$ and $xy$ for $\ba (x,y)$ and $\bm (x,y)$, respectively, whenever this does not lead to confusion.

\vskip1mm

\noindent {\bf Remark 2.1.2.} Using the Rigidity Lemma from \cite[Chapter 2]{Mum} and Chevalley's result on the structure of smooth connected algebraic groups (\cite{Chev1} or \cite{Con}), one can show that if $A$ is an arbitrary irreducible variety equipped with two regular maps $\ba \colon A \times A \to A$ and $\bm \colon A \times A \to A$ satisfying conditions (I)-(IV), then $A$ is necessarily affine (see \cite[Theorem 2.21]{IR}).

%Observe that it follows from condition (III) of the definition that for any $a \in A$, the maps
%$$
%\lambda_a \colon A \to A, \ \ \ x \mapsto \bm  (a,x)
%$$
%and
%$$
%\rho_a \colon A \to A, \ \ \ x \mapsto \bm (x,a),
%$$
%are endomorphisms of the algebraic group $(A, \ba).$ It is also clear from condition (II) that for all $a , b \in A$, we have $\lambda_{ab} = \lambda_a \circ \lambda_b$ and $\rho_{ab} = \rho_{b} \circ \rho_{a}. $

\vskip2mm

Next, morphisms of algebraic rings are defined in the natural way.

\vskip1mm

\noindent {\bf Definition 2.1.3.} Let $(A, \ba, \bm)$ and $(A', \ba', \bm')$ be algebraic rings. A regular map $\varphi \colon A \to A'$ is called a {\it homomorphism of algebraic rings} if
$$
\varphi (\ba(x,y)) = \ba' (\varphi (x), \varphi(y)) \ \ \ {\rm and} \ \ \ \varphi (\bm   (x,y)) = \bm' (\varphi(x), \varphi(y)).
$$
If $A$ and $A'$ are rings with identity, we also require that
$$
\varphi (1_A) = 1_{A'}.
$$
If $\varphi$ is in addition an isomorphism of algebraic varieties, then $\varphi$ is called an {\it isomorphism of algebraic rings}.

\vskip2mm

\subsection{Units of algebraic rings} We begin by considering the group of units $A^{\times}$ of an algebraic ring $A$.

%The following proposition provides a description of the group of units $A^{\times}$ of an algebraic ring $A$.

%\addtocounter{thm}{3}

\begin{prop}\label{P:AR-1}
Let $A$ be an algebraic ring. Then

\vskip1mm

\noindent {\rm (i)} \parbox[t]{14cm}{The group of units $A^{\times}$ is a (nonempty) principal open subset of $A$.}

\vskip1mm

\noindent {\rm (ii)} \parbox[t]{14cm}{The map $A^{\times} \to A^{\times}, t \mapsto t^{-1}$ is regular. In particular, $(A^{\times}, \bm)$ is an algebraic group.}
\end{prop}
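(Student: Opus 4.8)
The plan is to build everything on the left and right translation maps $L_t \colon x \mapsto \bm(t,x)$ and $R_t \colon x \mapsto \bm(x,t)$. For each $t \in A$ these are morphisms $A \to A$, and by the distributivity axiom (III) they are endomorphisms of the commutative algebraic group $(A, \ba)$; associativity gives $L_t \circ L_s = L_{\bm(t,s)}$, and $L_{1_A} = {\rm id}_A$. The first step is the purely ring-theoretic remark that $t \in A^{\times}$ if and only if both $L_t$ and $R_t$ are surjective: surjectivity of $L_t$ produces $s$ with $\bm(t,s) = 1_A$, surjectivity of $R_t$ produces $s'$ with $\bm(s',t) = 1_A$, and then $s' = \bm(s', \bm(t,s)) = \bm(\bm(s',t),s) = s$, so $s$ is a two-sided inverse; the converse is clear. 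I will also use that $(A,\ba)$, being an algebraic group, is smooth, and that $A^{\circ}$ is a two-sided ideal of $A$ (the continuous maps $L_t, R_t$ fix $0_A$, hence carry the connected set $A^{\circ}$ into $A^{\circ}$), so that $A/A^{\circ}$ is a finite ring.

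For the openness assertion in (i), I would show that $U_L := \{t : L_t \text{ surjective}\}$ is open, the argument for $R_t$ being identical and $A^{\times} = U_L \cap U_R$. Since $\bm^{-1}(0_A) \subseteq A \times A$ is closed, upper semicontinuity of fibre dimension shows that $\{t : \dim\{x : \bm(t,x) = 0_A\} = 0\}$ is open; because $A^{\circ}$ is connected and the image of a homomorphism of algebraic groups is a closed subgroup, this open set is exactly $\{t : L_t(A^{\circ}) = A^{\circ}\}$. Finally $L_t(A) = A$ holds iff $L_t(A^{\circ}) = A^{\circ}$ and the image of $t$ in the finite ring $A/A^{\circ}$ is a unit; the latter is a clopen condition on $t$. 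Hence $U_L$ is open, and $A^{\times} = U_L \cap U_R$ is a nonempty (it contains $1_A$) open subset of $A$.

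To upgrade ``open'' to ``principal open'' I would exhibit a single regular function $N$ on $A$ with $A^{\times} = \{N \neq 0\}$ --- equivalently, show that the non-units form a hypersurface cut out by one equation. This is the step that uses the structure theory of \S\ref{S:AR}: from the left-translation action $A \to {\rm End}(A^{\circ})$ on the connected commutative algebraic group $A^{\circ}$, together with the analysis of the finite ring $A/A^{\circ}$, one constructs a ``norm'' $N_A \colon A \to \mathbb{A}^1$ whose zero locus is precisely $A \setminus A^{\times}$ --- in the case where $A$ is a finite-dimensional $K$-algebra this is simply $t \mapsto \det L_t$. I expect this to be the main obstacle: openness (and, as we will see, part (ii)) is essentially formal, whereas producing the norm, i.e. knowing that the non-units cut out a principal hypersurface, genuinely relies on the structure of algebraic rings.

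Granting (i), so that $A^{\times}$ is an affine variety on which $\bm$ restricts to a morphism $A^{\times} \times A^{\times} \to A^{\times}$, I would deduce (ii) as follows. Set $Z = \{(x,y) \in A^{\times} \times A^{\times} : \bm(x,y) = 1_A\}$, a closed subvariety, with projections $\pi_1, \pi_2 \colon Z \to A^{\times}$; uniqueness of inverses makes $\pi_1$ bijective. For a fixed unit $x_0$, the map $L_{x_0}$ is an automorphism of the variety $A$ (with inverse $L_{x_0^{-1}}$), so $d(L_{x_0})$ is a linear isomorphism at every point; since distributivity gives $d\bm_{(x_0,y_0)}(v,w) = d(R_{y_0})(v) + d(L_{x_0})(w)$, this differential is surjective, so $Z$ is smooth, and $d\pi_1$ maps $T_{(x_0,y_0)}Z = \ker\, d\bm_{(x_0,y_0)}$ isomorphically onto $T_{x_0}A^{\times}$ (the preimage of $v$ being $(v,\, -d(L_{x_0})^{-1}d(R_{y_0})(v))$). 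Thus $\pi_1$ is a bijective morphism of smooth varieties inducing an isomorphism on every tangent space; it therefore induces an isomorphism on every completed local ring, hence is étale, and being bijective it is an isomorphism. Consequently $t \mapsto t^{-1} = \pi_2(\pi_1^{-1}(t))$ is regular, all the structure maps of $(A^{\times}, \bm)$ are regular, and it is an algebraic group.
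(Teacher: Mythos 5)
Your reduction of invertibility to surjectivity of $L_t$ and $R_t$, the openness argument via upper semicontinuity of fibre dimension and the ideal $A^{\circ}$, and the argument for (ii) via the smooth graph $Z$ and the \'etale bijection $\pi_1$ are all correct. But you leave the key claim of (i) --- that $A^{\times}$ is cut out by a \emph{single} regular function --- unproved, and the route you sketch toward it does not work. In positive characteristic the connected commutative group $A^{\circ}$ is in general not a vector group, so ${\rm End}(A^{\circ})$ (algebraic-group endomorphisms) is an infinite-dimensional, noncommutative ring (already ${\rm End}(\mathbb{G}_a)$ is the twisted polynomial ring in Frobenius), and there is no determinant on it. Moreover, ``the structure theory of \S\ref{S:AR}'' you propose to lean on is not available at this point: Lemma \ref{L:AR-1} is proved \emph{using} Proposition \ref{P:AR-1}, Proposition \ref{P:AR-2} requires characteristic $0$, and Proposition \ref{P:AR-4} requires commutativity and (FG). So that route is either circular or applies only in special cases, whereas the statement is for an arbitrary algebraic ring over an arbitrary algebraically closed field.

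The gap is genuinely fillable, but by a different mechanism: replace the action of $L_t$ on $A^{\circ}$ by its action on a finite-dimensional translation-stable subspace of $K[A]$. For $f \in K[A]$ the element $\bm^*(f) \in K[A]\otimes K[A]$ is a finite tensor, so $V_f := \langle L_t^*f : t \in A \rangle$ is finite-dimensional, and it is $L_s^*$-stable for every $s$ because $L_s^*L_t^* = L_{ts}^*$. Taking $V$ to be the (finite) sum of the $V_{f_i}$ over a set of $K$-algebra generators $f_1,\dots,f_n$ of $K[A]$, one checks (using Zariski-density of $A(K)$) that $t \mapsto L_t^*|_V$ is a regular map $A \to {\rm End}_K(V)$, and that $\det\bigl(L_t^*|_V\bigr) \neq 0$ precisely when $L_t$ is surjective: one direction is that $L_t$ surjective forces $L_t^*$ injective, hence $L_t^*|_V$ bijective; the other is that $L_t^*|_V$ bijective plus $V$ generating $K[A]$ forces $L_t^*$ surjective, so $L_t$ is a closed immersion, and a dimension-and-components count then gives $L_t(A)=A$. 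Setting $N = \det(L_t^*|_V)\cdot\det(R_t^*|_{V'})$ for the analogous $V'$ yields $A^{\times} = D(N)$. Note this simultaneously proves openness, making your separate upper-semicontinuity argument, while correct, unnecessary.
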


The proposition immediately yields the following corollary, which will be essential in our applications of Stein's \cite{St2} results on $K_2$.
\begin{cor}\label{C:AR-1}
Let $A$ be a connected algebraic ring (i.e., the underlying variety of $A$ is irreducible). Then $A = A^{\times} - A^{\times}$; in other words, $A$ is generated by its units.
\end{cor}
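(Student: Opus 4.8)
The plan is to derive this Corollary directly from Proposition~\ref{P:AR-1}, using nothing beyond the irreducibility of the underlying variety of $A$. The key preliminary observation is that $-1_A$ is its own multiplicative inverse, hence a unit; therefore the inverse map $x \mapsto \iota(x) = -x$ (which is regular, being part of the algebraic group structure $(A,\ba)$) restricts to a bijection of $A^{\times}$ onto itself. Consequently $A^{\times} - A^{\times} = A^{\times} + A^{\times}$, and it suffices to show that every $a \in A$ can be written as a \emph{sum} of two units.

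So I would fix $a \in A$ and argue geometrically. By Proposition~\ref{P:AR-1}(i), $A^{\times}$ is a nonempty open subset of $A$. Since addition and the inverse map are regular, the map
$$
\tau_a \colon A \longrightarrow A, \qquad x \longmapsto \ba(a, \iota(x)) = a - x
$$
is a morphism of varieties; as it is its own inverse, it is an isomorphism, so $\tau_a(A^{\times}) = a - A^{\times}$ is again a nonempty open subset of $A$. Now invoke irreducibility: any two nonempty open subsets of an irreducible variety intersect, so $A^{\times} \cap (a - A^{\times}) \neq \emptyset$. Choosing $u$ in this intersection gives $u \in A^{\times}$ and $u = a - v$ for some $v \in A^{\times}$, whence $a = u + v = u - (-v)$ with $u, -v \in A^{\times}$. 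This proves $A = A^{\times} - A^{\times}$.

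There is essentially no obstacle here beyond making the above precise; the entire content is the passage from Proposition~\ref{P:AR-1} to the elementary fact that two nonempty opens in an irreducible variety meet. The only point genuinely worth flagging is that connectedness cannot be dropped: for a finite (hence disconnected, positive-dimensional-free) algebraic ring such as $\Z/6\Z$, realized as a $0$-dimensional variety over $K$, one has $A^{\times} = \{\pm 1\}$ and $A^{\times} - A^{\times} = \{0, \pm 2\} \neq A$.
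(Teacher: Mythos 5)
Your argument is correct and is exactly the short argument the paper has in mind when it says the corollary is immediate from Proposition~\ref{P:AR-1}: $A^{\times}$ is a nonempty open subset, translation by any $a\in A$ is an automorphism of the underlying variety, and two nonempty opens in an irreducible variety must meet. The only purely stylistic remark is that the detour through $-1_A\in A^{\times}$ and the reduction to $A^{\times}+A^{\times}$ is avoidable: intersecting $A^{\times}$ with the open translate $a+A^{\times}$ directly produces $u\in A^{\times}$ with $u=a+v$, $v\in A^{\times}$, hence $a=u-v$ as required.
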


We also note that Proposition \ref{P:AR-1} puts strong restrictions on algebraic division rings (i.e., algebraic rings $A$ in which $A^{\times} = A \setminus \{ 0 \}$).
\begin{cor}\label{C:AR-3}
Let $A$ be an algebraic division ring. Then $\dim A \leq 1.$
\end{cor}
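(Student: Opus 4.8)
The plan is to exploit the fact that in a division ring the multiplicative structure is as rich as possible, and combine this with Proposition 2.2.1(ii). First I would reduce to the connected case: replacing $A$ by its identity component $A^{\circ}$ does not change the dimension, and one checks that $A^{\circ}$ is itself an algebraic ring (a subgroup under addition and, one verifies, closed under multiplication since $\bm$ is continuous and $\bm(A^{\circ} \times A^{\circ})$ is an irreducible set containing $0$), in which every nonzero element is still a unit. So assume $A$ is irreducible. By Proposition 2.2.1(i)--(ii), $A^{\times} = A \setminus \{0\}$ is a principal open subset of $A$ and $(A^{\times}, \bm)$ is an algebraic group; note $\dim A^{\times} = \dim A$ since $A^{\times}$ is a nonempty open subset of the irreducible variety $A$.

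Next I would bring in the additive translations. For each fixed $a \in A$, the map $x \mapsto a + x$ (i.e. $\ba(a, \cdot)$) is an automorphism of the variety $A$ sending $0$ to $a$ and hence sending $A^{\times} = A \setminus \{0\}$ onto $A \setminus \{a\}$. In particular, for $a = 1_A$, translation by $1_A$ restricts to an isomorphism of varieties $A \setminus \{0\} \xrightarrow{\sim} A \setminus \{1_A\}$, i.e. an isomorphism $A^{\times} \xrightarrow{\sim} A^{\times} \setminus \{1_A\}$ wait — more usefully, for a general nonzero $a$ we get that $A^{\times}$ is isomorphic as a variety to $A^{\times} \setminus \{-a\}$ when $-a \neq 0$. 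The key point I want is that the algebraic group $G := (A^{\times}, \bm)$, which has dimension $d := \dim A$, contains a point (namely $-1_A$, assuming $-1_A \neq 1_A$; the characteristic $2$ case needs a separate remark but the conclusion $d \le 1$ is only at issue when $d \ge 2$) whose removal leaves a variety still isomorphic to $G$ via an additive translation. Equivalently, the open subvariety $G \setminus \{g_0\}$ of the algebraic group $G$ is isomorphic to $G$ itself. This is highly restrictive: an algebraic group of dimension $d \geq 2$ cannot be isomorphic as a variety to itself-minus-a-point, because removing a point from a variety of dimension $\geq 2$ changes nothing about global regular functions / the variety at codimension $\geq 2$, but more decisively one can argue via completeness or Picard-group / divisor-class considerations, or most cleanly: a connected algebraic group of dimension $\ge 2$ is never isomorphic to a proper open subset obtained by deleting a single closed point, since such a deletion is not an algebraic group structure-preserving operation and, comparing rings of regular functions, $\mathcal{O}(G \setminus \{g_0\}) = \mathcal{O}(G)$ when $\operatorname{codim} \geq 2$, forcing $G \setminus \{g_0\}$ and $G$ to be the same — contradicting that one has a point the other lacks unless... —

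Let me instead organize the decisive step more carefully, as this is where the real content lies. I would argue: since every nonzero element of $A$ is invertible, the map $\mu_a \colon A^{\times} \to A^{\times}$, $x \mapsto \bm(a,x)$, for any fixed $a \in A^{\times}$, is an isomorphism of varieties. Now fix any $a \in A^{\times}$ with $a \ne 1_A$ (possible once $\dim A \ge 1$) and consider the additive map $\tau\colon A \to A$, $x \mapsto x + a$; it is a variety automorphism carrying $0 \mapsto a$, hence carrying $A \setminus \{0\} = A^{\times}$ isomorphically onto $A \setminus \{a\}$. Thus $A^{\times} \cong A \setminus \{a\}$. Similarly $A^{\times} = A \setminus \{0\} \cong A \setminus \{b\}$ for every $b$, and iterating, $A^{\times}$ is isomorphic to $A$ with any finite set of points removed. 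Therefore $A^{\times} \cong A \setminus \{0, a\}$ too, i.e. $A^{\times}$ is isomorphic to an open subset of itself obtained by deleting one point. The main obstacle — and the crux of the proof — is to show this is impossible when $d = \dim A \ge 2$: here I would invoke that $A^{\times}$ is an algebraic group, hence smooth, and that for a smooth irreducible variety of dimension $\geq 2$ deleting a closed point does not change the ring of global regular functions (normality + codimension $\geq 2$), so $A^{\times} \cong A^{\times} \setminus \{pt\}$ would force their coordinate rings to agree while their point sets differ — contradiction once we know these are affine varieties (which they are, being algebraic groups / principal opens in affine $A$). Hence $d \le 1$.

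I would expect the smoothness/normality-plus-codimension argument in the last step to be the part requiring the most care: one must ensure $A^{\times}$ is genuinely affine (true: it is a principal open subset of the affine variety $A$ by Proposition 2.2.1(i), or alternatively an affine algebraic group), invoke that affine normal varieties satisfy Hartogs' extension across codimension $\geq 2$, and conclude that an isomorphism $A^{\times} \xrightarrow{\sim} A^{\times} \setminus \{pt\}$ would be an isomorphism onto its image inducing an isomorphism on coordinate rings $\mathcal{O}(A^{\times}) \cong \mathcal{O}(A^{\times} \setminus \{pt\}) = \mathcal{O}(A^{\times})$, yet not be surjective on points — absurd. The low-dimensional bookkeeping ($-1_A$ versus $1_A$ in characteristic $2$, the trivial ring) only matters when $\dim A \le 1$, where the conclusion already holds, so it can be dispatched in a line.
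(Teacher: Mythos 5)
There is a genuine gap in the middle of your argument, at exactly the step you flag as ``the crux.'' The isomorphisms you actually construct are the additive translations $\tau_a\colon x\mapsto x+a$, which are bijections of the \emph{whole} variety $A$. Restricting a bijection of $A$ to a punctured open subset always yields an isomorphism between two subsets obtained by deleting the \emph{same number} of points: $\tau_a(A\setminus S)=A\setminus(S+a)$. So from $A\setminus\{0\}\cong A\setminus\{a\}$ (one puncture each) you can never, by translating --- or by composing with multiplications $\mu_a$ and the inversion, all of which are likewise bijections of $A^\times$ or of $A$ --- deduce $A\setminus\{0\}\cong A\setminus\{0,a\}$ (two punctures). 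The sentence ``iterating, $A^{\times}$ is isomorphic to $A$ with any finite set of points removed'' is therefore unfounded, and with it the claim $A^\times\cong A^\times\setminus\{\mathrm{pt}\}$ on which the whole Hartogs/codimension argument rests. (Your final Hartogs step would indeed finish the job if that isomorphism were available --- $A^\times\setminus\{\mathrm{pt}\}$ would be affine, normal, and have the same coordinate ring as $A^\times$, forcing the inclusion to be an isomorphism --- but the premise is never established.) A smaller issue: replacing $A$ by $A^\circ$ is not literally a harmless reduction, since $A^\circ$ is an ideal and a proper ideal in a division ring is $\{0\}$; the correct dichotomy is that either $A^\circ=\{0\}$ (so $\dim A=0$) or $A^\circ=A$ (so $A$ is already irreducible).

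The paper's own proof is much shorter and uses a different, more direct geometric fact. By Proposition~\ref{P:AR-1}(i), $A^\times$ is a \emph{principal} open set $D(\chi)$, so its complement $V(\chi)=A\setminus A^\times$ is a hypersurface: by Krull's principal ideal theorem (the ``Dimension Theorem''), every nonempty component of $V(\chi)$ in the irreducible variety $A$ has dimension $\geq\dim A-1$. But for a division ring $V(\chi)=\{0\}$, a single point, so $\dim A-1\leq 0$. The key leverage is not the rich group of self-maps of $A^\times$ (which, as explained, gets you nowhere), but the codimension constraint on complements of principal opens --- a principal open can only ever miss a divisor, never just a point, once the ambient dimension exceeds one. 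You may want to revisit Proposition~\ref{P:AR-1}(i) specifically for the word ``principal,'' since that is what makes this argument work and what your proposal does not exploit.
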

\begin{proof}
By Proposition \ref{P:AR-1}, $A^{\times}$ is a principal open subset, say $A^{\times} = D(\chi).$ The closed set $V(\chi) := A \setminus D(\chi)$ contains 0, hence is nonempty. By the Dimension Theorem, $\dim V(\chi) \geq \dim A -1.$ On the other hand, $V(\chi) = \{ 0 \},$ so $\dim A \leq 1.$
\end{proof}
Now, the algebraic rings that arise in the proof of Theorems 1
come equipped with a homomorphism of abstract rings $f \colon R \to A$, where $R$ is a given commutative abstract ring, such that $\overline{f(R)} = A$ (the bar denotes Zariski closure); consequently, we will need
results that relate the algebraic properties of $R$ and $A$. One such statement is the following.

\begin{cor}\label{C:AR-2}
Let $f \colon R \to A$ be an abstract ring homomorphism of an abstract commutative semilocal ring into a connected commutative algebraic ring such that $\overline{f(R)} = A.$ Then $\overline{f(R^{\times})} = A.$
\end{cor}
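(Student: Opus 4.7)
The plan is to exploit the decomposition $R = R^{\times} \cup \bigcup_{i=1}^{k} \mathfrak{m}_i$, where $\mathfrak{m}_1, \ldots, \mathfrak{m}_k$ are the maximal ideals of $R$. Applying $f$, taking Zariski closures and using the density hypothesis $\overline{f(R)} = A$ gives
$$A = \overline{f(R^{\times})} \cup \bigcup_{i=1}^{k} \overline{f(\mathfrak{m}_i)}.$$
Since $A$ is irreducible, one of these closed sets equals $A$. If $\overline{f(R^{\times})} = A$ we are done, so we may assume $\overline{f(\mathfrak{m}_i)} = A$ for some $i$.

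To pass from density on a maximal ideal to density on the units, I consider the nonempty collection
$$\mathcal{T} = \{\, T \subseteq \{1,\ldots,k\} : \overline{f(\textstyle\bigcap_{i \in T} \mathfrak{m}_i)} = A \,\}$$
and choose $T^* \in \mathcal{T}$ of maximal cardinality. Set $I = \bigcap_{i \in T^*} \mathfrak{m}_i$. For $r \in I$, one has $1 + r \equiv 1 \pmod{\mathfrak{m}_j}$ for every $j \in T^*$, so $1 + r$ fails to be a unit exactly when $r \in -1 + \mathfrak{m}_j$ for some $j \notin T^*$. Setting $B = \bigcup_{j \notin T^*} (I \cap (-1 + \mathfrak{m}_j))$, we therefore have $1 + (I \setminus B) \subseteq R^{\times}$ by construction.

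The key point is that for each $j \notin T^*$, coprimality of $I$ and $\mathfrak{m}_j$ (a consequence of the Chinese Remainder Theorem, since $\mathfrak{m}_j$ is coprime to every $\mathfrak{m}_i$ with $i \in T^*$) shows that $I \cap (-1 + \mathfrak{m}_j)$ is a nonempty coset of $I \cap \mathfrak{m}_j = \bigcap_{i \in T^* \cup \{j\}} \mathfrak{m}_i$. Hence $\overline{f(I \cap (-1 + \mathfrak{m}_j))}$ is a translate (by a variety automorphism) of $\overline{f(I \cap \mathfrak{m}_j)}$, which by maximality of $T^*$ is a \emph{proper} closed subset of $A$. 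Consequently $\overline{f(B)}$, being a finite union of proper closed subsets of the irreducible variety $A$, is itself proper. From $A = \overline{f(I)} = \overline{f(I \setminus B)} \cup \overline{f(B)}$ and irreducibility, I conclude $\overline{f(I \setminus B)} = A$, and translation by $1_A$ yields
$$\overline{f(R^{\times})} \;\supseteq\; \overline{f(1 + (I \setminus B))} \;=\; 1_A + \overline{f(I \setminus B)} \;=\; A.$$

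The main conceptual obstacle, compared to the easy local case where $1 + \mathfrak{m} \subseteq R^{\times}$ directly gives the conclusion, is that $1 + \mathfrak{m}_i$ generically meets other maximal ideals when $R$ has more than one, so density on a single maximal ideal cannot be converted to density on $R^{\times}$ in one step. The extremal choice of $T^*$ is exactly what ensures the obstruction to $1 + r$ being a unit is confined to a proper closed subset of $A$, which irreducibility then allows one to discard.
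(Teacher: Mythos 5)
Your argument is correct, and it takes a genuinely different route from the paper's. The paper's proof splits the maximal ideals according to whether $\overline{f(\mathfrak{m}_i)}=A$ and leans on the multiplicative structure of the algebraic ring: by Proposition \ref{P:AR-1} the units $A^{\times}$ form a nonempty open set, so each dense $f(\mathfrak{m}_i)$ meets $A^{\times}$; the closure $\overline{f(\mathfrak{m}_1\cdots\mathfrak{m}_{\ell})}$ is an ideal of $A$ (density of $f(R)$ plus continuity of multiplication) containing a unit, hence equals $A$; and then $f(1+\mathfrak{m}_1\cdots\mathfrak{m}_{\ell})$ meets the dense open set $A\setminus\bigcup_{i>\ell}\overline{f(\mathfrak{m}_i)}$ in a dense subset of $f(R^{\times})$. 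You instead make an extremal choice of a maximal set $T^{*}$ of maximal ideals whose intersection $I$ still has dense image, identify the obstruction locus $\{\,r\in I : 1+r\notin R^{\times}\,\}$ as the finite union of the sets $I\cap(-1+\mathfrak{m}_j)$, each a coset of $I\cap\mathfrak{m}_j$ by comaximality, and use maximality of $T^{*}$ together with the fact that additive translations are variety automorphisms to conclude that this locus has proper image-closure; irreducibility then gives density of $f(1+(I\setminus B))\subset f(R^{\times})$. The paper's route is shorter because Proposition \ref{P:AR-1} and the ideal-closure device are already in place; yours is more economical in what it asks of $A$ — only the additive group structure (translations are homeomorphisms) and irreducibility, never the openness of $A^{\times}$ or the multiplication on $A$, nor products of ideals — at the price of the combinatorial bookkeeping with $T^{*}$ and $B$. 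One cosmetic remark: the final translation should be by $f(1_R)$, which equals $1_A$ under the paper's unital conventions, so nothing is lost there.
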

\begin{proof}
Let $\mathfrak{m}_1, \dots, \mathfrak{m}_{\ell}$ be the maximal ideals of $R$ such that $\overline{f(\mathfrak{m}_i)} = A,$ and let $\mathfrak{m}_{\ell+1}, \dots, \mathfrak{m}_n$ be all other maximal ideals. It follows from Proposition \ref{P:AR-1} that $f(\mathfrak{m}_i) \cap A^{\times} \neq \varnothing$ for all $i = 1, \dots, \ell,$ and therefore $f(\mathfrak{m}_1 \cdots \mathfrak{m}_{\ell}) \cap A^{\times} \neq \varnothing.$ The assumption $\overline{f(R)} = A$ implies that $\overline{f (\mathfrak{m}_1 \cdots \mathfrak{m}_{\ell})}$ is an ideal of $A$, so we conclude that $\overline{f(\mathfrak{m}_1 \cdots \mathfrak{m}_{\ell})} = A.$ Now the fact that $A$ is connected implies that $V := A \setminus \cup_{i = \ell + 1}^n \overline{f(\mathfrak{m}_i)}$ is a dense open subset of $A$, and therefore $T:= f(1 + \mathfrak{m}_1 \cdots \mathfrak{m}_{\ell}) \cap V$ is dense in $A$. On the other hand, if $x \in 1 + \mathfrak{m}_1 \cdots \mathfrak{m}_{\ell}$ is such that $f(x) \in V,$ then $x \not\in \cup_{i = 1}^n \mathfrak{m}_i,$ and therefore $x \in R^{\times}.$ Thus, $T \subset f(R^{\times}),$ so the latter is dense in $A$.
\end{proof}

Next, our description of $A^{\times}$ enables us to show that any commutative algebraic ring $A$ is automatically semilocal as an abstract ring. First, let us note that if $\mathfrak{a}$ is a closed 2-sided ideal of an algebraic ring $A$, then the quotient $A/\mathfrak{a}$ in the category of additive algebraic groups is easily seen to have a natural structure of an algebraic ring.

%observe that given an algebraic ring $A$ and a closed 2-sided ideal $\mathfrak{a} \subset A$, the quotient $A/ \mathfrak{a}$ in the category of additive algebraic groups has a natural structure of an algebraic ring. Indeed, since $\mathfrak{a}$ is a closed normal subgroup of the affine group $(A, \ba),$ the quotient $(A/ \mathfrak{a}, \bar{\ba})$ is an affine algebraic group (\cite{Bo}, Theorem 6.8), where $\bar{\ba} \colon A/\mathfrak{a} \times A/\mathfrak{a} \to A/\mathfrak{a}$ is the natural map induced by $\ba$.
%Moreover, using the fact that $\mathfrak{a}$ is a 2 -sided ideal of $A,$ it is easy to see that the composite map
%$A \times A \stackrel{\bm}{\longrightarrow} A \to A/\mathfrak{a}$ is constant on additive cosets modulo $\mathfrak{a}
%\times \mathfrak{a},$ hence factors through $(A \times A)/(\mathfrak{a} \times \mathfrak{a}) \simeq A/\mathfrak{a} \times A/\mathfrak{a}.$
%The resulting map $\bar{\bm} \colon A/\mathfrak{a} \times A/\mathfrak{a} \to A/\mathfrak{a}$ and the map $\bar{\ba}$ make $A/\mathfrak{a}$ into an algebraic
%ring.
\begin{lemma}\label{L:AR-1}
Let $A$ be a commutative algebraic ring. Then $A$ is semilocal as an abstract ring.
\end{lemma}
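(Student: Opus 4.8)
The plan is to show that a commutative algebraic ring $A$ has only finitely many maximal ideals by exploiting the fact (Proposition 2.2.1 and Corollary 2.2.2) that $A^{\times}$ is a principal open subset of $A$, so that $A \setminus A^{\times}$ is a \emph{proper closed subvariety}. The key observation is that every maximal ideal $\m$ of $A$ is contained in the closed set $A \setminus A^{\times}$ (since no maximal ideal contains a unit), and — crucially — the Zariski closure $\overline{\m}$ is itself a proper ideal of $A$. Indeed, the closure of an additive subgroup is an additive subgroup, and the closure of an ideal is an ideal because for fixed $a \in A$ the multiplication map $x \mapsto ax$ is continuous, so it carries $\overline{\m}$ into $\overline{a\m} \subseteq \overline{\m}$; and $\overline{\m}$ is proper because it misses $A^{\times}$, which is open and nonempty. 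By maximality of $\m$ we then get $\overline{\m} = \m$, i.e. \emph{every maximal ideal of $A$ is Zariski-closed}.

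Once we know maximal ideals are closed, the finiteness follows from Noetherianity of the underlying variety. First I would argue that the closed maximal ideals are precisely the maximal elements of the set of proper closed ideals of $A$ (a proper closed ideal is contained in some maximal ideal, whose closure is again a proper closed ideal containing it, hence the maximal ideal itself is closed). Then, since $A$ as an affine variety has a Noetherian coordinate ring, its closed subsets satisfy the descending chain condition, equivalently the closed ideals of $A$ satisfy the ascending chain condition; in particular there are finitely many \emph{minimal} nonzero closed ideals... but more directly: if there were infinitely many maximal ideals $\m_1, \m_2, \dots$, all closed, then by the Chinese Remainder Theorem the partial intersections $\m_1, \m_1 \cap \m_2, \m_1 \cap \m_2 \cap \m_3, \dots$ form a strictly decreasing chain of closed ideals, contradicting DCC on closed subsets of the affine variety $A$. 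Hence $A$ has only finitely many maximal ideals, i.e. $A$ is semilocal.

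An alternative, perhaps cleaner, route uses the connected-component decomposition: write $A^{\circ}$ for the identity component (of $0$), so $A = A^{\circ} \oplus (\text{finite part})$ as additive groups and $A/A^{\circ}$ is a finite ring; it suffices to treat $A^{\circ}$, i.e. reduce to the connected case, and then invoke that a connected commutative algebraic ring is generated by its units (Corollary 2.2.2) to control its maximal ideals. However, I expect the direct Noetherian argument above to be the most economical, since it only uses Proposition 2.2.1 together with the elementary topological facts that closures of ideals are ideals and that affine varieties are Noetherian topological spaces.

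The main obstacle — and the genuinely substantive point — is establishing that \emph{every maximal ideal is Zariski-closed}; everything after that is a standard Noetherianity argument. This rests entirely on Proposition 2.2.1(i): without knowing that $A^{\times}$ is open (equivalently, that $A \setminus A^{\times}$ is a proper \emph{closed} set containing every maximal ideal), one cannot conclude that $\overline{\m} \neq A$, and the whole argument collapses. The commutativity hypothesis enters precisely here and in the use of CRT, ensuring that $\overline{\m}$ is a two-sided ideal and that the finitely many maximal ideals are comaximal in pairs.
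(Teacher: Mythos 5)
Your proof is correct, and the first half (every maximal ideal is Zariski-closed) is essentially the paper's argument, just presented directly rather than by contradiction: you observe that $\overline{\m} \subset A \setminus A^{\times}$ since the latter is closed, hence $\overline{\m}$ is a proper ideal and equals $\m$ by maximality; the paper instead supposes $\m$ not closed, deduces $\overline{\m} = A$ by maximality, and derives a contradiction from $\m$ being dense and $A^{\times}$ open. Where you genuinely diverge is the finiteness step. The paper fixes $n$ distinct maximal ideals, notes that they are all closed so each quotient $A/\m_i$ is again an algebraic ring, applies the CRT to get a \emph{surjective morphism of algebraic rings} $\nu \colon A \to A/\m_1 \times \cdots \times A/\m_n$, and then counts dimensions: the $\m_i$ with $\dim A/\m_i = 0$ contain $A^{\circ}$ and hence are at most $r$ in number (where $r$ is the number of maximal ideals of the finite ring $A/A^{\circ}$), while those with $\dim A/\m_i \geq 1$ number at most $\dim A$ by surjectivity of $\nu$; this yields the explicit bound $n \leq r + \dim A$. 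You instead use the Noetherianity of $A$ as a topological space directly: the partial intersections $\m_1 \supsetneq \m_1 \cap \m_2 \supsetneq \cdots$ form a strictly decreasing chain of closed subsets (strictness from prime avoidance / distinctness of the $\m_i$), contradicting DCC. Your route is more elementary — it avoids passing through the quotient algebraic ring structure on $A/\m_i$ and the surjectivity/dimension comparison — at the cost of giving no quantitative bound on the number of maximal ideals, which the paper's version provides. Both establish the lemma.

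One small caution on your phrasing: ``the closed ideals of $A$ satisfy the ascending chain condition'' conflates ideals of the abstract ring $A$ with ideals of the coordinate ring $K[A]$; what you actually use, and what is correct, is simply the DCC on Zariski-closed subsets of the affine variety $A$, and the partial intersections happen to be both closed subsets and ideals. This does not affect the validity of the argument.
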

\begin{proof}
First, we observe that any abstract maximal ideal $\mathfrak{m} \subset A$ is Zariski-closed. Indeed, if $\mathfrak{m}$ is not closed, then since $\overline{\mathfrak{m}}$ is an ideal, we have $\overline{\mathfrak{m}} = A.$ Then it follows from Proposition \ref{P:AR-1} that $\mathfrak{m} \cap A^{\times} \neq \varnothing$, which is impossible.

Now let $\mathfrak{m}_1, \dots, \mathfrak{m}_n$ be a collection of pairwise distinct maximal ideals of $A$. It follows from the Chinese Remainder Theorem that the canonical map
$$
A \stackrel{\nu}{\rightarrow} A/ \mathfrak{m}_1 \times \cdots A/ \mathfrak{m}_n
$$
is a surjective homomorphism of algebraic rings. If $\dim A/ \mathfrak{m}_i = 0,$ then $A/ \mathfrak{m}_i$ is finite, and consequently $\mathfrak{m}_i \supset A^{\circ}.$ Otherwise, $\dim A/ \mathfrak{m}_i \geq 1.$ The surjectivity of $\nu$ now implies that $n \leq r + \dim A$, where $r$ is the number of maximal ideals of the finite ring $A/ A^{\circ},$ and the required fact follows.
\end{proof}

\vskip5mm

\subsection{Structure of algebraic rings} We will now consider more closely the algebraic structure of algebraic rings. The key statement, given in Proposition \ref{P:AR-2} below, is a complete description of algebraic rings over fields of characteristic 0. However, some important structural information about algebraic rings (particularly
commutative algebraic rings) can be obtained in any characteristic. We begin with the artinian property.
\begin{lemma}\label{L:AR-2}
Let $A$ be a connected algebraic ring. Then every right and every left ideal of $A$ is connected and Zariski-closed, hence $A$ is artinian as an abstract ring.
\end{lemma}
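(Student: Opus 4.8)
The plan is to reduce everything to one clean observation: every \emph{principal} right ideal $aA$ (for $a\in A$) is the image of a homomorphism of algebraic groups, hence Zariski-closed and connected, and then to assemble an arbitrary right ideal out of finitely many such principal ones via a dimension count. The left-ideal assertion will follow by the symmetric argument, and the artinian conclusion will be a formal consequence.

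First I would record that, for any $a\in A$, distributivity (III) says that $\lambda_a\colon A\to A$, $x\mapsto ax$, is a homomorphism of the algebraic group $(A,\ba)$ to itself; since $A$ is irreducible, $(A,\ba)$ is connected, and its image under $\lambda_a$ is exactly $aA$, which is a right ideal because $(ax)b=a(xb)$ by associativity (II). Invoking the standard facts that the image of a morphism of algebraic groups is constructible (Chevalley) and that a constructible subgroup is closed, and that the continuous image of a connected set is connected, I conclude that $aA$ is a Zariski-closed, connected right ideal of $A$.

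Next, let $I$ be an arbitrary right ideal. Since $A$ has an identity, $a=a\cdot 1_A\in aA\subseteq I$ for every $a\in I$, so $I=\sum_{a\in I}aA$. Among all finite partial sums $a_1A+\cdots+a_rA$ with $a_i\in I$, choose one, say $J$, of maximal dimension; this is possible since all such dimensions lie between $0$ and $\dim A$. Each such partial sum is the image of the addition morphism $a_1A\times\cdots\times a_rA\to(A,\ba)$ of algebraic groups, hence is again a closed connected subgroup. For any $a\in I$, the subgroup $J+aA$ is then closed, connected, and contains $J$; by maximality it has the same dimension as $J$, and since both are irreducible with $J\subseteq J+aA$, this forces $J+aA=J$, i.e. $aA\subseteq J$. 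Therefore $I=\sum_{a\in I}aA=J$, so $I$ is Zariski-closed and connected (and, incidentally, finitely generated as a right ideal).

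Finally, if $I_1\supseteq I_2\supseteq\cdots$ is a descending chain of right ideals, then by the above it is a descending chain of closed subsets of the noetherian topological space $A$, hence stabilizes; thus $A$ satisfies the descending chain condition on right ideals, and running the same argument with $\rho_a\colon x\mapsto xa$ and the left ideals $Aa$ in place of $\lambda_a$ and $aA$ gives the statement for left ideals. I do not expect a genuine obstacle here: the only nontrivial input is the theorem that homomorphic images of algebraic groups are closed connected subgroups, and everything else is bookkeeping with the ring axioms together with a single maximal-dimension argument; if one prefers to avoid that theorem, the closedness of $aA$ can instead be extracted from the description of $A^{\times}$ in Proposition~\ref{P:AR-1} and Corollary~\ref{C:AR-1}, but the route above seems the most economical.
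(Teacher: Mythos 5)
Your proof is correct and follows essentially the same route as the paper: there one observes directly that the ideal generated by $a_1,\dots,a_n$ is the image of the algebraic-group homomorphism $A^n\to A$, $(x_1,\dots,x_n)\mapsto x_1a_1+\cdots+x_na_n$, hence closed and connected, and then chooses the $a_i$ so that this ideal has maximal dimension; the artinian conclusion is again drawn from $A$ being a noetherian topological space. Your decomposition into principal ideals $aA$ followed by the addition morphism is a mild repackaging of that same computation, and you also spell out the dimension/irreducibility step that the paper leaves as ``clearly''.
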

\begin{proof}
Notice that the second statement follows from the first as $A$ is a noetherian topological space for the Zariski topology. So, suppose $\mathfrak{a} \subset A$ is a left ideal. Then for any $a_1, \dots, a_n \in \mathfrak{a}$, the left ideal $\mathfrak{b} \subset A$ generated by $a_1, \dots, a_n$ is the image of the homomorphism of algebraic groups $A^n \to A$, $(x_1, \dots, x_n) \mapsto x_1 a_1 + \dots + x_n a_n,$ and therefore is closed (\cite{Bo}, Corollary 1.4) and connected. If $a_1, \dots, a_n \in \mathfrak{a}$ are chosen so that the corresponding $\mathfrak{b}$ has maximum possible dimension, then clearly $\mathfrak{a} = \mathfrak{b}.$ The argument for right ideals is completely analogous. \end{proof}

\vskip2mm

\noindent {\bf Remark 2.3.2.} We note that without the assumption of connectedness, the conclusion of the lemma may fail. For example, suppose $K$ is an algebraically closed field of characteristic $p > 0.$ Let $A_0 = K \times K,$ with the usual addition and multiplication given by
$$
\bm((x_1, y_1), (x_2, y_2)) = (x_1 y_2 + x_2 y_1, y_1 y_2).
$$
It is easily seen that $A_0$ is a commutative algebraic ring with identity element $(0,1).$ Then $A = K \times \mathbb{F}_p,$ where $\mathbb{F}_p$ is the prime subfield of $K$, is an algebraic subring of $A_0.$ Now if $S \subset K$ is any additive subgroup, then $\mathfrak{a} = (S,0)$ is an abstract ideal of $A$. Hence $A$ is not artinian and not every ideal of $A$ is Zariski-closed.

%the structure theorem for commutative artinian rings (cf. \cite{At}, Theorem 8.7), we have
%$$
%A = A_1 \times \cdots A_n,
%$$
%where each $A_i$ is an algebraic ring and local and artinian as an abstract ring.
%It follows that if $\mathfrak{R}(A)$ is the Jacobson radical of

\vskip2mm

\addtocounter{thm}{1}

Nevertheless, the artinian property does hold for commutative algebraic rings satisfying one additional condition, which we will now define. Let $A$ be a commutative algebraic ring. The connected component $A^{\circ}$ of $0_A$ in $(A, \ba)$ is easily seen to be an ideal of $A$. Consider the following condition on $A$:
\vskip3mm

\noindent (FG)\ \  \parbox[t]{15cm}{$A^{\circ}$ is finitely generated as an ideal of $A$.}

\vskip3mm
We note that (FG) holds automatically for algebraic rings over fields of characteristic 0 --- see Remark 2.3.7(ii) below.
It turns out that commutative algebraic rings satisfying (FG) possess a number of important structural properties.
\begin{prop}\label{P:AR-4}
Let $A$ be a commutative algebraic ring satisfying {\rm (FG)}. Then
\vskip1mm
\noindent {\rm (i)} \parbox[t]{15cm}{Every abstract ideal ideal $\mathfrak{a} \subset A$ is Zariski-closed, and consequently $A$ is artinian.}
\vskip1mm
\noindent {\rm (ii)} \parbox[t]{15cm}{We have the following direct sum decomposition of algebraic rings with identity
$$
A = A^{\circ} \oplus C,$$
where $C$ is a finite ring isomorphic to $A/ A^{\circ}$.}
\end{prop}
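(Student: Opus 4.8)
The plan is to prove (ii) first, by exhibiting an idempotent $e \in A^{\circ}$ with $A^{\circ} = eA$, and then to deduce (i) from this decomposition together with Lemma \ref{L:AR-2}. As a preliminary observation, since $A$ is an affine variety it has only finitely many connected components, so $\bar A := A/A^{\circ}$ is a \emph{finite} ring; and by (FG) we may fix generators $c_1, \dots, c_r \in A^{\circ}$ with $A^{\circ} = Ac_1 + \cdots + Ac_r$.

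The heart of the argument --- and the one step where (FG) is genuinely needed --- is to show that $A^{\circ}$ is an \emph{idempotent} ideal, i.e. $(A^{\circ})^2 = A^{\circ}$. On the one hand, using commutativity and $A^{\circ} = \sum_i Ac_i$ one computes $(A^{\circ})^2 = \sum_{i,j} Ac_ic_j$, so $(A^{\circ})^2$ is again a finitely generated ideal of $A$; in particular it is the image of the homomorphism of algebraic groups $A^{r^2} \to A$, $(x_{ij}) \mapsto \sum_{i,j} x_{ij}c_ic_j$, hence Zariski-closed by (\cite{Bo}, Corollary 1.4). On the other hand, $A^{\circ}$ annihilates $A^{\circ}/(A^{\circ})^2$, so the latter is a module over the finite ring $\bar A$ generated by the images of $c_1, \dots, c_r$, hence a finite set. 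Therefore $(A^{\circ})^2$ is a Zariski-closed subgroup of finite index in the \emph{connected} algebraic group $(A^{\circ}, \ba)$, which forces $(A^{\circ})^2 = A^{\circ}$. I expect this to be the only real obstacle; the example in Remark 2.3.2 shows that the conclusion indeed fails without (FG).

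Granting this, $A^{\circ}$ is a finitely generated ideal of the commutative ring $A$ with $(A^{\circ})^2 = A^{\circ}$, so the determinant (Cayley--Hamilton) trick produces $e \in A^{\circ}$ with $(1_A - e)A^{\circ} = 0$; then $e^2 = e$ and $A^{\circ} = eA$. Set $C := (1_A - e)A$. Since multiplication by $e$ and by $1_A - e$ are homomorphisms of algebraic groups, $A^{\circ}$ and $C$ are Zariski-closed, the map $x \mapsto (ex,\, (1_A - e)x)$ is an isomorphism of algebraic groups $A \stackrel{\sim}{\longrightarrow} A^{\circ} \times C$ that respects multiplication and sends $1_A$ to $(e,\, 1_A - e)$, and $C \cong A/eA = A/A^{\circ}$ is finite. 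This is exactly the decomposition $A = A^{\circ} \oplus C$ of algebraic rings with identity asserted in (ii).

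Finally, for (i), let $\mathfrak{a} \subset A$ be an arbitrary abstract ideal. By (ii) it splits as $\mathfrak{a} = e\mathfrak{a} \oplus (1_A - e)\mathfrak{a}$, where $e\mathfrak{a}$ is an ideal of $A^{\circ} = eA$ and $(1_A - e)\mathfrak{a}$ is an ideal of $C$. The latter is finite, hence Zariski-closed; and since $A^{\circ} = eA$ is a \emph{connected} algebraic ring (with identity $e$), Lemma \ref{L:AR-2} shows that $e\mathfrak{a}$ is Zariski-closed. Hence $\mathfrak{a}$, being a finite union of translates of $e\mathfrak{a}$, is Zariski-closed. And $A^{\circ}$ is artinian by Lemma \ref{L:AR-2} while $C$ is finite, so $A \cong A^{\circ} \times C$ is artinian, which completes (i).
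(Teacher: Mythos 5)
Your proof is correct. The key step is exactly the one you flag: using (FG) together with finiteness of $\bar A = A/A^{\circ}$ to show that $(A^{\circ})^2$ is a Zariski-closed subgroup of finite index in the connected group $(A^{\circ},\ba)$, hence equals $A^{\circ}$; the determinant trick then produces the idempotent $e$ and the splitting. The reduction of (i) to Lemma \ref{L:AR-2} applied to the connected ring $A^{\circ}=eA$ (with identity $e$) is clean, and you correctly observe that $e\mathfrak{a}=A^{\circ}\cap\mathfrak{a}$ is an ideal of $A^{\circ}$ while $(1_A-e)\mathfrak{a}$ is finite.

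Since the paper only states this proposition and refers to \cite{IR} for the proof, a direct comparison is not possible from the text alone; what the paper does sketch is the adjacent Proposition \ref{P:AR-2}, whose decomposition $A=A'\oplus C$ is obtained via the additive Jordan decomposition of $(A,\ba)$ into unipotent and semisimple parts. Your argument is a genuinely different route: it is purely commutative-algebraic (idempotent ideals, Cayley--Hamilton, Nakayama), it works uniformly in all characteristics, and it produces exactly the decomposition $A=A^{\circ}\oplus C$ asked for in the statement — whereas the unipotent part $A'$ in the Jordan-decomposition approach need not coincide with $A^{\circ}$ outside characteristic $0$ (cf.\ Remark 2.3.2, where $A$ is entirely unipotent but disconnected). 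What the Jordan route buys is more structural information about $A'$ in characteristic $0$, which is then used to show $A'$ is a $K$-algebra; what your route buys is a proof tailored to the hypothesis (FG) and valid in positive characteristic, showing very explicitly why (FG) is the right condition.

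One small stylistic point: when you assert ``$\mathfrak{a}$, being a finite union of translates of $e\mathfrak{a}$, is Zariski-closed,'' it is slightly cleaner to say that $\mathfrak{a}$ is the image of the closed set $e\mathfrak{a}\times(1_A-e)\mathfrak{a}$ under the isomorphism $A^{\circ}\times C\xrightarrow{\sim}A$; but the translate argument is also fine.
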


\vskip2mm

An important class of examples of algebraic rings satisfying condition (FG) is obtained as follows.
\begin{lemma}\label{L:AR-3}
Let $f \colon R \to A$ be an abstract homomorphism of an abstract commutative ring $R$ into a commutative algebraic ring $A$ such that $\overline{f(R)} = A.$
\vskip1mm

\noindent {\rm (i)} \parbox[t]{15cm}{If $R$ is noetherian, then $A$ satisfies {\rm (FG)}.}

\vskip1mm

\noindent {\rm (ii)} \parbox[t]{15cm}{If $R$ is an infinite field, then $A$ is connected.}

\end{lemma}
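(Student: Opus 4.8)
The plan is to handle the two parts separately, using repeatedly that $A^{\circ}$ is a closed ideal of $A$ (as observed before Lemma~\ref{L:AR-2}) and that the quotient $A/A^{\circ}$ is \emph{finite}: the affine variety $A$ has only finitely many connected components, and since $(A,\ba)$ is an algebraic group these are precisely the cosets of $A^{\circ}$.

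For part (ii) I would argue as follows. If $A=\{0\}$ there is nothing to prove, so assume $A\neq\{0\}$; then $f$ is injective, since its kernel is an ideal of the field $R$ and $f(1_R)=1_A\neq 0_A$. Hence $f(R)$ is a field isomorphic to $R$, in particular infinite. Now $f(R)\cap A^{\circ}$ is an additive subgroup of $f(R)$ which is stable under multiplication by $f(R)$, because $A^{\circ}$ is an ideal of $A$; that is, it is an ideal of the field $f(R)$, hence equals $0$ or $f(R)$. The first case would embed $f(R)$ into the finite group $A/A^{\circ}$, contradicting infinitude; so $f(R)\subseteq A^{\circ}$, and passing to Zariski closures gives $A=\overline{f(R)}\subseteq A^{\circ}$. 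Thus $A=A^{\circ}$ is connected.

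For part (i), the idea is to import a finite generating set for the ideal $A^{\circ}$ from $R$. The preimage $\mathfrak{c}:=f^{-1}(A^{\circ})$ is an ideal of $R$, hence finitely generated, say $\mathfrak{c}=(r_1,\dots,r_n)$, since $R$ is noetherian. Let $\mathfrak{b}$ be the ideal of $A$ generated by $f(r_1),\dots,f(r_n)$. As the image of the homomorphism of algebraic groups $A^{n}\to A$, $(x_1,\dots,x_n)\mapsto x_1f(r_1)+\cdots+x_nf(r_n)$, the ideal $\mathfrak{b}$ is Zariski-closed (\cite{Bo}, Corollary~1.4); and since each $f(r_i)\in A^{\circ}$ and $A^{\circ}$ is an ideal, $\mathfrak{b}\subseteq A^{\circ}$. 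To get the reverse inclusion I would pass to the quotient algebraic ring $\bar A:=A/\mathfrak{b}$ (legitimate precisely because $\mathfrak{b}$ is \emph{closed}) and the induced ring homomorphism $\bar f\colon R\to\bar A$, whose image is still Zariski-dense. By construction $\bar f$ vanishes on $r_1,\dots,r_n$, hence on $\mathfrak{c}$, so it factors through $R/\mathfrak{c}$; but $R/\mathfrak{c}=R/f^{-1}(A^{\circ})$ embeds in the finite ring $A/A^{\circ}$, so $R/\mathfrak{c}$ — and therefore $\bar f(R)$ — is finite, hence Zariski-closed. Density then forces $\bar A=\bar f(R)$, so $\dim\bar A=0$. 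Since $\dim A^{\circ}=\dim A$ and $\dim\bar A=\dim A-\dim\mathfrak{b}$, we conclude $\dim\mathfrak{b}=\dim A^{\circ}$; as $\mathfrak{b}$ is closed and $A^{\circ}$ is irreducible, this yields $\mathfrak{b}=A^{\circ}$, so $A^{\circ}$ is finitely generated as an ideal.

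The step that needs the most care is the last one in (i): one must check that $A/\mathfrak{b}$ really carries an algebraic-ring structure (this is exactly why closedness of $\mathfrak{b}$ is established first) and that the dimension formula $\dim(A/\mathfrak{b})=\dim A-\dim\mathfrak{b}$ for quotients of a commutative algebraic group by a closed subgroup is in force. Should one wish to sidestep the quotient, an alternative is to prove directly that $f(R)\cap A^{\circ}$ is Zariski-dense in $A^{\circ}$ — using that it has finite index in $f(R)$, that $\overline{f(R)}=A$, and that the closure of a subgroup is a subgroup with the same identity component — and then read off $A^{\circ}=\overline{f(R)\cap A^{\circ}}\subseteq\mathfrak{b}$ from the closedness of $\mathfrak{b}$. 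Either route is elementary; the only external inputs are the finiteness of $A/A^{\circ}$, the closedness of images of algebraic-group homomorphisms, and the quotient construction recalled earlier in this section.
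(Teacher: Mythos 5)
Your argument is correct in both parts. Part (ii) is essentially the paper's argument, just unwound: you case on whether the ideal $f(R)\cap A^{\circ}$ of the field $f(R)$ is zero or everything, whereas the paper says in one stroke that $f^{-1}(A^{\circ})$ would be a proper ideal of finite index in the infinite field $R$, which is impossible.

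For part (i) you take a genuinely different, more roundabout route. You form the closed ideal $\mathfrak{b}=Af(r_1)+\cdots+Af(r_n)\subseteq A^{\circ}$ and then, to prove the reverse inclusion, pass to the quotient algebraic ring $A/\mathfrak{b}$, show its image from $R$ is finite (via $R/\mathfrak{c}\hookrightarrow A/A^{\circ}$), deduce $\dim(A/\mathfrak{b})=0$, and then invoke the dimension formula for quotients of algebraic groups to get $\dim\mathfrak{b}=\dim A^{\circ}$, hence $\mathfrak{b}=A^{\circ}$ by irreducibility. This works, but it loads in two extra pieces of machinery (the quotient construction for a closed ideal and the additivity of dimension under quotients) that the paper avoids. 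The paper's proof is a one-step topology observation: since $A^{\circ}$ is \emph{open} in $A$ and $f(R)$ is dense, $f(\mathfrak{c})=f(R)\cap A^{\circ}$ is automatically dense in $A^{\circ}$; since $\mathfrak{b}$ is closed and contains $f(\mathfrak{c})$, it contains $A^{\circ}$, and equality follows. Your ``sidestep the quotient'' alternative at the end is closer to this, though it still goes through the finite-index-subgroup argument rather than using openness of $A^{\circ}$ directly. All three variants are sound; the paper's is just the leanest.
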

\begin{proof}
(i) Since $A^{\circ}$ is open, for $\mathfrak{r} = f^{-1} (f(R) \cap A^{\circ})$, we have $\overline{f(\mathfrak{r})} = A^{\circ}.$ By assumption, $R$ is noetherian and clearly $\mathfrak{r}$ is an ideal, so we have $\mathfrak{r} = R u_1 + \dots + R u_m.$ Then
since $A f(u_1) + \cdots + A f(u_m)$ is closed and contains $f(\mathfrak{r}),$ we obtain $A^{\circ} = A f(u_1) + \cdots + A f(u_m).$

\vskip1mm
\noindent (ii) If $A \neq A^{\circ},$ then $f^{-1}(A^{\circ})$ would be a proper ideal of $R$ of finite index, which is impossible.
\end{proof}

\vskip2mm

\noindent {\bf Remark 2.3.5.} Notice that if $D$ is a division algebra over an infinite field, $A$ an associative algebraic ring, and $f \colon D \to A$ a ring homomorphism with Zariski-dense image, then the same argument as given in (ii) shows that $A$ must be connected.

\vskip2mm

\addtocounter{thm}{1}

Suppose $S$ is a finite-dimensional $K$-algebra. Then $S$ has a natural structure of a connected algebraic ring. We will say that an algebraic ring $A$ {\it comes from an algebra} if there exists a finite dimensional $K$-algebra $S$ and an isomorphism $A \simeq S$ of algebraic rings. Furthermore, we say that an algebraic ring $A$ {\it virtually comes from an algebra} if $A \simeq A' \oplus B$, where
$A'$ comes from an algebra and $B$ is finite. For such algebraic rings most of our previous results are immediate.
On the other hand, it turns out that in characteristic zero, all algebraic rings virtually come from algebras.
\begin{prop}\label{P:AR-2}
Let $A$ be an algebraic ring over an algebraically closed field $K$ of characteristic zero. Then $A$ virtually comes from an algebra.
\end{prop}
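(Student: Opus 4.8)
The plan is to reduce to the connected case and then leverage Chevalley's structure theorem for smooth connected algebraic groups together with the fact that, in characteristic zero, there are no infinitesimal obstructions. First I would observe that by Proposition~\ref{P:AR-4}(ii) (applied after we know (FG) holds, which by the remark is automatic in characteristic zero), we have a decomposition $A = A^{\circ} \oplus C$ with $C$ finite; hence it suffices to prove that the connected algebraic ring $A^{\circ}$ comes from a finite-dimensional $K$-algebra. So from now on assume $A$ is connected, i.e.\ its underlying variety is irreducible.

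Next I would analyze the underlying additive group $(A, \ba)$. Since $A$ is a connected commutative algebraic group in characteristic zero, Chevalley's theorem gives an exact sequence $1 \to L \to A \to B \to 1$ with $L$ linear (connected commutative) and $B$ an abelian variety; moreover $L = T \times V$ with $T$ a torus and $V$ a vector group (unipotent, hence isomorphic as a group to $\bG_a^d$ in characteristic zero). The key point is that the ring structure is very restrictive: multiplication by a fixed element $a \in A$ is an additive endomorphism $A \to A$, and $\bm$ itself is bi-additive. I would argue that $A$ must in fact be unipotent as an additive group. Indeed, the identity element $1_A$ lies in $A^{\times}$, which by Proposition~\ref{P:AR-1} is a principal open subset, and more importantly $A$ is artinian (Lemma~\ref{L:AR-2}), so its Jacobson radical $\m$ is nilpotent and $A/\m$ is a finite product of (algebraic) division rings; by Corollary~\ref{C:AR-3} each such factor has dimension $\le 1$. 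A dimension-$1$ algebraic division ring over an algebraically closed $K$ of characteristic zero whose underlying variety carries no nonconstant regular functions vanishing suitably must be finite --- one rules out $\bG_m$ and elliptic curves as candidates for $A/\m$ because neither admits a compatible distributive multiplication making it a field with the requisite regularity (e.g.\ on $\bG_m$ the map $x \mapsto ax$ for varying $a$ cannot be simultaneously a group endomorphism in the additive structure and yield a field). Hence $A/\m$ is finite, so $\dim A = \dim \m$, and $\m$ is a nilpotent ideal which, being connected (Lemma~\ref{L:AR-2}), is a connected unipotent commutative algebraic group, i.e.\ a vector group $\cong \bG_a^d$. It follows that $A$ is unipotent as an additive group and that $A^{\circ} = A$ is, additively, $\bG_a^{\dim A}$; in particular $A$ carries a canonical $K$-vector space structure via the exponential/logarithm, since in characteristic zero the endomorphism ring of $\bG_a$ is exactly $K$.

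Finally I would upgrade this to a $K$-algebra structure. Having identified $(A, \ba)$ with a finite-dimensional $K$-vector space $V$ (the identification being canonical: the only algebraic group endomorphisms of $\bG_a^d$ over $K$ in characteristic zero are the $K$-linear ones), I would check that $\bm \colon A \times A \to A$ is $K$-bilinear: it is biadditive by axiom (III), it is a morphism of varieties, and any biadditive morphism $\bG_a^d \times \bG_a^d \to \bG_a^d$ over a field of characteristic zero is automatically $K$-bilinear (this is where characteristic zero is essential --- in characteristic $p$ one has Frobenius-twisted biadditive maps, exactly as in Remark~2.3.2). Associativity (II), distributivity (III), and the identity axiom (IV) then say precisely that $(V, +, \bm, 1_A)$ is an associative unital $K$-algebra, necessarily finite-dimensional since $\dim_K V = \dim A < \infty$. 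The regular maps $\ba, \bm$ recover exactly the variety-plus-ring structure, so $A \simeq V$ as algebraic rings, i.e.\ $A$ comes from an algebra; combined with the first paragraph, the general $A$ virtually comes from an algebra.

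The main obstacle I expect is the step showing $A$ has no semisimple (torus or abelian-variety) part additively --- equivalently, ruling out that $A/\m$ could be a positive-dimensional algebraic division ring. Corollary~\ref{C:AR-3} bounds the dimension by $1$ but does not by itself exclude $\bG_m$ or an elliptic curve; one genuinely needs to use that a compatible field structure forces enough extra regular functions (or enough endomorphisms) to contradict properness/anti-affineness, and to handle the torus case one uses that $\mathrm{End}(\bG_m) = \Z$ is too small to support a $K$-algebra structure of positive dimension. Once additively-unipotent is established, the rest is the routine (but characteristic-zero-dependent) linearization of morphisms between vector groups.
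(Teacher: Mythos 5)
Your high-level plan (reduce to the connected case, show the additive group is a vector group, then linearize $\bm$) is the right one and matches the paper's in spirit, but the execution has several real gaps.

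First, the circularity. You invoke Proposition~\ref{P:AR-4}(ii) to split off the finite part, justifying condition (FG) by Remark~2.3.7(ii). But that remark is itself a consequence of Proposition~\ref{P:AR-2} (it refers to the subring $A'$ produced inside its proof), so you cannot use it here. The paper avoids this by working with a \emph{different} decomposition: in any characteristic it splits $A = A' \oplus C$ where $A'$ is the set of unipotent elements of $(A,\ba)$ and $C$ is the finite set of semisimple elements; this decomposition is established directly rather than via (FG). If you want to use $A = A^{\circ}\oplus C$ instead, you need to prove the splitting independently.

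Second, the Chevalley/abelian-variety discussion is out of place: by Definition~2.1.1 the underlying variety of an algebraic ring is affine (and Remark~2.1.2 shows this is forced even if you only assume irreducibility), so $(A,\ba)$ is a linear algebraic group and no abelian variety quotient can appear. The only thing to rule out is a torus factor.

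Third, the Wedderburn--Artin step is misstated. For a noncommutative artinian ring, $A/\m$ is a product of \emph{matrix rings} over division rings, not of division rings; Corollary~\ref{C:AR-3} then does not apply directly to the factors as you wrote. You also pass from ``$\m$ is a nilpotent ideal'' (ring-theoretic nilpotence) to ``$\m$ is a unipotent algebraic group,'' which does not follow without already knowing the torus part is trivial --- so that step begs the question.

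The real crux, which you correctly flag as the obstacle, is showing $(A,\ba)$ has no torus factor. Your remark that $\mathrm{End}(\bG_m)=\Z$ is ``too small'' is the right idea, and it can be turned into a clean argument: if $(A,\ba)\simeq T\times V$ with $T$ a torus and $V$ a vector group, then since $\mathrm{Hom}(T,V)=0$ and $\mathrm{Hom}(V,T)=0$, the left-multiplication map $a\mapsto (x\mapsto \bm(a,x))$ induces a regular map from the connected variety $A$ to the discrete group $\mathrm{End}(T)\cong M_n(\Z)$; this must be constant, but it sends $0_A\mapsto 0$ and $1_A\mapsto \mathrm{id}_T$, forcing $T=1$. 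With that lemma in hand, the rest of your argument (vector group in characteristic $0$, biadditive regular map between vector groups is $K$-bilinear, hence $A^{\circ}$ is a finite-dimensional $K$-algebra) is fine and coincides with the paper's. As written, though, the proposal does not yet constitute a proof because the torus exclusion and the Wedderburn step are not carried out correctly.
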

\begin{proof}(Sketch) One first shows that if $A$ is an algebraic ring over an algebraically closed field $K$ of \emph{any} characteristic, then we have $A = A' \oplus C$, where $A'$ is an algebraic subring of $A$ consisting of all unipotent elements in $(A, \ba)$, and $C$ is a finite ring consisting of all semisimple elements. Then, using the fact that commutative unipotent groups over fields of characteristic 0 are vector groups (see \cite[7.3]{Bo}), it is easy to show that $A'$ is actually a finite-dimensional $K$-algebra.
%if char $K = 0$, it is easy to show using \cite[7.3]{Bo} that $A'$ is actually a finite-dimensional $K$-algebra.
\end{proof}

\vskip2mm

\noindent {\bf Remark 2.3.7.} (i) The proof of Proposition \ref{P:AR-2} sketched above (see \cite[Proposition 2.14]{IR} for the details) is purely algebraic and works for \emph{any} algebraically closed field of characteristic 0. In the case that $K = \C$, Kassabov and Sapir \cite{Kas} proved this result using a topological argument.

\vskip1mm

\noindent (ii) Notice that if $\mathrm{char} \: K = 0$, the subring $A'$ coincides with $A^{\circ}$ and is generated as an ideal of $A$ by $1_{A'}$, the projection of $1_A$ to $A'$; hence, $A$ satisfies condition (FG) introduced above.

\addtocounter{thm}{1}

\vskip5mm

\subsection{Commutative artinian algebraic rings} To conclude our discussion of algebraic rings, we would like to mention some structural results for commutative artinian algebraic rings that hold regardless of whether
or not the ring (virtually) comes from an algebra. So, let $A$ be a commutative artinian algebraic ring with identity,
${\mathfrak m}_1, \ldots , {\mathfrak m}_r$ be its maximal ideals (note that $A$ is semilocal by \cite{At}, Proposition 8.3), and $J = {\mathfrak m}_1 \cap \cdots \cap {\mathfrak m}_r$
be its Jacobson radical (recall that the ${\mathfrak m}_i$'s, hence also $J,$ are Zariski-closed by Lemma \ref{L:AR-1}). It is well-known (cf. \cite{At}, Theorem 8.7) that
there are idempotents $e_1, \ldots , e_r \in A$ such that $e_1 + \cdots + e_r = 1_A,$ $e_ie_j = 0$ for $i \neq j,$ and $A_i := e_iA$ is a {\it
local} commutative artinian ring with identity for each $i = 1, \ldots , r.$ We have
\begin{equation}\label{E:AR120}
A \simeq A_1 \oplus \cdots \oplus A_r, \  \  \  a \mapsto (e_1a, \ldots , e_ra),
\end{equation}
as algebraic rings. Furthermore, after possible renumbering, the ideal ${\mathfrak m}_i$ corresponds to $$A_1 \oplus \cdots \oplus {\mathfrak m}'_i
\oplus \cdots A_r,$$ where ${\mathfrak m}'_i$ is the unique maximal ideal of $A_i.$

Let now $B$ be a local commutative artinian algebraic ring with maximal ideal ${\mathfrak n}.$ It follows
from Corollary \ref{C:AR-3} that $\dim B/{\mathfrak n} \leq 1.$ If $\dim B/{\mathfrak n} = 0,$ i.e. $B/{\mathfrak n}$ is finite, then since
there exists $n \geq 1$ such that ${\mathfrak n}^n = \{ 0 \}$ (\cite{At}, Proposition 8.4) and each quotient ${\mathfrak n}^j/{\mathfrak n}^{j+1}$ in the
filtration
\begin{equation}\label{E:AR121}
B \supset {\mathfrak n} \supset {\mathfrak n}^2 \supset \cdots \supset {\mathfrak n}^{n-1} \supset {\mathfrak n}^n = \{ 0 \}
\end{equation}
is a finitely generated $B/{\mathfrak n}$-module (as it is an artinian module over the field $B/ \mathfrak{n}$), we conclude that $B$ itself is finite. Now, suppose $\dim B/{\mathfrak n} = 1.$
Since $B/{\mathfrak n}$ is an infinite algebraic division ring, it is automatically connected, so we can use the following.

%We conclude this section with two results on the geometry of algebraic rings. As before, we again assume that $K$ is any algebraically closed field. It is well-known that a connected one-dimensional affine algebraic group is isomorphic to either $\mathbb{G}_a$ or $\mathbb{G}_m.$ It turns out that one-dimensional connected commutative algebraic rings are isomorphic to $K$. The precise statement is as follows.
\begin{prop}\label{P:AR-3}
Suppose $(A, \ba, \bm)$ is a one-dimensional connected commutative algebraic ring. Then $(A, \ba , \bm) \simeq (K, +, \cdot)$ as algebraic rings.
\end{prop}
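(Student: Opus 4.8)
The plan is to first determine the additive group $(A,\ba)$, and then to show that the multiplication $\bm$ is forced to be a rescaling of the usual multiplication on the affine line. Since $A$ is affine by definition, $(A,\ba)$ is a connected commutative affine algebraic group of dimension one, hence, by the classification of one-dimensional connected algebraic groups over an algebraically closed field, it is isomorphic as an algebraic group to $\mathbb{G}_a$ or to $\mathbb{G}_m$. I would rule out $\mathbb{G}_m$ as follows: by Proposition \ref{P:AR-1} the unit group $A^{\times}$ is a nonempty principal open subset of the irreducible curve $A$, hence infinite; on the other hand, for each $c \in A^{\times}$ the map $\mu_c \colon x \mapsto \bm(c,x)$ is, by distributivity, an endomorphism of the algebraic group $(A,\ba)$, and it is an automorphism with inverse $\mu_{c^{-1}}$, while $c \mapsto \mu_c$ is injective because $\mu_c(1_A) = c$; since $\mathrm{Aut}(\mathbb{G}_m) \cong \{\pm 1\}$ is finite, this is absurd. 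Thus $(A,\ba) \cong (\mathbb{G}_a,+)$, and after fixing such an isomorphism I may assume that $A = K$ with its usual addition, so that $\bm \colon K \times K \to K$ is a regular, biadditive, associative and commutative map with an identity element $e := 1_A$, and $e \neq 0$ (otherwise $\bm \equiv 0$ and $\dim A = 0$).

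Next I would study the left multiplications $\lambda_a := \bm(a,\cdot) \colon K \to K$ for $a \in A$. By distributivity each $\lambda_a$ is an endomorphism of $\mathbb{G}_a$, and property (IV) gives $\lambda_a(e) = a$, so $\lambda_a \neq 0$ for $a \neq 0$; moreover $\bm(a,a) \neq 0$ for $a \neq 0$, since otherwise associativity would give $\lambda_a \circ \lambda_a = \lambda_{\bm(a,a)} = \lambda_0 = 0$, forcing $\lambda_a = 0$. In characteristic $0$ every endomorphism of $\mathbb{G}_a$ is a scalar multiplication, so this is already most of the work. In characteristic $p$ the $\lambda_a$ lie in the twisted polynomial ring $K\{\tau\}$ of additive polynomials $\sum_i c_i x^{p^i}$ (with $\tau = $ Frobenius), which has no zero divisors and whose $\tau$-degree is additive under composition, and $\deg_\tau \lambda_a$ is bounded independently of $a$ because $\bm$ is a single polynomial. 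The crux is then a degree count: choosing $a_0 \neq 0$ with $n := \deg_\tau \lambda_{a_0}$ maximal, the identity $\lambda_{a_0} \circ \lambda_{a_0} = \lambda_{\bm(a_0,a_0)}$ forces $2n \leq n$, hence $n = 0$. So, in every characteristic, each $\lambda_a$ is multiplication by a scalar $c(a) \in K$, and then $\lambda_a(e) = a$ yields $c(a) = e^{-1}a$, i.e. $\bm(a,x) = e^{-1}ax$ for all $a,x \in K$.

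To finish, I would check that $\varphi \colon A \to K$, $a \mapsto e^{-1}a$, is an isomorphism of algebraic rings: it is an isomorphism of varieties, it is additive, it sends $1_A$ to $1$, and $\varphi(\bm(a,b)) = e^{-1}(e^{-1}ab) = \varphi(a)\varphi(b)$; hence $(A,\ba,\bm) \cong (K,+,\cdot)$ as algebraic rings. The hard part will be the characteristic-$p$ step: a priori a left multiplication could carry Frobenius twists, and ruling this out requires the degree function on $K\{\tau\}$ together with the absence of zero divisors there. Over a field of characteristic zero the entire argument collapses to elementary linear algebra over $K$.
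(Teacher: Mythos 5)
The paper does not actually include a proof of this proposition, deferring to \cite[\S 2]{IR}, so a direct comparison of routes is not possible; I will therefore evaluate your argument on its own merits. It is correct. Every step checks out: the one-dimensional connected commutative affine algebraic group $(A,\ba)$ is $\mathbb{G}_a$ or $\mathbb{G}_m$; the $\mathbb{G}_m$ case is correctly excluded via the injective group homomorphism $A^\times \hookrightarrow \mathrm{Aut}(A,\ba)$, $c \mapsto \mu_c$ (it is a homomorphism by associativity of $\bm$, injective because $\mu_c(1_A)=c$, and $A^\times$ is infinite by Proposition~\ref{P:AR-1} while $\mathrm{Aut}(\mathbb{G}_m)\cong\{\pm1\}$); and the analysis of left multiplications via $\mathrm{End}(\mathbb{G}_a)\cong K\{\tau\}$ is sound. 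The degree count in characteristic $p$ is the genuinely non-obvious part, and you handle it properly: $\deg_\tau\lambda_a$ is uniformly bounded because $\bm$ is a fixed bivariate polynomial, $K\{\tau\}$ has no zero divisors and additive $\tau$-degree, and $\bm(a_0,a_0)\neq0$ is needed and correctly established beforehand, so $2n\le n$ yields $n=0$. The final verification that $a\mapsto e^{-1}a$ is a ring isomorphism is routine. One cosmetic remark: the parenthetical ``otherwise $\bm\equiv0$ and $\dim A=0$'' is slightly misstated; the cleaner version is that $\bm(0,x)=0$ always holds by biadditivity, so if $1_A=0$ then $x=\bm(1_A,x)=0$ for every $x$, forcing $A=\{0\}$.

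For context: based on the sketch of Proposition~\ref{P:AR-2} that the paper does include, the argument in \cite{IR} likely proceeds via the additive Jordan decomposition $A=A'\oplus C$ (unipotent part $\oplus$ finite semisimple part), which in the connected one-dimensional case immediately forces $(A,\ba)\cong\mathbb{G}_a$. Your route replaces that structural input with the elementary unit-group argument and then does the same $K\{\tau\}$ degree computation. Both are reasonable; your version is more self-contained, relying only on Proposition~\ref{P:AR-1} from the paper plus the standard description of $\mathrm{End}(\mathbb{G}_a)$.
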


Thus, if $\dim  B/{\mathfrak n} = 1$, then $B/{\mathfrak n} \simeq K$ with the natural operations. Combining this with the decomposition (\ref{E:AR120}) and taking into account that
$J = {\mathfrak m}'_1 \oplus \cdots \oplus {\mathfrak m}'_r,$ we obtain the following
\begin{prop}\label{P:AR-20}
Let $A$ be a commutative artinian algebraic ring with Jacobson radical $J.$ Then

\vskip1mm

\noindent {\rm (i)} $A \simeq A_1 \oplus \cdots \oplus A_r$, where each $A_i$ is a local commutative artinian
algebraic ring;

\vskip1mm

\noindent {\rm (ii)} \parbox[t]{15cm}{$A/{J} \simeq (K, +, \cdot)^n \oplus C$ where, $n = \dim A/J$ and $C$ is
a finite algebraic ring; in particular, $A/J$ always virtually comes from an algebra.}

\end{prop}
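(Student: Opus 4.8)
The plan is to assemble the statement from the structure theory of algebraic rings developed above together with the classical decomposition theory of commutative artinian rings, essentially making explicit the remarks preceding the proposition. Part (i) is precisely the content of the isomorphism (\ref{E:AR120}): the (abstract) maximal ideals $\mathfrak{m}_1, \dots, \mathfrak{m}_r$ of $A$ are finite in number since $A$ is semilocal, and they are Zariski-closed by Lemma \ref{L:AR-1}; the standard idempotent decomposition of a commutative artinian ring (\cite{At}, Theorem 8.7) then produces orthogonal idempotents $e_1, \dots, e_r$ with $\sum e_i = 1_A$ and each $A_i := e_i A$ a local artinian ring. I would check that this is a decomposition \emph{in the category of algebraic rings}: each $A_i$ is the image of the regular map $x \mapsto \bm(e_i, x)$, hence a Zariski-closed subvariety of $A$ (Lemma \ref{L:AR-2}, or \cite{Bo}, Corollary 1.4) on which $\ba$ and $\bm$ restrict, so $A_i$ is a local commutative artinian algebraic ring; and the map $a \mapsto (e_1 a, \dots, e_r a)$ is a regular bijection with regular inverse $(a_1, \dots, a_r) \mapsto a_1 + \cdots + a_r$, hence an isomorphism of algebraic rings.

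For part (ii) I would first locate $A/J$ inside this decomposition. Since $J$ corresponds to $\mathfrak{m}'_1 \oplus \cdots \oplus \mathfrak{m}'_r$ under (\ref{E:AR120}), where $\mathfrak{m}'_i$ is the unique maximal ideal of $A_i$ (Zariski-closed by Lemma \ref{L:AR-1}), passing to quotients by closed ideals yields an isomorphism of algebraic rings $A/J \simeq \bigoplus_{i=1}^r A_i/\mathfrak{m}'_i$. I would then analyze each factor: $A_i/\mathfrak{m}'_i$ is an algebraic division ring, so by Corollary \ref{C:AR-3} it has dimension $\leq 1$. If $\dim A_i/\mathfrak{m}'_i = 0$, the factor is finite; if $\dim A_i/\mathfrak{m}'_i = 1$, then it is an infinite algebraic division ring, hence connected, and Proposition \ref{P:AR-3} gives $A_i/\mathfrak{m}'_i \simeq (K, +, \cdot)$. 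Renumbering so that the first $n$ factors are one-dimensional and the remaining $r - n$ are finite, I obtain $A/J \simeq (K,+,\cdot)^n \oplus C$ with $C := \bigoplus_{i > n} A_i/\mathfrak{m}'_i$ finite; since $\dim (K,+,\cdot)^n = n$ and $\dim C = 0$, the number $n$ of large factors is forced to equal $\dim A/J$. Finally, $(K,+,\cdot)^n$ comes from the $K$-algebra $K^n$ and $C$ is finite, so $A/J$ virtually comes from an algebra by definition.

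I do not expect a serious obstacle: once Lemma \ref{L:AR-1}, Corollary \ref{C:AR-3}, and Proposition \ref{P:AR-3} are available, the argument is largely bookkeeping. The only points requiring genuine care are (a) verifying that the purely ring-theoretic decomposition (\ref{E:AR120}) is compatible with the underlying variety structure — which reduces to closedness of ideals and of images of regular homomorphisms, both already established — and (b) keeping careful track of which residue fields $A_i/\mathfrak{m}'_i$ are ``large'' (isomorphic to $K$) versus finite, so as to match the constant $n$ with $\dim A/J$.
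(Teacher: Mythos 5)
Your proof is correct and follows exactly the route the paper takes: the proposition is presented as a summary of the discussion that precedes it (the idempotent decomposition (\ref{E:AR120}), the dichotomy for residue fields via Corollary \ref{C:AR-3} and Proposition \ref{P:AR-3}), and you assemble the same ingredients while spelling out the routine verifications in the category of algebraic rings. The one small slip is offering Lemma \ref{L:AR-2} as a reference for closedness of $A_i$ --- that lemma assumes $A$ connected, whereas here $A$ need not be --- but your alternative citation, \cite{Bo}, Corollary 1.4, applied to the additive group homomorphism $x \mapsto \bm(e_i,x)$, is the correct one and suffices.
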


\vskip5mm

\section{$K$-theoretic background}\label{S:KT}
In this section, we describe the $K$-theoretic machinery that is needed in the proofs of Theorems 1 and 2. The cases of commutative and noncommutative rings are treated separately (in \S\S \ref{S:KT} and \ref{S:KTNC}, respectively). In the former, the necessary statements are given by results of Stein \cite{St2} on the group $K_2$ of a semilocal commutative ring.
%we apply results of Stein \cite{St2} on the group $K_2$ of a semilocal commutative ring to the setting of algebraic rings.
In the latter, we sketch how analogous results in the relevant situations can be obtained using some computations of Bak and Rehmann \cite{BR}.

\vskip2mm

\subsection{Structure of $K_2$: commutative case}\label{S:KT} We begin by recalling some standard definitions.
Let $\Phi$ be a reduced irreducible root system of rank $\geq 2$, $G$ be the corresponding universal Chevalley group scheme, and $e_{\alpha} \colon \mathbb{G}_a \to G$ be the one-parameter subgroup associated with $\alpha \in \Phi.$ Suppose $S$ is a commutative ring. It is well-known (see \cite{Stb1}, Chapter 3) that the elements $e_{\alpha} (s)$, for $\alpha \in \Phi$ and $s \in S$, satisfy the following relations:
\begin{equation}\label{E:StG103}
e_{\alpha} (s) e_{\alpha} (t) = e_{\alpha} (s+t)
\end{equation}
for all $s,t \in S$ and all $\alpha \in \Phi,$ and
\begin{equation}\label{E:StG104}
[e_{\alpha} (s), e_{\beta} (t)] = \prod e_{i \alpha + j \beta} (N^{i,j}_{\alpha, \beta} s^i t^j),
\end{equation}
for all $s,t \in S$ and all $\alpha, \beta \in \Phi,$ $\beta \neq - \alpha,$
where the product is taken over all roots of the form $i \alpha + j \beta,$ $i, j \in \Z^+$, listed in an arbitrary (but {\it fixed}) order, and the $N^{i,j}_{\alpha, \beta}$ are integers depending only on  $\Phi$ and the order of the factors in (\ref{E:StG104}), but not on the ring $S$.

\vskip2mm

\noindent {\bf Definition 3.1.1.} The {\it Steinberg group} $\mathrm{St}(\Phi, S)$ is the group generated by symbols
$\tilde{x}_{\alpha} (t)$, for all $t \in S$ and $\alpha \in \Phi$, subject to the relations
\vskip1mm

(R1) $\tilde{x}_{\alpha}(s) \tilde{x}_{\alpha}(t) = \tilde{x}_{\alpha} (s+t)$

\vskip1mm

(R2) $[\tilde{x}_{\alpha} (s), \tilde{x}_{\beta} (t)] = \prod \tilde{x}_{i \alpha + j \beta} (N^{i,j}_{\alpha, \beta} s^i t^j)$,

\vskip1mm

\noindent where $N^{i,j}_{\alpha, \beta}$ are the same integers as in (\ref{E:StG104}).

\vskip2mm
\addtocounter{thm}{1}

It follows from the definition and relations (\ref{E:StG103}) and (\ref{E:StG104}) that there exists a surjective group homomorphism
$$
\pi_S \colon \mathrm{St}(\Phi, S) \to G(S)^+, \ \ \ \tilde{x}_{\alpha} (t) \mapsto e_{\alpha} (t),
$$
and one defines
$$
K_2 (\Phi, S) = \ker \pi_S.
$$
We note that the pair $(\St (\Phi, S), \pi_S)$ is functorial in the following sense: given a homomorphism of commutative rings
$f \colon S \to T$, there is a commutative diagram of group homomorphisms
$$
\xymatrix{ \St (\Phi, S) \ar[d]_{\pi_S} \ar[r]^{\tilde{F}} & \St (\Phi, T) \ar[d]^{\pi_T} \\ G(S)^+ \ar[r]^{F} & G(T)^+}
$$
%$$
%\begin{array}{ccc} \tilde{G}(S) & \stackrel{\tilde{F}}{\longrightarrow} & \tilde{G}(T) \\ \pi_S \downarrow & & \downarrow \pi_{T} \\ G(S)^+ & \stackrel{F}{\longrightarrow} & G(T)^+ \end{array}
%$$
where $F$ and $\tilde{F}$ are the natural homomorphisms induced by $f$, i.e. that map the generators as follows:
$F \colon e_{\alpha} (t) \mapsto e_{\alpha} (f(t))$ and $\tilde{F} \colon \tilde{x}_{\alpha} (t) \mapsto \tilde{x}_{\alpha} (f(t)).$

Next, we recall some additional standard notations.
%To state Stein's result on $K_2$, we need to introduce some additional notation.
For $\alpha \in \Phi$ and $u \in S^{\times}$, define the following elements of $\St (\Phi, S):$
\begin{equation}\label{E:StG-20}
\tilde{w}_{\alpha} (u) = \tilde{x}_{\alpha} (u) \tilde{x}_{-\alpha} (-u^{-1}) \tilde{x}_{\alpha} (u) \ \ \ {\rm and} \ \ \  \tilde{h}_{\alpha}(u) = \tilde{w}_{\alpha} (u) \tilde{w}_{\alpha} (-1).
\end{equation}
Then,
for $u, v \in S^{\times}$, the {\it Steinberg symbol} $\{ u, v \}_{\alpha}$ is defined as
\begin{equation}\label{E-Symb1}
\{ u, v \}_{\alpha} = \tilde{h}_{\alpha} (uv) \tilde{h}_{\alpha} (u)^{-1} \tilde{h}_{\alpha} (v)^{-1}.
\end{equation}
We note that since the elements $h_{\alpha} (t) = \pi (\tilde{h}_{\alpha} (t))$ are multiplicative in $t$ (\cite{Stb1}, Lemma 28), all Steinberg symbols are contained in $K_2 (\Phi, S).$ Moreover, by
(\cite{St2}, Proposition 1.3(a)), the Steinberg symbols lie in the center of $\St (\Phi, S).$

Historically, central extensions of the groups of points of Chevalley groups over fields were first studied by Steinberg \cite{Stb3}. He showed that for a field $k$ with $\vert k \vert > 4$, $\pi_k \colon \St (\Phi, k) \to G(k)^+$ is a {\it universal central extension}. The structure of $\ker \pi_k$ was completely described by Matsumoto \cite{M1}: he proved that $\ker \pi_k$ is the subgroup of $\St (\Phi, k)$ generated by Steinberg symbols (\ref{E-Symb1}) and determined the full set of relations (we should point out that Matsumoto, whose main goal was the resolution of the congruence subgroup problem for semisimple split groups over number fields, in fact only considered the case of infinite topological fields, but, as noted by Steinberg in \cite[Theorem 12]{Stb1}, the argument actually works for all fields $k$ with $\vert k \vert > 4$). Motivated by this work, Milnor formally introduced the functor $K_2$ from the category of associative rings to the category of abelian groups for groups of type $A_n$ (a detailed exposition can be found in his book \cite{Mil}). These ideas then played a crucial role in the work of Bass, Milnor, and Serre \cite{BMS} on the congruence subgroup problem.

Several years later, Stein \cite{St2} established an analogue of Matsumoto's result for commutative semilocal rings. To give the statement, we will need the following notation: for any ring $S$ and any nonempty subset  $P \subset S$, we denote by $\Z [P]$ the subring of $S$ generated by $P.$

%Following Stein \cite{St2}, given a nonempty subset $P \subset S$, we denote by $\Z [P]$ the subring of $S$ generated by $P.$
%With these notations, we have
\begin{thm}\label{T:StG-1}{\rm (cf. \cite{St2}, Theorem 2.13)}
Let $\Phi$ be a reduced irreducible root system of rank $\geq 2.$ If $S$ is a commutative semilocal ring with $S = \Z[S^{\times}]$, then
$K_2 (\Phi, S)$ is the central subgroup of $\St (\Phi, S)$ generated by the Steinberg symbols $\{ u, v \}_{\alpha}$, with $u,v \in S^{\times}$, for {\rm any} fixed long root $\alpha.$
\end{thm}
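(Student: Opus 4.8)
The plan is to establish the non-trivial inclusion: since the symbols $\{u,v\}_\alpha$ were already seen to be central and to lie in $K_2(\Phi,S)$, it remains to show that every element of $K_2(\Phi,S)$ is a product of symbols $\{u,v\}_\alpha$, $u,v\in S^\times$, for the fixed long root $\alpha$. I would break this into two parts: (a) show that the central subgroup $C_\alpha\subseteq\St(\Phi,S)$ generated by the $\{u,v\}_\alpha$ already contains every symbol $\{u,v\}_\beta$ for every root $\beta\in\Phi$; and (b) show that $K_2(\Phi,S)$ is generated by the full set of symbols $\{u,v\}_\beta$, $\beta\in\Phi$. Throughout, the hypothesis $S=\Z[S^\times]$ will be used to reduce arbitrary elements of $S$ to sums of units, so that identities for the generators $\tilde{x}_\beta(s)$ can be built up from the case $s\in S^\times$.

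For (a), I would use the ``Weyl group'' elements $\tilde{w}_\gamma(1)\in\St(\Phi,S)$: from (R1)--(R2) one checks that conjugation by $\tilde{w}_\gamma(1)$ sends $\tilde{x}_\beta(t)$ to $\tilde{x}_{s_\gamma\beta}(\pm t)$, and hence sends $\{u,v\}_\beta$ to $\{u,v\}_{s_\gamma\beta}^{\pm1}$. Since the Weyl group acts transitively on the roots of a given length, this already shows that all long-root symbols generate the common central subgroup $C_\alpha$, and all short-root symbols generate a common central subgroup $C_{\mathrm{short}}$. To see $C_{\mathrm{short}}\subseteq C_\alpha$ one uses that $\Phi$ has rank $\ge 2$: every short root lies, together with a long root, in a rank-two subsystem of type $A_2$, $B_2$ or $G_2$, and inside such a subsystem the relations among Steinberg symbols worked out by Matsumoto --- bilinearity in $(u,v)$, $\{u,1-u\}_\beta=1$ when $u,1-u\in S^\times$, and the explicit identities relating symbols attached to roots of different lengths --- express a short-root symbol as a product of long-root symbols. (Here the structure constants $N^{i,j}_{\alpha,\beta}$ enter, and one must track that the arguments occurring are again built from units, which is where $S=\Z[S^\times]$ is again convenient.)

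For (b), the main part, I would exploit the semilocal hypothesis. Let $J=\mathrm{rad}(S)$, so $\bar{S}:=S/J$ is a finite product of fields and, crucially, $1+J\subseteq S^\times$. Functoriality of the pair $(\St,\pi)$ gives a homomorphism $K_2(\Phi,S)\to K_2(\Phi,\bar{S})$; by Matsumoto's theorem applied to each residue field (with the finitely many small fields handled by Stein's supplementary arguments) the target is generated by symbols, and these lift to Steinberg symbols in $K_2(\Phi,S)$, so it suffices to control the kernel. This kernel is the relative group $K_2(\Phi,S,J)$, and I would analyze it by introducing the relative Steinberg group and showing, through explicit commutator computations in $\St(\Phi,S)$ with arguments in $J$, together with the fact that $1+J$ consists of units, that it is generated by ``relative symbols'' --- which, since the relevant ring elements are units, are ordinary Steinberg symbols. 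Combining with part (a) then gives $K_2(\Phi,S)=C_\alpha$. I expect this last step to be the main obstacle: it is precisely here that semilocality is essential (so that the $J$-adic filtration is well-behaved and the reduction $S\to\bar{S}$ can be handled effectively), and carrying out the bookkeeping of Steinberg relations with arguments running through $J$ is the genuine technical heart of the argument --- this is the content of Stein's analysis of the relative Steinberg group, which we take as a black box.
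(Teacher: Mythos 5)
The paper does not actually prove this theorem: it is stated as a quoted result, with the citation (cf.\ \cite{St2}, Theorem~2.13) in the statement itself, and no argument is given in the text. So there is no in-paper proof to compare against, and the task you set yourself is really that of reconstructing Stein's own argument.

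As a sketch of that argument your outline is sensible and points in the right direction. Part~(a)---conjugation by $\tilde{w}_\gamma(1)$ to identify all long-root symbols, then passage through rank-two subsystems to express short-root symbols in terms of long-root ones---is standard; the only caution is that the Matsumoto-type symbol identities you invoke were proved for fields, and one must check they persist in $\St(\Phi,S)$ when the arguments are units, which is precisely where $S=\Z[S^\times]$ earns its keep. Part~(b), however, is where the substance lies, and you explicitly concede it as a black box: the reduction modulo $J=\mathrm{rad}(S)$, the presentation of the relative Steinberg group, and the identification of $K_2(\Phi,S,J)$ with a group of relative symbols using $1+J\subseteq S^\times$ occupy the bulk of Stein's paper and constitute the genuine content of the theorem. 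You are also implicitly relying on a Matsumoto-type description of $K_2(\Phi,\bar{S})$ for $\bar{S}$ a finite product of fields (including the small finite fields), which is itself nontrivial and is part of what Stein establishes. In short, your proposal correctly identifies the architecture of the proof but defers its central difficulty to the reference; as a self-contained proof it is incomplete, though as a map of what Stein does it is accurate.
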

Our results from $\S$ \ref{S:AR} yield the following.
\begin{cor}\label{C:StG-1}
Let $A$ be a connected commutative algebraic ring. Then $K_2 (\Phi, A)$ is a central subgroup of $\St (\Phi, A)$ generated by the Steinberg symbols $\{ u, v \}_{\alpha}$ for $u, v \in A^{\times}$ and any fixed long root $\alpha.$
\end{cor}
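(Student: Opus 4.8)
The plan is to deduce this directly from Stein's description (Theorem \ref{T:StG-1}) by verifying that a connected commutative algebraic ring $A$, regarded purely as an abstract ring, satisfies its two hypotheses: that $A$ is semilocal, and that $A = \Z[A^{\times}]$. Note first that both the Steinberg group $\St(\Phi, A)$ and its subgroup $K_2(\Phi, A)$ are defined in terms of the abstract ring structure of $A$ alone, so the algebraic-geometric structure of $A$ enters only through the structural facts already established in \S \ref{S:AR}.

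For the semilocal condition, I would simply invoke Lemma \ref{L:AR-1}, which asserts that any commutative algebraic ring is semilocal as an abstract ring. For the condition $A = \Z[A^{\times}]$, I would use Corollary \ref{C:AR-1}: since $A$ is connected, $A = A^{\times} - A^{\times}$, so in particular every element of $A$ lies in the subring generated by $A^{\times}$, which forces $\Z[A^{\times}] = A$. With both hypotheses in hand, Theorem \ref{T:StG-1} applied with $S = A$ yields precisely the assertion that $K_2(\Phi, A)$ is the central subgroup of $\St(\Phi, A)$ generated by the Steinberg symbols $\{u, v\}_{\alpha}$, $u, v \in A^{\times}$, for any fixed long root $\alpha$ (centrality of these symbols being already guaranteed by (\cite{St2}, Proposition 1.3(a))).

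There is essentially no obstacle here beyond bookkeeping; the entire content of the corollary is carried by the earlier results. If I had to single out a point needing care, it would be matching the hypotheses of Theorem \ref{T:StG-1} exactly --- in particular ensuring that the condition $S = \Z[S^{\times}]$ is literally the conclusion of Corollary \ref{C:AR-1} and not merely something weaker --- together with checking that no property of $A$ beyond connectedness and commutativity is actually invoked, since those are precisely the hypotheses of Lemma \ref{L:AR-1} and Corollary \ref{C:AR-1}.
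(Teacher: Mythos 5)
Your proposal matches the paper's proof exactly: verify the two hypotheses of Theorem \ref{T:StG-1} by citing Corollary \ref{C:AR-1} (connectedness gives $A = A^{\times} - A^{\times}$, hence $A = \Z[A^{\times}]$) and Lemma \ref{L:AR-1} (commutative algebraic rings are semilocal), then apply the theorem with $S = A$. No discrepancies.
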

\begin{proof}
Since $A$ is connected, we have $A = A^{\times} - A^{\times}$ (Corollary \ref{C:AR-1}), hence $A = \Z [A^{\times}]$. Furthermore, by Lemma \ref{L:AR-1}, any commutative algebraic ring is semilocal as an abstract ring. Thus, the statement follows directly from Theorem \ref{T:StG-1}.
\end{proof}

The centrality of $K_2 (\Phi, A)$ will be critical for the proof of Theorem 1. We also note the following finiteness statement.
\begin{prop}\label{P:StG-2} {\rm (\cite{IR}, Proposition 4.5)}
Let $S$ be a finite commutative ring and $\Phi$ a reduced irreducible root system of rank $\geq 2.$ Then $\St (\Phi, S)$ and $K_2 (\Phi, S)$ are finite.
\end{prop}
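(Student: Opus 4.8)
The plan is to deduce the proposition, via the functoriality of the Steinberg group, from Stein's description of $K_2$ over finite local rings (Theorem \ref{T:StG-1}). First, since $G$ is an affine group scheme of finite type over $\Z$, write $G = {\rm Spec}\:\Z[X_1,\dots,X_k]/I$; then $G(S)$ is the set of $\Z$-algebra homomorphisms $\Z[X_1,\dots,X_k]/I \to S$, which embeds into $S^k$ and is therefore finite. In particular $G(S)^{+} \subseteq G(S)$ is finite, and since $\St(\Phi, S)$ is an extension of $G(S)^{+}$ by $K_2(\Phi, S) = \ker \pi_S$, it suffices to show that $K_2(\Phi, S)$ is finite.

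Next I would reduce to the case of a finite \emph{local} ring. A finite commutative ring is artinian, hence decomposes as a product $S \cong S_1 \times \cdots \times S_r$ of finite local rings, with primitive idempotents $e_1,\dots,e_r$. One then checks that the homomorphism $\St(\Phi, S) \to \prod_i \St(\Phi, S_i)$ induced by the coordinate projections is an isomorphism: the subgroups $G_i := \langle\, \tilde{x}_{\alpha}(e_i s) : \alpha \in \Phi,\ s \in S \,\rangle$ generate $\St(\Phi, S)$, since $\tilde{x}_{\alpha}(s) = \tilde{x}_{\alpha}(e_1 s)\cdots\tilde{x}_{\alpha}(e_r s)$ by (R1); the rule $\tilde{x}_{\alpha}(a) \mapsto \tilde{x}_{\alpha}(\iota_i(a))$, where $\iota_i \colon S_i \to S$ is the (non-unital) $i$-th coordinate inclusion, respects (R1) and (R2) and yields an isomorphism of $\St(\Phi, S_i)$ onto $G_i$ splitting the $i$-th projection; and the $G_i$ commute elementwise. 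Granting this, $\St(\Phi, S) \cong \prod_i \St(\Phi, S_i)$, so we may assume $S$ is a finite local ring.

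So let $S$ be finite local with maximal ideal $\m$. Then $S = \Z[S^{\times}]$: indeed $S^{\times} = S \setminus \m$, and for $x \in \m$ the element $1 - x$ is a unit, whence $x = 1 - (1-x) \in \Z[S^{\times}]$. Since $S$ is semilocal with $S = \Z[S^{\times}]$, Theorem \ref{T:StG-1} shows that $K_2(\Phi, S)$ is the central subgroup of $\St(\Phi, S)$ generated by the finitely many Steinberg symbols $\{u,v\}_{\alpha}$, $u, v \in S^{\times}$, for a fixed long root $\alpha$. These symbols are central, hence pairwise commuting, and by the bimultiplicativity of Steinberg symbols together with $\{1,v\}_{\alpha} = 1$, each is annihilated by $|S^{\times}|$ (as $\{u,v\}_{\alpha}^{|S^{\times}|} = \{u^{|S^{\times}|}, v\}_{\alpha} = \{1,v\}_{\alpha} = 1$). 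A group generated by finitely many pairwise commuting elements of finite order is finite, so $K_2(\Phi, S)$ — and therefore $\St(\Phi, S)$ — is finite.

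The only genuinely delicate point is the compatibility of $\St(\Phi, -)$ with finite direct products invoked in the second step, specifically the elementwise commutativity of the subgroups $G_i$. For $\tilde{x}_{\alpha}(e_i s)$ and $\tilde{x}_{\beta}(e_j t)$ with $i \neq j$ and $\beta \neq -\alpha$ this is immediate from (R2), since every monomial $(e_i s)^{m}(e_j t)^{n}$ with $m, n \geq 1$ vanishes because $e_i e_j = 0$; the remaining case $\beta = -\alpha$ relies on the fact that $[\tilde{x}_{\alpha}(a), \tilde{x}_{-\alpha}(b)] = 1$ in $\St(\Phi, S)$ whenever $ab = 0$, which can be verified by reducing to a rank-$2$ subsystem and running the usual $SL_2$-type computation (this is where $\mathrm{rk}\,\Phi \geq 2$ enters). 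Alternatively, one could sidestep this reduction given a form of Stein's theorem valid for arbitrary semilocal rings, but the route above relies only on what is recorded in Theorem \ref{T:StG-1}.
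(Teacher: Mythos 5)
Your proof is correct in outline and takes the natural route: reduce to finite local rings (where $S = \Z[S^\times]$ holds automatically, so Theorem \ref{T:StG-1} applies), then exploit the centrality of $K_2(\Phi,S)$ together with the finiteness of $G(S)^+$. Two remarks on the execution. The product decomposition $\St(\Phi,S)\cong\prod_i\St(\Phi,S_i)$ that you re-derive is precisely Stein's Lemma 2.12 from \cite{St2}, which the paper itself invokes immediately afterwards (in the proof of Proposition \ref{P:StG-3}); your sketch handles $\beta\neq-\alpha$ cleanly, but the case $\beta=-\alpha$, namely that $[\tilde{x}_\alpha(a),\tilde{x}_{-\alpha}(b)]=1$ whenever $ab=0$, is a genuine lemma (for type $A_n$ it is Lemma 9.7 of \cite{Mil}) that takes more than ``the usual $SL_2$-type computation,'' so citing Stein would be both shorter and safer. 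More substantively, your finiteness argument for $K_2(\Phi,S)$ invokes bimultiplicativity of Steinberg symbols; this is true for semilocal rings by Stein's theory, but it is \emph{not} part of Theorem \ref{T:StG-1} as stated, so your closing claim that the route ``relies only on what is recorded in Theorem \ref{T:StG-1}'' is not accurate. A finish that stays within what the paper records: by Corollary 4.4 of \cite{St1} (cited later in the paper, in the proof of Proposition \ref{P:R-2}), $\St(\Phi,S)$ is perfect; since $K_2(\Phi,S)$ is central by Theorem \ref{T:StG-1} and $G(S)^+$ is a finite perfect group, $\pi_S$ realizes $\St(\Phi,S)$ as a perfect central extension of a finite group, hence $K_2(\Phi,S)$ is a quotient of the Schur multiplier $H_2(G(S)^+,\Z)$, which is finite. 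This avoids the symbol computations entirely.
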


Now applying this to algebraic rings, we obtain

\begin{prop}\label{P:StG-3}
Suppose $A$ is a commutative algebraic ring over an algebraically closed field $K$ and $\Phi$ is a reduced irreducible root system of rank $\geq 2.$ Assume that $A$ satisfies {\rm (FG)} if $\mathrm{char} \: K = p>0$.
%and that $A$ has at most one residue field isomorphic to $\mathbb{F}_2$ if $\Phi$ contains roots of different lengths.
Then $\St (\Phi, A) = \St (\Phi, A^{\circ}) \times P,$ where $P$ is a finite group.
\end{prop}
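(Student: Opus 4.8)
The plan is to reduce Proposition \ref{P:StG-3} to the statement that, for a reduced irreducible root system $\Phi$ of rank $\geq 2$, the Steinberg functor $\St(\Phi,-)$ carries finite direct products of commutative rings to direct products of groups, and then to invoke the finiteness result of Proposition \ref{P:StG-2}. First note that the hypotheses force $A$ to satisfy {\rm (FG)}: this is assumed outright when $\mathrm{char}\, K = p > 0$, and holds automatically when $\mathrm{char}\, K = 0$ by Remark 2.3.7(ii). Hence Proposition \ref{P:AR-4}(ii) yields an isomorphism of algebraic rings with identity $A \simeq A^{\circ} \oplus C$, where $C \simeq A/A^{\circ}$ is a \emph{finite} commutative ring. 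Writing $A = A^{\circ} \times C$, it then suffices to prove $\St(\Phi, A^{\circ}\times C) \simeq \St(\Phi, A^{\circ}) \times \St(\Phi, C)$ and to take $P = \St(\Phi, C)$, which is finite by Proposition \ref{P:StG-2}.

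For the isomorphism I would work with the homomorphisms $\tilde{P}_1 \colon \St(\Phi,A)\to\St(\Phi,A^{\circ})$ and $\tilde{P}_2\colon\St(\Phi,A)\to\St(\Phi,C)$ induced by the two coordinate projections, together with the maps $\iota_1\colon\St(\Phi,A^{\circ})\to\St(\Phi,A)$, $\tilde{x}_{\gamma}(t)\mapsto\tilde{x}_{\gamma}((t,0))$, and $\iota_2\colon\St(\Phi,C)\to\St(\Phi,A)$, $\tilde{x}_{\gamma}(t)\mapsto\tilde{x}_{\gamma}((0,t))$; the latter two are well defined because relations (R1), (R2) refer only to the additive and multiplicative structure of the ring. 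Since $\tilde{P}_i\circ\iota_i = \mathrm{id}$, each $\iota_i$ is injective; write $N_i$ for its image. Relation (R1) gives $\tilde{x}_{\gamma}((s,t)) = \iota_1(\tilde{x}_{\gamma}(s))\,\iota_2(\tilde{x}_{\gamma}(t))$, so $N_1$ and $N_2$ generate $\St(\Phi,A)$; and \emph{if} $N_1$ and $N_2$ commute elementwise, then $N_1 N_2 = \St(\Phi,A)$, while $\tilde{P}_2$ — which kills $N_1$ and restricts to an isomorphism on $N_2$ — forces $N_1 \cap N_2 = 1$, so that $\St(\Phi,A) = N_1 \times N_2 \simeq \St(\Phi,A^{\circ})\times\St(\Phi,C)$.

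The crux — and the step I expect to be the main obstacle — is therefore to check $[N_1,N_2] = 1$, equivalently $[\tilde{x}_{\alpha}((a,0)),\tilde{x}_{\beta}((0,c))] = 1$ for all $\alpha,\beta\in\Phi$, $a\in A^{\circ}$, $c\in C$. When $\beta\neq-\alpha$ this is immediate from (R2): every factor on the right has the form $\tilde{x}_{i\alpha+j\beta}\big(N^{i,j}_{\alpha,\beta}(a,0)^i(0,c)^j\big)$ with $i,j\geq 1$ and $(a,0)^i(0,c)^j = 0$. The case $\beta = -\alpha$ is genuinely harder, since $\tilde{x}_{\alpha}$ and $\tilde{x}_{-\alpha}$ do not commute in general. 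My plan here is to show that $\tilde{x}_{\alpha}(A^{\circ}\times 0) \subseteq N_1' := \langle\, \tilde{x}_{\gamma}((a',0)) : \gamma\in\Phi\setminus\{\pm\alpha\},\ a'\in A^{\circ}\,\rangle$; granting this, each generator of $N_1'$ commutes with $\tilde{x}_{-\alpha}((0,c))$ by the case just treated, hence so does $\tilde{x}_{\alpha}((a,0))$. To obtain the containment I would exploit $\mathrm{rank}\,\Phi \geq 2$: choose $\beta\neq\pm\alpha$ with $\langle\alpha,\beta^{\vee}\rangle\neq 0$ (possible since $\Phi$ is irreducible), so that $w_{\beta}(\alpha)\neq\pm\alpha$, and apply $\iota_1$ to the standard Steinberg relation $\tilde{w}_{\beta}(1)\,\tilde{x}_{\alpha}(t)\,\tilde{w}_{\beta}(1)^{-1} = \tilde{x}_{w_{\beta}(\alpha)}(\pm t)$ valid in $\St(\Phi,A^{\circ})$ (cf. \cite{Stb1}); this expresses $\tilde{x}_{\alpha}((a,0))$ as a word in $\tilde{x}_{\pm\beta}((\pm 1_{A^{\circ}},0))$ and $\tilde{x}_{w_{\beta}(\alpha)}((\pm a,0))$, all of whose root indices avoid $\pm\alpha$.

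It is worth flagging why the Weyl-element maneuver is used in the last step rather than the naive alternative of writing $\tilde{x}_{\alpha}(\cdot)$ as a single commutator $[\tilde{x}_{\gamma}(\cdot),\tilde{x}_{\delta}(\cdot)]$ with $\gamma+\delta=\alpha$ and $\gamma,\delta\neq\pm\alpha$: in types $B_2$ and $G_2$ the leading structure constant $N^{1,1}_{\gamma,\delta}$ may equal $\pm 2$ or $\pm 3$, so such a commutator would only reach $\tilde{x}_{\alpha}(2A^{\circ}\times 0)$ or $\tilde{x}_{\alpha}(3A^{\circ}\times 0)$, and Proposition \ref{P:StG-3} carries no "nice pair" assumption that would let one invert $2$ or $3$ in $A^{\circ}$. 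The conjugation relation above sidesteps this because it is an identity of words invoking only the unit $1_{A^{\circ}}$, which is available inside $N_1$. Once $[N_1,N_2]=1$ is established, the bookkeeping of the second paragraph completes the proof.
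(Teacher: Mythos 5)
Your proof is correct, but it takes a genuinely more self-contained route than the paper. Both arguments begin the same way: you unify the characteristic-zero and positive-characteristic cases by observing that (FG) holds automatically in characteristic~0 (Remark~2.3.7(ii)) and then apply Proposition~\ref{P:AR-4}(ii) to obtain $A \simeq A^{\circ} \oplus C$ with $C$ finite, whereas the paper cites Proposition~\ref{P:AR-2} directly in characteristic~0 — an inessential difference. The real divergence is in the next step. The paper simply cites Stein's Lemma~2.12 from \cite{St2} for the splitting $\St(\Phi, A^{\circ} \times C) = \St(\Phi, A^{\circ}) \times \St(\Phi, C)$ and is immediately done; you instead reprove that product decomposition from the defining relations (R1), (R2). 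Your bookkeeping with $\iota_1, \iota_2, \tilde{P}_1, \tilde{P}_2$ is sound, and you correctly isolate the only nontrivial point — the commutativity $[N_1, N_2]=1$ in the opposite-root case — and correctly diagnose why the naive commutator trick $\tilde{x}_{\alpha} = [\tilde{x}_{\gamma}, \tilde{x}_{\delta}]$ would fail here (the structure constants $\pm 2, \pm 3$ in types $B_2, G_2$ cannot be inverted, since this proposition carries no nice-pair hypothesis). Your fix via the derived Weyl-element conjugation relation $\tilde{w}_{\beta}(1)\tilde{x}_{\alpha}(t)\tilde{w}_{\beta}(1)^{-1} = \tilde{x}_{w_\beta(\alpha)}(\pm t)$, pushed into $\St(\Phi,A)$ by $\iota_1$, is exactly the right move; do be explicit that this is a \emph{derived} consequence of (R1), (R2) valid over any commutative ring for unit arguments (cf.\ \cite{Stb1}, \cite{St2}), not one of the defining relations, and that its derivation uses $\mathrm{rank}\,\Phi \geq 2$ — which of course you have. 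In effect you have re-proved the relevant special case of Stein's lemma; the paper gains brevity by the citation, while your version makes the argument fully transparent.
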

\begin{proof}
Applying Proposition \ref{P:AR-2} if $\mathrm{char} \: K = 0$ and Proposition \ref{P:AR-4} if $\mathrm{char} \: K > 0$ (observe that the latter proposition applies since $A$ is assumed to satisfy (FG)), we conclude that $A = A^{\circ} \times C$, where $C$ is a finite ring.
So, by (\cite{St2}, Lemma 2.12), we have
$$
\St (\Phi, A) = \St (\Phi, A^{\circ}) \times \St (\Phi, C),
$$
and by Proposition \ref{P:StG-2}, $P := \St (\Phi, C)$ is finite.
\end{proof}
\begin{cor}\label{C:StG-2}
%Let $(\Phi, R)$ be a nice pair.
Suppose $R$ is an abstract commutative ring and $A$ is an algebraic ring over a field $K$ such that there exists an abstract ring homomorphism $f \colon R \to A$ with Zariski-dense image. Assume moreover that $R$ is noetherian if $\mathrm{char} \: K > 0.$ Then $\St (\Phi, A) = \St (\Phi, A^{\circ}) \times P,$ where $P$ is a finite group.
\end{cor}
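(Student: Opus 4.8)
The plan is to reduce the statement directly to Proposition \ref{P:StG-3}, so the work consists entirely in checking that the hypotheses of that proposition are satisfied. First I would observe that $A$ is automatically \emph{commutative}: since $R$ is commutative and $f$ has Zariski-dense image, the regular map $A \times A \to A$ sending $(x,y)$ to $\bm(x,y) - \bm(y,x)$ (the difference formed in the algebraic group $(A,\ba)$) vanishes on the Zariski-dense subset $f(R) \times f(R)$, hence vanishes identically on $A \times A$. Thus $A$ is a commutative algebraic ring over the algebraically closed field $K$, which is the setting of Proposition \ref{P:StG-3}.

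Next I would dispose of the one extra condition required by Proposition \ref{P:StG-3}, namely that $A$ satisfy (FG) when $\mathrm{char}\: K = p > 0$. In that case our standing assumption is precisely that $R$ is noetherian, and since $\overline{f(R)} = A$, Lemma \ref{L:AR-3}(i) gives that $A$ satisfies (FG). When $\mathrm{char}\: K = 0$, no such hypothesis on $R$ is needed, since (FG) holds automatically for algebraic rings over fields of characteristic $0$ (Remark 2.3.7(ii)).

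With these two points in hand, Proposition \ref{P:StG-3} applies verbatim: writing $A = A^{\circ} \times C$ with $C$ finite (via Proposition \ref{P:AR-2} if $\mathrm{char}\: K = 0$, and Proposition \ref{P:AR-4} if $\mathrm{char}\: K > 0$, the latter being available because $A$ satisfies (FG)), Stein's Lemma 2.12 from \cite{St2} yields $\St(\Phi, A) = \St(\Phi, A^{\circ}) \times \St(\Phi, C)$, and Proposition \ref{P:StG-2} shows that $P := \St(\Phi, C)$ is finite. I do not expect a genuine obstacle here; the only things to be careful about are the (automatic but worth recording) passage from commutativity of $R$ to commutativity of $A$, and the translation of the noetherian hypothesis on the abstract ring $R$ into the (FG) property of the algebraic ring $A$, which is exactly what Lemma \ref{L:AR-3}(i) supplies.
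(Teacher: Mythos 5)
Your argument is correct and follows exactly the route the paper takes: use the noetherian hypothesis on $R$ together with Lemma~\ref{L:AR-3}(i) to deduce condition (FG) for $A$ when $\mathrm{char}\:K>0$, and then apply Proposition~\ref{P:StG-3}. The only addition is your explicit check that $A$ is commutative (via the density argument applied to $\bm(x,y)-\bm(y,x)$), which the paper leaves tacit but which is genuinely needed to invoke Lemma~\ref{L:AR-3}(i) and Proposition~\ref{P:StG-3}, both of which assume $A$ commutative; recording this is a minor but legitimate improvement in rigor.
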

\begin{proof}
Note that if $\mathrm{char} \: K > 0$, the fact that $R$ is noetherian implies that $A$ satisfies (FG) (see Lemma \ref{L:AR-3}).
So our assertion follows immediately from Proposition \ref{P:StG-3}.
\end{proof}

\vskip5mm

\subsection{Structure of $K_2$: noncommutative case}\label{S:KTNC}
Many of the constructions of the previous section carry over to groups of type $A_n$ over associative, but not necessarily commutative, rings (details can be found in \cite[\S 2]{IR1}). Let $R$  be an associative unital ring. For $1 \leq i,j \leq n, i \neq j,$ and $r \in R$, let $e_{ij}(r) \in GL_n (R)$ be the elementary matrix with $r$ as the $(i,j)$-th entry, and denote by $E_n (R)$ the subgroup of $GL_n (R)$ generated by all the $e_{ij} (r)$ (this is again called the {\it elementary subgroup}).
If $n \geq 3,$ it is well-known that the elementary matrices in $GL_n (R)$ satisfy the following relations:

\vskip2mm

\noindent (R1) $e_{ij} (r) e_{ij} (s) = e_{ij} (r+s)$;

\vskip2mm

\noindent (R2) $[e_{ij}(r), e_{kl}(s)] =1$ if $i \neq l$, $j \neq k$;

\vskip2mm

\noindent (R3) $[e_{ij} (r), e_{jl} (s)] = e_{il} (rs)$ if $i \neq l.$

\vskip2mm

\noindent Then, just as in the commutative case, the
{\it Steinberg group} over $R$, denoted $\mathrm{St}_n (R)$, is defined to be the group
generated by
all symbols $\tilde{x}_{ij} (r)$, with $1 \leq i,j \leq n$, $i \neq j,$ and $r \in R$, subject to the natural analogues of the relations (R1)-(R3) written in terms of the $\tilde{x}_{ij}(r).$
Again, we have a canonical surjective group homomorphism
$$
\pi_R \colon \mathrm{St}_n (R) \to E_n (R), \ \ \ \tilde{x}_{ij} (r) \mapsto e_{ij} (r),
$$
and we set
$$
K_2 (n, R) = \ker (\mathrm{St}_n (R) \stackrel{\pi_R}{\longrightarrow} E_n (R)).
$$

\vskip2mm

Let us now summarize the results of Bak and Rehmann \cite{BR} dealing with relative $K_2$ groups of associative rings (see Theorem \ref{KT:T-BR1} below). From now on, we will always assume that $n \geq 3.$
As above, let $R$ be an associative unital ring. Given $u \in R^{\times}$, we again define, for $i \neq j$, the following standard elements of $\mathrm{St}_n (R)$:
$$
\tilde{w}_{ij} (u) = \tilde{x}_{ij} (u) \tilde{x}_{ji}(-u^{-1}) \tilde{x}_{ij} (u) \ \ \ \mathrm{and} \ \ \ \tilde{h}_{ij} (u) = \tilde{w}_{ij}(u) \tilde{w}_{ij}(-1).
$$
Notice that the image $\pi_R(\tilde{h}_{ij}(u))$ in $E_n (R)$ is the diagonal matrix with $u$ as the $i$th diagonal entry, $u^{-1}$ as the $j$th diagonal entry, and $1$'s everywhere else on the diagonal. We also define the following noncommutative version of the usual Steinberg symbols: for $u, v \in R^{\times}$, let
$$
c(u,v) = \tilde{h}_{12}(u) \tilde{h}_{12}(v) \tilde{h}_{12}(vu)^{-1}.
$$
One easily sees that $\pi_R (c(u,v))$ is the diagonal matrix with $uvu^{-1} v^{-1}$ as its first diagonal entry and $1$'s everywhere else on the diagonal. Let $U_n (R)$ be the subgroup of $\mathrm{St}_n(R)$ generated by all the $c(u,v)$, with $u,v \in R^{\times}.$

We also need to consider relative versions of these constructions.
Let $\fa$ be a two-sided ideal of $R$ and
$$
GL_n (R, \fa) := \ker (GL_n (R) \to GL_n (R/ \fa))
$$
be the congruence subgroup of level $\fa$.
%(here the homomorphism is the one induced by the canonical map $R \to R/ \fa$).
Define $E_n (R, \fa)$ to be the normal subgroup of $E_n (R)$ generated by all elementary matrices $e_{ij} (a)$, with $a \in \fa.$ Now letting
$$
\mathrm{St}_n (R, \fa) = \ker (\mathrm{St}_n (R) \to \mathrm{St}_n (R / \fa)),
$$
we have a natural homomorphism $\mathrm{St}_n (R, \fa) \to E_n (R, \fa)$,\footnotemark \footnotetext{It can be shown using the presentation of $\mathrm{St}_n (R)$ that $\mathrm{St}_n (R, \fa)$ is the smallest normal subgroup of $\mathrm{St}_n (R)$ containing the elements $\tilde{x}_{ij} (a),$ with $a \in \fa$.} and we set
$$
K_2 (n, R, \fa) = \ker (\mathrm{St}_n (R, \fa) \to E_n (R, \fa)).
$$
Finally, let
$$
U_n (R, \fa) := < c(u, 1+a) \mid u \in R^{\times}, 1+a \in (1 + \fa) \cap R^{\times}>
$$
(notice this is contained in $\mathrm{St}_n (R, \fa)$). We should point out that even though for a noncommutative ring, the groups $U_n (R)$ and $U_n (R, \fa)$ may not lie in $K_2(n,R)$, it is well-known that any element of $K_2(n,R) \cap U_n (R)$ is automatically contained in the center of $\mathrm{St}_n (R)$ (cf. \cite{Mil}, Corollary 9.3).

The following theorem contains a summary of the results of \cite{BR} that will relevant for our purposes.

\begin{thm}\label{KT:T-BR1}{\rm(cf. \cite{BR}, Theorem 2.9 and Corollary 2.11)}
Let $R$ be an associative unital ring. Suppose that $\fa$ is a two sided ideal contained in the Jacobson radical $\mathrm{Rad}(R)$ of $R$, and that $R$ is additively generated by $R^{\times}.$ Assume $n \geq 3.$ Then

\vskip1mm

\noindent {\rm (1)} \parbox[t]{16cm}{$K_2 (n, R, \fa) \subset U_n (R, \fa)$
and the canonical sequence below is exact
$$
1 \to U_n (R, \fa) \to U_n (R) \to U_n (R/ \fa) \to 1.
$$}

\vskip1mm

\noindent {\rm (2)} \parbox[t]{16cm}{If, moreover, $K_2 (n, R/ \fa) \subset U_n (R/ \fa),$ then $K_2 (n,R) \subset U_n (R)$ and the natural sequence
$$
1 \to K_2 (n, R, \fa) \to K_2(n, R) \to K_2 (n, R/ \fa) \to 1
$$
is exact.}
\end{thm}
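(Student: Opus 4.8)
The plan is to extract everything from the presentation of the Steinberg group, the behaviour of units modulo $\fa$, and the exact-sequence formalism. I begin with a preliminary observation used throughout: since $\fa \subseteq \mathrm{Rad}(R)$, one has $1 + \fa \subseteq R^{\times}$, and the reduction map $R^{\times} \to (R/\fa)^{\times}$ is surjective with kernel $1+\fa$ (if $\bar{u}$ is a unit of $R/\fa$ with inverse $\bar{v}$ and $u, v$ are arbitrary lifts, then $uv$ and $vu$ lie in $1 + \fa \subseteq R^{\times}$, whence $u \in R^{\times}$). Consequently the generators $\tilde{x}_{ij}(\bar{u})$, $e_{ij}(\bar r)$, $\tilde{h}_{ij}(\bar{u})$ and $c(\bar{u}, \bar{v})$ of $\mathrm{St}_n(R/\fa)$, $E_n(R/\fa)$ and $U_n(R/\fa)$ all admit lifts to $R$, so the relevant vertical maps below are surjective. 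For the exact sequence in part (1), surjectivity of $U_n(R) \to U_n(R/\fa)$ is the lifting of units just noted, and $U_n(R, \fa) \subseteq \ker(U_n(R) \to U_n(R/\fa))$ is clear because each $c(u, 1+a)$ maps to $c(\bar u, 1) = 1$. For the reverse inclusion one invokes the ``symbol calculus'' of the elements $c(u,v)$ inside the subgroup generated by the $\tilde{h}_{12}(u)$: the identities $c(u,1) = 1 = c(1,v)$ together with the relations linking $c(u_1 u_2, v)$, $c(u_1,v)$, $c(u_2,v)$ (and their conjugates), all consequences of (R1)--(R3) for $n \geq 3$. Combining these relations with the lifting of units, a word in the $c(u_i,v_i)$ that is trivial modulo $\fa$ can be rewritten as a product of the relative symbols $c(u, 1+a)$, so that the kernel is exactly $U_n(R, \fa)$.

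The second assertion of part (1), $K_2(n, R, \fa) \subseteq U_n(R, \fa)$, is the technical heart of the theorem (it is essentially \cite{BR}, Theorem 2.9). Given $z \in K_2(n, R, \fa) \subseteq \mathrm{St}_n(R, \fa)$, I would perform a Gaussian-elimination / Bruhat-type reduction on $z$ inside $\mathrm{St}_n(R)$: using (R1)--(R3) --- here $n \geq 3$ is essential --- one successively absorbs the ``positive'' and ``negative'' root generators and pushes $z$ into the subgroup generated by the $\tilde{h}_{ij}$; since $\pi_R(z) = 1$ in $E_n(R)$ and $z$ lies in the relative subgroup, the remaining ``torus'' factor is forced into the group generated by the relative symbols $c(u, 1+a)$. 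The hypothesis $\fa \subseteq \mathrm{Rad}(R)$ is used to guarantee that every element $1 + a$ produced during the elimination is a unit, so that its $\tilde{w}$ and $\tilde{h}$ are defined and the symbol identities apply; and $R = \Z[R^{\times}]$ is used to express an arbitrary elementary generator $\tilde{x}_{ij}(r)$ as a product of ones with invertible argument, which is what makes the $\tilde{h}$-manipulations available.

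For part (2), assume in addition $K_2(n, R/\fa) \subseteq U_n(R/\fa)$. Exactness of $1 \to K_2(n, R, \fa) \to K_2(n, R) \to K_2(n, R/\fa)$ on the left and in the middle is immediate from the definitions, since $K_2(n, R, \fa) = \mathrm{St}_n(R, \fa) \cap \ker \pi_R = \ker(K_2(n, R) \to K_2(n, R/\fa))$. Surjectivity of $K_2(n, R) \to K_2(n, R/\fa)$ I would obtain from the snake lemma applied to the map of short exact sequences
\[
\begin{array}{ccccccccc}
1 & \to & \mathrm{St}_n(R,\fa) & \to & \mathrm{St}_n(R) & \to & \mathrm{St}_n(R/\fa) & \to & 1\\
  & & \downarrow & & \downarrow & & \downarrow & & \\
1 & \to & E_n(R,\fa) & \to & E_n(R) & \to & E_n(R/\fa) & \to & 1
\end{array}
\]
induced by $\pi$: here $\pi_R(\mathrm{St}_n(R, \fa)) = E_n(R, \fa)$ (the normal closure of the $\tilde{x}_{ij}(a)$, $a \in \fa$, maps onto the normal closure of the $e_{ij}(a)$), and one invokes the classical fact that $E_n(R, \fa) = E_n(R) \cap GL_n(R, \fa)$ for $n \geq 3$, so that the bottom row is exact; the snake connecting map then takes values in a trivial cokernel. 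Finally, for $K_2(n, R) \subseteq U_n(R)$: given $z \in K_2(n, R)$, its reduction lies in $K_2(n, R/\fa) \subseteq U_n(R/\fa)$, say $\bar z = \prod_i c(\bar u_i, \bar v_i)$; lifting the units gives $w = \prod_i c(u_i, v_i) \in U_n(R)$ with $z w^{-1} \in \mathrm{St}_n(R, \fa)$ and $\pi_R(z w^{-1}) = \mathrm{diag}(\delta^{-1}, 1, \dots, 1)$, where $\delta = \prod_i [u_i, v_i] \in (1 + \fa) \cap [R^{\times}, R^{\times}]$. One must now correct for this diagonal discrepancy: setting $w' = c(v_k, u_k) \cdots c(v_1, u_1) \in U_n(R)$ one has $\pi_R(w') = \pi_R(z w^{-1})$, so $z w^{-1} (w')^{-1} \in K_2(n, R)$, and feeding the resulting residual element back through part (1) --- in the intended applications, where $\fa = \mathrm{Rad}(R)$ is nilpotent, by induction along the filtration $\fa \supseteq \fa^2 \supseteq \cdots$ --- eventually exhibits $z$ as a product of symbols $c(u,v)$, i.e.\ $z \in U_n(R)$.

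I expect the main obstacle to be the inclusion $K_2(n, R, \fa) \subseteq U_n(R, \fa)$ of part (1): making the Gaussian-elimination normal form in the relative Steinberg group rigorous --- controlling precisely which generators can be absorbed, the conjugations that arise, and the relative symbols that accumulate --- is delicate, and it is the step that genuinely uses all three hypotheses ($n \geq 3$, $\fa \subseteq \mathrm{Rad}(R)$, $R = \Z[R^{\times}]$). The analogous bookkeeping forced on us in part (2) by the fact that in the noncommutative setting $U_n(R)$ need not lie inside $K_2(n, R)$ is of the same flavour and is the other delicate point.
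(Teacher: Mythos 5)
First, note that the paper does not prove Theorem \ref{KT:T-BR1}: it is stated explicitly as a summary of results of Bak and Rehmann (their Theorem 2.9 and Corollary 2.11) and is used as a black box in what follows, so there is no in-paper argument to compare your sketch against. Assessing it on its own terms, there is a concrete gap in the last step of part (2). After lifting $\bar z = \prod_i c(\bar u_i, \bar v_i)$ to $w = \prod_i c(u_i, v_i) \in U_n(R)$ you have $z w^{-1} \in \mathrm{St}_n(R, \fa)$ with $\pi_R(z w^{-1}) = \mathrm{diag}(\delta^{-1}, 1, \dots, 1)$, $\delta \in 1+\fa$. To bring yourself into the scope of part (1) you need to absorb this diagonal discrepancy by an element of $U_n(R, \fa)$, not merely of $U_n(R)$: if $w''$ satisfies both $\pi_R(w'') = \pi_R(z w^{-1})$ and $z w^{-1}(w'')^{-1} \in \mathrm{St}_n(R, \fa)$, these two conditions together force $w'' \in U_n(R, \fa)$ via the kernel identification you proved in part (1). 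Your $w' = c(v_k, u_k) \cdots c(v_1, u_1)$ matches $\pi_R(z w^{-1})$ but lies only in $U_n(R)$; its image $\bar{w'}$ modulo $\fa$ is generically a nontrivial product of symbols in $U_n(R/\fa)$, so $z w^{-1}(w')^{-1}$ does not lie in $\mathrm{St}_n(R, \fa)$ and you are left with an unstructured element of $K_2(n,R)$ --- precisely what you set out to describe. Your fallback, inducting along $\fa \supseteq \fa^2 \supseteq \cdots$, presupposes nilpotence of $\fa$, an extra hypothesis not in the statement; Bak and Rehmann prove the result without it.

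The missing ingredient is a relative-symbol argument showing that $\mathrm{diag}(\delta, 1, \dots, 1)$ with $\delta = \prod_i [u_i, v_i] \in 1+\fa$ lies in $\pi_R\bigl(U_n(R, \fa)\bigr)$ --- equivalently, that the lift $w$ can be modified within its $U_n(R,\fa)$-coset so as to have $\pi_R(w) = 1$. This is not a rearrangement of the $c(u_i,v_i)$; it requires the genuine identities among $c(u_1u_2,v)$, $c(u, 1+a)$ and their conjugates, and it is here that the hypotheses $\fa \subseteq \mathrm{Rad}(R)$ and $R = \Z[R^\times]$ enter once more. Apart from this, your structural outline --- the unit-lifting observation, the exactness at the ends of the $U_n$ sequence, the kernel identification in part (1), and the plan of reducing $K_2(n,R,\fa) \subseteq U_n(R,\fa)$ to a Bruhat-type normal form in the relative Steinberg group --- is a reasonable reconstruction of the strategy behind the cited result.
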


\vskip2mm

Using the theorem, one then proves
\begin{prop}\cite[Proposition 2.3]{IR1}\label{KT:P-BR2}
Suppose that $R$ is either a finite-dimensional algebra over an algebraically closed field $K$ or a finite ring with $2 \in R^{\times}.$ Then $K_2 (n, R) \subset U_n(R)$, and consequently $K_2 (n,R)$ is a central subgroup of $\mathrm{St}_n (R).$
\end{prop}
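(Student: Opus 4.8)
The plan is to derive the proposition from Theorem~\ref{KT:T-BR1} with the choice $\fa = \mathrm{Rad}(R)$. The inclusion $\fa \subseteq \mathrm{Rad}(R)$ being automatic, there are exactly two points to arrange before the theorem can be invoked: first, that $R$ is additively generated by $R^{\times}$; and second, the ``base case'' $K_2(n, R/\mathrm{Rad}(R)) \subseteq U_n(R/\mathrm{Rad}(R))$. Once both are in hand, part~(2) of Theorem~\ref{KT:T-BR1} gives $K_2(n,R) \subseteq U_n(R)$, whence $K_2(n,R) = K_2(n,R)\cap U_n(R)$ is central in $\mathrm{St}_n(R)$ by the remark preceding Theorem~\ref{KT:T-BR1} (cf.\ \cite{Mil}, Corollary~9.3); this proves both assertions at once. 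I would begin by recording the structural input: in either case $R$ is right and left artinian, so $\mathrm{Rad}(R)$ is a nilpotent two-sided ideal and $S := R/\mathrm{Rad}(R)$ is semisimple artinian; by Wedderburn's theorem $S \cong \prod_{i=1}^{t} M_{d_i}(F_i)$, where --- since $K$ is algebraically closed in the algebra case, and by Wedderburn's little theorem in the finite case --- each $F_i$ is either $K$ or a finite field.

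For the first point, recall that $1 + \mathrm{Rad}(R) \subseteq R^{\times}$ and that an element of $R$ is a unit precisely when its image in $S$ is, so $R^{\times} \twoheadrightarrow S^{\times}$. Since each $M_d(F)$ is additively generated by $M_d(F)^{\times}$ (every matrix being a sum of at most two invertible ones --- including the degenerate case $F = \mathbb{F}_2$, $d = 1$, where $0 = 1 + 1$), so is the finite product $S$. Given $r \in R$, I would lift a decomposition $\bar r = \bar u_1 + \cdots + \bar u_k$ in $S$ to units $u_1,\dots,u_k \in R^{\times}$; then $a := r - (u_1 + \cdots + u_k) \in \mathrm{Rad}(R)$, and $r = u_1 + \cdots + u_k + (1+a) + (-1)$ is the desired expression of $r$ as a sum of units.

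The second point is the heart of the argument. Using the natural product decomposition $\mathrm{St}_n\bigl(\prod_i S_i\bigr) \cong \prod_i \mathrm{St}_n(S_i)$ --- compatible with the corresponding decompositions of $E_n$, $K_2(n,-)$ and $U_n(-)$, which is the type-$A_n$ counterpart of \cite{St2}, Lemma~2.12 --- I would reduce to showing $K_2(n, M_d(F)) \subseteq U_n(M_d(F))$ for a single matrix algebra over a field $F$. For this, the natural tool is Morita invariance: the isomorphism $\mathrm{St}_n(M_d(F)) \cong \mathrm{St}_{nd}(F)$, compatible with the maps onto $E_n(M_d(F)) \cong E_{nd}(F)$ and carrying the symbol subgroup $U_n(M_d(F))$ onto $U_{nd}(F)$, further reduces the claim to $K_2(m,F) \subseteq U_m(F)$ with $m = nd \ge 3$. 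When $|F| > 4$ this is exactly Matsumoto's theorem (\cite{M1}; see also \cite{Mil}): $K_2(m,F)$ is generated by Steinberg symbols, and these are the images of the generators $c(u,v)$ of $U_m(F)$. In the algebra case $F = K$ is infinite, so nothing more is needed. In the finite case, the hypothesis $2 \in R^{\times}$ forces every $F_i$ to have odd characteristic, so $\mathbb{F}_2$ and $\mathbb{F}_4$ are excluded; the only remaining small field is $\mathbb{F}_3$, and for it one checks directly that $K_2(m,\mathbb{F}_3) = 0$ for $m \ge 3$ (the Steinberg extension of $\mathrm{SL}_m(\mathbb{F}_3)$ being the universal central one in this range), so the inclusion is trivial.

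The main obstacle will be the base case of the second point --- in particular, the bookkeeping in the Morita identification $\mathrm{St}_n(M_d(F)) \cong \mathrm{St}_{nd}(F)$, where one must verify that it sends $U_n(M_d(F))$ into $U_{nd}(F)$, and the handling of the exceptional small finite fields by hand; it is precisely to dispose of $\mathbb{F}_2$ (and $\mathbb{F}_4$) that the hypothesis $2 \in R^{\times}$ is imposed. Everything else --- verifying the two hypotheses of Theorem~\ref{KT:T-BR1} and extracting centrality from the inclusion $K_2(n,R) \subseteq U_n(R)$ --- should be routine.
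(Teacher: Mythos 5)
Your overall plan --- invoke Theorem~\ref{KT:T-BR1}(2) with $\fa = \mathrm{Rad}(R)$, verify that $R$ is additively generated by units, and establish the base case for the semisimple quotient --- is the right strategy, and the first two of these are carried out correctly. However, there is a genuine error in the Morita step, which is the crux of the base case.

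You assert that the Morita isomorphism $\Phi \colon \mathrm{St}_n(M_d(F)) \to \mathrm{St}_{nd}(F)$ carries $U_n(M_d(F))$ \emph{onto} $U_{nd}(F)$. This is false for $d \geq 2$. Indeed, take $u, v \in GL_d(F)$ that do not commute. The element $c(u,v) \in U_n(M_d(F))$ projects under $\pi_{M_d(F)}$ to the diagonal matrix $\mathrm{diag}(uvu^{-1}v^{-1}, 1, \dots, 1) \neq I$ in $E_n(M_d(F))$, and since $\Phi$ intertwines the projections, $\Phi(c(u,v))$ has a nontrivial image in $E_{nd}(F)$. On the other hand, $F$ is commutative, so every generator $c(a,b)$ of $U_{nd}(F)$ lies in $\ker \pi_F = K_2(nd,F)$, and hence so does all of $U_{nd}(F)$. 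Therefore $\Phi(U_n(M_d(F))) \not\subseteq U_{nd}(F)$, and the claimed surjection onto cannot be an equality of subgroups. What you actually need, and what does hold, is the single containment $U_{nd}(F) = K_2(nd,F) \subseteq \Phi(U_n(M_d(F)))$. This follows because, choosing positions $1$ and $d+1$ in different blocks, one can compute directly from the definitions that $\Phi(\tilde{h}_{12}(\mathrm{diag}(a,1,\dots,1))) = \tilde{h}_{1,d+1}(a)$ for $a \in F^{\times}$ (the cross terms $\tilde{h}_{p,d+p}(1)$ for $p \geq 2$ vanish), so $\Phi(c(\mathrm{diag}(a,1,\dots,1), \mathrm{diag}(b,1,\dots,1)))$ equals the Steinberg symbol built from $\tilde{h}_{1,d+1}$; by Matsumoto these symbols generate $K_2(nd,F)$. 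From this containment one gets $K_2(n, M_d(F)) = \Phi^{-1}(K_2(nd,F)) \subseteq U_n(M_d(F))$ since $\Phi$ is injective, which is exactly what was wanted. So the conclusion of the step is recoverable, but the reduction as you stated it is incorrect and you should replace the equality of symbol subgroups by the one-sided inclusion together with the explicit computation of $\Phi$ on diagonal-entry symbols. Apart from this, the remaining points (generation of $R$ by units via lifting from $S = R/\mathrm{Rad}(R)$, exclusion of $\mathbb{F}_2$ and $\mathbb{F}_4$ via $2 \in R^{\times}$, vanishing of $K_2(m,\mathbb{F}_3)$ for $m \geq 3$, and centrality via \cite{Mil} Corollary~9.3) are sound.
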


The proposition yields the following statement, which gives analogues in the noncommutative setting of Corollary \ref{C:StG-1} and Proposition \ref{P:StG-3}.

\begin{prop}\cite[Proposition 3.6]{IR1}\label{NC:P-St2}
Let $K$ be an algebraically closed field of characteristic 0 and $n$ and integer $\geq 3.$ Suppose $A$ is an associative algebraic ring over $K$ such that $2 \in A^{\times}$ and denote by $A^{\circ}$ the connected component of $0_A.$ Then

\vskip1mm

\noindent $\mathrm{(i)}$ $\mathrm{St}_n (A) = \mathrm{St}_n (A^{\circ}) \times P$, where $P$ is a finite group;

\vskip1mm

\noindent $\mathrm{(ii)}$ $K_2 (n, A^{\circ})$ is a central subgroup of $\mathrm{St}_n (A^{\circ}).$
\end{prop}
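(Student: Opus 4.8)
The statement is, in effect, the $A_n$ noncommutative transcription of Proposition \ref{P:StG-3}, and the plan is to follow the same pattern: peel off the finite part of $A$, decompose the Steinberg group accordingly, and invoke the Bak--Rehmann results on each factor. The first step is to apply Proposition \ref{P:AR-2}: since $\mathrm{char}\, K = 0$, the associative algebraic ring $A$ virtually comes from an algebra, and by Remark 2.3.7(ii) the subring in question is precisely the identity component, so that $A = A^{\circ} \oplus C$ as algebraic rings with identity, where $A^{\circ}$ is a finite-dimensional $K$-algebra and $C \cong A/A^{\circ}$ is a finite ring. Units in a finite direct product of rings decompose componentwise, so the hypothesis $2 \in A^{\times}$ forces both $2 \in (A^{\circ})^{\times}$ and $2 \in C^{\times}$; these are exactly the conditions under which the machinery of \S \ref{S:KTNC} applies to the two factors.

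For part (i), I would next record the $\mathrm{St}_n$-analogue of (\cite{St2}, Lemma 2.12): for associative unital rings $R_1, R_2$ the natural homomorphisms induce an isomorphism $\mathrm{St}_n(R_1 \times R_2) \cong \mathrm{St}_n(R_1) \times \mathrm{St}_n(R_2)$. This is a routine verification with the presentation --- writing $r = (r_1, r_2)$ one has $\tilde{x}_{ij}(r) = \tilde{x}_{ij}(r_1, 0)\,\tilde{x}_{ij}(0, r_2)$, and relations (R1)--(R3), together with the vanishing of products of the form $(r_1, 0)(0, r_2)$, show that the two families of generators commute with each other and that each satisfies the Steinberg relations over its own factor, so the two obvious maps are mutually inverse. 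Applying this to $A = A^{\circ} \oplus C$ gives $\mathrm{St}_n(A) \cong \mathrm{St}_n(A^{\circ}) \times \mathrm{St}_n(C)$, which reduces (i) to the finiteness of $P := \mathrm{St}_n(C)$.

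Showing that $\mathrm{St}_n(C)$ is finite is the step I expect to require the most care, since no noncommutative counterpart of Proposition \ref{P:StG-2} is recorded above. As $C$ is finite, $E_n(C) \subset GL_n(C)$ is finite; and for $n \geq 3$ the group $\mathrm{St}_n(C)$ is perfect, since each generator $\tilde{x}_{il}(a)$ is the commutator $[\tilde{x}_{ij}(a), \tilde{x}_{jl}(1)]$ by (R3). The centrality of $K_2(n, C)$ in $\mathrm{St}_n(C)$ follows from Proposition \ref{KT:P-BR2}, using $2 \in C^{\times}$. Hence $\mathrm{St}_n(C)$ is a \emph{perfect} central extension of the finite group $E_n(C)$, so $K_2(n,C)$ is a quotient of the Schur multiplier $H_2(E_n(C); \Z)$, which is finite; therefore $\mathrm{St}_n(C)$ is finite, giving (i). (Alternatively, one can bound $|K_2(n,C)|$ directly: $K_2(n,C) \subset U_n(C)$ by Proposition \ref{KT:P-BR2}, and $U_n(C)$ is generated by the finitely many elements $c(u,v)$ with $u, v \in C^{\times}$.)

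Finally, (ii) is immediate from the reduction of the first step: $A^{\circ}$ is a finite-dimensional $K$-algebra with $2 \in (A^{\circ})^{\times}$, so Proposition \ref{KT:P-BR2} yields $K_2(n, A^{\circ}) \subset U_n(A^{\circ})$, and by the remark preceding Theorem \ref{KT:T-BR1} every element of $K_2(n, A^{\circ}) \cap U_n(A^{\circ}) = K_2(n, A^{\circ})$ lies in the center of $\mathrm{St}_n(A^{\circ})$. In summary, the only genuinely new ingredients beyond the cited results are the product decomposition of $\mathrm{St}_n$ over a direct sum of rings and the finiteness of $\mathrm{St}_n$ over a finite ring, and it is the latter --- really a Schur-multiplier argument --- that constitutes the main technical point.
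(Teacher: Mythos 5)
Your overall strategy is sound and mirrors the paper's commutative analogue, Proposition \ref{P:StG-3}: decompose $A = A^{\circ} \oplus C$ using Proposition \ref{P:AR-2} and Remark~2.3.7(ii), split the Steinberg group accordingly, check that $\St_n(C)$ is finite, and invoke Proposition \ref{KT:P-BR2} for centrality. Since the survey only cites \cite{IR1} without reproducing the proof, I cannot compare step by step, but your route is almost certainly the intended one. Two points deserve flagging.

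First, the claim that the product decomposition $\St_n(R_1 \times R_2) \cong \St_n(R_1) \times \St_n(R_2)$ follows ``routinely'' from (R1)--(R3) together with $(r_1,0)(0,r_2) = 0$ glosses over one case: the commutator $[\tilde{x}_{ij}((r_1,0)),\, \tilde{x}_{ji}((0,r_2))]$ is covered by neither (R2) nor (R3), since both index conditions in (R2) fail. The fix requires one more move using $n \geq 3$: pick $k \neq i,j$ and write $\tilde{x}_{ji}((0,r_2)) = [\tilde{x}_{jk}((0,r_2)),\, \tilde{x}_{ki}((0,1))]$; then $\tilde{x}_{ij}((r_1,0))$ commutes with each of $\tilde{x}_{jk}((0,r_2))$ and $\tilde{x}_{ki}((0,1))$ by (R3) and the vanishing of cross-products, hence with the commutator. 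Your write-up should make this explicit rather than asserting that (R1)--(R3) alone suffice.

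Second, your main argument for the finiteness of $\St_n(C)$ --- $\St_n(C)$ is perfect for $n \geq 3$, $K_2(n,C)$ is central by Proposition \ref{KT:P-BR2}, so $\St_n(C)$ is a perfect central extension of the finite group $E_n(C)$, whence $K_2(n,C)$ is a quotient of the finite Schur multiplier $H_2(E_n(C);\Z)$ --- is correct and clean, and is a reasonable substitute for the unstated noncommutative analogue of Proposition \ref{P:StG-2}. However, the parenthetical ``alternative'' argument is not valid as written: knowing that $U_n(C)$ (and hence $K_2(n,C) \subset U_n(C)$) is generated by finitely many elements $c(u,v)$ does not by itself bound its order, since a finitely generated abelian group can be infinite. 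Drop that aside, or supply the extra input needed to make it work. With these two repairs, the proof is complete.
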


\vskip2mm

Finally, an important ingredient in the proof of Theorem 2 will be the following proposition, which provides a description of $K_2$ in the relevant case.

%In the proof of Theorem 1, we will also need to consider the situation
%described by the following proposition.

\begin{prop}\cite[Proposition 2.4]{IR1}\label{KT:P-DA}
Let $k$ and $K$ be fields of characteristic 0, with $K$ algebraically closed. Suppose that $D$ is a finite-dimensional central division algebra over $k,$ $A$ a finite-dimensional algebra over $K$, and $f \colon D \to A$ a ring homomorphism with Zariski-dense image. Then for $n \geq 3,$ $K_2 (n,A)$ coincides with the subgroup
$$
U'_n (A) = < c(u,v) \mid u,v \in \overline{f(L^{\times})} >,
$$
where $L$ is an arbitrary maximal subfield of $D$.
\end{prop}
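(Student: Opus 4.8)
The plan is to sandwich $K_2(n,A)$ between $U'_n(A)$ and $U_n(A)$ and then show the latter two coincide under the Zariski-density hypothesis. First, recall that $A$ is a finite-dimensional $K$-algebra, so by Proposition \ref{KT:P-BR2} (applied with the ideal $\fa = \mathrm{Rad}(A)$, noting $A$ is additively generated by $A^{\times}$ since $A/\mathrm{Rad}(A)$ is semisimple and one reduces to that case via Theorem \ref{KT:T-BR1}(2) together with a finite-ring computation) we already know $K_2(n,A) \subset U_n(A)$, and these symbols $c(u,v)$ lie in the center of $\mathrm{St}_n(A)$. Since $U'_n(A) \subset U_n(A)$ trivially, it suffices to prove the two reverse containments $U_n(A) \subset U'_n(A)$ and $U'_n(A) \subset K_2(n,A)$. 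The second of these is the more delicate one, and it is where the division-algebra structure of $D$ enters.

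For the containment $U_n(A) \subset U'_n(A)$: a general element of $U_n(A)$ is a product of symbols $c(u,v)$ with $u,v \in A^{\times}$, but by Zariski-density $\overline{f(D^{\times})} = A$, hence $\overline{f(D^{\times})}$ meets the open set $A^{\times}$ in a dense subset; one then uses the bimultiplicativity-type relations satisfied by the symbols $c(\cdot,\cdot)$ in $\mathrm{St}_n$ (relations (R1)--(R3) yield $c(u,v)c(u',v) = c(uu',v)$ etc., as in \cite{Mil}, \S 9) together with the fact that $A^{\times}$ is generated by $\overline{f(D^{\times})} \cap A^{\times}$ as a group (Corollary \ref{C:AR-1}-type reasoning plus density) to reduce an arbitrary $c(u,v)$ to a product of $c(f(d_1),f(d_2))$ with $d_i \in D^{\times}$. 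Next, since $D$ is a finite-dimensional central division algebra over a field $k$ of characteristic $0$, every element of $D^{\times}$ lies in some maximal subfield, and all maximal subfields of $D$ are conjugate; using the conjugation action of $\tilde w_{ij}$'s to move between the relevant copies, and the fact that $L^{\times}$ generates $D^{\times}$ after multiplying by suitable conjugates, one expresses each $c(f(d_1),f(d_2))$ as a product of symbols $c(f(\ell_1), f(\ell_2))$ with $\ell_i \in L^{\times}$. Taking closures and using the centrality (so that products behave well) gives $c(u,v) \in U'_n(A)$.

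For the containment $U'_n(A) \subset K_2(n,A)$: we must show each $c(u,v)$ with $u,v \in \overline{f(L^{\times})}$ actually lies in the kernel of $\pi_A$, i.e. $\pi_A(c(u,v)) = 1$ in $E_n(A)$. For $u = f(\ell)$, $v = f(\ell')$ with $\ell, \ell' \in L^{\times}$, the images commute in $A$ (since $L$ is commutative and $f$ is a ring homomorphism), so $\pi_A(c(f(\ell),f(\ell')))$ is the diagonal matrix with first entry $f(\ell)f(\ell')f(\ell)^{-1}f(\ell')^{-1} = 1$ and $1$'s elsewhere, hence is the identity; thus $c(f(\ell),f(\ell')) \in K_2(n,A)$. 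Since $K_2(n,A)$ is Zariski-closed in $\mathrm{St}_n(A)$ in the appropriate sense — more precisely, since $\pi_A$ restricted to the subvariety of symbols is a morphism and the condition $\pi_A(c(u,v)) = 1$ is a closed condition in $(u,v) \in A^{\times} \times A^{\times}$ — and $\overline{f(L^{\times})} \times \overline{f(L^{\times})}$ is the closure of $f(L^{\times}) \times f(L^{\times})$, we conclude $c(u,v) \in K_2(n,A)$ for all $u,v \in \overline{f(L^{\times})}$. Here one uses that $c(\cdot,\cdot) \colon A^{\times} \times A^{\times} \to \mathrm{St}_n(A)$ is a regular map into the algebraic-group structure on (the relevant finite-dimensional piece of) $\mathrm{St}_n(A)$, available because $\mathrm{St}_n(A) = \mathrm{St}_n(A^{\circ}) \times P$ with $\mathrm{St}_n(A^{\circ})$ carrying an algebraic-group structure by Proposition \ref{NC:P-St2}.

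The main obstacle I anticipate is the first reverse containment, specifically the reduction from symbols $c(f(d_1), f(d_2))$ with arbitrary $d_i \in D^{\times}$ to symbols supported on a single maximal subfield $L$: this requires carefully exploiting the conjugacy of maximal subfields of $D$ together with the Steinberg relations governing how $\tilde w$-conjugation acts on the $\tilde h$'s and hence on the symbols $c(u,v)$, and making sure the passage to Zariski closures is compatible with the group operations (which is where centrality of the symbols, guaranteed by Proposition \ref{KT:P-BR2}, is essential). The characteristic-$0$ hypothesis on $k$ enters both through the structure theory of $D$ (separability of maximal subfields) and through Proposition \ref{NC:P-St2}, which supplies the algebraic-group structure needed to take closures.
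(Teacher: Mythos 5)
There is a fatal structural error in your plan. You propose to establish the chain
$K_2(n,A) \subset U_n(A) \subset U'_n(A) \subset K_2(n,A)$,
which would force $U_n(A) = K_2(n,A)$. But this equality is false as soon as $D \neq k$: by Lemma~\ref{KT:L-DA} we have $A \simeq M_s(C)$ with $s^2 = \dim_k D \geq 4$, so $A^{\times} = GL_s(C)$ contains noncommuting units $u, v$, and for such a pair $\pi_A(c(u,v)) = \mathrm{diag}(uvu^{-1}v^{-1}, 1, \dots, 1) \neq I$, so $c(u,v) \in U_n(A) \setminus K_2(n,A)$. Thus the containment $U_n(A) \subset U'_n(A)$ cannot hold, and the route through it proves a false statement. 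The underlying technical slip is the appeal to "bimultiplicativity-type relations" $c(u,v)c(u',v) = c(uu',v)$ from Milnor \S 9: those identities are valid for Steinberg symbols in the commutative setting, but for the noncommutative symbols $c(u,v) = \tilde{h}_{12}(u)\tilde{h}_{12}(v)\tilde{h}_{12}(vu)^{-1}$ they fail in general (already their images under $\pi_A$ violate the relation when $u, u', v$ do not pairwise commute), so the proposed reduction of an arbitrary $c(u,v)$ to products of symbols with arguments in a single maximal subfield has no footing. You also assert that "all maximal subfields of $D$ are conjugate"; this is false (Skolem--Noether gives conjugacy only for \emph{isomorphic} subfields, and a division algebra generally has non-isomorphic maximal subfields), though this is a secondary issue next to the structural one above.

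The containments you can keep are $K_2(n,A) \subset U_n(A)$ (Proposition~\ref{KT:P-BR2}) and $U'_n(A) \subset K_2(n,A)$ (your argument there is sound: any $u, v \in \overline{f(L^{\times})}$ commute, since commutativity is a Zariski-closed condition and $f(L^{\times})$ is commutative, so $\pi_A(c(u,v)) = I$). What remains, and what the paper actually does, is to show $K_2(n,A) \subset U'_n(A)$ \emph{directly}, not via $U_n(A)$. The intended mechanism is Lemma~\ref{KT:L-DA}: under the identification $A \simeq M_s(C)$ with $C = \overline{f(k)}$ commutative semilocal, the subalgebra $\overline{f(L)}$ becomes the diagonal subalgebra $D_s(C)$, so $\overline{f(L^{\times})}$ becomes $D_s(C)^{\times}$. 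One then uses Morita invariance $K_2(n, M_s(C)) \simeq K_2(ns, C)$ together with Stein's description (Theorem~\ref{T:StG-1}) of $K_2(ns,C)$ as the central subgroup generated by Steinberg symbols $\{u,v\}$ with $u, v \in C^{\times}$, and tracks these symbols back through the Morita isomorphism to land inside $U'_n(A)$. This bypasses $U_n(A)$ entirely and is where the division-algebra structure and the choice of maximal subfield genuinely enter.
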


One of the key steps in the proof of the proposition is the following lemma, which is also of independent interest.
\begin{lemma}\label{KT:L-DA}
Let $A,$ $D$, and $f$ be as above, and set
$C = \overline{f(k)}$ (Zariski closure). Then
\begin{equation}\label{E-LDA}
A \simeq D \otimes_k C \simeq M_{s} (C)
\end{equation}
as $K$-algebras, where $s^2 = \dim_k D.$ Moreover, if $L$ is any maximal subfield of $D$, then the second isomorphism can be chosen so that $L \otimes_k C \simeq D_s (C)$, where $D_s (C) \subset M_s (C)$ is the subring of diagonal matrices.
\end{lemma}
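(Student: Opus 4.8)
The plan is to identify $A$ with a base change of $D$ along the coefficient ring $C$ and then to show that base change splits. First I would record the structure of $C$. Since $D$ is a division algebra, the unital homomorphism $f$ is injective, and since $k = Z(D)$, the subset $f(k)$ commutes elementwise with $f(D)$; as the centralizer in $A$ of any subset is Zariski-closed, $f(k)$ in fact commutes with $\overline{f(D)} = A$. Hence $f(k) \subseteq Z(A)$ and $C = \overline{f(k)} \subseteq Z(A)$, so $C$ is commutative. Since $f|_k \colon k \to C$ has Zariski-dense image and $k$ is an infinite field, $C$ is connected by Lemma \ref{L:AR-3}(ii); being a connected Zariski-closed subgroup of the vector group $(A,+)$ over a field of characteristic $0$, it is a $K$-linear subspace of $A$, and as it contains $1_A$ and is a subring, it is a finite-dimensional commutative $K$-subalgebra of $A$. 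Via $f|_k$ I regard $C$ as a commutative $k$-algebra (with $k \hookrightarrow C$).

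Next I would introduce the comparison map
$$
\mu \colon D \otimes_k C \longrightarrow A, \qquad d \otimes c \longmapsto f(d)\,c,
$$
a well-defined homomorphism of $K$-algebras because $C$ is central in $A$ and $f$ is $k$-linear. Its image is the $C$-submodule of $A$ generated by $f(D)$, which --- being a $K$-subspace since $K \subseteq C$, hence Zariski-closed, and containing $f(D)$ --- equals $\overline{f(D)} = A$. Thus $\mu$ is surjective, and \eqref{E-LDA} will follow once I prove: (a) $D \otimes_k C \simeq M_s(C)$ with $s^2 = \dim_k D$, and (b) $\ker \mu = 0$.

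For this I would decompose the artinian ring $C = C_1 \oplus \cdots \oplus C_r$ into local factors as in \eqref{E:AR120}; each $C_i$ is artinian local with residue field a finite extension of the algebraically closed field $K$, hence $K$ itself, and $D \otimes_k C \simeq \prod_i (D \otimes_k C_i)$. Since $D$ is central simple over $k$, each $D \otimes_k C_i$ is Azumaya over $C_i$, and modulo its nilpotent maximal ideal becomes $D \otimes_k K$, which is central simple over the algebraically closed field $K$, hence $\simeq M_s(K)$ with $s^2 = \dim_k D$. Lifting a full set of matrix units from $M_s(K)$ through the nilpotent ideal $\m_i(D \otimes_k C_i)$ --- equivalently, using that an Azumaya algebra over an artinian local ring with algebraically closed residue field is split --- yields $D \otimes_k C_i \simeq M_s(C_i)$, whence $D \otimes_k C \simeq M_s(C)$, giving (a). Moreover, any such isomorphism carries the center $1 \otimes C$ of $D \otimes_k C$ onto the scalar matrices. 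Then $\ker \mu$, being a two-sided ideal of $M_s(C)$, has the form $M_s(\mathfrak b)$ for an ideal $\mathfrak b \subseteq C$, and $\mathfrak b = \ker \mu \cap (1 \otimes C)$; but $\mu$ restricted to $1 \otimes C$ is the inclusion $C \hookrightarrow A$, which is injective, so $\mathfrak b = 0$, proving (b). Hence $A \simeq D \otimes_k C \simeq M_s(C)$.

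Finally, for a maximal subfield $L \subseteq D$ one has $[L:k] = s$ and $L/k$ separable (as $\mathrm{char}\, k = 0$), and since $C_D(L) = L$ the centralizer of $L \otimes_k C$ in $D \otimes_k C$ is $L \otimes_k C$, so $L \otimes_k C$ is a maximal commutative subalgebra. Working over each $C_i$: the minimal polynomial over $k$ of a primitive element of $L$ is separable of degree $s$ and splits into $s$ distinct linear factors over the residue field $K$, so by Hensel's lemma for the complete local ring $C_i$ it splits into $s$ pairwise coprime linear factors over $C_i$, giving $L \otimes_k C_i \simeq C_i^{\,s}$. Viewing $D \otimes_k C_i \simeq M_s(C_i) = \mathrm{End}_{C_i}(C_i^{\,s})$, the $s$ orthogonal idempotents of $L \otimes_k C_i$ decompose $C_i^{\,s}$ into $s$ free rank-one summands, and a choice of generator of each gives a $C_i$-basis in which those idempotents, hence all of $L \otimes_k C_i$, are diagonal; conjugating the isomorphism $D \otimes_k C_i \simeq M_s(C_i)$ by the corresponding change-of-basis matrix and assembling over $i$ yields an isomorphism $D \otimes_k C \simeq M_s(C)$ carrying $L \otimes_k C$ onto $D_s(C)$, as required. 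The step I expect to be the main obstacle is the splitting $D \otimes_k C \simeq M_s(C)$ in (a): since $C$ is typically non-reduced one cannot simply embed a splitting field of $D$ into it, and the crucial point is that the local factors of $C$ have \emph{algebraically closed} residue field, over which $D$ automatically splits, together with the fact that such a splitting lifts along the nilpotent maximal ideal; testing $\ker\mu$ on centers is then the convenient device that upgrades surjectivity to an isomorphism. Everything else is either formal or a routine computation over a local ring.
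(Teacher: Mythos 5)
The paper defers the proof of this lemma to \cite{IR1}, so a literal line-by-line comparison is not possible; your proof is, however, complete and correct and follows the route one would naturally expect there: show $C \subseteq Z(A)$ is a finite-dimensional commutative $K$-subalgebra of $A$, exhibit the surjective $C$-algebra map $\mu \colon D \otimes_k C \to A$, split $D \otimes_k C \simeq M_s(C)$ over each artinian local factor of $C$ (whose residue field is the algebraically closed $K$), test $\ker\mu$ against the center $1 \otimes C$ to obtain injectivity, and diagonalize $L \otimes_k C$ via its primitive idempotents. No gaps noticed, and the auxiliary points you invoke (connectedness of $C$ via the cited lemma, linearity of connected closed subgroups of $(A,+)$ in characteristic $0$, the two-sided ideal structure of $M_s(C)$, lifting matrix units and applying Hensel over the complete local $C_i$) are all sound.
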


\vskip5mm

\section{Rigidity results over commutative rings}\label{S:T1, T2}

This section is devoted to the proof of Theorem 1.
%At the end, we will also indicate the modifications that are needed in the noncommutative case and sketch the proof of Theorem 2. Before proceeding to the details,
For the reader's convenience, we would like to begin by sketching the general strategy of the argument. Suppose $\Phi$ is a reduced irreducible root system of rank $\geq 2$, $G$ the universal Chevalley-Demazure group scheme of type $\Phi$, and $R$ a commutative ring such that $(\Phi, R)$ is a nice pair\footnotemark \footnotetext{Recall that this means that $2 \in R^{\times}$ whenever $\Phi$ contains a subsystem of type $B_2$, and $2,3 \in R^{\times}$ if $\Phi$ is of type $G_2$.}.
Given an abstract representation $$\rho \colon G(R)^+ \to GL_n (K)$$ of the elementary subgroup $G(R)^+$ over an algebraically closed field $K$, we first show that one can associate to $\rho$ an algebraic ring $A$ over $K$, together with a homomorphism of abstract rings $f \colon R \to A$ having Zariski-dense image (this generalizes a construction of Kassabov and Sapir \cite{Kas} originally carried out for $SL_n$). The construction of $A$ relies on explicit computations with Steinberg commutator relations. Next, we lift $\rho$, in an appropriate sense, to a representation
$\tilde{\tau} \colon \St (\Phi, A) \to H$ of the corresponding Steinberg group, where $H = \overline{\rho (G(R)^+)}$ is the Zariski closure of the image of $\rho$.
Then, using information on the structure of $K_2 (\Phi, A^{\circ})$, where $A^{\circ}$ is the connected component of $A$,
we show that $\tilde{\tau} \vert_{\St (\Phi, A^{\circ})}$ descends to an abstract homomorphism
$\sigma \colon G(A^{\circ}) \to H$, such that the composition
$$
G(R)^+ \to G(A) \to G(A^{\circ}) \stackrel{\sigma}{\rightarrow} H,
$$
where the first map is induced by $f$ and the second by the natural projection $A \to A^{\circ}$ (cf. Propositions \ref{P:AR-4} and \ref{P:AR-2}), coincides with $\rho$ on a finite index subgroup $\Delta \subset G(R)^+.$ Finally,
in the situations considered in the statement of Theorem 1, $B = A^{\circ}$ is a finite-dimensional commutative $K$-algebra and, furthermore, we show that $\sigma$ is actually a morphism of algebraic groups.

\subsection{Algebraic ring associated to a representation}
The main result of this section is the following theorem, which enables us to attach an algebraic ring $A$ to an abstract representation $\rho \colon G(R)^+ \to GL_n (K).$

%In this section, we discuss the first step in the proof of Theorem 1, which involves attaching an algebraic ring $A$ to an abstract representation $\rho \colon G(R)^+ \to GL_n (K).$ The precise statement is given by the following theorem.
\begin{thm}\cite[Theorem 3.1]{IR}\label{T:ARR-1}
Let $\Phi$ be a reduced irreducible root system of rank $\geq 2$,  $G$ the corresponding universal Chevalley-Demazure group scheme, and $R$ a commutative ring such that $(\Phi, R)$ is a nice pair. Then, given a representation $\rho \colon G(R)^+ \to GL_n (K)$, there exists a commutative algebraic ring $A$ with identity, together with a homomorphism of abstract rings $f \colon R \to A$ having Zariski-dense image, such that for every root $\alpha \in \Phi$, there is an injective regular map $\psi_{\alpha} \colon A \to H$ into $H := \overline{\rho (G(R)^+)}$ (Zariski closure) satisfying
\begin{equation}\label{E:ARR101}
\rho (e_{\alpha} (t)) = \psi_{\alpha} (f(t)).
\end{equation}
for all $t \in R.$
\end{thm}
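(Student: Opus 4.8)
The plan is to build the algebraic ring $A$ out of the "structure constants" of the restriction of $\rho$ to the root subgroups, following the Kassabov--Sapir construction for $SL_n$ but carried out Steinberg-commutator-theoretically so that it works for an arbitrary nice pair $(\Phi, R)$. Fix a long root $\alpha_0 \in \Phi$ and consider the abstract group homomorphism $R^+ \to GL_n(K)$, $t \mapsto \rho(e_{\alpha_0}(t))$. Its Zariski closure $U_{\alpha_0} := \overline{\{\rho(e_{\alpha_0}(t)) : t \in R\}}$ inside $H$ is a commutative algebraic subgroup, and the map $t \mapsto \rho(e_{\alpha_0}(t))$ gives a homomorphism from $R^+$ with Zariski-dense image. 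The underlying variety of $A$ will be (a chosen model of) $U_{\alpha_0}$, and $f \colon R \to A$ will be $t \mapsto \rho(e_{\alpha_0}(t))$. The first task is to equip this variety with a multiplication $\bm$. Here one exploits a root pair $\alpha, \beta$ with $\alpha + \beta \in \Phi$ but $2\alpha + \beta, \alpha + 2\beta \notin \Phi$ (available since $\mathrm{rk}\,\Phi \geq 2$; for $B_2$ and $G_2$ the niceness hypothesis lets one rescale so that the relevant structure constant is a unit), so that the Chevalley commutator relation reads $[e_\alpha(s), e_\beta(t)] = e_{\alpha+\beta}(N\, st)$ with $N = \pm 1$ after normalization. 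Applying $\rho$ shows that the closures $U_\alpha, U_\beta, U_{\alpha+\beta}$ are all isomorphic as varieties to $U_{\alpha_0}$ (via the $W$-conjugation and the relation $\rho(e_\gamma(t)) = \psi_\gamma(f(t))$ that we are simultaneously constructing), and the commutator map descends to a regular map $U_{\alpha_0} \times U_{\alpha_0} \to U_{\alpha_0}$ which one declares to be $\bm$. That this is well-defined and independent of the auxiliary choices, and that it is \emph{bilinear} and \emph{associative} on the dense subset $f(R) \times f(R)$, follows from systematically expanding iterated Steinberg commutators (Hall--Witt / Jacobi identities among the $e_\gamma$) and using that a regular map agreeing with a ring-theoretic identity on a Zariski-dense set agrees everywhere; the identity element $1_A$ comes from $f(1)$.

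The second task is to produce, for \emph{every} root $\gamma \in \Phi$, the regular map $\psi_\gamma \colon A \to H$ with $\rho(e_\gamma(t)) = \psi_\gamma(f(t))$. For $\gamma$ conjugate to $\alpha_0$ under the Weyl group this is immediate by transporting the construction along a representative of $w \in W$ lifted into $G(R)^+$ (using the standard elements $\tilde w_\gamma(1)$ and the fact that $\rho$ is a group homomorphism). For short roots one again uses the relevant commutator relation $[e_\alpha(s), e_\beta(t)] = \prod e_{i\alpha+j\beta}(N^{i,j}\,s^i t^j)$ together with the niceness hypothesis to solve for $\psi_\gamma$ in terms of $\psi_{\alpha_0}$ and $\bm$; injectivity of each $\psi_\gamma$ comes from injectivity of $e_\gamma \colon R^+ \to G(R)^+$ on the elementary group together with the fact that $\psi_\gamma$ restricted to the dense set $f(R)$ is the composite of the injection $t \mapsto e_\gamma(t)$ with $\rho$ — more precisely, injectivity of $\psi_\gamma$ as a regular map on all of $A$ is deduced from the group-theoretic observation that $\psi_\gamma$ is an isomorphism onto the closed subgroup $\overline{\rho(U_\gamma(R))}$ of $H$, which is the standard consequence of the identity $\psi_\gamma(x)\psi_\gamma(y) = \psi_\gamma(x+y)$ plus the commutator relations.

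The main obstacle, and the place where the real work lies, is verifying the ring axioms for $\bm$ — in particular associativity and the two distributivity laws — intrinsically on the variety $A$ rather than merely on the dense subring $f(R)$. The subtlety is that $f(R)$ need not be all of $A$, so one cannot simply transfer the (obvious) ring structure of $R$; one must show the regular maps encoding $(xy)z - x(yz)$ and $x(y+z) - (xy+xz)$, which vanish on the dense subset $f(R) \times f(R) \times f(R)$, vanish identically, and for that one needs these maps to be \emph{regular on all of} $A \times A \times A$, which in turn forces one to set up $\bm$ and the $\psi_\gamma$ as genuine morphisms of varieties from the outset (not just on $f(R)$). Concretely this means choosing the commutator-relation identities carefully enough that every structure constant appearing is invertible in the relevant ring — this is exactly what the "nice pair" hypothesis buys, and is why $B_2$ needs $2 \in R^\times$ and $G_2$ needs $2,3 \in R^\times$ — and then checking that the resulting maps glue into global regular maps. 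Once $\bm$ is globally regular, Zariski density of $f(R)$ plus separatedness of $A$ closes out associativity and distributivity, and commutativity follows from $[e_\alpha(s),e_\beta(t)]$ versus $[e_\beta(t),e_\alpha(s)]$ giving inverse elements; finally Remark 2.1.2 (Rigidity Lemma plus Chevalley's structure theorem) upgrades the a priori quasi-affine $A$ to an honest affine algebraic ring with identity, completing the proof.
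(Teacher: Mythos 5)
Your proposal follows essentially the same strategy as the paper: define $A$ as the Zariski closure of a root subgroup image inside $H$, use the Chevalley commutator relation for a suitable pair of roots to build the multiplication map $\bm$ as a regular map, and then use Zariski density of $f(R)$ together with the ring axioms in $R$ to transfer the ring axioms to all of $A$. This is precisely what the paper does (explicitly for $A_2$, with a remark that $B_2$ and $G_2$ are handled by the same scheme but require relating the closures $A_\alpha$, $A_\beta$ for a short and a long root). The key point, which you identify, is that $\bm$ is a regular map by construction (a conjugated commutator), so the ``identity-on-a-dense-set implies identity everywhere'' argument is available with no further work; this is the content of Lemma~\ref{L:ARR-1}.

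One thing you get wrong, though not fatally: the final appeal to Remark~2.1.2 (the Rigidity Lemma and Chevalley's structure theorem) is both unnecessary and based on a misapprehension. The variety $A$ you construct is $\overline{\rho(e_{\alpha_0}(R))}$, a Zariski closure inside the affine group $H\subset GL_n(K)$, hence a \emph{closed} subvariety of an affine variety and therefore already affine --- there is no ``a priori quasi-affine'' object to upgrade. Remark~2.1.2 addresses a different situation: an \emph{abstract} irreducible variety carrying ring operations, where affineness is not given in advance. Related small points: the Hall--Witt/Jacobi expansion you invoke for associativity is a red herring (one only needs that $\beta_1=\bm(\bm(x,y),z)$ and $\beta_2=\bm(x,\bm(y,z))$ are regular and agree on $f(R)^3$); and your argument for injectivity of $\psi_\gamma$ is slightly circular as stated --- in the paper's construction $\psi_{\alpha_0}$ is literally the inclusion $A\hookrightarrow H$, and the other $\psi_\gamma$ are obtained by conjugating by fixed elements, so injectivity is immediate rather than a consequence of an isomorphism claim one would first have to establish.
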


\vskip3mm

%To make this result somewhat more concrete, we consider several examples of ring homomorphisms that can arise in this context.

To make this result somewhat more concrete, let us now consider several examples of ring homomorphisms that can arise in this context. Note that in the results of Borel and Tits \cite{BT}, one only needs to consider embeddings of one field into another. Over rings, however, there are considerably more possibilities for homomorphisms.
%We begin with the following elementary example for fields.

%\vskip2mm

%\noindent {\bf Example 3.2.} Let $k/ \Q$ be a field extension of degree $n$. Then $k$ has $n$ embeddings $$\sigma_1, \dots, \sigma_n \colon k \hookrightarrow K,$$ where $K$ is a fixed algebraic closure of $\Q$. By Artin's Lemma \cite[Chapter VI, Theorem 4.1]{La}, $\sigma_1, \dots, \sigma_n$ are linearly independent over $K$, hence the linear span of the map
%$$
%\sigma \colon k \to K \times \cdots \times K, \ \ \ a \mapsto (\sigma_1 (a), \dots, \sigma_n (a))
%$$
%is all of $K^n$, and consequently $\sigma$
%has Zariski-dense image. On the other hand, notice that in this example, there is no embedding $\sigma'$ of $k$ into $K^{n+1}$ with Zariski-dense image: indeed, if $v_1, \dots, v_n$ is a basis of $k/ \Q$, then $\sigma' (v_1), \dots, \sigma' (v_n)$ is a basis of the image.

%\vskip2mm

%Now let us show that for rings, there is no bound on the dimension of the image.

\vskip2mm

\noindent {\bf Example 4.1.2.} Let $R = \Z[X]$ and $K$ be an algebraically closed field of characteristic 0.
Fix $s$ \emph{distinct} points $a_1, \dots, a_s \in K^{\times}.$

\vskip1mm

\noindent (a) The most basic example of a ring homomorphism is simply an evaluation homomorphism. Namely,
%fix $s$ \emph{distinct} points $a_1, \dots, a_s \in K^{\times}$,
let $B = K \times \cdots \times K$ ($s$ copies), and define
$$
f \colon R \to B, \ \ \ \ g(X) \mapsto (g(a_1), \dots, g(a_s)).
$$
To see that $f$ has Zariski-dense image we argue as follows. First notice that since the image is obviously invariant under $f(\Z)$, which is infinite and contained in the diagonal $\{(a, \dots, a) \mid a \in K \}$, the closure of the image is a linear subspace of $B$. Furthermore, the elements
$$
f(1) = (1, \dots, 1), \  f(X) = (a_1, \dots, a_s),\ \dots, \ f(X^{s-1}) = (a_1^{s-1}, \dots, a_s^{s-1})
$$
are linearly independent as the Vandermonde determinant is nonzero, which yields Zariski-density.

\vskip2mm

\noindent (b) Now let
%Now suppose $a_1, \dots, a_s \in K^{\times}$ are the roots of an irreducible polynomial $h(X) \in R$, and let
$B = K[\varepsilon_1] \times \cdots \times K[\varepsilon_s]$, with $\varepsilon_i^2 = 0$ for all $i.$ Define
$$
f \colon R \to B, \ \ \ g(X) \mapsto (g(a_1) + g'(a_1)\varepsilon_1, \dots, g(a_s) + g'(a_s) \varepsilon_s),
$$
where $g'(a_i)$ denotes the derivative of the polynomial $g(X)$ evaluated at $a_i.$ Set
$$
a = f(X) = (a_1 + \varepsilon_1, \dots, a_s + \varepsilon_s).
$$
%$$
%y = f(h(X)) = (\gamma_1 \varepsilon_1, \dots, \gamma_s \varepsilon_s),
%$$
%with $\gamma_i = h'(a_i)$ nonzero for all $i$.
Let $C = \overline{f(R)}$ be the Zariski closure of the image and consider the projection map
$$
\varphi \colon B \to K \times \cdots \times K, \ \ (\alpha_1 + \beta_1 \varepsilon_1, \cdots, \alpha_s + \beta_s \varepsilon_s) \mapsto (\alpha_1, \cdots, \alpha_s).
$$
It follows from (a) that $\varphi (C) = K \times \cdots \times K.$ We claim that $C$ contains the elements
$$
(0, \dots, 0, \varepsilon_i, 0, \dots, 0)
$$
for $i = 1, \dots, s.$ Since $C$ is a subspace, this will imply that $\ker \varphi \subset C$, and hence $C = B.$ We will give the argument only for $i = 1.$ First, we note that $C$ contains an element of the form
$$
(\varepsilon_1, *, \dots, *).
$$
Indeed, since $a_1 \neq 0$, this follows from the fact that
$$
a^2 - a_1 a = (a_1 \varepsilon_1, *, \dots, *).
$$
Next, since $(1, 0, \dots, 0) \in \varphi(C)$, we see that $C$ contains an element of the form
$$
b = (1 + \beta_1 \varepsilon_1, \beta_2 \varepsilon_2, \dots, \beta_s \varepsilon_s).
$$
Now,
$$
b^2 = (1 + 2 \beta_1 \varepsilon_1, 0, \dots, 0),
$$
so that
$$
(\varepsilon_1, *, \dots, *) \cdot (1 + 2 \beta_1 \varepsilon_1, 0, \dots, 0) = (\varepsilon_1, 0, \dots, 0) \in C,
$$
as needed.

%Since for any $(\alpha_1 + \beta_1 \varepsilon_1, \dots, \alpha_s \in \beta_s \varepsilon_s) \in B$, we have
%$$
%(\alpha_1 + \beta_1 \varepsilon_1, \dots, \alpha_s \in \beta_s \varepsilon_s) \cdot y = (\alpha_1 \gamma_1 \varepsilon_1, \cdots, \alpha_s \gamma_s \varepsilon_s),
%$$
%this implies that $\ker \varphi \subset C$, and hence $C = B.$

%More generally, a similar construction can be carried out for any linearly independent derivations $\delta_1, \dots, \delta_s \colon R \to B$, which gives a natural extension of the example considered by Borel and Tits in \cite[8.19]{BT}.

\vskip2mm
\noindent (c) Let $B = K[\omega_n]$, with $\omega_n^{n+1} = 0,$ and define
$$
f \colon R \to B, \ \ \ g(X) \mapsto g(a_1) + g'(a_1) \omega_n + \frac{g''(a_1)}{2!} \omega_n^2 + \cdots + \frac{g^{(n)}(a_1)}{n!} \omega_n^n,
$$
where $g^{(k)} (a_1)$ denotes the $k^{\rm{th}}$ derivative of $g(X)$ evaluated at $a_1.$ It is easy to see that $f$ is a ring homomorphism. Now, since $f(1) = 1,$ we have $K \subset C = \overline{f(R)}$; then the fact that $f(X) = a_1 + \omega_n$ implies that $\omega_n \in C$, hence $C = B$.

\vskip2mm

\noindent {\bf Remark 4.1.3.} In \cite{LR}, a variation of example (c) was used to construct abstract homomorphisms of the groups of points of semisimple algebraic groups over fields of characteristic 0 such that the unipotent radical of the Zariski closure of the image has prescribed nilpotence class (this contains the example of Borel and Tits discussed in the introduction as a special case). The set-up is as follows. Let $K/k$ be an extension of fields of characteristic 0, with $K$ algebraically closed. Fix an integer $n \geq 1$ and let $B = K[\omega_n]$, with $\omega_n^{n+1} = 0$, as above. Let $\delta \colon k \to K$ be a nontrivial derivation and denote by $k_0 \subset k$ the subfield of constants of $\delta$ (i.e. $\delta(x) = 0$ for all $x \in k_0$). It is known that $\delta$ can be extended (not necessarily uniquely) to a derivation $K \to K$ (see \cite[Proposition 8.17]{J}); we fix one such extension and denote it also by $\delta$. Now define the {\it $n^{th}$ Taylor homomorphism} associated with (this extension of) $\delta$ by
$$
t_{\delta, n} \colon k \to K[\omega_n], \ \ \ x \mapsto x + \delta(x) \omega_n + \frac{\delta^2 (x)}{2!} \omega_n^2 + \cdots + \frac{\delta^n (x)}{n!} \omega_n^n
$$
(one verifies by direct computation that this is a homomorphism of $k_0$-algebras).
Using the fact that algebraic subgroups of $B$ coincide with $K$-subspaces, one can show that $t_{\delta, n}$ has Zariski-dense image (see \cite[Proposition 3(iii)]{LR}). Now, let $G$ be a connected semisimple algebraic group defined over $k_0$ and let $\G_n = R_{B/K}(_{B}G)$ be the $K$-group obtained by restriction of scalars; by construction, we have $G(B) = \G_n (K).$ Since $t_{\delta,n}$ is a homomorphism of $k_0$ algebras, it induces a group homomorphism
$$
\beta \colon G(k) \to G(B) = \G_n (K)
$$
on the groups of rational points. Using the unirationality of $G$ \cite[Theorem 18.2(ii)]{Bo}, one shows that $\beta$ has Zariski-dense image. Moreover, the unipotent radical of $\G_n$ has nilpotence class $n$ \cite[Proposition 4]{LR}.

\vskip3mm

\addtocounter{thm}{2}

Now, using the fact that the Weyl group $W(\Phi)$ of $\Phi$ acts transitively on roots of the same length, the proof of Theorem \ref{T:ARR-1} easily reduces to the cases where $\Phi$ is of type $A_2$, $B_2$, or $G_2.$ Furthermore, it is enough to construct a regular map  $\psi_{\alpha} \colon A \to H$ satisfying (\ref{E:ARR101}) for a single root of each length in $\Phi$.

To illustrate the strategy of the proof, but at the same time to avoid excessive technicalities, we would like to discuss the case of $\Phi$ of type $A_2$ (this case was handled by Kassabov and Sapir -- see \cite[Theorem 3]{Kas}). We will use the standard realization of $\Phi$, described in \cite{Bour}, where the roots are of the form $\varepsilon_i - \varepsilon_j$,
with $i,j \in \{1, 2, 3 \}, i \neq j.$ We will write $e_{ij} (t)$ to denote $e_{\alpha} (t)$ for $\alpha = \varepsilon_i - \varepsilon_j$. Notice that $e_{ij} (t)$ is simply the usual elementary matrix with $1$'s on the diagonal, $t$ in the $(i,j)$-th entry, and 0 elsewhere, and that $G(R)^+$ is the elementary subgroup $E_3 (R) \subset GL_3 (R)$. In particular, the set-up works for a general associative ring $R$.
So, let $\rho \colon E_3(R) \to GL_n(K)$ be a representation,
set $\alpha = \varepsilon_1 - \varepsilon_3,$ and define $A$ to be $A_{\alpha} = \overline{\rho (e_{\alpha} (R))}.$
We let $\ba \colon A \times A \to A$ be the restriction of the matrix product in $H = \overline{\rho (E_3(R))}$ to $A$ and note that
$(A, \ba)$ is a commutative algebraic subgroup of $H$.
%, where $\ba \colon A \times A \to A$ is the restriction of the product in $H$ to $A$.
Furthermore, we define $f = f_{\alpha} \colon R \to A$ to be the map
$t \mapsto \rho (e_{\alpha} (t)).$ Clearly
\begin{equation}\label{E:ARR202}
\ba (f(t_1), f(t_2)) = f(t_1 + t_2)
\end{equation}
for all $t_1, t_2 \in R.$ To define $\bm$, we need the following elements
$$
w_{12} = e_{12} (1) e_{21} (-1) e_{12} (1) = \left( \begin{array}{rcc} 0 & 1 & 0 \\ -1 & 0 & 0 \\ 0 & 0 & 1 \end{array} \right) \ \ \ \ {\rm and} \ \ \ \ w_{23} = e_{23} (1) e_{32} (-1) e_{23} (1) = \left( \begin{array}{crc} 1 & 0 & 0 \\ 0 & 0 & 1 \\ 0 & -1 & 0 \end{array} \right).
$$
It is easily checked that
\begin{equation}\label{E:ARR203}
w_{12}^{-1} e_{13}(r) w_{12} = e_{23} (r), \ \ \ w_{23} e_{13} (r) w_{23}^{-1} = e_{12} (r)
\end{equation}
and
\begin{equation}\label{E:ARR204}
[e_{12} (r), e_{23} (s)] = e_{13} (rs)
\end{equation}
for all $r, s \in R$, where $[g, h] = g h g^{-1} h^{-1}.$ Define the regular map $\bm \colon A \times A \to H$ by
$$
\bm (a_1, a_2) = [\rho(w_{23}) a_1 \rho(w_{23})^{-1}, \rho(w_{12})^{-1} a_2 \rho(w_{12})].
$$
It follows from relations (\ref{E:ARR203}) and (\ref{E:ARR204}) that
\begin{equation}\label{E:ARR205}
\bm (f(t_1), f(t_2)) = f(t_1 t_2).
\end{equation}
Then, in particular, $\bm (f(R) \times f(R)) \subset f(R),$ implying that $\bm (A \times A) \subset A$, and allowing us to view $\bm$ as a regular map $\bm \colon A \times A \to A.$ The fact that $(A, \ba, \bm)$ is a commutative algebraic ring is a consequence of the following elementary lemma.

\begin{lemma}\label{L:ARR-1}
Let $A$ be an affine  variety equipped with two regular maps $\ba \colon A \times A \to A$ and $\bm \colon A \times A \to A$. Assume that $(A, \ba)$ is a commutative algebraic group and that there exists a homomorphism $f \colon R \to A$ of an abstract unital ring $R$ into $A$ such that $\overline{f(R)} = A$ and
\begin{equation}\label{E:ARR201}
f(t_1 + t_2) = \ba (f(t_1), f(t_2)) \ \ \ {\rm and} \ \ \ f(t_1 t_2) = \bm (f(t_1), f(t_2))
\end{equation}
for all $t_1, t_2 \in R.$ Then $(A, \ba, \bm)$ is an algebraic ring with identity, which is also commutative if $R$ is a commutative ring.

\end{lemma}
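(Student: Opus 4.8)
The plan is to observe that each ring axiom still to be checked for $(A, \ba, \bm)$ --- associativity (II) and distributivity (III) of the multiplication, existence of an identity (IV), and, when $R$ is commutative, commutativity of $\bm$ --- amounts to an equality of two \emph{regular} maps $A^m \to A$, and that such an equality is a Zariski-closed condition which, by hypothesis, already holds on a Zariski-dense subset. (Axiom (I) is given, since $(A, \ba)$ is assumed to be a commutative algebraic group, so $0_A$ and the regular inversion $\iota$ are already in place.)

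First I would record the two elementary geometric facts that make the argument run. Since $A$ is an affine variety it is separated, so the diagonal of $A \times A$ is closed; consequently, for any variety $X$ and any two regular maps $\varphi, \psi \colon X \to A$, the equalizer $\{ x \in X : \varphi(x) = \psi(x) \}$, being the preimage of the diagonal under $(\varphi, \psi)$, is Zariski-closed in $X$. Second, if $S$ is dense in a variety $X$ and $T$ is dense in a variety $Y$, then $S \times T$ is dense in $X \times Y$: for each $t \in T$ the closed subset $X \times \{t\}$ contains the dense subset $S \times \{t\}$, whence $X \times T \subset \overline{S \times T}$; then for each $x \in X$ the closed subset $\{x\} \times Y$ contains the dense subset $\{x\} \times T$, whence $X \times Y = \overline{S \times T}$. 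Iterating, $f(R)^m$ is Zariski-dense in $A^m$ for every $m \geq 1$.

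With these in hand I would check the axioms one at a time. For (II), the two regular maps $A^3 \to A$, namely $(x,y,z) \mapsto \bm(\bm(x,y), z)$ and $(x,y,z) \mapsto \bm(x, \bm(y,z))$, both send $(f(t_1), f(t_2), f(t_3))$ to $f(t_1 t_2 t_3)$ by (\ref{E:ARR201}) and associativity in $R$; as such triples are dense in $A^3$, the two maps agree everywhere. The two distributive laws in (III) follow in exactly the same way, using that $f$ respects both $\ba$ and $\bm$ and that $R$ is distributive; and if $R$ is commutative, then $\bm(x,y)$ and $\bm(y,x)$ agree on the dense set $f(R)^2$, hence on all of $A^2$. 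Finally, taking $1_A := f(1_R)$, the regular self-maps $x \mapsto \bm(1_A, x)$ and $x \mapsto \bm(x, 1_A)$ coincide with $\mathrm{id}_A$ on $f(R)$ --- because $f(1_R) f(t) = f(t) = f(t) f(1_R)$ --- and therefore on all of $A$, giving (IV). This establishes that $(A, \ba, \bm)$ is an algebraic ring with identity, commutative when $R$ is. Since the whole argument is the soft principle ``a closed condition holding on a dense set holds everywhere,'' there is no genuine obstacle; the only points that deserve (routine) attention are the separatedness of the affine variety $A$, which is what makes equalizers closed, and the density of $f(R)^m$ in $A^m$ --- i.e. that a product of dense subsets of varieties is dense in the product, for which one cannot simply invoke the product topology (the Zariski topology on $A^m$ is strictly finer) but must run the short fibrewise argument above.
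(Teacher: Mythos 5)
Your argument is essentially identical to the paper's: both verify each remaining axiom by observing that the relevant pair of regular maps $A^m \to A$ agrees on the dense subset $f(R)^m$, hence everywhere, with $1_A := f(1_R)$. The paper simply leaves the separatedness of $A$ and the density of $f(R)^m$ in $A^m$ implicit, whereas you spell them out; this is the same proof.
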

\begin{proof}
Condition (I) of Definition 2.1.1 holds by our assumption. To verify (II), we observe that (\ref{E:ARR201}), in conjunction with the fact that multiplication in $R$ is associative, implies that the regular maps
$$
\beta_1 \colon A \times A \times A \to A, \ \ \ \beta_1 (x,y,z) = \bm (\bm (x,y), z),
$$
and
$$
\beta_2 \colon A \times A \times A \to A, \ \ \ \beta_2 (x,y,z) = \bm (x, \bm (y,z))
$$
coincide on the Zariski-dense subset $f(R) \times f(R) \times f(R) \subset A \times A \times A$. It follows that they coincide everywhere, yielding (II). All other conditions (including the fact that $1_A := f(1_R)$ is the identity element in $A$) are verified similarly.
\end{proof}

Finally, we note that by our construction, (\ref{E:ARR101}) obviously holds for the inclusion map $\psi_{\alpha} \colon A \to H$, as required.

When $\Phi$ is of type $B_2$ or $G_2$, the construction proceeds along similar lines, but is quite a bit more involved. In particular, one needs to consider commutative algebraic groups $A_{\alpha}$ and $A_{\beta}$, defined as above as Zariski closures of images of roots subgroups, for a short root $\alpha$ and long root $\beta.$
%the commutative algebraic groups $A_{\alpha}$ and $A_{\beta}$ defined in the same way as above, for $\alpha$ a short root and $\beta$ a long root.
Using the Steinberg commutator relations, one shows that the underlying algebraic varieties are isomorphic, which then allows one to define multiplication operations that make $A_{\alpha}$ and $A_{\beta}$ into isomorphic algebraic rings. The upshot is that this procedure enables us to associate an algebraic ring to a representation $\rho$ of $G(R)^+$ that captures information about the images of \emph{all} root subgroups simultaneously.

\vskip5mm

\subsection{Lifting to Steinberg groups}
Let $\rho \colon G(R)^+ \to GL_n (K)$ be a representation with associated algebraic ring $A$. The next step of the proof involves constructing a representation $$\tilde{\tau} \colon \St (\Phi, A) \to GL_n (K)$$ of the Steinberg group such that $\tilde{x}_{\alpha} (a) \mapsto \psi_{\alpha} (a)$ for all $a \in A$, where $\psi_{\alpha}$ is the regular map from Theorem \ref{T:ARR-1}. More precisely, we have the following statement.

%in the proof of Theorem \ref{T-1} is to show that $\rho$ lifts to a representation of an appropriate Steinberg group. More concretely, for a given representation $\rho \colon G(R)^+ \to GL_n (K)$ and the associated algebraic ring $A$, we construct a representation $\tilde{\tau} \colon \St (\Phi, A) \to GL_n (K)$ of the corresponding Steinberg group such that $\tilde{x}_{\alpha} (a) \mapsto \psi_{\alpha} (a)$ for all $a \in A$, where $\psi_{\alpha}$ is the regular map from Theorem \ref{T:ARR-1}. The precise statement is given by the following proposition.

\begin{prop}\label{P:StG-1}
Let $(\Phi, R)$ be a nice pair, $K$ an algebraically closed field, and $$\rho \colon G(R)^+ \to GL_n (K)$$ a representation. Furthermore, let $A$ and $f \colon R \to A$ be the algebraic ring and ring homomorphism associated to $\rho$ that were constructed in Theorem \ref{T:ARR-1}. Then there exists a group homomorphism $$\tilde{\tau} \colon \St (\Phi, A) \to H \subset GL_n (K)$$ such that $\tilde{\tau} \colon \tilde{x}_{\alpha} (a) \mapsto \psi_{\alpha} (a)$ for all $a \in A$ and all $\alpha \in \Phi$. Consequently, $\tilde{\tau} \circ \tilde{F} = \rho \circ \pi_R,$ where $\tilde{F} \colon \St (\Phi, R) \to \St (\Phi, A)$ is the homomorphism induced by $f.$
\end{prop}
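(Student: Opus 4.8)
The plan is to build the homomorphism $\tilde{\tau}$ directly from the presentation of $\St(\Phi, A)$: it suffices to define $\tilde{\tau}$ on the generators by $\tilde{x}_\alpha(a) \mapsto \psi_\alpha(a)$ and then check that the images satisfy the Steinberg relations (R1) and (R2). The key point is that the defining relations of $\St(\Phi, A)$ are \emph{polynomial identities} in the ring variables, so verifying them amounts to checking that certain regular maps $A^{\times k} \to H$ agree, and this can be done by density. First I would fix a single root $\alpha$ of each length and use the construction of Theorem \ref{T:ARR-1}, which gives injective regular maps $\psi_\alpha \colon A \to H$ with $\rho(e_\alpha(t)) = \psi_\alpha(f(t))$; for the remaining roots $\gamma$ I would transport $\psi_\alpha$ by the Weyl group elements $\rho(w)$ (using the fact that $w$ conjugates $U_\alpha$ to $U_\gamma$ in $G(R)^+$, and hence in $H$), consistently with how the $\psi_\gamma$ were defined in the proof of Theorem \ref{T:ARR-1}.

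The heart of the argument is the verification of relations. For (R1): the two regular maps $A \times A \to H$ given by $(a_1,a_2)\mapsto \psi_\alpha(a_1)\psi_\alpha(a_2)$ and $(a_1,a_2)\mapsto \psi_\alpha(\ba(a_1,a_2))$ agree on $f(R)\times f(R)$, because on that subset they both equal $\rho(e_\alpha(t_1))\rho(e_\alpha(t_2)) = \rho(e_\alpha(t_1+t_2))$ by the Chevalley relation \eqref{E:StG103} in $G(R)^+$ together with \eqref{E:ARR101} and \eqref{E:ARR202}. Since $f(R)\times f(R)$ is Zariski-dense in $A\times A$ and $H$ is separated, the two maps coincide everywhere, which is exactly (R1). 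For (R2): the Steinberg commutator relation \eqref{E:StG104} holds in $G(R)^+$, so after applying $\rho$ and using \eqref{E:ARR101}, both sides of the corresponding identity among the $\psi_\gamma$ agree on the dense subset $f(R)\times f(R)$ of $A\times A$; by the same density argument they agree on all of $A\times A$. Here one must check that the structure constants $N^{i,j}_{\alpha,\beta}$ appearing in (R2) for $\St(\Phi,A)$ are literally the same integers as those governing the commutator relations in $G$, which is built into the Chevalley-Demazure formalism and is independent of the ring. Once (R1) and (R2) are verified, the universal property of the presentation yields the group homomorphism $\tilde{\tau} \colon \St(\Phi,A) \to GL_n(K)$, and its image lands in $H$ because $H$ is closed and contains all the $\psi_\gamma(A)$, which are the images of the generators.

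The final assertion $\tilde{\tau}\circ \tilde{F} = \rho\circ \pi_R$ is then a matter of comparing two group homomorphisms $\St(\Phi,R) \to GL_n(K)$ on generators: on $\tilde{x}_\alpha(t)$ one side gives $\tilde{\tau}(\tilde{x}_\alpha(f(t))) = \psi_\alpha(f(t))$ and the other gives $\rho(\pi_R(\tilde{x}_\alpha(t))) = \rho(e_\alpha(t))$, and these are equal by \eqref{E:ARR101}. Since these generators generate $\St(\Phi,R)$, the two homomorphisms coincide.

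The main obstacle I anticipate is the bookkeeping in the $B_2$ and $G_2$ cases: there one has the two algebraic rings $A_\alpha$ and $A_\beta$ attached to roots of different lengths, and verifying (R2) requires that the isomorphism $A_\alpha \simeq A_\beta$ produced in Theorem \ref{T:ARR-1} is compatible with \emph{all} the commutator relations involving roots of mixed lengths (including the ones with nontrivial coefficients and higher powers $s^i t^j$), not just the ones used to define the multiplication. This is precisely where the ``nice pair'' hypothesis enters — the invertibility of $2$ (resp.\ $2$ and $3$) is what makes the relevant polynomial identities among the $\psi_\gamma$ determined by their values on $f(R)$, so that the density argument closes. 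The reductions via the Weyl group action on roots of equal length, and the fact that it is enough to treat one root of each length, cut the verification down to a manageable finite check in each of the three rank-two types.
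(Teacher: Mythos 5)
Your argument follows the paper's proof essentially verbatim: define $\tilde{\tau}$ on generators by $\tilde{x}_\alpha(a) \mapsto \psi_\alpha(a)$, verify (R1) and (R2) by noting that each side of the relation is a regular map $A\times A \to H$ agreeing on the Zariski-dense subset $f(R)\times f(R)$ (via the Chevalley relations in $G(R)^+$ and the intertwining (\ref{E:ARR101})), hence agreeing everywhere, and then check the final compatibility on generators. One correction to your last paragraph: the density argument here is uniform across root lengths and makes no further appeal to the nice-pair hypothesis --- a regular map into a variety is determined by its restriction to a dense subset, full stop; all the type-specific work (comparing $A_\alpha$ with $A_\beta$, using invertibility of $2$ or $3$) is internal to the proof of Theorem \ref{T:ARR-1}, and Proposition \ref{P:StG-1} only consumes its conclusion, namely that $\psi_\gamma \colon A \to H$ exists and satisfies $\rho(e_\gamma(t)) = \psi_\gamma(f(t))$ for every root $\gamma$.
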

\begin{proof}
It follows from Definition 3.1.1 that to establish the existence of $\tilde{\tau},$ we only need to show that
relations (R1) and (R2) are satisfied if each of the generators $\tilde{x}_{\alpha} (a)$ is replaced by $\psi_{\alpha} (a)$. First, let $a = f(s), b= f(t)$, with $s, t \in R.$ Since $\rho$ is a homomorphism and by Theorem \ref{T:ARR-1}, $\rho \circ e_{\alpha} = \psi_{\alpha} \circ f$, we have, in view of the relation $e_{\alpha} (s) e_{\alpha} (t) = e_{\alpha} (s+t)$, that
$$
\psi_{\alpha} (a) \psi_{\alpha} (b) = \rho (e_{\alpha} (t) e_{\alpha} (s)) = \rho (e_{\alpha} (t+s)) = \psi_{\alpha} (a + b).
$$
Thus, the two regular maps $A \times A \to H$ given by
$$
(a, b) \mapsto \psi_{\alpha} (a) \psi_{\alpha} (b) \ \ \ {\rm and} \ \ \ (a,b) \mapsto \psi_{\alpha} (a + b)
$$
coincide on $f(R) \times f(R).$ The Zariski density of the latter in $A \times A$ implies that they coincide everywhere, yielding (R1).

Similarly, using the commutator relation (\ref{E:StG104}), we see that the two regular maps $A \times A \to H$ defined by
$$
(a, b) \mapsto [ \psi_{\alpha} (a), \psi_{\beta} (b)] \ \ \ {\rm and} \ \ \ (a,b) \mapsto \prod \psi_{i \alpha + j \beta} (N^{i,j}_{\alpha, \beta} a^i b^j)
$$
coincide on $f(R) \times f(R),$ hence on $A \times A$, and (R2) follows. Finally, the maps $\tilde{\tau} \circ \tilde{F}$ and $\rho \circ \pi_R$ both send $\tilde{x}_{\alpha} (s),$ $s \in R$, to $\psi_{\alpha} (f(s)) = \rho (e_{\alpha} (s)) = (\rho \circ \pi_R) (\tilde{x}_{\alpha} (s)),$ so they coincide on $\St (\Phi, R).$

\end{proof}

Thus, we obtain that the diagram formed by the solid arrows in
\begin{equation}\label{D:StG-1}
\xymatrix{ \St (\Phi, R) \ar[d]_{\pi_R} \ar[r]^{\tilde{F}} & \St (\Phi, A) \ar[rrdd]^{\tilde{\tau}} \ar[d]^{\pi_A} \\ G(R) \ar[rrrd]_{\rho} \ar[r]^{F} & G(A) \ar@{.>}[rrd]^{\tau} \\ & & & H \\}
\end{equation}
commutes. To complete the proof of Theorem 1, the crucial thing is to show that
$\tilde{\tau}$ basically descends to a homomorphism $\tau \colon G(A) \to H$ which makes the whole diagram commutative -- this will be made more precise below.

First, however, we would like to mention the following consequence of Proposition \ref{P:StG-1}.

\begin{cor}\label{C:StG-4}
Let $(\Phi, R)$ be a nice pair. Assume that there is an integer $m \geq 1$ such that $m R = \{ 0 \}.$ Let $K$ be an algebraically closed field such that $\mathrm{char} \: K$ does not divide $m$, and suppose moreover that $R$ is noetherian if $\mathrm{char} \: K > 0.$ If $\rho \colon G(R)^+ \to GL_n (K)$ is an abstract representation, then $\rho (G(R)^+)$ is finite.
\end{cor}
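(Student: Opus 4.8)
The plan is to attach to $\rho$ its associated algebraic ring $A$, observe that the hypothesis $mR = \{0\}$ forces $A$ to be a \emph{finite} ring, and then read off finiteness of $\rho(G(R)^+)$ from the finiteness of the Steinberg group over a finite ring. Concretely, I would first apply Theorem~\ref{T:ARR-1} to obtain a commutative algebraic ring $A$ with identity over $K$ together with a ring homomorphism $f \colon R \to A$ with $\overline{f(R)} = A$, and then Proposition~\ref{P:StG-1} to produce a group homomorphism $\tilde{\tau} \colon \mathrm{St}(\Phi, A) \to H$ satisfying $\tilde{\tau} \circ \tilde{F} = \rho \circ \pi_R$, where $\pi_R \colon \mathrm{St}(\Phi, R) \to G(R)^+$ is the canonical surjection and $\tilde{F} \colon \mathrm{St}(\Phi, R) \to \mathrm{St}(\Phi, A)$ is induced by $f$. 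Since $\pi_R$ is onto, this gives $\rho(G(R)^+) = \tilde{\tau}(\tilde{F}(\mathrm{St}(\Phi, R))) \subseteq \tilde{\tau}(\mathrm{St}(\Phi, A))$, so by Proposition~\ref{P:StG-2} it is enough to show that $A$ is a finite ring.

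To see this, note that since $mR = \{0\}$ and $f$ is additive, $f(R)$ lies in the kernel of the multiplication-by-$m$ endomorphism $[m]$ of the additive group $(A, \ba)$; this kernel is Zariski-closed (as $[m]$ is regular), so $\overline{f(R)} = A$ forces $mA = \{0\}$, and in particular $mA^{\circ} = \{0\}$. Now the differential of $[m]$ at $0_A$ is multiplication by $m$ on $\mathrm{Lie}(A^{\circ})$; since $[m]$ is the zero map on $A^{\circ}$, while $m$ is invertible in $K$ (because $\mathrm{char}\: K$ does not divide $m$), we conclude $\mathrm{Lie}(A^{\circ}) = 0$, hence $\dim A^{\circ} = 0$, and $A^{\circ}$ being connected, $A^{\circ} = \{0_A\}$. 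Thus every connected component of the variety $A$ is a single point, so $A$ is finite. (Alternatively, when $R$ is noetherian one may instead quote condition (FG) and deduce the finiteness of $A$ from the decomposition $A = A^{\circ} \oplus C$ of Proposition~\ref{P:AR-4} together with $A^{\circ} = \{0_A\}$.)

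Putting the pieces together, $A$ is a finite commutative ring, so $\mathrm{St}(\Phi, A)$ is finite by Proposition~\ref{P:StG-2}, and therefore $\rho(G(R)^+) \subseteq \tilde{\tau}(\mathrm{St}(\Phi, A))$ is finite, as claimed. I do not anticipate any serious obstacle here; the one point worth stressing is that finiteness does \emph{not} follow merely from the fact that $\rho(G(R)^+)$ is generated by the finitely many elements $\rho(e_{\alpha}(t))$, $\alpha \in \Phi$, $t \in R$ (together with $mR = \{0\}$). It is precisely the detour through the Steinberg group of the \emph{finite} ring $A$, via the finiteness statement of Proposition~\ref{P:StG-2}, that converts ``finitely generated'' into ``finite''; the structural input needed for that detour is the implication $mA = \{0\} \Rightarrow A$ finite, which is exactly where the coprimality of $m$ and $\mathrm{char}\: K$ is used.
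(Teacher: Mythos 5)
Your proof is correct, and the high-level strategy matches the paper's: attach the algebraic ring $A$, show $mA^{\circ} = 0$, conclude $A^{\circ} = \{0\}$, and deduce finiteness of $\rho(G(R)^+)$ from the finiteness of the Steinberg group of a finite ring. The step where you genuinely diverge is the proof that $A^{\circ} = \{0\}$. The paper appeals to the artinian structure theory via Proposition \ref{P:AR-20}, realizing $A^{\circ}/J$ as a $K$-vector space annihilated by the unit $m$, hence zero, and then deducing $A^{\circ} = \{0\}$ from the nilpotence of $J$ together with the presence of an identity in $A^{\circ}$. You instead observe that the differential of the multiplication-by-$m$ endomorphism of the smooth connected commutative algebraic group $(A^{\circ}, \ba)$ is $m \cdot \mathrm{id}$ on $\mathrm{Lie}(A^{\circ})$, an isomorphism since $m \in K^{\times}$, yet $[m]$ is identically zero on $A^{\circ}$, forcing $\mathrm{Lie}(A^{\circ}) = 0$ and hence $\dim A^{\circ} = 0$. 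Your route uses only basic facts about differentials of algebraic group homomorphisms and avoids the artinian ring machinery of Propositions \ref{P:AR-4} and \ref{P:AR-20}; it also reveals that the noetherian hypothesis on $R$ in positive characteristic is superfluous for this corollary, since once $A$ is known to be finite one can invoke Proposition \ref{P:StG-2} directly rather than going through Corollary \ref{C:StG-2}, which is the only place in the paper's argument where condition (FG), and hence noetherianness, is used.
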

\begin{proof}
Let $A$ and $f \colon R \to A$ be the algebraic ring and ring homomorphism associated to $\rho$ constructed in Theorem \ref{T:ARR-1}. Then $mA = \{0 \}$ as $m f(R) = \{0 \}$ and $f(R)$ is Zariski-dense in $A$. In particular $m A^{\circ} = \{0 \}$, where $A^{\circ}$ is the connected component of $0_A.$
Now recall that by Proposition \ref{P:AR-20}, the quotient $A^{\circ} / J$ is a $K$-algebra, where $J$ denotes the Jacobson radical of $A^{\circ}.$ Then our assumption that $m$ is not divisible by $\mathrm{char} \: K$ forces $A^{\circ}/ J = \{ 0 \}$,  hence $A^{\circ} = \{ 0 \}.$ Therefore, by Corollary \ref{C:StG-2}, $\St (\Phi, A) = P,$ a finite group. On the other hand, by Proposition \ref{P:StG-1}, the representation $\rho$ factors through $\St (\Phi, A),$ which proves the finiteness of $\rho (G(R)^+).$
\end{proof}

\vskip5mm

\subsection{Passage to a subgroup of finite index} As we remarked above, the general strategy for completing the proof of Theorem 1 will be to construct a homomorphism $\tau \colon G(A) \to H$ which makes the diagram (\ref{D:StG-1}) commute. We now make this idea more precise.

Let $\Phi$ be a reduced irreducible root system of rank $\geq 2$, $K$ an algebraically closed field, and $R$ a commutative ring such that $(\Phi, R)$ is a nice pair. Assume that $R$ is noetherian if $\mathrm{char} \: K > 0.$
Furthermore, let $G$ be the universal Chevalley-Demazure group scheme of type $\Phi$, and consider a representation $\rho \colon G(R)^+ \to GL_n (K)$. By Theorem \ref{T:ARR-1}, we can associate to $\rho$ an algebraic ring $A$ and a ring homomorphism $f \colon R \to A$ with Zariski-dense image. Moreover,
our assumptions imply that we can write $A = A^{\circ} \oplus C$, with $C$ a finite ring (in fact, a finite quotient of $R$ --- see Propositions \ref{P:AR-4} and \ref{P:AR-2}).
By Proposition \ref{P:StG-1}, there exists a group homomorphism $\tilde{\tau} \colon \St (\Phi, A) \to GL_n (K)$ such that $\tilde{\tau} \circ \tilde{F} = \rho \circ \pi_R,$ where $\tilde{F} \colon \St(\Phi, R) \to \St (\Phi, A)$ is the homomorphism of Steinberg groups induced by $f$, and $\pi_R \colon \St (\Phi, R) \to G(R)$ is the canonical homomorphism. On the other hand, by Corollary \ref{C:StG-2}, $\St (\Phi, A) = \St (\Phi, A^{\circ}) \times P,$ where $P = \St (\Phi, C)$ is a finite group. So, $\tilde{\Delta} := \tilde{F}^{-1} (\St (\Phi, A^{\circ}))$ and $\Delta := \pi_R(\tilde{\Delta})$ are subgroups of finite index in $\St (\Phi, R)$ and $G(R)^+$, respectively, and moreover $F(\Delta) \subset G(A^{\circ}).$ Letting $\tilde{\sigma}$ denote the restriction of $\tilde{\tau}$ to $\St (\Phi, A^{\circ})$, we obtain that
the solid arrows in
\begin{equation}\label{D:FI-1}
\xymatrix{ \tilde{\Delta} \ar[d]_{\pi_R} \ar[r]^{\tilde{F}} & \St (\Phi, A^{\circ}) \ar[rrdd]^{\tilde{\sigma}} \ar[d]^{\pi_{A^{\circ}}} \\ \Delta \ar[rrrd]_{\rho} \ar[r]^{F} & G(A^{\circ}) \ar@{.>}[rrd]^{\sigma} \\ & & & H^{\circ} \\}
\end{equation}
form a commutative diagram. To complete the prove of Theorem 1, we show that there exists a group homomorphism $\sigma \colon G(A^{\circ}) \to H^{\circ}$ making the full diagram (\ref{D:FI-1}) commute.
Observe that if such $\sigma$ exists, then $\rho \colon G(R)^+ \to GL_n  (K)$ necessarily coincides on $\Delta$ with the composition $\sigma \circ T$, where $T \colon G(R)^+ \to G(A^{\circ})$ is the group homomorphism induced by the ring homomorphism $t \colon R \to A^{\circ}$, defined as the composition of $f \colon R \to A$ with the canonical projection $s \colon A \to A^{\circ}.$

In our arguments, we will need the following result of Matsumoto:
\begin{lemma}\label{L:FI-100}{\rm (cf. \cite{M}, Corollary 2)}
Let $\Phi$ be a reduced irreducible root system of rank $\geq 2$ and let $G$ be the corresponding universal Chevalley-Demazure group scheme. If $S$ is a semilocal commutative ring, then $G(S) = G(S)^+.$
\end{lemma}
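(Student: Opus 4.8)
The lemma is due to Matsumoto, and I will only indicate the shape the proof takes. The plan is to separate a soft, torus‑theoretic reduction from the genuinely semilocal input, which is the fact (Bass) that a semilocal ring $S$ has stable rank $1$: whenever $aS + bS = S$ there is a $t \in S$ with $a + tb \in S^{\times}$.

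First I would reduce the statement to showing that $G(S)$ is generated by the root subgroups $U_{\alpha}(S)$ together with the standard split maximal torus $T(S)$. This is essentially free, because $T(S) \subseteq G(S)^{+}$ already: fixing a basis $\Pi \subseteq \Phi$ of simple roots, the hypothesis that $G$ is the \emph{universal} Chevalley--Demazure scheme means that the cocharacter lattice $X_{*}(T)$ is the coroot lattice $\bigoplus_{\alpha \in \Pi}\Z\,\alpha^{\vee}$; since $T$ is split, $T(S) = X_{*}(T) \otimes_{\Z} S^{\times}$, so $T(S)$ is generated by the elements $h_{\alpha}(u) = \alpha^{\vee}(u)$ with $\alpha \in \Pi$, $u \in S^{\times}$, and these lie in $G(S)^{+}$ because $h_{\alpha}(u) = w_{\alpha}(u)\,w_{\alpha}(-1)$ with $w_{\alpha}(u) = e_{\alpha}(u)\,e_{-\alpha}(-u^{-1})\,e_{\alpha}(u)$ (legitimate since $u \in S^{\times}$). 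Hence $G(S)^{+} = \langle\, U_{\alpha}(S),\, T(S) \mid \alpha \in \Phi \,\rangle$, and it remains to prove that $G(S)$ coincides with this subgroup; note that this reduction works over an arbitrary commutative ring, so everything semilocal must go into the generation statement.

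For the generation statement the plan is to run a Bruhat‑type decomposition of $G(S)$. Let $U^{\pm}$ be the unipotent radicals of the two opposite Borel subgroups containing $T$, put $B = T\,U^{+}$, and let $W$ be the Weyl group; its elements have representatives $\dot w \in G(S)^{+}$, each a product of the $w_{\alpha}(1)$. Over a field one has $G = \bigsqcup_{w \in W} B\dot{w}B$, and since $B(S) \subseteq \langle U_{\alpha}(S), T(S)\rangle$ and $\dot w \in G(S)^{+}$, each cell lies in $G(S)^{+}$; this settles the field case. Over a semilocal ring the naive Bruhat decomposition \emph{fails} --- e.g. a matrix $\left(\begin{smallmatrix} a & b \\ c & d\end{smallmatrix}\right) \in SL_{2}(S)$ with $a,c$ both non‑units (but necessarily $aS + cS = S$, as $ad - bc = 1$) lies in none of the sets $B(S)\dot w B(S)$ --- and the remedy is precisely stable rank $1$: there is a $t$ with $a + tc \in S^{\times}$, so $e_{12}(t)\left(\begin{smallmatrix} a & b \\ c & d\end{smallmatrix}\right)$ does lie in the big cell $U^{-}(S)\,T(S)\,U^{+}(S)$, and since $e_{12}(t) \in U^{+}(S) \subseteq G(S)^{+}$ one concludes $\left(\begin{smallmatrix} a & b \\ c & d\end{smallmatrix}\right) \in G(S)^{+}$; this gives $SL_{2}(S) = E_{2}(S)$. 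The general argument follows the same pattern: one establishes a decomposition in which $G(S)$ is covered by the pieces $V_{w}(S)\,\dot w\,B(S)$ (with $V_w$ a suitable unipotent subgroup) \emph{after} left multiplication of an arbitrary $g \in G(S)$ by a product of elements of the $U_{\alpha}(S)$, so that every factor in sight, and every such correction, lies in $\langle U_{\alpha}(S), T(S)\rangle$; combined with the first step this yields $G(S) = G(S)^{+}$.

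The hard part is exactly this last point: proving a workable Bruhat decomposition for $G(S)$ with $S$ only semilocal --- equivalently, showing that after left multiplication by a suitable product of elements of the $U_{\alpha}(S)$ (whose existence is guaranteed by stable rank $1$ applied to an appropriate unimodular collection of ``coordinates'' of $g$, extracted via a faithful representation of $G$ and the commutator relations \eqref{E:StG104}) every element of $G(S)$ lands in a single Bruhat cell. Carrying this out uniformly over the types of $\Phi$ is the technical core, and is Matsumoto's contribution; once it is available, the lemma follows from the torus reduction above.
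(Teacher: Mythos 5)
The paper does not prove this lemma; it is stated as a citation to Matsumoto (\cite{M}, Corollary~2), so there is no internal proof against which to compare your argument. That said, your sketch assembles the right pieces. The torus reduction is correct and is indeed ``soft'': for a \emph{universal} Chevalley--Demazure scheme the cocharacter lattice of $T$ is the coroot lattice, so $T(S)$ is generated by the $h_{\alpha}(u)$, each of which is manifestly a word in the $e_{\pm\alpha}$, and therefore $T(S)\subset G(S)^{+}$ over any commutative ring. You are also right that stable rank~$1$ of semilocal rings is the genuinely ring-theoretic input, and your $SL_{2}$ computation correctly illustrates how a left elementary correction pushes an arbitrary element into the big cell.

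The weakness is the one you flag yourself: the ``workable Bruhat decomposition for $G(S)$'' after elementary corrections, carried out uniformly over all types of $\Phi$, is exactly the nontrivial content of Matsumoto's result, and you defer it entirely. As written the proposal is therefore an accurate roadmap rather than a proof. If you wanted to fill the gap without reproducing Matsumoto's case analysis, a cleaner route is to argue locally: for a local ring $(S,\mathfrak{m})$, smoothness of $G$ over $\mathbb{Z}$ gives surjectivity of $G(S)\to G(S/\mathfrak{m})$; the target equals $G(S/\mathfrak{m})^{+}$ because $S/\mathfrak{m}$ is a field; and the congruence kernel $G(S,\mathfrak{m})$ lies in the big cell $\Omega(S)=U^{-}(S)\,T(S)\,U^{+}(S)$ (since elements congruent to $1$ modulo $\mathfrak{m}$ have all the relevant ``diagonal'' coordinates in $1+\mathfrak{m}\subset S^{\times}$), hence in $G(S)^{+}$ by the torus reduction. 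Lifting elementary generators and combining gives $G(S)=G(S)^{+}$ in the local case, and the semilocal case follows by the same argument applied with the Jacobson radical in place of $\mathfrak{m}$ together with the Chinese Remainder Theorem. This avoids any explicit Bruhat decomposition over $S$ and makes the use of the big cell transparent; either route is legitimate, but neither appears in the paper, which simply cites the result.
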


In particular, we see that $G(A^{\circ}) = G(A^{\circ})^+$ since $A^{\circ}$ is semilocal by Lemma \ref{L:AR-1}, so the canonical homomorphism $\pi_{A^{\circ}} \colon \St (\Phi, A^{\circ}) \to G(A^{\circ})$ is surjective. Consequently, the existence of $\sigma$ is equivalent to the triviality of the restriction $\tilde{\sigma} \colon K_2 (\Phi, A^{\circ}) \to H^{\circ}.$

The existence of $\sigma$ in the case that $R$ is a semilocal ring is given by the following proposition.

%is divided into two cases. We show in Proposition \ref{P:FI-3} below that the required homomorphism $\sigma$ exists if $R$ is a semilocal ring. In the case that either $H^{\circ}$ is reductive or char~$K = 0$ and the unipotent radical $U$ of $H^{\circ}$ is commutative, one first establishes the somewhat weaker statement that there exists a homomorphism $\bar{\sigma} \colon G(A^{\circ}) \to \bar{H}$ such that $\bar{\sigma} \circ \pi_{A^{\circ}} = \nu \circ \tilde{\sigma},$ where $Z(H^{\circ})$ is the center of $H^{\circ},$ $\bar{H} = H^{\circ}/ Z(H^{\circ})$, and $\nu \colon H^{\circ} \to \bar{H}$ is the canonical map (cf. Proposition \ref{P:FI-5}).
%Then, using the fact that the center $Z(H^{\circ})$ in these cases is finite, we show that it is possible to lift $\bar{\sigma}$ to the required group homomorphism $\sigma.$

%Throughout this section, we will use the following result of Matsumoto:
%\begin{lemma}\label{L:FI-100}{\rm (cf. \cite{M}, Corollary 2)}
%Let $\Phi$ be a reduced irreducible root system of rank $\geq 2$ and let $G$ be the corresponding universal Chevalley-Demazure group scheme. If $S$ is a semilocal commutative ring, then $G(S) = G(S)^+.$
%\end{lemma}

%In particular, $G(A^{\circ}) = G(A^{\circ})^+$ since $A^{\circ}$ is semilocal by Lemma \ref{L:AR-1}, so the canonical homomorphism $\pi_{A^{\circ}} \colon \St (\Phi, A^{\circ}) \to G(A^{\circ})$ is surjective. Consequently, the existence of $\sigma$ is equivalent to the triviality of the restriction $\tilde{\sigma} \colon K_2 (\Phi, A^{\circ}) \to H^{\circ}.$

\begin{prop}\label{P:FI-3}
Suppose $R$ is a commutative semilocal ring. Then there exists a group homomorphism $\sigma \colon G(A^{\circ}) \to H^{\circ}$ making the diagram {\rm (\ref{D:FI-1})} commute. Moreover, if $\mathrm{char} \: K = 0$, then $B:= A^{\circ}$ is a finite-dimensional $K$-algebra, and $\sigma$ can be viewed as a homomorphism $G(B) \to H^{\circ}$ such that the composition $\sigma \circ T$, where $T \colon G(R)^+ \to G(B)$ is induced by $t \colon R \to B$, coincides with $\rho$ on a subgroup $\Delta \subset G(R)^+$ of finite index.
\end{prop}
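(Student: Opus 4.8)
By the discussion immediately preceding the statement, producing $\sigma$ is equivalent to showing that $\tilde{\sigma}$ vanishes on $K_2(\Phi, A^{\circ}) = \ker \pi_{A^{\circ}}$; recall that $\pi_{A^{\circ}} \colon \St(\Phi, A^{\circ}) \to G(A^{\circ})$ is surjective because $A^{\circ}$ is semilocal (Lemma \ref{L:AR-1}) and one may invoke Matsumoto's Lemma \ref{L:FI-100}. Since $R$, hence also $A^{\circ}$, is semilocal, Corollary \ref{C:StG-1} tells us that $K_2(\Phi, A^{\circ})$ is generated by the central Steinberg symbols $\{u, v\}_{\alpha}$ with $u, v \in (A^{\circ})^{\times}$ for a fixed long root $\alpha$. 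So the plan is to prove $\tilde{\sigma}(\{u,v\}_{\alpha}) = 1$ for all such $u,v$, and the key tool will be a regularity-plus-connectedness argument. First I would observe, using the defining formulas (\ref{E:StG-20}) and (\ref{E-Symb1}), the regularity of inversion on $(A^{\circ})^{\times}$ (Proposition \ref{P:AR-1}(ii)), and the relations $\tilde{\sigma}(\tilde{x}_{\pm\alpha}(a)) = \psi_{\pm\alpha}(a)$, that
$$
\Theta \colon (A^{\circ})^{\times} \times (A^{\circ})^{\times} \longrightarrow H^{\circ}, \qquad (u,v) \longmapsto \tilde{\sigma}(\{u,v\}_{\alpha}),
$$
is a regular map, with $\Theta(1_{A^{\circ}}, 1_{A^{\circ}}) = 1$ (since $\tilde{h}_{\alpha}(1_{A^{\circ}}) = 1$ forces $\{1_{A^{\circ}}, v\}_{\alpha} = 1$).

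The heart of the argument is to show $\Theta$ is constant. I would first check that $\Theta$ takes only finitely many values on $t(R^{\times}) \times t(R^{\times})$, which is Zariski-dense in $(A^{\circ})^{\times} \times (A^{\circ})^{\times}$ by Corollary \ref{C:AR-2} applied to $t \colon R \to A^{\circ}$ (the composite of $f$ with the projection $A \to A^{\circ}$, still with dense image, so that $\overline{t(R)} = A^{\circ}$ and the corollary applies). Indeed, for $x, y \in R^{\times}$ the symbol $\{x,y\}_{\alpha}$ lies in $K_2(\Phi, R) = \ker \pi_R$, so commutativity of diagram (\ref{D:StG-1}) gives $\tilde{\tau}(\tilde{F}(\{x,y\}_{\alpha})) = \rho(\pi_R(\{x,y\}_{\alpha})) = 1$. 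Writing $f(x) = (t(x), \bar{x})$ and $f(y) = (t(y), \bar{y})$ relative to $A = A^{\circ} \oplus C$ and using the induced splitting $\St(\Phi, A) = \St(\Phi, A^{\circ}) \times \St(\Phi, C)$ from Corollary \ref{C:StG-2}, under which $\tilde{F}(\{x,y\}_{\alpha}) = \{f(x), f(y)\}_{\alpha}$ corresponds to $(\{t(x), t(y)\}_{\alpha}, \{\bar{x}, \bar{y}\}_{\alpha})$, one gets $\Theta(t(x), t(y)) = \tilde{\sigma}(\{t(x), t(y)\}_{\alpha}) = \tilde{\tau}(\{\bar{x}, \bar{y}\}_{\alpha})^{-1}$, an element of the finite group $\tilde{\tau}(\St(\Phi, C))$ (finite by Proposition \ref{P:StG-2}, as $C$ is finite). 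As a regular, hence continuous, map taking finitely many values on a dense set, $\Theta$ has finite image; and since $(A^{\circ})^{\times}$ is a nonempty principal open subset of the irreducible variety $A^{\circ}$, hence itself irreducible, the domain $(A^{\circ})^{\times} \times (A^{\circ})^{\times}$ is connected, so $\Theta$ is constant, equal to $\Theta(1_{A^{\circ}}, 1_{A^{\circ}}) = 1$. Thus $\tilde{\sigma}$ kills every generator of $K_2(\Phi, A^{\circ})$, which yields the desired $\sigma \colon G(A^{\circ}) \to H^{\circ}$ with $\sigma \circ \pi_{A^{\circ}} = \tilde{\sigma}$; that $\sigma$ makes (\ref{D:FI-1}) commute, and hence $\rho\vert_{\Delta} = (\sigma \circ F)\vert_{\Delta}$, then follows by lifting $\delta \in \Delta$ to $\tilde{\delta} \in \tilde{\Delta}$ and using the already-commuting solid arrows. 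For the final assertion, when $\mathrm{char}\: K = 0$, Proposition \ref{P:AR-2} and Remark 2.3.7(ii) identify $B := A^{\circ}$ with the finite-dimensional $K$-algebra of unipotent elements of $A$; regarding $G(B)$ as an algebraic $K$-group via restriction of scalars, the composition $\sigma \circ T$ with $T \colon G(R)^{+} \to G(B)$ induced by $t$ agrees with $\rho$ on $\Delta$ by what was just shown.

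I expect the main obstacle to be the middle step: correctly tracking the symbol $\{x,y\}_{\alpha}$ through the two successive passages — first $R \to A$ via $f$, then the internal decomposition $A = A^{\circ} \oplus C$ — and recognizing that $\{x,y\}_{\alpha} \in \ker \pi_R$ forces the $A^{\circ}$-component of its image under $\tilde\tau$ into the finite subgroup $\tilde{\tau}(\St(\Phi, C))$. Once that is in place, the regularity of $\Theta$, the irreducibility of $(A^{\circ})^{\times}$, and the concluding diagram chase are essentially formal.
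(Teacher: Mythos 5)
Your argument is correct and essentially the same as the paper's: both hinge on the regularity of $\Theta$ via Proposition \ref{P:AR-1}, the Zariski density of $t(R^\times)$ via Corollary \ref{C:AR-2}, and the vanishing of $\tilde\tau$ on Steinberg symbols $\{f(a),f(b)\}_\alpha$ coming from multiplicativity of $h_\alpha$ in $G(R)^+$. The only small difference is in closing the density step: the paper restricts to the finite-index subgroup $\Lambda = R^\times \cap f^{-1}((A^\circ)^\times \times \{1\})$ so that $\Theta$ vanishes outright on the dense set $f(\Lambda) \times f(\Lambda)$, whereas you show $\Theta$ has finite image on $t(R^\times) \times t(R^\times)$ and invoke irreducibility of $(A^\circ)^\times \times (A^\circ)^\times$ to force constancy — both are valid routes to the same conclusion.
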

\begin{proof}
Observe that by Corollary \ref{C:AR-2}, $t(R^{\times})$ is Zariski-dense in $(A^{\circ})^{\times},$ where as above, $t \colon R \to A^{\circ}$ is the composition of the homomorphism $f \colon R \to A$ with the projection $s \colon A \to A^{\circ}.$
Let $\Lambda = R^{\times} \cap f^{-1} (A^{\circ} \times \{ 1 \}).$ Then $\Lambda$ has finite index in $R^{\times},$ so since $(A^{\circ})^{\times}$ is irreducible, we obtain that $f(\Lambda)$ is Zariski-dense in $(A^{\circ})^{\times} \times \{1 \}.$ Fix a long root $\alpha$, and for $u, v \in A^{\times}$, let $\{u, v \}_{\alpha}$ denote the corresponding Steinberg symbol. Clearly
$$
\tilde{\tau} ( \{u, v \}_{\alpha}) = H_{\alpha} (uv) H_{\alpha} (u)^{-1} H_{\alpha} (v)^{-1},
$$
where for $r \in A^{\times}$, we set
$$
H_{\alpha} (r) = {{W}}_{\alpha} (r) {{W}}_{\alpha} (-1) \ \ \ {\rm and} \ \ \ {{W}}_{\alpha} (r) = \psi_{\alpha} (r) \psi_{-\alpha} (-r^{-1}) \psi_{\alpha} (r).
$$
Now by Proposition \ref{P:AR-1}, the map $A^{\times} \to A^{\times}, t \mapsto t^{-1}$ is regular, hence the map
$\Theta \colon A^{\times} \times A^{\times} \to H,$ $(u,v ) \mapsto \tilde{\tau} (\{ u, v \}_{\alpha})$ is also regular. On the other hand, as we noted earlier,
for $a, b \in R^{\times},$ we have $h_{\alpha} (ab) = h_{\alpha} (a) h_{\alpha} (b)$ (see \cite{Stb1}, Lemma 28).
So, by Proposition \ref{P:StG-1},
$$
\tilde{\tau} (\{ f(a), f(b) \} ) = \rho (h_{\alpha} (ab) h_{\alpha} (a)^{-1} h_{\alpha} (b)^{-1}) = 1
$$
for all $a, b \in R^{\times}.$
Since the closure of $f(R^{\times})$ in $A^{\times}$ contains $(A^{\circ})^{\times} \times \{1 \},$ it follows that $\tilde{\tau}$ vanishes on $((A^{\circ})^{\times} \times \{ 1 \}) \times ((A^{\circ})^{\times} \times \{ 1 \} ).$ But according to Corollary \ref{C:StG-1}, $\ker \pi_{A^{\circ}}$ is generated by the Steinberg symbols $\{ u, v \}_{\alpha}$ for any fixed long root $\alpha \in \Phi.$ Thus, $\tilde{\sigma}$ vanishes on $\ker \pi_{A^{\circ}}$, implying that the required homomorphism $\sigma \colon G(A^{\circ}) \to H^{\circ}$ exists. The fact that $B := A^{\circ}$ is a finite-dimensional $K$-algebra if $\mathrm{char} \: K = 0$ follows from Proposition \ref{P:AR-2}.
\end{proof}

\vskip2mm

To obtain the required homomorphism $\sigma$ in the
case that either $H^{\circ}$ is reductive or $\mathrm{char} \: K = 0$ and the unipotent radical $U$ of $H^{\circ}$ is commutative, we first need to prove the somewhat weaker statement that there exists a homomorphism $\bar{\sigma} \colon G(A^{\circ}) \to \bar{H}$ such that $\bar{\sigma} \circ \pi_{A^{\circ}} = \nu \circ \tilde{\sigma},$ where $Z(H^{\circ})$ is the center of $H^{\circ},$ $\bar{H} = H^{\circ}/ Z(H^{\circ})$, and $\nu \colon H^{\circ} \to \bar{H}$ is the canonical map. Then we show that in these cases, $\bar{\sigma}$ actually lifts to the needed homomorphism $\sigma.$ This will require several additional facts about the structure of $H^{\circ}$, which we now summarize (see \cite[\S 5]{IR} for details).

%Then, using the finiteness of $Z(H^{\circ})$ in these cases, it is possible to lift $\bar{\sigma}$ to the needed $\sigma$ (this will be done in the next section).
%Then, using the fact that the center $Z(H^{\circ})$ in these cases is finite, we show that it is possible to lift $\bar{\sigma}$ to the required group homomorphism $\sigma.$

%For these arguments, we will need some additional facts about the structure of $H^{\circ}$, which we now summarize.

\begin{prop}\label{P:FI-1}
The group $H^{\circ}$ coincides with $\tilde{\sigma}(\tilde{G}(A^{\circ}))$ and is its own commutator.
\end{prop}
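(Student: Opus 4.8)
The plan is to identify $H^{\circ}$ with the image $N := \tilde{\sigma}(\St(\Phi, A^{\circ}))$ by squeezing it between $N$ and itself, and then to read off perfectness from that of the Steinberg group. Since $\tilde{\sigma}$ is the restriction of $\tilde{\tau}$ to $\St(\Phi, A^{\circ})$ and carries $\tilde{x}_{\alpha}(a)$ to $\psi_{\alpha}(a)$, the subgroup $N$ is generated by the sets $U_{\alpha} := \psi_{\alpha}(A^{\circ})$, $\alpha \in \Phi$. First I would record that each $U_{\alpha}$ is a closed connected subgroup of $H^{\circ}$: by the Zariski density of $f(R)$ in $A$ one has $\psi_{\alpha}(a+b) = \psi_{\alpha}(a)\psi_{\alpha}(b)$ for all $a,b \in A$ (this is precisely the computation establishing relation (R1) in the proof of Proposition~\ref{P:StG-1}), so restricting the regular map $\psi_{\alpha}$ to the connected commutative algebraic group $(A^{\circ}, \ba)$ gives a morphism of algebraic groups into $GL_n(K)$; hence $U_{\alpha}$ is closed and connected and contains $\psi_{\alpha}(0_A) = 1$. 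Moreover $A^{\circ}$ is open in $A$, so $\mathfrak{r} := f^{-1}(A^{\circ})$ satisfies $\overline{f(\mathfrak{r})} = A^{\circ}$, whence $U_{\alpha} = \psi_{\alpha}(\overline{f(\mathfrak{r})}) \subseteq \overline{\psi_{\alpha}(f(\mathfrak{r}))} = \overline{\rho(e_{\alpha}(\mathfrak{r}))} \subseteq H$, and therefore $U_{\alpha} \subseteq H^{\circ}$. Using the standard fact that the subgroup generated by a family of closed connected subgroups is again closed and connected (cf.~\cite{Bo}, \S 2), we conclude that $N$ is a closed connected subgroup of $GL_n(K)$ with $N \subseteq H^{\circ}$.

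For the reverse inclusion, since $G(R)^{+} = \pi_R(\St(\Phi, R))$ and $\tilde{\tau} \circ \tilde{F} = \rho \circ \pi_R$ (Proposition~\ref{P:StG-1}), we have $\rho(G(R)^{+}) = \tilde{\tau}(\tilde{F}(\St(\Phi, R))) \subseteq \tilde{\tau}(\St(\Phi, A))$. By Corollary~\ref{C:StG-2}, $\St(\Phi, A) = \St(\Phi, A^{\circ}) \times P$ with $P$ finite, so $\tilde{\tau}(\St(\Phi, A)) = N \cdot Q$, where $Q := \tilde{\tau}(P)$ is a finite subgroup commuting elementwise with $N$. Then $M := NQ$ is a subgroup, closed because $N$ is closed and $M$ is the union of the finitely many translates $Nq$, $q \in Q$; consequently $H = \overline{\rho(G(R)^{+})} \subseteq M$. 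Since $N$ is a closed, connected, normal subgroup of finite index in $M$, it is open, so $N = M^{\circ}$; and as $H$ is a closed subgroup of $M$ we obtain $H^{\circ} \subseteq M^{\circ} = N$. Together with the previous step this yields $H^{\circ} = N = \tilde{\sigma}(\St(\Phi, A^{\circ}))$, which is the first assertion; in particular $\tilde{\sigma}$ maps $\St(\Phi, A^{\circ})$ onto $H^{\circ}$.

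The second assertion is then immediate from the fact that, for $\Phi$ irreducible of rank $\geq 2$, the Steinberg group $\St(\Phi, A^{\circ})$ is perfect --- each generator $\tilde{x}_{\alpha}(t)$ lies in the commutator subgroup by the relations (R2) (cf.~\cite{Stb1}) --- so its homomorphic image $H^{\circ} = \tilde{\sigma}(\St(\Phi, A^{\circ}))$ satisfies $H^{\circ} = [H^{\circ}, H^{\circ}]$.

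I expect the only genuinely nontrivial point to be the equality $H^{\circ} = N$ in the middle step: one must see that $N$ is already \emph{closed} (a priori it is merely constructible), which rests on its being generated by the closed connected subgroups $U_{\alpha}$, and on the fact that passing from $N$ to the full image of $\rho$ only introduces the finite factor $Q$ coming from the finite ring $A/A^{\circ}$. Everything else is formal once the product decomposition $\St(\Phi, A) = \St(\Phi, A^{\circ}) \times P$ from \S\ref{S:KT} is in hand.
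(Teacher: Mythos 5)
Your proof is correct and follows the natural route given the machinery already assembled in the paper (the survey defers the actual argument to \cite[\S 5]{IR}, so there is no in-text proof to compare against, but your argument uses precisely the tools the paper makes available). The two substantive observations you need --- that each $U_\alpha = \psi_\alpha(A^\circ)$ is a closed connected subgroup (because $\psi_\alpha$ restricted to $(A^\circ, \ba)$ is a homomorphism of algebraic groups, by the identity $\psi_\alpha(a+b)=\psi_\alpha(a)\psi_\alpha(b)$ established in Proposition~\ref{P:StG-1}), and that the decomposition $\St(\Phi,A)=\St(\Phi,A^\circ)\times P$ from Corollary~\ref{C:StG-2} squeezes $H$ between the closed subgroup $NQ$ and its identity component $N$ --- are exactly right, and Proposition~\ref{P:Connect} gives you closedness and connectedness of $N$. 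Two minor points worth tightening: (i) the inclusion $U_\alpha \subset H$ is already built into Theorem~\ref{T:ARR-1} (which asserts $\psi_\alpha \colon A \to H$), so the density detour through $\mathfrak r = f^{-1}(A^\circ)$ is unnecessary; (ii) for the perfectness of $\St(\Phi, A^\circ)$ over a general commutative ring with identity, the correct reference is \cite[Corollary 4.4]{St1} (Stein's paper on relations and coverings over commutative rings), which is what the paper itself invokes in Proposition~\ref{P:R-2}, rather than \cite{Stb1}, which treats fields.
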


Next, recall that if $G$ is a connected algebraic group over a field $k$, a \emph{Levi subgroup} $L \subset G$ is a connected $k$-subgroup such that $G = L \ltimes R_u G,$ where $R_u G$ is the unipotent radical of $G$ (such a semi-direct product decomposition is called a \emph{Levi decomposition} of $G$). Notice that since a Levi subgroup maps isomorphically onto $G/R_uG$, it is reductive. In fact, Levi subgroups are maximal among reductive subgroups and provide cross-sections to the quotient map $\pi \colon G \to G/R_u G.$ It is a well-known result of Mostow \cite{Mos} that if $\mathrm{char} \: k = 0,$ then Levi subgroups exist and are all conjugate. However, in positive characteristic, they need not exist, nor be conjugate (see \cite[Appendix A.6]{CGP} for examples).

The proposition now yields

\begin{cor}\label{C:FI-5}
If $H^{\circ}$ has a Levi decomposition $H^{\circ} = U \rtimes S$, where $U$ is the unipotent radical of $H^{\circ}$ and $S$ is reductive, then $S$ is automatically semisimple. In particular, if $H^{\circ}$ is reductive, then it is semisimple and hence the center $Z(H^{\circ})$ is finite.
%\footnotetext{Recall that $H^{\circ}$ always has a Levi decomposition if $\mathrm{char} \: K = 0$ (cf. \cite{Mos})}

\end{cor}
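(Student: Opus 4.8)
The plan is to deduce this entirely from Proposition~\ref{P:FI-1}, which asserts that $H^{\circ}$ equals its own commutator subgroup, i.e.\ that $H^{\circ}$ is \emph{perfect}. The first observation is that perfectness passes to quotients: if $P$ is a perfect group and $N \trianglelefteq P$, then $P/N$ is again perfect, since the commutator subgroup surjects onto the commutator subgroup under the quotient map. Applying this to the projection $H^{\circ} = U \rtimes S \twoheadrightarrow S$, whose kernel is $U$ (equivalently, using that a Levi subgroup maps isomorphically onto $H^{\circ}/U$), we conclude that the reductive group $S$ is perfect.

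Next I would invoke the structure theory of connected reductive groups over the algebraically closed field $K$: such a group $S$ admits a decomposition $S = Z(S)^{\circ} \cdot [S,S]$, in which $Z(S)^{\circ}$ (the radical of $S$) is a central torus, $[S,S]$ is semisimple, and the quotient $S/[S,S]$ is a torus. Since $S$ is perfect we have $S = [S,S]$, so this quotient torus is trivial; hence $Z(S)^{\circ}$ maps onto the trivial group with finite kernel, which forces $Z(S)^{\circ} = \{1\}$ as it is connected. Therefore $S$ is semisimple.

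For the final assertion, suppose $H^{\circ}$ is reductive. Then its unipotent radical $U$ is trivial, so a Levi decomposition of $H^{\circ}$ is simply $H^{\circ} = S$; in particular, no appeal to Mostow's theorem \cite{Mos} on the existence and conjugacy of Levi subgroups is needed in this case. By the previous paragraph, $H^{\circ}$ is semisimple, and the center of a connected semisimple group over an algebraically closed field is finite: it is contained in every maximal torus $T$ and coincides there with the subgroup cut out by the vanishing of all roots, which is a finite group. Hence $Z(H^{\circ})$ is finite.

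I do not anticipate any genuine difficulty here; the one point that repays a moment's attention is that one argues with the perfectness of the Levi factor $S$ rather than of $H^{\circ}$ directly, and that the existence of Levi subgroups in positive characteristic is not an issue precisely because, in the reductive case, the relevant Levi subgroup is all of $H^{\circ}$.
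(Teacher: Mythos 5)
Your proof is correct and follows essentially the same route as the paper: both arguments start from Proposition~\ref{P:FI-1} (that $H^{\circ}$ is perfect), deduce $S = [S,S]$ by applying the projection $H^{\circ} \twoheadrightarrow S$, and then conclude that a connected reductive perfect group is semisimple. The only difference is that the paper cites this last step directly (\cite{Bo}, Corollary 14.2), whereas you supply the short structure-theoretic argument via $S = Z(S)^{\circ}\cdot[S,S]$ and the triviality of the quotient torus $S/[S,S]$; your added remark that Mostow's theorem is not needed in the reductive case is consistent with the way the corollary is phrased (existence of a Levi decomposition is a hypothesis, not a conclusion).
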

\begin{proof}
Since $H^{\circ} = [H^{\circ}, H^{\circ}]$, we have $S= [S, S],$ so $S$ is semisimple (\cite{Bo}, Corollary 14.2). In particular, if $H^{\circ}$ is reductive, then $H^{\circ} = S$ is semisimple, hence $Z(H^{\circ})$ is finite.
\end{proof}

To streamline the statements of some results later on, it will be convenient to introduce the following condition on $H^{\circ}.$ As in the corollary, let $U$ be the unipotent radical of $H^{\circ}$ and $Z(H^{\circ})$ be the center of $H^{\circ}$. We will say that $H^{\circ}$ satisfies condition (Z) if

\vskip3mm

\noindent (Z)\ \  \parbox[t]{15cm}{$Z(H^{\circ}) \cap U = \{ e \}.$}

\vskip3mm
\noindent The main result concerning (Z) is the following proposition.

\begin{prop}\label{P:FI-2} \ \
\newline {\rm (i)}\parbox[t]{16cm}{Suppose $H^{\circ}$ satisfies {\rm (Z)}. Then $Z(H^{\circ})$ is finite. Moreover, if $\mathrm{char} \: K = 0$, then $Z(H^{\circ})$ is contained in any Levi subgroup of $H^{\circ}$.}

\vskip1mm

\noindent {\rm (ii)}\parbox[t]{15.5cm}{Assume that $\mathrm{char} \: K = 0$ and that $U$ is commutative. Then $H^{\circ}$ satisfies {\rm (Z)}.}
\end{prop}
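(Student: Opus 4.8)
The plan is to derive both parts from the perfectness of $H^\circ$ established in Proposition \ref{P:FI-1} (that is, $H^\circ = [H^\circ, H^\circ]$), combined with standard structure theory of linear algebraic groups over the algebraically closed field $K$.

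For part (i), I would first show that the connected component $Z(H^\circ)^\circ$ is trivial, which gives finiteness of $Z(H^\circ)$. The connected commutative group $Z(H^\circ)^\circ$ is the direct product $T \times V$ of a torus $T$ (its subgroup of semisimple elements) and a unipotent group $V$; both are central in $H^\circ$, hence normal. Since $H^\circ$ is perfect, the reductive quotient $H^\circ / R_u(H^\circ)$ is perfect, hence semisimple, so the radical $R(H^\circ)$ coincides with $R_u(H^\circ) = U$; thus the connected solvable normal subgroup $T$ lies in $U$, and a torus inside a unipotent group is trivial, so $T = \{e\}$. Likewise $V \subseteq R_u(H^\circ) = U$, and since $V \subseteq Z(H^\circ)$, hypothesis (Z) forces $V = \{e\}$. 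Hence $Z(H^\circ)$ is finite. For the second assertion, assume $\mathrm{char}\: K = 0$ and fix a Levi subgroup $L$ of $H^\circ$. Any $z \in Z(H^\circ)$ has finite order, hence is semisimple, hence lies in some maximal torus $M$ of $H^\circ$. Since $L$ is reductive it contains a maximal torus of $H^\circ$, and all maximal tori of $H^\circ$ are conjugate, so $M = g M' g^{-1}$ for some maximal torus $M' \subseteq L$. Then $z \in g L g^{-1}$, and since $z$ is central this yields $z = g^{-1} z g \in L$.

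For part (ii), assume $\mathrm{char}\: K = 0$ and that $U$ is commutative. By Mostow's theorem \cite{Mos} there is a Levi decomposition $H^\circ = U \rtimes S$ with $S$ reductive (and semisimple by Corollary \ref{C:FI-5}). A short computation with the semidirect product shows that $Z(H^\circ) \cap U = U^S$, the subgroup of $S$-fixed points of $U$. Since $U$ is a commutative unipotent group in characteristic $0$ it is a vector group, and as a rational $S$-module it decomposes, by complete reducibility, as $U = U^S \oplus V'$ where $V' := \sum_{s \in S} (s-1)U$ is the span of the nontrivial isotypic components; one checks that $V'$ is a normal subgroup of $H^\circ = U \rtimes S$, so $H^\circ / V' \cong U^S \times S$ (a direct product, as $S$ acts trivially on $U^S$). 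This quotient of $H^\circ$ must be perfect, but its commutator subgroup is $\{e\} \times S$, forcing $U^S = \{e\}$. Therefore $Z(H^\circ) \cap U = \{e\}$, which is (Z).

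The main obstacle I anticipate is marshalling the structure-theoretic inputs in precisely the form needed: that for a perfect connected group the radical coincides with the unipotent radical; that a reductive Levi subgroup contains a maximal torus of the ambient group (needed for the conjugacy step in part (i)); and the verification that $V'$ in part (ii) is genuinely $H^\circ$-invariant, with the stated product decomposition of the quotient. Each of these is standard given \cite{Bo}, \cite{Mos}, and Corollary \ref{C:FI-5}, but they must be assembled carefully.
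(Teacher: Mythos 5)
Your proof is correct. A few remarks on where it slots into the paper's machinery, plus a couple of small points.

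Both parts rest, as they should, on the perfectness $H^{\circ}=[H^{\circ},H^{\circ}]$ from Proposition \ref{P:FI-1}. In (i), once you observe that perfectness forces $R(H^{\circ})=R_u(H^{\circ})=U$, the inclusion $Z(H^{\circ})^{\circ}\subseteq R(H^{\circ})=U$ already gives $Z(H^{\circ})^{\circ}\subseteq Z(H^{\circ})\cap U=\{e\}$ directly from (Z) — the decomposition into $T\times V$ and the ``torus inside a unipotent group'' observation for $T$ is a harmless redundancy. The Levi-containment step is fine: $Z(H^{\circ})$ is finite, hence (in characteristic $0$) consists of semisimple elements, each of which lies in a maximal torus of $H^{\circ}$; a Levi subgroup has full rank (a maximal torus of $L$ is already maximal in $H^{\circ}$), and conjugacy of maximal tori together with centrality of $z$ finishes it.

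In (ii), your identification $Z(H^{\circ})\cap U=U^{S}$ (using commutativity of $U$), the complete-reducibility splitting $U=U^{S}\oplus V'$ with $V'$ the span of the nontrivial isotypic components, the check that $V'$ is normal in $H^{\circ}$ (conjugation of $H^{\circ}$ on $U$ factors through $S$ because $U$ is abelian), and the observation that $H^{\circ}/V'\cong U^{S}\times S$ is a direct product whose derived subgroup is $\{e\}\times S$ — all of this is correct and cleanly forces $U^{S}=\{e\}$ via perfectness. This is the natural argument; an equivalent packaging would compute $[H^{\circ},H^{\circ}]=V'\rtimes S$ directly inside the semidirect product and invoke perfectness once, but the quotient-by-$V'$ route is just as clean. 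Mostow's theorem supplies the Levi decomposition, and Corollary \ref{C:FI-5} supplies the semisimplicity of $S$, exactly as the paper intends. The approach is essentially the one the paper's setup is pointing at.
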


\vskip4mm

Now set $\bar{H} = H^{\circ} / Z(H^{\circ})$.
Since $Z(H^{\circ})$ is a closed normal subgroup of $H^{\circ}$, it follows from (\cite{Bo}, Theorem 6.8) that
$\bar{H}$ is an affine algebraic group and the canonical map $\nu \colon H^{\circ} \to \bar{H}$ is a morphism of algebraic groups. We let $\bar{\rho} = \nu \circ \rho.$ Since $H^{\circ} = \tilde{\sigma}(\St (\Phi, A^{\circ}))$ by Proposition \ref{P:FI-1}, and $K_2 (\Phi, A^{\circ}) = \ker \pi_{A^{\circ}}$ is a central subgroup of $\St (\Phi, A^{\circ})$ by Corollary \ref{C:StG-1}, it is clear that $\tilde{\sigma}$ descends to a homomorphism $\bar{\sigma} \colon G(A^{\circ}) \to \bar{H}$ such that $\bar{\sigma} \circ \pi_{A^{\circ}} = \nu \circ \tilde{\sigma}.$
Thus, we have the following.

\begin{prop}\label{P:FI-5}
If either $\mathrm{char} \: K = 0$ or $H^{\circ}$ is reductive, then $B := A^{\circ}$ is a finite-dimensional $K$-algebra, and $\bar{\sigma}$ can be viewed as a homomorphism $G(B) \to \bar{H}$ such that the composition $\bar{\sigma} \circ T$, where $T \colon G(R) \to G(B)$ is induced by $t \colon R \to B$, coincides with $\bar{\rho}$ on a subgroup $\Gamma \subset G(R)^+$ of finite index.
\end{prop}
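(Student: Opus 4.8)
The plan is to deduce the proposition from the material already assembled, the only genuinely new ingredient being the identification of $B := A^{\circ}$ with a finite-dimensional $K$-algebra. Recall from the discussion preceding the statement that we already have, at the level of abstract groups, a homomorphism $\bar\sigma \colon G(A^{\circ}) \to \bar H$ satisfying $\bar\sigma \circ \pi_{A^{\circ}} = \nu \circ \tilde\sigma$: this uses Proposition \ref{P:FI-1} (so that $H^{\circ} = \tilde\sigma(\St(\Phi, A^{\circ}))$), the centrality of $K_{2}(\Phi, A^{\circ}) = \ker \pi_{A^{\circ}}$ (Corollary \ref{C:StG-1}), and the surjectivity of $\pi_{A^{\circ}}$, which holds because $A^{\circ}$ is semilocal (Lemma \ref{L:AR-1}), so that Matsumoto's Lemma \ref{L:FI-100} gives $G(A^{\circ}) = G(A^{\circ})^{+}$. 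Once $A^{\circ}$ is known to be a finite-dimensional $K$-algebra, the group $G(B) = G(A^{\circ})$ carries its structure of an algebraic $K$-group via restriction of scalars (\S\ref{S:Rat}), and $\bar\sigma$ is, by construction, a homomorphism on it; the remaining assertion, that $\bar\sigma \circ T$ agrees with $\bar\rho$ on a finite-index subgroup, will then fall out of the commutative diagram (\ref{D:FI-1}).

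The first task is therefore to show that $A^{\circ}$ is a finite-dimensional $K$-algebra. When $\mathrm{char}\: K = 0$ this is immediate: by Proposition \ref{P:AR-2}, $A^{\circ}$ virtually comes from an algebra, and since $A^{\circ}$ is connected the finite summand is trivial, so (Remark 2.3.7(ii)) $A^{\circ}$ is the unipotent subring $A'$, a commutative unipotent group in characteristic $0$, hence a vector group (\cite[7.3]{Bo}) whose unital multiplication makes it a finite-dimensional $K$-algebra. When $H^{\circ}$ is reductive --- with $\mathrm{char}\: K$ arbitrary and $R$ noetherian, as in the standing hypotheses --- the abstract structure theory of \S\ref{S:AR} is not enough, since a commutative unipotent algebraic ring in positive characteristic need not be a vector group (Remark 2.3.2); one must exploit the geometry of $H^{\circ}$. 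By Corollary \ref{C:FI-5}, $H^{\circ}$ is in fact semisimple. Using the injective regular maps $\psi_{\alpha} \colon A^{\circ} \to H^{\circ}$ of Theorem \ref{T:ARR-1}, which identify the additive group of $A^{\circ}$ with the connected commutative subgroups $U_{\alpha} := \psi_{\alpha}(A^{\circ})$ of $H^{\circ}$, together with the torus action coming from the Chevalley relation $h_{\alpha}(u)\,e_{\alpha}(t)\,h_{\alpha}(u)^{-1} = e_{\alpha}(u^{2}t)$ --- which shows that the $H_{\alpha}(u)$, with $u$ in the connected algebraic group $(A^{\circ})^{\times}$ (a principal open subset of the irreducible variety $A^{\circ}$ by Proposition \ref{P:AR-1}), normalize $U_{\alpha}$ and act on it through $\psi_{\alpha}(a) \mapsto \psi_{\alpha}(u^{2}a)$ --- one produces a nontrivial $\bG_{m}$-action on $U_{\alpha}$ forcing $U_{\alpha}$, and hence the additive group of $A^{\circ}$, to be a vector group; the unital, bi-additive multiplication then makes $A^{\circ}$ a finite-dimensional $K$-algebra. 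I expect this last step to be the main obstacle, precisely because of the failure of the vector-group property in positive characteristic noted above: the argument must genuinely pass through the semisimplicity of $H^{\circ}$ and must handle the compatibility of the $\bG_{m}$-action with the ring identity with some care.

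With $B = A^{\circ}$ a finite-dimensional $K$-algebra in hand, it remains to chase the diagram (\ref{D:FI-1}). By Corollary \ref{C:StG-2} we have $\St(\Phi, A) = \St(\Phi, A^{\circ}) \times P$ with $P = \St(\Phi, C)$ finite, so $\tilde\Delta := \tilde F^{-1}(\St(\Phi, A^{\circ}))$ and $\Delta := \pi_{R}(\tilde\Delta)$ are of finite index in $\St(\Phi, R)$ and in $G(R)^{+}$, respectively, and $\tilde F(\tilde\Delta) \subset \St(\Phi, A^{\circ})$. Restricting the identity $\tilde\tau \circ \tilde F = \rho \circ \pi_{R}$ of Proposition \ref{P:StG-1} to $\tilde\Delta$ and composing with $\nu$ gives $\nu \circ \tilde\sigma \circ \tilde F = \bar\rho \circ \pi_{R}$ on $\tilde\Delta$; substituting $\nu \circ \tilde\sigma = \bar\sigma \circ \pi_{A^{\circ}}$ and the functoriality relation $\pi_{A^{\circ}} \circ \tilde F = F \circ \pi_{R}$, we obtain $\bar\sigma \circ F = \bar\rho$ on $\Delta = \pi_{R}(\tilde\Delta)$. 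Since $\tilde F(\tilde\Delta) \subset \St(\Phi, A^{\circ})$, the restriction $F|_{\Delta}$ factors through $G(A^{\circ}) = G(B)$, where it coincides with $T|_{\Delta}$ for $T \colon G(R)^{+} \to G(B)$ the homomorphism induced by $t \colon R \to B$ (the composite of $f$ with the projection $A \to A^{\circ}$). Taking $\Gamma = \Delta$ then completes the proof.
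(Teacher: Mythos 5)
Your treatment of the diagram chase and of the $\mathrm{char}\:K = 0$ case is correct and matches the paper (for $\mathrm{char}\:K = 0$ one indeed quotes Proposition~\ref{P:AR-2} together with Remark~2.3.7(ii); the fact that $\bar\sigma \circ F$ and $\bar\rho$ agree on $\Delta$ follows from the commutative diagram~(\ref{D:FI-1}) exactly as you say). The gap is in the case $H^{\circ}$ reductive with $\mathrm{char}\:K$ arbitrary. You attempt a $\mathbb{G}_m$-action argument on $U_{\alpha} = \psi_{\alpha}(A^{\circ})$ via the Chevalley relation, hoping to conclude that the additive group of $A^{\circ}$ is a vector group; but you yourself flag this as ``the main obstacle,'' and rightly so: a nontrivial $\mathbb{G}_m$-action on a commutative unipotent group in positive characteristic does not force it to be a vector group (e.g.\ Witt groups admit such actions), and even if the additive group were a vector group one would still need to verify that the ring multiplication is $K$-bilinear, which is not automatic. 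As written, this step is not a proof.

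The paper closes the gap by a different and cleaner mechanism. It first proves Lemma~\ref{L:FI-1}: if $\ell$ is the nilpotency class of the unipotent radical $U$ of $H^{\circ}$ and $J$ is the Jacobson radical of $A^{\circ}$, then $J^{\ell+1} = \{0\}$. When $H^{\circ}$ is reductive, $U = \{e\}$ and $\ell = 0$, so $J = \{0\}$. Then Proposition~\ref{P:AR-20}(ii) (the structure theorem for commutative artinian algebraic rings, available in any characteristic) gives $A^{\circ} = A^{\circ}/J \simeq (K,+,\cdot)^{n} \oplus C$ with $C$ finite, and connectedness of $A^{\circ}$ kills $C$, yielding $A^{\circ} \simeq K^{n}$ directly. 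So the semisimplicity of $H^{\circ}$ enters exactly as you suspected it must, but via the vanishing of the Jacobson radical rather than via a torus action; this bypasses both difficulties you identified. You should replace the $\mathbb{G}_m$ argument with Lemma~\ref{L:FI-1} and Proposition~\ref{P:AR-20}.
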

All of the statements have been proved except for the fact that $B$ is a $K$-algebra if $H^{\circ}$ is reductive. This follows from Proposition \ref{P:AR-20} in view of the next lemma.
\begin{lemma}\cite[Lemma 5.7]{IR}\label{L:FI-1}
Let $\ell$ be the nilpotency class of $U$. If $J$ is the Jacobson radical of $A^{\circ},$ then $J^{\ell + 1} = \{0 \}.$ In particular, if $H^{\circ}$ is reductive, then $J = \{ 0 \}.$
\end{lemma}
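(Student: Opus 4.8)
\noindent\emph{Proof plan.} The plan is to prove by induction on $k$ that
\[
\psi_{\gamma}(J^{k})\subseteq\gamma_{k}(U)\qquad\text{for all roots }\gamma\in\Phi\text{ and all }k\ge1,
\]
where $\gamma_{1}(U)=U$ and $\gamma_{i+1}(U)=[U,\gamma_{i}(U)]$. Since each $\psi_{\gamma}$ is injective (Theorem \ref{T:ARR-1}) and $\gamma_{\ell+1}(U)=\{e\}$, the case $k=\ell+1$ then forces $J^{\ell+1}=\{0\}$. The reductive assertion is the instance $\ell=0$: there $U=\{e\}$ by Corollary \ref{C:FI-5}, so only the base case is needed and it already gives $J=\{0\}$.

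\noindent\emph{Base case $\psi_{\gamma}(J)\subseteq U$ --- the crux.} Since $A^{\circ}$ is a connected commutative algebraic ring it is artinian (Lemma \ref{L:AR-2}) and semilocal (Lemma \ref{L:AR-1}), so $J$ is a nilpotent Zariski-closed ideal and $\pi_{A^{\circ}}\colon\St(\Phi,A^{\circ})\to G(A^{\circ})$ is surjective (Lemma \ref{L:FI-100}). Put $S:=H^{\circ}/U$ and let $q\colon H^{\circ}\to S$ be the projection; by Proposition \ref{P:FI-1} and the argument of Corollary \ref{C:FI-5}, $S$ is semisimple and $q\circ\tilde{\sigma}$ is surjective onto $S$. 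First I would check that $q\circ\tilde{\sigma}$ kills $K_{2}(\Phi,A^{\circ})$: by Corollary \ref{C:StG-1} this group is generated by the Steinberg symbols $\{u,v\}_{\alpha}$ with $u,v\in(A^{\circ})^{\times}$ and $\alpha$ a fixed long root; the assignment $(u,v)\mapsto\tilde{\sigma}(\{u,v\}_{\alpha})$ is regular on the irreducible variety $(A^{\circ})^{\times}\times(A^{\circ})^{\times}$ (inversion in $A^{\circ}$ is regular by Proposition \ref{P:AR-1}, exactly as in the proof of Proposition \ref{P:FI-3}); its values lie in $q(Z(H^{\circ}))\subseteq Z(S)$ (the symbols are central in $\St(\Phi,A^{\circ})$ and $\tilde{\sigma}$ is surjective onto $H^{\circ}$), a finite set, so the assignment is constant, equal to its value $e$ at $(1_{A^{\circ}},1_{A^{\circ}})$. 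Hence $q\circ\tilde{\sigma}$ descends to a surjection $\hat{\sigma}\colon G(A^{\circ})\to S$ with $\hat{\sigma}(e_{\gamma}(a))=q(\psi_{\gamma}(a))$; since $\hat{\sigma}$ is regular on the root subgroups $e_{\gamma}(A^{\circ})$ and on a maximal torus (the images of the $h_{\gamma}(u)$), it is regular on the big cell and therefore a morphism of algebraic groups. Now $V:=\ker\!\big(G(A^{\circ})\to G(A^{\circ}/J)\big)$ is a normal unipotent subgroup: under an embedding $G\hookrightarrow GL_{N}$ over $\Z$ it lies inside $I+M_{N}(J)$, and $M_{N}(J)$ is a nilpotent ideal because $J$ is. Consequently $\hat{\sigma}(V)$ is a normal unipotent algebraic subgroup of the semisimple group $S$, hence trivial; as $e_{\gamma}(a)\in V$ for $a\in J$, we obtain $q(\psi_{\gamma}(a))=e$, i.e.\ $\psi_{\gamma}(J)\subseteq\ker q=U$ for every root $\gamma$.

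\noindent\emph{Inductive step.} Here I would use the Steinberg commutator relations behind the construction of the ring $A$. For a root $\gamma$, write $\gamma=\gamma_{1}+\gamma_{2}$ with $\gamma_{1},\gamma_{2}\in\Phi$ (possible as $\mathrm{rank}\,\Phi\ge2$); expanding $[\psi_{\gamma_{1}}(c),\psi_{\gamma_{2}}(d)]=\prod\psi_{i\gamma_{1}+j\gamma_{2}}(N^{i,j}_{\gamma_{1},\gamma_{2}}c^{i}d^{j})$ (valid for all $c,d\in A^{\circ}$ by Proposition \ref{P:StG-1}; the product runs over $i,j\ge1$ with $i\gamma_{1}+j\gamma_{2}\in\Phi$) and isolating the $(1,1)$-term, one gets, for a suitable choice of $\gamma_{1},\gamma_{2}$,
\[
\psi_{\gamma}(\pm cd)=[\psi_{\gamma_{1}}(c),\psi_{\gamma_{2}}(d)]\cdot\!\!\prod_{\substack{i,j\ge1\\(i,j)\ne(1,1)}}\!\!\psi_{i\gamma_{1}+j\gamma_{2}}(N^{i,j}_{\gamma_{1},\gamma_{2}}c^{i}d^{j})^{-1},
\]
each correction monomial having $i,j\ge1$ and $i+j\ge3$ (for simply-laced $\Phi$ there are no corrections). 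Also $\gamma_{i}(U)$ is characteristic in $H^{\circ}$, so it is stable under the conjugations in $H$ relating the $\psi$'s of roots of equal length. Suppose $\psi_{\gamma'}(J^{j})\subseteq\gamma_{j}(U)$ for all roots $\gamma'$ and all $j\le k$. Since $J^{k+1}=J\cdot J^{k}$ and $\psi_{\gamma}$ is additive, it is enough to bound $\psi_{\gamma}(ab)$ for $a\in J$, $b\in J^{k}$. The main term $[\psi_{\gamma_{1}}(a),\psi_{\gamma_{2}}(b)]$ lies in $[U,\gamma_{k}(U)]=\gamma_{k+1}(U)$ by the inductive hypothesis. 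A correction term $\psi_{\delta}(N^{i,j}_{\gamma_{1},\gamma_{2}}a^{i}b^{j})$ has monomial $a^{i}b^{j}=c'd'$ with $d'\in\{a,b\}$ and $c'\in J^{m}$ for some $m\ge k+1$; reapplying the product formula to $c'd'$ again writes $\psi_{\delta}(\pm c'd')$ as a commutator of $\psi$'s evaluated at $c'$ and $d'$ --- which lies in $[\gamma_{k}(U),U]=\gamma_{k+1}(U)$, the hypothesis applying since $c'\in J^{m}\subseteq J^{k}$ --- times further correction monomials of strictly larger $J$-degree; because $J$ is nilpotent this recursion terminates with trivial remainder, so the correction term also lies in $\gamma_{k+1}(U)$. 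Hence $\psi_{\gamma}(J^{k+1})\subseteq\gamma_{k+1}(U)$, completing the induction.

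\noindent\emph{Main obstacle.} The fiddly point is the bookkeeping for types $B_{2}$ and $G_{2}$ in the inductive step, where the commutator relations genuinely couple the short- and long-root subrings and produce the quadratic (and, in $G_{2}$, cubic) correction terms: one must verify that at every stage the re-expansion splits off a factor lying in some $J^{m}$ with $m\ge k+1$ while the residual monomials strictly increase in $J$-degree, since this --- together with the nilpotence of $J$ --- is what closes the recursion. (Checking that the descent of $q\circ\tilde{\sigma}$ and the unipotence of $V$ also go through in positive characteristic is routine given the structure theory of \S\ref{S:AR}, notably Proposition \ref{P:AR-20}; this is where the positive-characteristic force of the ``in particular'' clause lies.)
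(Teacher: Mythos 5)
The paper does not reproduce the proof of this lemma (it is cited from [IR, Lemma 5.7]), so a line-by-line comparison is not possible; I will assess your argument on its own merits. Your overall strategy — prove $\psi_{\gamma}(J^{k})\subseteq\gamma_{k}(U)$ by induction and then use injectivity of $\psi_{\gamma}$ together with $\gamma_{\ell+1}(U)=\{e\}$ — is sound, and the structure (base case via the semisimple quotient $H^{\circ}/U$ and the congruence subgroup, inductive step via the Chevalley commutator formula) is the natural one. A few points should be tightened, though. In the base case, the deduction that $\hat{\sigma}$ is a morphism of algebraic groups via the big cell presupposes that $G(A^{\circ})$ carries the structure of a connected algebraic $K$-group; this is true because $G$ is a closed $\Z$-subscheme of affine space and $A^{\circ}$ is an algebraic ring (so $G(A^{\circ})$ is cut out of $(A^{\circ})^{M}$ by regular equations, and is connected by Lemma \ref{L:FI-100} and Proposition \ref{P:Connect}), but you should say so explicitly, since the paper's own discussion of $G(B)$ as an algebraic group (via $R_{B/K}$) presumes $B$ is already a finite-dimensional $K$-algebra — which, for $H^{\circ}$ reductive in positive characteristic, is precisely what this lemma is invoked to establish; your route avoids that circularity, but only once this point is made. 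Also, after concluding that $\hat{\sigma}(V)$ is a closed normal subgroup of $S$ consisting of unipotent elements, one should add the short argument that its identity component lies in $R_{u}(S)=\{e\}$ and the remaining finite part is central in the connected group $S$ hence semisimple, so $\hat{\sigma}(V)=\{e\}$ in all characteristics.

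In the inductive step, you write the leading term as $\psi_{\gamma}(\pm cd)$, but for a non-simply-laced choice of $\gamma_{1},\gamma_{2}$ the structure constant $N^{1,1}_{\gamma_{1},\gamma_{2}}$ can equal $\pm 2$ or $\pm 3$; this is harmless because the nice-pair hypothesis makes these units in $R$ and hence in $A^{\circ}$ (so the products $N^{1,1}ab$, as $a$ ranges over $J$ and $b$ over $J^{k}$, still additively generate $J^{k+1}$), but the rescaling should be stated. Likewise, isolating the $(1,1)$-factor from the Chevalley product requires conjugating past the other factors; this is covered because $\gamma_{k+1}(U)$ is characteristic in $H^{\circ}$ and hence stable under all the conjugations involved, as you note, but the point deserves to be spelled out. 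With these details filled in, the recursion does terminate (the correction monomials strictly increase in $J$-degree and $J$ is nilpotent since $A^{\circ}$ is artinian by Lemma \ref{L:AR-2}), and the argument is complete.
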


\vskip5mm

\subsection{Rationality}\label{S:Rat} In this section, we will complete the proof of Theorem 1. We begin by showing that the abstract group homomorphisms $\sigma \colon G(B) \to H^{\circ}$ and $\bar{\sigma} \colon G(B) \to \bar{H}$ constructed in Propositions \ref{P:FI-3} and \ref{P:FI-5}, respectively, are in fact morphisms of algebraic groups. In the latter case, this then enables us to lift $\bar{\sigma}$ to a morphism of algebraic groups $\sigma \colon G(B) \to H^{\circ}$ that makes the diagram (\ref{D:FI-1}) commutative.

We should point out that in the statements concerning the rationality of $\sigma$ and $\bar{\sigma}$, we are implicitly using the fact that the functor of restriction of scalars $R_{B/K}$ enables us to endow $G(B)$ with a natural structure of a connected algebraic group over $K$. We refer the reader to \cite[Appendix A.5]{CGP}, \cite[Chapter I, \S 1, 6.6]{DG}, and \cite[Appendices 2 and 3]{Oes} for a general discussion of restriction of scalars. In the present context, all we need is the following: since $B$ is a commutative finite-dimensional $K$-algebra, $R_{B/K}(G)$ (or more precisely $R_{B/K}(_{B}G)$, where $_{B}G$ is obtained from $G$ by the base change $\Z \hookrightarrow B$) is an algebraic $K$-group such that $R_{B/K} (G)(K)$ can be naturally identified with $G(B)$, which yields a structure of an algebraic $K$-group on the latter. Also note that for each root $\alpha \in \Phi$, we have a morphism $R_{B/K} (e_{\alpha}) \colon R_{B/K} (\mathbb{G}_a) \to R_{B/K} (G)$. Now, $R_{B/K} (\mathbb{G}_a) (K) \simeq B$ is an irreducible $K$-variety. On the other hand, since $B$ is a finite-dimensional $K$-algebra, it follows from Lemma \ref{L:FI-100} that $G(B) = G(B)^+,$ i.e. the images $R_{B/K} (e_{\alpha}) (R_{B/K} (\mathbb{G}_a)(K))$ generate $R_{B/K} (G)(K)$.
The following well-known statement implies that $G(B)$ is a connected algebraic group over $K$.
\begin{prop}\label{P:Connect} \cite[Proposition 2.2]{Bo}
Let $f_i \colon V_i \to G$ $(i \in I)$ be a family of $k$-morphisms from irreducible $k$-varieties $V_i$ into a $k$-group $G$, and assume $e \in f_i V_i = W_i$ for each $i \in I$, where $e$ is the identity element of $G$. Put $M = \bigcup_{i \in I} W_i$ and denote by $\mathcal{A}(M)$ the smallest closed subgroup of $G$ containing $M$. Then $\mathcal{A}(M)$ is a connected subgroup of $G$ defined over $k$. Moreover, there is a finite sequence $(\alpha(1), \dots, \alpha(n))$ in $I$ such that $\mathcal{A}(M) = W_{\alpha(1)}^{e_1} \cdots W_{\alpha(n)}^{e_n}$, where each $e_i = \pm 1.$
\end{prop}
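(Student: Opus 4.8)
This is the classical statement of \cite[Proposition 2.2]{Bo}; here is how I would reconstruct it. The idea is to realize $\mathcal{A}(M)$ as the Zariski closure of one product of enough of the $W_i^{\pm 1}$, chosen to be of maximal dimension. For a finite word $a = (a(1), \dots, a(n))$ in $I$ and a sign vector $\epsilon = (\epsilon_1, \dots, \epsilon_n) \in \{ \pm 1 \}^n$, I would consider the morphism
\[
\mu_{a,\epsilon} \colon V_{a(1)} \times \cdots \times V_{a(n)} \longrightarrow G, \qquad (v_1, \dots, v_n) \longmapsto f_{a(1)}(v_1)^{\epsilon_1} \cdots f_{a(n)}(v_n)^{\epsilon_n},
\]
whose image $W_{a,\epsilon}$ is a constructible subset of $G$. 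Since each $V_i$ is irreducible and the image (hence the closure of the image) of an irreducible variety is irreducible, $\overline{W_{a,\epsilon}}$ is an irreducible closed subvariety of $G$; moreover it contains $e$, because $e \in W_i$ lets one choose $v_j$ with $f_{a(j)}(v_j) = e$. As all these dimensions are bounded by $\dim G$, I would then fix one such subvariety $Y = \overline{W_{a,\epsilon}}$ of maximal dimension.

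The first main step is to show that $Y$ is a closed subgroup. For $i \in I$, let $(a', \epsilon')$ be obtained from $(a, \epsilon)$ by appending the letter $i$ with sign $+1$; then, using continuity of right translations,
\[
W_{a',\epsilon'} = W_{a,\epsilon} \cdot W_i \subseteq Y \cdot W_i = \overline{W_{a,\epsilon}} \cdot W_i \subseteq \overline{W_{a,\epsilon} \cdot W_i} = \overline{W_{a',\epsilon'}},
\]
so $\overline{Y \cdot W_i} = \overline{W_{a',\epsilon'}}$ is irreducible of dimension $\leq \dim Y$ by maximality. Since $Y = Y \cdot e \subseteq Y \cdot W_i$, the irreducible closed sets $Y \subseteq \overline{Y \cdot W_i}$ have equal dimension and therefore coincide, giving $Y \cdot W_i = Y$; the same reasoning with sign $-1$ gives $Y \cdot W_i^{-1} = Y$. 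Iterating over the letters of $(a, \epsilon)$ yields $Y \cdot W_{a,\epsilon} \subseteq Y$, and by continuity of left translations and closedness of $Y$ we get $Y \cdot Y = Y \cdot \overline{W_{a,\epsilon}} \subseteq Y$; applying the same to $W_{a,\epsilon}^{-1} = W_{b,\eta}$ (reversed word, reversed and negated signs) gives $Y \cdot Y^{-1} \subseteq Y$, hence $Y^{-1} \subseteq Y$. Thus $Y$ is a closed subgroup of $G$, and it is connected because it is irreducible.

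To finish, I would identify $Y$ with $\mathcal{A}(M)$ and read off the explicit formula. Each $W_i = e \cdot W_i \subseteq Y$, so $M \subseteq Y$ and hence $\mathcal{A}(M) \subseteq Y$; conversely $W_{a,\epsilon}$ is a product of sets of the form $W_i$ and $W_i^{-1}$, all contained in the group $\mathcal{A}(M)$, so $W_{a,\epsilon} \subseteq \mathcal{A}(M)$ and thus $Y = \overline{W_{a,\epsilon}} \subseteq \mathcal{A}(M)$, giving $\mathcal{A}(M) = Y$. The constructible set $W_{a,\epsilon}$ is dense in the connected group $Y$, so it contains a dense open subset $U_0$ of $Y$; for any $g \in Y$ the dense open sets $U_0$ and $g U_0^{-1}$ meet, whence $g \in U_0 \cdot U_0 \subseteq W_{a,\epsilon} \cdot W_{a,\epsilon} = W_{a'',\epsilon''}$, where $(a'',\epsilon'')$ is the concatenation of $(a,\epsilon)$ with itself. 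This is exactly the asserted description $\mathcal{A}(M) = W_{\alpha(1)}^{e_1} \cdots W_{\alpha(m)}^{e_m}$.

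For the rationality clause, if all the $V_i$ and $f_i$ are defined over $k$, then $\mu_{a'',\epsilon''}$ is a $k$-morphism of $k$-varieties, so the Zariski closure of its image — just identified with $\mathcal{A}(M)$ — is a closed $k$-subvariety of $G$, by the standard fact that the closure of the image of a $k$-morphism is $k$-defined; being a subgroup, $\mathcal{A}(M)$ is then a closed $k$-subgroup. I expect the only point requiring genuine care to be this last one when $k$ is imperfect, where one must invoke the general form of that fact about images of $k$-morphisms rather than a naive Galois-descent argument; note that the maximal-dimension choice, and everything preceding it, is insensitive to extending scalars to $\overline{k}$, since the sets $W_{a,\epsilon}$ and their dimensions are unchanged. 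The only mild technical nuisance in a full write-up is the bookkeeping in the second step: tracking which word and sign vector produces each closure, and the lemma that the closure of $(\text{closed set}) \cdot (\text{constructible set})$ is governed by the closure of the product.
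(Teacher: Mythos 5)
The paper states this proposition only as a citation to Borel's \emph{Linear Algebraic Groups}, Proposition 2.2, and gives no proof of its own, so there is nothing paper-internal to compare against. Your reconstruction is correct and is essentially Borel's classical argument: maximize the dimension of a closed irreducible product $\overline{W_{a,\epsilon}}$, use the dimension/irreducibility comparison to show it is stable under right multiplication by each $W_i^{\pm 1}$ and hence is a closed subgroup, identify it with $\mathcal{A}(M)$, and then use the density/constructibility of $W_{a,\epsilon}$ (plus the standard "two dense opens in an irreducible variety meet" trick) to see that $\mathcal{A}(M) = W_{a,\epsilon} \cdot W_{a,\epsilon}$; the rationality clause then follows from the fact that the closure of the image of a $k$-morphism is $k$-closed.
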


%Using (\cite{Bo}, Proposition 2.2), we conclude that $G(B)$ is a connected algebraic group over $K$.

We now need to introduce some additional notations.
As before, let $\Phi$ be a reduced irreducible root system of rank $\geq 2$, and let $G$ be the universal Chevalley-Demazure group scheme of type $\Phi.$ For the remainder of this section, we fix an ordering on $\Phi,$ and
denote be $\Phi^+$ and $\Phi^-$ the corresponding subsystems of positive and negative roots, respectively; also, we let $\Pi \subset \Phi^+$ be a system of simple roots. Set $m = \vert \Phi^+ \vert = \vert \Phi^- \vert,$ and $\ell = \vert \Pi \vert.$ Next, let $U^+$ and $U^-$ be the standard subschemes associated to $\Phi^+$ and $\Phi^-$, respectively, and let $T$ be the usual maximal torus (cf. \cite{Chev}, \S 4).

Now let $R$ be a commutative ring such that $(\Phi, R)$ is a nice pair, and $K$ be
an algebraically closed field. As previously, we will assume
that $R$ is noetherian if $\mathrm{char} \: K > 0.$ Given a representation $\rho \colon G(R)^+ \to GL_n (K)$, let $A$ be the algebraic ring associated to $\rho$ (Theorem \ref{T:ARR-1}). From now on, we will assume that $B : = A^{\circ}$
is a finite-dimensional $K$-algebra, so that $G(A^{\circ})= G(B)$ has a natural structure of a connected algebraic $K$-group, as explained above. We note that $B$ is indeed a finite-dimensional $K$-algebra if either $\mathrm{char} \: K = 0$ or $H^{\circ}$ is reductive (see Proposition \ref{P:FI-5}), which are precisely the cases needed to complete the proof of Theorem 1.
As a matter of convention,
whenever we need to emphasize that we are working with the maps involving $A^{\circ}$ constructed in the previous subsection, we will use the notation $G(A^{\circ})$ rather than $G(B).$

\vskip2mm

The following well-known lemma (cf. \cite{Bo1}) describes the ``big cell'' of $G(B).$

\begin{lemma}\label{L:R-1}
The product map $p: U^- \times T \times U \to G$ gives an isomorphism onto an open subscheme $\Omega \subset G$ over $\Z$. Consequently, if $B$ is a finite-dimensional $K$-algebra, then $\Omega (B)$ is a Zariski-open subvariety of $G(B).$
\end{lemma}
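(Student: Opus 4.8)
The plan is to reduce the statement to the standard structure theory of split reductive group schemes: the first assertion is the existence of the open cell over $\Z$, and the claim about $B$-points is then a formal consequence obtained by restriction of scalars. For the first part, I would observe that the source $U^- \times T \times U$ is a smooth affine $\Z$-scheme, $\Z$-flat with connected fibres, of the same relative dimension $\ell + 2m$ as $G$ (here $m = \vert \Phi^+ \vert$ and $\ell = \vert \Pi \vert$). One first checks that the differential of $p$ along the identity section is the canonical decomposition $\mathrm{Lie}(G) = \mathfrak{n}^- \oplus \mathfrak{t} \oplus \mathfrak{n}$, with $\mathfrak{n}^{\pm}$ and $\mathfrak{t}$ the Lie algebras of $U^{\pm}$ and $T$ (valid already over $\Z$ in Chevalley's integral form); hence $p$ is \'etale along the identity section, and by left translation by $U^- T$ and right translation by $U$ it is \'etale everywhere on the source. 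Next, one uses the uniqueness of the triangular factorization $g = u^- t u$ --- which holds functorially over any base, because $U^- \cap TU$ is the trivial subscheme of $G$ --- to see that $p$ is a monomorphism. Since an \'etale monomorphism of schemes is an open immersion, $p$ identifies $U^- \times T \times U$ with an open subscheme $\Omega \subset G$ containing the identity section. This is exactly the ``open cell'' constructed in \cite{Bo1} and, in the scheme-theoretic generality needed here, in \cite{BCRed}, so in practice one may simply invoke those references.

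\textbf{Passage to $B$-points.} Since $B$ is a commutative finite-dimensional $K$-algebra, base change along $\Z \hookrightarrow B$ turns the open immersion $\Omega \hookrightarrow G$ into an open immersion $\Omega_B \hookrightarrow {}_{B}G$ of $B$-schemes; applying the restriction of scalars functor $R_{B/K}$, which carries open immersions to open immersions (cf. \cite[Appendix A.5]{CGP}), yields an open immersion $R_{B/K}(\Omega_B) \hookrightarrow R_{B/K}({}_{B}G)$ of $K$-schemes. Passing to $K$-points and using the identifications $R_{B/K}({}_{B}G)(K) = G(B)$ and $R_{B/K}(\Omega_B)(K) = \Omega(B)$ then shows that $\Omega(B)$ is Zariski-open in $G(B)$. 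Equivalently, and more concretely, the open cell is a principal open subset $\Omega = G_{\delta}$ for a suitable $\delta \in \Z[G]$ (for instance a product of lowest-weight matrix coefficients ranging over a set of fundamental weights); then $\Omega(B) = \{\, g \in G(B) \mid \delta(g) \in B^{\times} \,\}$ is the preimage of the Zariski-open subset $B^{\times} \subset B$ --- open because $B$ is artinian, so $B^{\times}$ is the complement of the finite union of its maximal ideals --- under the regular map $g \mapsto \delta(g)$ from the $K$-variety $G(B)$ to the affine space underlying $B$, and is therefore open.

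\textbf{The main obstacle.} The only genuinely substantive point is the first assertion, namely that $p$ is an open immersion over the base $\Z$ and not merely over a field; over $\Z$ one must argue scheme-theoretically (flatness, the differential computation in the integral Chevalley form, and the functorial uniqueness of the triangular factorization). This is classical and fully documented in \cite{Bo1} and \cite{BCRed}, so the real content of the lemma for our purposes lies in the essentially formal passage to $B$-points.
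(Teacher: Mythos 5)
The paper gives no proof of this lemma at all: it is stated as ``well-known (cf.\ \cite{Bo1})'' and the existence of the big cell over $\Z$ is simply invoked from the references. Your proposal reconstructs the standard argument that one finds in \cite{Bo1} and \cite{BCRed}, and it is correct in substance: the differential computation $\mathrm{Lie}(G) = \mathfrak{n}^- \oplus \mathfrak{t} \oplus \mathfrak{n}$ over $\Z$ gives \'etaleness along the identity section, an equivariance argument propagates \'etaleness to all of $U^- \times T \times U$, and functorial uniqueness of the triangular factorization (via $U^- \cap TU = \{e\}$ scheme-theoretically) shows $p$ is a monomorphism, whence an open immersion since an \'etale monomorphism is an open immersion. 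The one place your write-up is slightly imprecise is the translation step: the clean formulation is that $p$ is equivariant for the left action of $U^-\!\rtimes T$ (acting on the $U^-$-factor by conjugation-then-translation and on the $T$-factor by translation) and the right action of $U$, and these together act simply transitively on the source, so \'etaleness at one point spreads everywhere; as stated, ``left translation by $U^- T$'' needs this conjugation built in. For the passage to $B$-points, both of your arguments are fine and in fact equivalent: $R_{B/K}$ preserves open immersions, or, more concretely, $\Omega$ is a principal open $G_\delta$ and $\Omega(B)$ is the preimage of the open set $B^\times$ under the regular map $\delta$. Since the paper outsources the content of the lemma entirely to the literature, your proposal supplies strictly more detail than the paper itself does.
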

%\begin{proof}
%By (\cite{Bo1}, Lemma 4.5), $p$ is an isomorphism onto a principal open subscheme $\Omega \subset G$ defined by some $d \in \Z[G]$ (which is in fact
%a matrix coefficient of the $m$-th exterior power of the adjoint representation). It follows that
%$$
%\Omega (B) = \{ g \in G(B) \mid d(g) \in B^{\times} \}.
%$$
%Since $d \colon G(B) \to B$ (or, more precisely, $R_{B/K} (d)$) is a regular map of algebraic $K$-varieties, and $B^{\times} \subset B$ is Zariski-open by Proposition \ref{P:AR-1}, we obtain that $\Omega (B)$ is an open subvariety of $G(B)$, as claimed.
%\end{proof}

The key step in proving that the homomorphisms $\sigma$ and $\bar{\sigma}$ are regular will be to show
that their restrictions to $\Omega(A^{\circ})$ are regular.

\begin{lemma}\label{L:R-2}
Let $\sigma \colon G(A^{\circ}) \to H^{\circ}$ and $\bar{\sigma} \colon G(A^{\circ}) \to \bar{H}$ be the group homomorphisms constructed in Propositions {\ref{P:FI-3}} and {\ref{P:FI-5}}, respectively. Then their restrictions $\sigma \vert_{\Omega (A^{\circ})}$ and $\bar{\sigma} \vert_{\Omega (A^{\circ})}$ to $\Omega(A^{\circ})$ are regular.
\end{lemma}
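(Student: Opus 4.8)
The plan is to reduce the statement to proving that $\sigma$ and $\bar{\sigma}$ are regular on each of the three factors of the big cell. Write $B = A^\circ$; by assumption $B$ is a finite-dimensional $K$-algebra, so Lemma \ref{L:R-1} identifies $\Omega(B)$, as a $K$-variety, with $U^-(B) \times T(B) \times U^+(B)$. Since $\sigma \colon G(A^\circ) \to H^\circ$ and $\bar{\sigma} \colon G(A^\circ) \to \bar{H}$ are group homomorphisms and multiplication in $H^\circ$ (resp.\ $\bar{H}$) is regular, the restriction of $\sigma$ (resp.\ $\bar{\sigma}$) to $\Omega(B)$ is obtained by composing the product map $(\sigma\vert_{U^-(B)}) \times (\sigma\vert_{T(B)}) \times (\sigma\vert_{U^+(B)})$ with iterated multiplication; hence it suffices to show that the restriction to each of the three factors is regular. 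For this I will use the standard isomorphisms of $\Z$-schemes (cf.\ \cite{Chev}, \S 4): after fixing orderings of $\Phi^+$ and $\Phi^-$, the product maps $(a_\beta)_{\beta \in \Phi^{\pm}} \mapsto \prod_\beta e_\beta(a_\beta)$ give isomorphisms $\prod_{\beta\in\Phi^{\pm}}\mathbb{G}_a \to U^{\pm}$, and, since $G$ is simply connected, $(u_1,\dots,u_\ell)\mapsto \prod_{i=1}^{\ell} h_{\alpha_i}(u_i)$ gives an isomorphism $\mathbb{G}_m^{\ell}\to T$ through the simple coroots $\alpha_1,\dots,\alpha_\ell$. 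Evaluating at $B$, these furnish coordinates $(a_\beta)_{\beta\in\Phi^{\pm}}$ on $U^{\pm}(B)$ and $(u_1,\dots,u_\ell)$ on $T(B) \cong (B^{\times})^{\ell}$.

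For the two unipotent factors I would argue as follows. Recall from Propositions \ref{P:FI-3} and \ref{P:FI-5} that $\sigma\circ\pi_{A^\circ} = \tilde{\sigma}$ and $\bar{\sigma}\circ\pi_{A^\circ} = \nu\circ\tilde{\sigma}$, where $\tilde{\sigma}$ is the restriction of $\tilde{\tau}$ to $\St(\Phi, B)$, and recall from Proposition \ref{P:StG-1} that $\tilde{\tau}(\tilde{x}_\beta(a)) = \psi_\beta(a)$ for every root $\beta\in\Phi$ and every $a\in B$, where $\psi_\beta \colon A\to H$ is the regular map of Theorem \ref{T:ARR-1}. Applying $\sigma$ to $\prod_{\beta\in\Phi^+} e_\beta(a_\beta) = \pi_{A^\circ}(\prod_{\beta\in\Phi^+}\tilde{x}_\beta(a_\beta))$ and using that $\tilde{\sigma}$ is a homomorphism, I obtain $\sigma(\prod_{\beta}e_\beta(a_\beta)) = \prod_{\beta}\psi_\beta(a_\beta)$, which is a product of finitely many regular maps followed by multiplication in $GL_n(K)$; hence, in the coordinates $(a_\beta)$, the map $\sigma\vert_{U^+(B)}$ is regular. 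The same argument applies to $U^-(B)$, and to $\bar{\sigma}$ after post-composing with the morphism $\nu$.

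The torus factor is the only place where I expect real subtlety. Exactly as in the proof of Proposition \ref{P:FI-3}, for $u\in B^{\times}$ we have $\sigma(h_{\alpha_i}(u)) = \tilde{\sigma}(\tilde{h}_{\alpha_i}(u)) = H_{\alpha_i}(u)$ with $H_{\alpha_i}(u) = W_{\alpha_i}(u)\,W_{\alpha_i}(-1)$ and $W_{\alpha_i}(u) = \psi_{\alpha_i}(u)\,\psi_{-\alpha_i}(-u^{-1})\,\psi_{\alpha_i}(u)$. The hard part is the $u^{-1}$ occurring here, which makes regularity non-obvious, and this is exactly where Proposition \ref{P:AR-1}(ii) — the regularity of inversion $u\mapsto u^{-1}$ on $B^{\times}$ — is essential; together with the regularity of $\psi_{\alpha_i}$ and $\psi_{-\alpha_i}$ it shows that $u\mapsto H_{\alpha_i}(u)$ is a regular map $B^{\times}\to H^\circ$. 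Consequently $\sigma(\prod_{i=1}^{\ell} h_{\alpha_i}(u_i)) = \prod_{i=1}^{\ell} H_{\alpha_i}(u_i)$ is regular in the coordinates $(u_1,\dots,u_\ell)$, so $\sigma\vert_{T(B)}$ is regular, and likewise $\bar{\sigma}\vert_{T(B)}$ after composing with $\nu$. Assembling the three cases via the big-cell decomposition then yields the lemma. Apart from the torus step, the argument is essentially a bookkeeping exercise with the already-constructed regular maps $\psi_\beta$ and $\pi_{A^\circ}$.
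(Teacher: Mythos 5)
Your proof is correct and takes essentially the same route as the paper: both parametrize the big cell via the product isomorphism $U^- \times T \times U^+ \to \Omega$, pull $\sigma$ back along it, and observe that on the unipotent coordinates the map is given by the regular maps $\psi_\alpha$ while on the torus coordinates it is given by the $H_\alpha$'s, whose regularity hinges on Proposition \ref{P:AR-1}(ii). The only cosmetic difference is that the paper packages the result as a single explicit regular map $\kappa$ making a triangle commute, whereas you phrase it as regularity factor-by-factor before reassembling — the underlying computation is the same.
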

\begin{proof}
Recall that there exist isomorphisms of group schemes over $\Z$
$$
\omega^+ \colon (\mathbb{G}_a)^m \to U, \ \ \ \omega^- \colon (\mathbb{G}_a)^m \to U^-, \ \ \ {\rm and} \ \ \ \omega \colon (\mathbb{G}_m)^{\ell} \to T
$$
such that for any commutative ring $R$, we have
$$
\omega^+ ((u_{\alpha})_{\alpha \in \Phi^+}) = \prod_{\alpha \in \Phi^+} e_{\alpha} (u_{\alpha}), \ \ \ \omega^- ((u_{\alpha})_{\alpha \in \Phi^-}) = \prod_{\alpha \in \Phi^-} e_{\alpha} (u_{\alpha}), \ \ \ {\rm and} \ \ \ \omega ((t_{\alpha})_{\alpha \in \Pi}) = \prod_{\alpha \in \Pi} h_{\alpha} (t_{\alpha}),
$$
where the roots in $\Phi^+$ and $\Phi^-$ are listed in an arbitrary (but {\it fixed}) order (cf. \cite{Chev}, \S 4). Let
$$
\theta \colon (\bG_a)^m \times (\bG_m)^{\ell} \times (\bG_a)^m \to G
$$
be the composition of the isomorphism
$$
\omega^- \times \omega \times \omega^+ \colon (\bG_a)^m \times (\bG_m)^{\ell} \times (\bG_a)^m \to U^- \times T \times U
$$
with the product map $p \colon U^- \times T \times U \to G.$ Then it follows from Lemma \ref{L:R-1} that $\theta$ is an isomorphism onto the Zariski-open subscheme $\Omega \subset G$, and consequently $\theta$ induces an isomorphism of $K$-varieties
$$
\theta_{A^{\circ}} \colon (A^{\circ})^m \times ((A^{\circ})^{\times})^{\ell} \times (A^{\circ})^m \to \Omega (A^{\circ}).
$$
Next, let $\psi_{\alpha} \colon A^{\circ} \to H$, $\alpha \in \Phi$, be the regular maps constructed in Theorem \ref{T:ARR-1}, and $$H_{\alpha} \colon (A^{\circ})^{\times} \to H, \ \ \ \alpha \in \Phi$$ be the regular maps introduced in the proof of Proposition \ref{P:FI-3}.
%with $\alpha \in \Phi$, be the regular map constructed in Theorem \ref{T:ARR-1}, and $\chi_{\alpha} \colon (A^{\circ})^{\times} \to H$ be the regular map introduced in the proof of Proposition \ref{P:FI-3}.
Define the regular map
$$
\kappa \colon (A^{\circ})^m \times ((A^{\circ})^{\times})^{\ell} \times (A^{\circ})^m \to H
$$
by
$$
\kappa ((u_{\alpha})_{\alpha \in \Phi^-}, (t_{\alpha})_{\alpha \in \Pi}, (u_{\alpha})_{\alpha \in \Phi^+}) = \left( \prod_{\alpha \in \Phi^-} \psi_{\alpha} (u_{\alpha}) \right) \cdot \left( \prod_{\alpha \in \Pi} H_{\alpha} (t_{\alpha}) \right) \cdot \left( \prod_{\alpha \in \Phi^+} \psi_{\alpha} (u_{\alpha}) \right).
$$
By our construction, $\sigma (e_{\alpha} (a)) = \psi_{\alpha} (a)$ for any $a \in A^{\circ}$ and all $\alpha \in \Phi,$ and consequently $(\sigma (h_{\alpha} (a)) = H_{\alpha} (a)$ for all $a \in (A^{\circ})^{\times}$ and $\alpha \in \Phi.$ Thus we have the following commutative diagram
$$
\xymatrix{ & (A^{\circ})^m \times ((A^{\circ})^{\times})^{\ell} \times (A^{\circ})^m \ar[ld]_{\theta} \ar[rd]^{\kappa} \\ G(A^{\circ}) \supset \Omega(A^{\circ})  \ar[rr]^{\sigma} & & H}
$$
Since $\kappa$ is regular and $\theta$ is an isomorphism of algebraic varieties over $K$, we conclude that $\sigma \vert_{\Omega (A^{\circ})}$ is regular. A similar argument shows that $\bar{\sigma} \vert_{\Omega (A^{\circ})}$ is regular.

\end{proof}

We can now prove
\begin{prop}\label{P:R-1}
The homomorphisms $\sigma \colon G(B) \to H^{\circ}$ and $\bar{\sigma} \colon G(B) \to \bar{H}$ constructed in Propositions \ref{P:FI-3} and \ref{P:FI-5}, respectively, are morphisms of algebraic groups.
\end{prop}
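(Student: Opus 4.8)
The plan is to deduce that $\sigma$ and $\bar\sigma$ are morphisms from the fact, established in Lemma \ref{L:R-2}, that they restrict to regular maps on the big cell, by a standard translation argument. By Lemma \ref{L:R-1}, $\Omega(A^\circ)$ is a nonempty Zariski-open subvariety of $G(A^\circ) = G(B)$ --- it contains the identity element, obtained by setting all root coordinates equal to $0$ and all torus coordinates equal to $1$ --- and, since $G(B)$ is connected (Proposition \ref{P:Connect}), it is even dense, though density will not actually be needed below.

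First I would note that the left translates $g \cdot \Omega(A^\circ)$, for $g \in G(B)$, form an open cover of $G(B)$: given $h \in G(B)$, the set $\{ h\omega^{-1} : \omega \in \Omega(A^\circ) \}$ is nonempty and open, being the image of $\Omega(A^\circ)$ under the variety automorphism $x \mapsto h x^{-1}$, so it contains some element $g_0$, and then $h \in g_0 \cdot \Omega(A^\circ)$.

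Next, fix $g_0 \in G(B)$. For any $g \in g_0 \cdot \Omega(A^\circ)$ we have $g_0^{-1} g \in \Omega(A^\circ)$, and since $\sigma$ is an abstract group homomorphism,
$$
\sigma(g) \;=\; \sigma(g_0) \cdot \sigma(g_0^{-1} g) \;=\; \lambda_{\sigma(g_0)}\bigl( \sigma\vert_{\Omega(A^\circ)}(g_0^{-1} g) \bigr),
$$
where $\lambda_{h}$ denotes left translation by $h$ in $H^\circ$, which is an automorphism of $H^\circ$ as an algebraic variety. The map $g \mapsto g_0^{-1} g$ is regular on $g_0 \cdot \Omega(A^\circ)$, and $\sigma\vert_{\Omega(A^\circ)}$ is regular by Lemma \ref{L:R-2}; hence $\sigma$ is regular on the open set $g_0 \cdot \Omega(A^\circ)$. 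Since these sets cover $G(B)$ as $g_0$ varies, and the local expressions automatically agree (being restrictions of the single map $\sigma$), we conclude that $\sigma$ is regular on $G(B)$, i.e. it is a morphism of algebraic groups. Running the identical argument with $\bar\sigma$ in place of $\sigma$, using the regularity of $\bar\sigma\vert_{\Omega(A^\circ)}$ from Lemma \ref{L:R-2} and left translations in $\bar H$, shows that $\bar\sigma$ is likewise a morphism of algebraic groups.

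The real content lies entirely in Lemma \ref{L:R-2}, where $\sigma\vert_{\Omega(A^\circ)}$ was realized explicitly as $\kappa \circ \theta_{A^\circ}^{-1}$; after that, the spreading-out above is routine and I do not anticipate any genuine obstacle. The only minor points requiring care are that the translates of $\Omega(A^\circ)$ truly cover $G(B)$ --- which uses nothing beyond $\Omega(A^\circ)$ being a nonempty open subset --- and that there is no gluing issue, since one and the same abstract homomorphism is being restricted over the various members of the cover.
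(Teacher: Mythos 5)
Your proof is correct and takes essentially the same route as the paper: the paper also reduces to Lemma \ref{L:R-2} and then invokes a stated-without-proof ``elementary'' lemma (Lemma \ref{L:R-3}) asserting that an abstract homomorphism of affine algebraic groups which is regular on a nonempty Zariski-open set is a morphism. Your translation argument is precisely the standard proof of that lemma, supplied inline; your remark that connectedness (and density of the big cell) is not actually needed for the spreading-out step is also accurate.
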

Indeed, as we remarked above, under the hypotheses of Propositions \ref{P:FI-3} and \ref{P:FI-5}, $G(B)$ is a connected algebraic group over $K$. Thus, the proposition follows immediately from
Lemma \ref{L:R-2} and the following (elementary) lemma.
\begin{lemma}\label{L:R-3}
Let $K$ be an algebraically closed field and let $\mathscr{G}$ and $\mathscr{G}'$ be affine algebraic groups over $K$, with $\mathscr{G}$ connected. Suppose $f \colon \mathscr{G} \to \mathscr{G}'$ is an abstract group homomorphism\footnotemark and assume there exists a Zariski-open set $V \subset \mathscr{G}$
%containing the identity element $e_G$
such that $\varphi := f \vert_V$ is a regular map. Then $f$ is a morphism of algebraic groups. \footnotetext{Here we tacitly identify $\mathscr{G}$ and $\mathscr{G}'$ with the corresponding groups $\mathscr{G}(K)$ and $\mathscr{G}'(K)$ of $K$-points.}
%there exists a Zariski-open set Let $G$ be a connected affine algebraic group, and let $f \colon G \to GL_n (K)$ be a group homomorphism, with $K$ an algebraically closed field.  Suppose there exists a Zariski- open set $V \subset G$ containing the identity element 1 such that $\varphi = f \vert_V$ is a regular map. Then $f$ is a morphism of algebraic groups.
\end{lemma}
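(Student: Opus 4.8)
The plan is to exhibit a Zariski-open cover of $\mathscr{G}$ on each member of which $f$ manifestly agrees with a regular map, and then to invoke the fact that regularity of a morphism of varieties is local on the source.

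First I would observe that the left translates $gV$, for $g\in\mathscr{G}$, form an open cover of $\mathscr{G}$. Each $gV$ is open because left translation $L_g\colon x\mapsto gx$ is an isomorphism of the variety $\mathscr{G}$; and given any $h\in\mathscr{G}$, choosing any $v\in V$ yields $h=(hv^{-1})v\in(hv^{-1})V$, so every point of $\mathscr{G}$ lies in some translate. (Only the nonemptiness of $V$ is used here; the connectedness of $\mathscr{G}$ is not in fact needed for this step, although it holds in our application.)

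Next, for a fixed $h\in\mathscr{G}$ I would examine $f$ on the open set $hV$. For $x\in hV$ one has $h^{-1}x\in V$, so, since $f$ is a group homomorphism,
$$
f(x) = f\bigl(h\cdot(h^{-1}x)\bigr) = f(h)\,f(h^{-1}x) = f(h)\,\varphi(h^{-1}x),
$$
the last equality because $h^{-1}x\in V$. Viewed as a function of $x$ on $hV$, the right-hand side is the composite of the variety isomorphism $L_{h^{-1}}\colon hV\to V$, the regular map $\varphi\colon V\to\mathscr{G}'$, and left multiplication by the fixed element $f(h)\in\mathscr{G}'$ (itself an isomorphism of the variety $\mathscr{G}'$); hence $f|_{hV}$ is regular.

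Since regularity of a map of varieties may be checked on a Zariski-open cover of the source, it follows that $f$ is a morphism of varieties, and, being a group homomorphism, a morphism of algebraic groups. I do not anticipate a genuine obstacle: the argument is formal, and the only points requiring slight care are that translations on $\mathscr{G}$ and on $\mathscr{G}'$ are isomorphisms of varieties---so that translating the given regular map $\varphi$ preserves regularity---and that local regularity on an open cover yields global regularity.
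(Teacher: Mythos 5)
Your proof is correct. The paper states the lemma without proof, describing it as elementary, and your translate-and-patch argument — covering $\mathscr{G}$ by the translates $gV$, writing $f(x) = f(h)\,\varphi(h^{-1}x)$ on $hV$, and invoking the local nature of regularity — is exactly the standard argument the author has in mind. Your parenthetical observation that connectedness of $\mathscr{G}$ is not actually used here is also correct; it is a hypothesis carried along from the surrounding context of the paper (where it is needed, e.g., to guarantee that $G(B)$ is generated by the big cell), not from the proof of this particular lemma.
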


\vskip2mm

\noindent {\bf Remark 4.4.6.} In the noncommutative setting, we will establish analogous rationality results using a geometric argument (see Lemma \ref{NC:L-R1}), which could also be applied in the present case when working over a field of characteristic 0. However, since the use of the big cell makes the proof more direct, we decided to give the above argument here.

\vskip3mm

\addtocounter{thm}{1}

%\begin{proof}
%Clearly, since $f$ is a group homomorphism, given $x,y \in V$ such that $xy \in V$, we have $\varphi (xy) = \varphi(x) \varphi(y).$
%Now fix any $a \in V$, and consider the regular map $\varphi_a \colon a V \to \mathscr{G}'$ given by $\varphi_a (a u) = \varphi(a) \varphi(u).$ If $a u = v \in a V \cap V$, then $\varphi_a (au) = \varphi (a) \varphi (u) = \varphi (au) = \varphi (v).$ Hence, $\varphi$ and $\varphi_a$ define the same rational map. So, since $\varphi_a$ is defined on $aV$, it follows that $\varphi$ is defined on $\cup_{ a \in V} a V = VV = \mathscr{G}$,  where the last equality follows from (\cite{Bo}, Proposition 1.3) as $\mathscr{G}$ is connected. Thus, $\varphi$ extends to a regular map on $\mathscr{G}$, which we will also denote by $\varphi$. Observe now that the map $\mathscr{G} \times \mathscr{G} \to \mathscr{G}'$, $(x,y) \mapsto \varphi(xy)^{-1} \varphi(x) \varphi(y)$ coincides with the constant map $(x,y) \mapsto e_{\mathscr{G}'}$ on $V \times V$. So, the density of $V$ implies that $\varphi(xy) = \varphi(x) \varphi(y)$ for all $x,y \in \mathscr{G}.$ Thus, $\varphi$ and $f$ are both group homomorphisms that coincide on $V$, hence $\varphi \equiv f$ as $V V = \mathscr{G}.$ So, $f$ is a morphism of algebraic groups, as claimed.
%\end{proof}

The final step in the proof of Theorem 1 involves showing that
in the cases under consideration,
the morphism of algebraic groups $\bar{\sigma} \colon G(B) \to \bar{H}$ can be lifted to a morphism $\sigma \colon G(B) \to H^{\circ}$ making the diagram (\ref{D:FI-1}) commutative.

For this, one needs several structural results for $G(B)$ as an algebraic $K$-group, where $B$ is an arbitrary commutative finite-dimensional $K$-algebra, which are also useful in other contexts.
Let $J = J(B)$ be the Jacobson radical of $B$. By the Wedderburn-Malcev Theorem (see \cite{P}, Theorem 11.6), there exists a semisimple subalgebra $\bar{B} \subset B$ such that $B = \bar{B} \oplus J$ as $K$-vector spaces and $\bar{B} \simeq B/J \simeq K \times \cdots \times K$ ($r$ copies) as $K$-algebras.
Let $e_i = (0, \dots, 0, 1, 0, \dots, 0) \in \bar{B}$ be the $i$th standard basis vector. Then we have $B = \oplus_{i=1}^r B_i,$ where $B_i = e_i B.$ Clearly, $B_i = \bar{B}_i \oplus J_i$ with $\bar{B}_i = e_i \bar{B} \simeq K$ and $J_i = e_i J$, so in particular, $B_i$ is a local $K$-algebra with maximal ideal $J_i.$ For an ideal $\mathfrak{b} \subset B$, we let $G(B, \mathfrak{b})$ denote the congruence subgroup modulo $\mathfrak{b},$ i.e. the kernel of the natural morphism of algebraic $K$-groups $G(B) \to G(B/ \mathfrak{b})$; it is clear that $G(B, \mathfrak{b})$ is a closed normal subgroup of $G(B).$ The main results about $G(B)$ are summarized in the following proposition.
\begin{prop}\label{P:R-3}
{\rm (i)} $G(B) = G(B,J) \rtimes G(\bar{B})$ is a Levi decomposition of $G(B)$;

\vskip2mm

\noindent {\rm (ii)} $G(B) \simeq G(B_1) \times \cdots \times G(B_r)$, where each $B_i$ is a finite-dimensional local $K$-algebra;

\vskip2mm

\noindent {\rm (iii)} \parbox[t]{15.6cm}{Suppose $B = K \oplus J$ is a finite-dimensional commutative local $K$-algebra. Let $d \geq 1$ be such that $J^d = \{ 0 \}$, and for each $k = 1, \dots, d-1,$ let $s_k = \dim_K J^k/ J^{k+1}.$ Then, for $k = 1, \dots, d-1,$ the quotient $\mathcal{G}_{k} := G(B, J^{k})/ G(B, J^{k +1})$ is isomorphic as an algebraic $K$-group
to $\mathfrak{g} \times \cdots \times \mathfrak{g}$ ($s_{k}$ copies), where $\mathfrak{g}$ is the Lie algebra of $G$, considered as an algebraic group in terms of the underlying vector space. Furthermore, the conjugation action of $G(K)$ on $\mathcal{G}_{k}$ is the sum of $s_{k}$ copies of the adjoint representation.}
%If $B = K \oplus J$, where $J = J(B)$, is a finite-dimensional commutative local $K$-algebra, and $d \geq 1$ is such that $J^{d} = \{ 0 \}$, then for $k = 1, \dots, d-1,$ the quotient $\mathcal{G}_{k} := G(B, J^{k})/ G(B, J^{k +1})$ is isomorphic to $\mathfrak{g} \times \cdots \times \mathfrak{g}$ ($s_{k}$ copies) as algebraic $K$-groups, where $\mathfrak{g}$ is the Lie algebra of $G$, so that the conjugation action of $G(K)$ on $\mathcal{G}_{k}$ becomes the sum of $s_{k}$ copies of the adjoint representation.

\end{prop}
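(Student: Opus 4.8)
The plan is to fix once and for all a closed immersion $G \hookrightarrow GL_N$ of the Chevalley--Demazure group scheme over $\Z$, so that for any commutative ring $C$ and ideal $\mathfrak{b} \subset C$ the congruence subgroup $G(C,\mathfrak{b})$ is the concrete subgroup $\{ g \in G(C) : g \equiv I \!\!\pmod{\mathfrak{b}}\}$, and then to handle the three parts in turn, using (ii) to reduce (i) to the local case and reducing (iii) to a square-zero computation along the $J$-adic filtration. Part (ii) is the quickest: the idempotents $e_1,\dots,e_r$ give a decomposition $B = B_1 \times \cdots \times B_r$ of $K$-algebras with each $B_i = e_i B$ a finite-dimensional local $K$-algebra (as already noted); since $G$ is a group scheme its functor of points carries finite products of rings to direct products of groups, and since $R_{B/K}(G) = \prod_i R_{B_i/K}(G)$, the resulting isomorphism $G(B) \simeq \prod_i G(B_i)$ is one of algebraic $K$-groups.

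For (i), the $K$-algebra splitting $B = \bar B \oplus J$ gives a ring inclusion $\bar B \hookrightarrow B$ that is a section of the reduction $B \twoheadrightarrow B/J \simeq \bar B$; applying $R_{B/K}(G)$ yields a closed immersion $G(\bar B) \hookrightarrow G(B)$ splitting $G(B) \to G(B/J)$, whose kernel is by definition $G(B,J)$. Hence $G(B) = G(B,J) \rtimes G(\bar B)$ as algebraic $K$-groups. Now $G(\bar B) = G(K)^r$ is connected (as $G(K)$ is) and semisimple, in particular reductive; and $G(B,J)$ is unipotent, since under $G \hookrightarrow GL_N$ each of its elements is $I + X$ with $X \in M_N(J)$ nilpotent ($J^d = \{0\}$), so that, viewing $R_{B/K}(GL_N)$ inside some $GL_M$ over $K$, the group $G(B,J)$ consists of unipotent matrices and Kolchin's theorem applies. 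Finally $G(B,J)$ is normal with reductive quotient $G(\bar B)$, so $R_u(G(B)) \subseteq G(B,J)$, and as $G(B,J)$ is itself unipotent and normal this forces $R_u(G(B)) = G(B,J)$; this is the asserted Levi decomposition.

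For (iii), by (ii) we may assume $B = K \oplus J$ is local with $J^d = \{0\}$. Fix $k$ with $1 \le k \le d-1$ and consider the square-zero extension $B/J^{k+1} \twoheadrightarrow B/J^k$ with kernel $I := J^k/J^{k+1}$: one has $I^2 = 0$ since $2k \ge k+1$, and $J \cdot I = 0$, so $I$ is a $B/J = K$-vector space, of dimension $s_k$. Using $G \hookrightarrow GL_N$ and smoothness of $G$ (equivalently, the infinitesimal lifting property: near the identity $G$ is cut out of $GL_N$ by equations whose linearizations define $\mathfrak{g} = \mathrm{Lie}(G)$), an element of $\mathcal{G}_k = G(B,J^k)/G(B,J^{k+1})$ is represented by $I + X$ with $X \in M_N(J^k)$, and modulo $J^{k+1}$ the condition $I + X \in G$ becomes exactly $X \in \mathfrak{g} \otimes_K I$. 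Since any product $XY$ lies in $M_N(J^{2k}) \subseteq M_N(J^{k+1})$, the induced group law is addition, so $\mathcal{G}_k \simeq \mathfrak{g} \otimes_K I \simeq \mathfrak{g} \times \cdots \times \mathfrak{g}$ ($s_k$ copies) as algebraic $K$-groups. For the conjugation action of $h \in G(K) \hookrightarrow G(B)$ one computes $h(I+X)h^{-1} = I + hXh^{-1}$, and since $h$ has entries in $K$ while $I$ is a $K$-module, this is $(\mathrm{Ad}\,h) \otimes \mathrm{id}_I$ on $\mathfrak{g} \otimes_K I$, i.e.\ $s_k$ copies of the adjoint representation.

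The main obstacle is the honest verification, in (iii), that the kernel of $G(B/J^{k+1}) \to G(B/J^k)$ is canonically $\mathrm{Lie}(G) \otimes_K I$: this is precisely where smoothness of the Chevalley--Demazure scheme enters, and one must also check that this identification, together with those in (i) and (ii), is an isomorphism of $K$-\emph{varieties} and not merely of abstract groups --- which follows by applying functoriality of restriction of scalars to the corresponding scheme-level statements over $\Z$. Everything else is bookkeeping with the idempotents $e_i$ and with the nilpotence of $J$.
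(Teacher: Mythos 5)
Your proposal is correct and follows what is surely the intended approach (the survey does not reproduce the proof, deferring to \cite{IR}): Wedderburn--Malcev gives the splitting $B = \bar B \oplus J$ underlying (i), compatibility of Weil restriction with products of rings gives (ii), Kolchin's theorem plus semisimplicity of $G(K)^r$ identifies $G(B,J)$ with the unipotent radical in (i), and the square-zero/infinitesimal-lifting computation in (iii) is the standard smoothness argument identifying the kernel of $G(B/J^{k+1}) \to G(B/J^k)$ with $\mathfrak{g}\otimes_K (J^k/J^{k+1})$ with its additive group law and $\mathrm{Ad}$-action.

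Two small points worth making explicit. First, in (i) you should note that characteristic $0$ is in force (it is in the paper's context, and is anyway needed for Levi decompositions to behave well); this guarantees in particular that the unipotent group $G(B,J)$ is automatically connected, which is used tacitly when you equate it with $R_u(G(B))$. Second, in (iii) you implicitly identify $\mathcal{G}_k = G(B,J^k)/G(B,J^{k+1})$ with $\ker\bigl(G(B/J^{k+1}) \to G(B/J^k)\bigr)$; this requires surjectivity of $G(B) \to G(B/J^{k+1})$, which does hold here (by smoothness of $G$ and nilpotence of $J$, or via Matsumoto's Lemma \ref{L:FI-100} since $B$ is local so $G(B) = G(B)^+$), but is worth a sentence.
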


The following proposition completes the proof of Theorem 1.

\begin{prop}\label{P:R-2} In each of the following situations
\vskip1mm

\noindent {\rm (i)} $H^{\circ}$ is reductive (hence semisimple);

\vskip1mm

\noindent {\rm (ii)} $\mathrm{char} \: K = 0$ and $H^{\circ}$ satisfies condition {\rm (Z)}

\vskip1mm
\noindent there exists a morphism of algebraic groups $\sigma \colon G(A^{\circ}) \to H^{\circ}$ such that $\tilde{\sigma} = \sigma \circ \pi_{A^{\circ}},$ i.e. the diagram {\rm (\ref{D:FI-1})} commutes.
\end{prop}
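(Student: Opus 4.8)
Recall that from Propositions~\ref{P:FI-5} and~\ref{P:R-1} we already have a morphism of algebraic groups $\bar\sigma\colon G(B)\to\bar H$ with $\bar\sigma\circ\pi_{A^{\circ}}=\nu\circ\tilde\sigma$, where $B=A^{\circ}$ is a finite-dimensional $K$-algebra and $\nu\colon H^{\circ}\to\bar H=H^{\circ}/Z(H^{\circ})$ is the quotient map. Since $\pi_{A^{\circ}}\colon\St(\Phi,A^{\circ})\to G(A^{\circ})$ is surjective with kernel $K_{2}(\Phi,A^{\circ})$ (Lemma~\ref{L:FI-100}), producing a group homomorphism $\sigma$ with $\sigma\circ\pi_{A^{\circ}}=\tilde\sigma$ is equivalent to showing that $\tilde\sigma$ vanishes on $K_{2}(\Phi,A^{\circ})$; such a $\sigma$ then automatically makes (\ref{D:FI-1}) commute, and its rationality will be addressed at the end. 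The key preliminary observation, valid in both cases, is that $Z(H^{\circ})=\ker\nu$ is a \emph{finite} group: this is Corollary~\ref{C:FI-5} in case (i), where $H^{\circ}$ is moreover semisimple, and Proposition~\ref{P:FI-2}(i) in case (ii), where it uses condition (Z). Hence $\tilde\sigma\bigl(K_{2}(\Phi,A^{\circ})\bigr)\subseteq Z(H^{\circ})$ is finite, and the whole point is to see that it is trivial.

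In case (i), Lemma~\ref{L:FI-1} forces $J(A^{\circ})=\{0\}$, so $A^{\circ}$ is a connected commutative semisimple algebraic ring, hence $A^{\circ}\simeq K\times\cdots\times K$ (Proposition~\ref{P:AR-20}), and therefore $K_{2}(\Phi,A^{\circ})$ is a finite product of copies of $K_{2}(\Phi,K)$. By Matsumoto's theorem (or Corollary~\ref{C:StG-1} together with bimultiplicativity of the Steinberg symbols) $K_{2}(\Phi,K)$ is a quotient of $K^{\times}\otimes_{\Z}K^{\times}$, hence divisible since $K^{\times}$ is divisible ($K$ being algebraically closed); thus $K_{2}(\Phi,A^{\circ})$ is divisible, and a homomorphism from a divisible group to a finite group is trivial. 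This yields $\sigma\colon G(A^{\circ})\to H^{\circ}$ with $\sigma\circ\pi_{A^{\circ}}=\tilde\sigma$.

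In case (ii) ($\mathrm{char}\,K=0$) one instead lifts $\bar\sigma$ through the central isogeny $\nu$, using the Levi decomposition $G(B)=G(B,J)\rtimes G(\bar B)$ (Proposition~\ref{P:R-3}(i)). Here $G(\bar B)\simeq G(K)\times\cdots\times G(K)$ is semisimple and simply connected, so $\bar\sigma|_{G(\bar B)}$ lifts to a morphism $\sigma_{L}\colon G(\bar B)\to H^{\circ}$ with $\nu\circ\sigma_{L}=\bar\sigma|_{G(\bar B)}$ (the pullback of $\nu$ along $\bar\sigma|_{G(\bar B)}$, restricted to its identity component, is a central isogeny onto $G(\bar B)$, hence an isomorphism, giving a section); and $G(B,J)$, being a split unipotent group in characteristic $0$ --- an iterated extension of vector groups by Proposition~\ref{P:R-3}(iii) --- admits no nontrivial finite central extension, so $\bar\sigma|_{G(B,J)}$ likewise lifts to $\sigma_{u}\colon G(B,J)\to H^{\circ}$. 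Put $\sigma(ng)=\sigma_{u}(n)\sigma_{L}(g)$ for $n\in G(B,J)$, $g\in G(\bar B)$; this is a group homomorphism because for fixed $n$ the map $g\mapsto\sigma_{u}(gng^{-1})\,\sigma_{L}(g)\sigma_{u}(n)^{-1}\sigma_{L}(g)^{-1}$ is a regular map from the connected group $G(\bar B)$ to the finite group $\ker\nu$, hence constant equal to its value $e$ at $g=e$; and $\sigma$ is regular since $(n,g)\mapsto ng$ is an isomorphism of varieties $G(B,J)\times G(\bar B)\to G(B)$. Finally, $\tilde\sigma$ and $\sigma\circ\pi_{A^{\circ}}$ are homomorphisms $\St(\Phi,A^{\circ})\to H^{\circ}$ with the same composite with $\nu$, so their ``quotient'' is a homomorphism of $\St(\Phi,A^{\circ})$ into the abelian group $Z(H^{\circ})$, hence trivial because $\St(\Phi,A^{\circ})$ is perfect for $\Phi$ of rank $\geq 2$; thus $\sigma\circ\pi_{A^{\circ}}=\tilde\sigma$ and (\ref{D:FI-1}) commutes.

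It remains to check in both cases that $\sigma\colon G(A^{\circ})\to H^{\circ}$ is a morphism of algebraic groups and not merely abstract. This is done exactly as in Lemma~\ref{L:R-2}: since $\sigma(e_{\alpha}(a))=\psi_{\alpha}(a)$, the restriction of $\sigma$ to the big cell $\Omega(A^{\circ})$ factors through the isomorphism $\theta_{A^{\circ}}$ and the regular map $\kappa$ built from the $\psi_{\alpha}$ and the $H_{\alpha}$, so $\sigma|_{\Omega(A^{\circ})}$ is regular; since $\Omega(A^{\circ})$ is Zariski-open in the connected group $G(A^{\circ})$ (Lemma~\ref{L:R-1}), Lemma~\ref{L:R-3} promotes this to regularity everywhere. \emph{The main obstacle} really lies in the two places where the internal structure of $H^{\circ}$ enters, and these have been isolated in advance: knowing $Z(H^{\circ})$ is finite (and, in case (i), that $A^{\circ}$ collapses to a product of copies of $K$) rests on the perfectness of $H^{\circ}$ (Proposition~\ref{P:FI-1}) via Corollary~\ref{C:FI-5} and Lemma~\ref{L:FI-1}; and in the genuinely non-reductive case (ii), keeping $Z(H^{\circ})$ finite and disjoint from the unipotent radical, and then lifting over that radical, is precisely what condition (Z), Proposition~\ref{P:FI-2}, and Proposition~\ref{P:R-3} are designed to make possible. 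Granting those inputs, the divisibility argument in case (i) and the lifting-and-gluing in case (ii) are the only new ingredients.
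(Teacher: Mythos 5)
Your argument is correct, and in case (i) it takes a genuinely different route from the paper. For (i), the paper notes that $A^{\circ}\simeq K\times\cdots\times K$, so $G(A^{\circ})$ is semisimple and simply connected, and then lifts the already--rational $\bar\sigma$ through the central isogeny $\nu$ by invoking \cite[Proposition 2.24]{BT1}; you instead observe directly that $K_{2}(\Phi,A^{\circ})$ is a product of copies of $K_{2}(\Phi,K)$, which is divisible because $K^{\times}$ is divisible for $K$ algebraically closed, so that $\tilde\sigma$ must kill $K_{2}(\Phi,A^{\circ})$ since its image sits in the finite group $Z(H^{\circ})$. This is a shorter, more elementary argument (it bypasses the isogeny-lifting theorem entirely, though one should be a little careful with bimultiplicativity of the Steinberg symbols for non-simply-laced $\Phi$; for $K$ algebraically closed of characteristic $0$, where $I(K)=0$, this is unproblematic). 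For (ii), the paper's sketch mentions using the Levi decomposition $G(B)=G(B,J)\rtimes G(\bar B)$ together with the fact that $Z(H^{\circ})$ lies in some Levi subgroup of $H^{\circ}$; you replace the latter fact by lifting $\bar\sigma$ separately over the simply-connected semisimple part and over the characteristic-zero unipotent part (where no nontrivial finite central extensions can occur), and then glue the two lifts via a regular-map-into-a-finite-group-is-constant argument. This fills in the details the paper leaves implicit and, as a bonus, is self-contained in that it does not actually invoke $Z(H^{\circ})\subset L$. Finally, both you and the paper conclude $\sigma\circ\pi_{A^{\circ}}=\tilde\sigma$ by the perfectness of $\St(\Phi,A^{\circ})$, and the rationality of the resulting $\sigma$ via the big-cell argument (Lemmas \ref{L:R-2}, \ref{L:R-3}) is handled the same way.
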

\begin{proof}(Sketch)
By Proposition \ref{P:FI-5}, in both cases, $B:= A^{\circ}$ is a finite-dimensional $K$-algebra, so $G(A^{\circ}) = G(B)$ has a natural structure of a connected algebraic $K$-group. Furthermore, in each case, the center $Z(H^{\circ})$ is finite (see Corollary \ref{C:FI-5} and Proposition \ref{P:FI-2}), so the canonical morphism $\nu \colon H^{\circ} \to \bar{H}$ is a central isogeny.

According to Proposition \ref{P:R-1}, $\bar{\sigma} \colon G(A^{\circ}) \to \bar{H}$ is a morphism of algebraic groups, which, by our construction, satisfies $\nu \circ \tilde{\sigma} = \bar{\sigma} \circ \pi_{A^{\circ}}$, in the notations introduced earlier. As we already noted in the proof of Proposition \ref{P:FI-5}, in case (i), we have $J(A^{\circ}) = \{ 0 \}$, so $A^{\circ} \simeq K \times \cdots \times K$ (Proposition \ref{P:AR-20}), and therefore $G(A^{\circ}) = G(K) \times \cdots \times G(K)$ is a semisimple simply connected algebraic group. Then, according to (\cite{BT1}, Proposition 2.24), there exists a morphism of algebraic groups $\sigma \colon G(A^{\circ}) \to H$ such that
$$
\nu \circ \sigma = \bar{\sigma}.
$$
To show that such a $\sigma$ exists in case (ii), one argues in similar way, making use of the Levi decomposition in Proposition \ref{P:R-3}(i), together with the fact that $Z(H^{\circ})$ is contained in any Levi subgroup of $H^{\circ}$ (see Proposition \ref{P:FI-5}).

%We will next show that such a $\sigma$ also exists in case (ii). Pick a semisimple subalgebra $\bar{B} \subset B:=A^{\circ}$ such that $B = \bar{B} \oplus J$; then by
%Proposition \ref{P:R-3},
%$$
%G(B) = G(B, J) \rtimes G(\bar{B})
%$$
%is a Levi decomposition of $G(B) = G(A^{\circ})$. Also, since $\bar{B} \simeq K \times \cdots \times K$, the group $G(\bar{B}) = G(K) \times \cdots \times G(K)$ is semisimple and simply connected. Set
%$$
%\bar{U} = \bar{\sigma} (G(A^{\circ}, J)) \ \ \  {\rm and} \ \ \  \bar{S} = \bar{\sigma} (G(\bar{B})).
%$$
%Then
%$$
%\bar{H} = \bar{U} \rtimes \bar{S}
%$$
%is a Levi decomposition of $\bar{H}$. Furthermore, setting $S = (\nu^{-1} (\bar{S}))^{\circ}$, we have that
%$$
%H^{\circ} = U \rtimes S,
%$$
%where $U$ is the unipotent radical of $H^{\circ}$, is a Levi decomposition of $H^{\circ}$. According to Proposition \ref{P:FI-5}, $Z(H^{\circ}) \subset S$, so that $\bar{S} = S/ Z(H^{\circ})$ and the restriction $\nu_U = \nu \vert_U \colon U \to \bar{U}$ is an isomorphism.
%Since $G(\bar{B})$ is simply connected, there exists a morphism of algebraic groups $\sigma_S \colon G(\bar{B}) \to S$ such that
%$$
%\nu \circ \sigma_S = \bar{\sigma} \vert_{G(\bar{B})}.$$
%Now define $\sigma_U \colon G(A^{\circ}, J) \to U$ to be $\nu_U^{-1} \circ (\bar{\sigma} \vert_{G(A^{\circ}, J)}).$ Then $$
%\sigma = (\sigma_U, \sigma_S) \colon G(A^{\circ}) = G(A^{\circ}, J) \rtimes G(\bar{B}) \to H^{\circ}
%$$
%is again a morphism of algebraic groups satisfying $\nu \circ \sigma = \bar{\sigma}.$

Then, in both cases it follows
%Thus, for the morphisms $\sigma$ constructed in both cases, it
%now follows
from Proposition \ref{P:FI-5} that $\nu \circ \sigma \circ \pi_{A^{\circ}} = \nu \circ \tilde{\sigma}.$ Hence $$\chi \colon \St (\Phi, A^{\circ}) \to H^{\circ}$$ defined by
$$
\chi (g) = \tilde{\sigma}(g)^{-1} \cdot (\sigma \circ \pi_{A^{\circ}}) (g)
$$
has values in $Z(H^{\circ}).$ This, in conjunction with the fact that $\tilde{\sigma}$ and $\sigma \circ \pi_{A^{\circ}}$ are group homomorphisms, implies that $\chi$ is also a group homomorphism. However, since $\St (\Phi, A^{\circ})$ coincides with its commutator subgroup (\cite{St1}, Corollary 4.4), $\chi$ must be trivial, and therefore
$$
\sigma \circ \pi_{A^{\circ}} = \tilde{\sigma},
$$
as required.
\end{proof}

To summarize, we have proved the following theorem, which, in view of Proposition \ref{P:FI-2}, yields Theorem 1.
\vskip2mm

\begin{thm}\label{T:R-1}
Let $\Phi$ be a reduced irreducible root system of rank $\geq 2$, $R$ a commutative ring such that $(\Phi, R)$ is a nice pair, and $K$ an algebraically closed field. Assume that $R$ is noetherian if $\mathrm{char} \: K > 0.$ Furthermore let $G$ be the universal Chevalley-Demazure group scheme of type $\Phi$ and let $\rho \colon G(R)^+ \to GL_n (K)$ be a finite-dimensional linear representation of the elementary subgroup $G(R)^+ \subset G(R)$ over $K.$ Set $H = \overline{\rho (G(R)^+)}$ (Zariski closure), and let $H^{\circ}$ be the connected component of the identity of $H$. Then in each of the following situations
\vskip1mm

\noindent {\rm (1)} $H^{\circ}$ is reductive;

\vskip1mm

\noindent {\rm (2)} $\mathrm{char} \: K = 0$ and $R$ is semilocal;

\vskip1mm

\noindent {\rm (3)} $\mathrm{char} \: K = 0$ and $H^{\circ}$ satisfies condition {\rm (Z)}

\vskip1mm

\noindent there exists a commutative finite-dimensional $K$-algebra $B$, a ring homomorphism $f \colon R \to B$ with Zariski-dense image and a morphism $\sigma \colon G(B) \to H$ of algebraic $K$-groups such that for a suitable subgroup $\Delta \subset G(R)^+$ of finite index we have
$$
\rho \vert_{\Delta} = (\sigma \circ F) \vert_{\Delta},
$$
where $F \colon G(R)^+ \to G(B)^+$ is the group homomorphism induced by $f$.

\end{thm}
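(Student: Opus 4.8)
The plan is to assemble the results established in this section into one argument; the statement is essentially a bookkeeping consequence of Propositions \ref{P:FI-3}, \ref{P:FI-5}, \ref{P:R-1} and \ref{P:R-2}, together with the constructions of Theorem \ref{T:ARR-1} and Proposition \ref{P:StG-1}. First I would apply Theorem \ref{T:ARR-1} to $\rho$, obtaining a commutative algebraic ring $A$ with identity over $K$, a ring homomorphism $g \colon R \to A$ with Zariski-dense image, and injective regular maps $\psi_{\alpha} \colon A \to H$ with $\rho(e_{\alpha}(t)) = \psi_{\alpha}(g(t))$. Since $R$ is noetherian when $\mathrm{char}\: K > 0$, Corollary \ref{C:StG-2} gives $\St(\Phi, A) = \St(\Phi, A^{\circ}) \times P$ with $P$ finite, and Propositions \ref{P:AR-4} and \ref{P:AR-2} give a decomposition $A = A^{\circ} \oplus C$ of algebraic rings with identity, $C$ finite. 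Setting $B := A^{\circ}$ and letting $f \colon R \to B$ be the composite of $g$ with the projection $s \colon A \to A^{\circ}$, one checks (using that $C$ is finite and $\overline{g(R)} = A$) that $f$ has Zariski-dense image; this $f$ is the ring homomorphism of the statement, and $f(1_R) = 1_B$.

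Next I would apply Proposition \ref{P:StG-1} to produce a homomorphism $\tilde{\tau} \colon \St(\Phi, A) \to H$ with $\tilde{\tau}(\tilde{x}_{\alpha}(a)) = \psi_{\alpha}(a)$, hence $\tilde{\tau} \circ \tilde{F} = \rho \circ \pi_R$ for the map $\tilde{F}$ induced by $g$. Put $\tilde{\sigma} := \tilde{\tau}|_{\St(\Phi, A^{\circ})}$, $\tilde{\Delta} := \tilde{F}^{-1}(\St(\Phi, A^{\circ}))$, and $\Delta := \pi_R(\tilde{\Delta})$; finiteness of $P$ makes $\tilde{\Delta}$ and $\Delta$ of finite index, and one has $F(\Delta) \subset G(B) = G(B)^+$, the last equality by Matsumoto's Lemma \ref{L:FI-100} since $B$ is semilocal. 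The solid arrows of diagram (\ref{D:FI-1}) then commute, and the whole theorem is reduced to producing a \emph{morphism of algebraic $K$-groups} $\sigma \colon G(B) \to H^{\circ} \subseteq H$ with $\sigma \circ \pi_{A^{\circ}} = \tilde{\sigma}$. Indeed, granting this, a diagram chase — using the functoriality of the Steinberg construction and the splitting $\St(\Phi, A) = \St(\Phi, A^{\circ}) \times P$ — gives $\sigma \circ F \circ \pi_R = \tilde{\sigma} \circ \tilde{F} = \rho \circ \pi_R$ on $\tilde{\Delta}$, and since $\pi_R$ maps $\tilde{\Delta}$ onto $\Delta$ we obtain $\rho|_{\Delta} = (\sigma \circ F)|_{\Delta}$. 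Moreover, since $\pi_{A^{\circ}} \colon \St(\Phi, A^{\circ}) \to G(B)$ is surjective (as $G(B) = G(B)^+$), the existence of such a $\sigma$ amounts to the vanishing of $\tilde{\sigma}$ on $K_2(\Phi, A^{\circ}) = \ker \pi_{A^{\circ}}$.

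It then remains to treat the three cases. In case (2) — where $\mathrm{char}\: K = 0$ and $R$ is semilocal — Proposition \ref{P:FI-3} directly yields a group homomorphism $\sigma \colon G(B) \to H^{\circ}$ making (\ref{D:FI-1}) commute and shows that $B$ is a finite-dimensional $K$-algebra, so that $G(B)$ is a connected algebraic $K$-group; then Proposition \ref{P:R-1} promotes $\sigma$ to a morphism of algebraic groups. In cases (1) ($H^{\circ}$ reductive) and (3) ($\mathrm{char}\: K = 0$ and $H^{\circ}$ satisfies (Z), which by Proposition \ref{P:FI-2} subsumes the ``commutative unipotent radical'' hypothesis of Theorem 1), Proposition \ref{P:FI-5} shows $B$ is a finite-dimensional $K$-algebra and produces the descended homomorphism $\bar{\sigma} \colon G(B) \to \bar{H} = H^{\circ}/Z(H^{\circ})$ with $\bar{\sigma} \circ \pi_{A^{\circ}} = \nu \circ \tilde{\sigma}$; Proposition \ref{P:R-1} makes $\bar{\sigma}$ a morphism of algebraic groups, and Proposition \ref{P:R-2} lifts it across the central isogeny $\nu$ to the required morphism $\sigma$ with $\sigma \circ \pi_{A^{\circ}} = \tilde{\sigma}$. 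In all cases $\rho|_{\Delta} = (\sigma \circ F)|_{\Delta}$ then follows as explained above.

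The assembly above is routine once the ingredients are in hand, so the ``main obstacle'' lies upstream: the delicate points are the construction of the algebraic ring $A$ from the Steinberg commutator relations (Theorem \ref{T:ARR-1}), the use of Stein's description of $K_2$ of a semilocal ring — via Corollary \ref{C:StG-1} — to force $\tilde{\sigma}$ to descend along $\pi_{A^{\circ}}$, and the rationality/lifting results, Propositions \ref{P:R-1} (proved using the big cell of $G(B)$) and \ref{P:R-2} (lifting a morphism across $H^{\circ} \to \bar{H}$, which requires $Z(H^{\circ})$ to be finite and, in characteristic $0$, to lie inside a Levi subgroup). Within the assembly itself, the single point needing care is that one and the same finite-index subgroup $\Delta$ serves in all three cases; this is arranged by defining $\Delta$ as $\pi_R\bigl(\tilde{F}^{-1}(\St(\Phi,A^{\circ}))\bigr)$, so that $\Delta$ is built directly from the splitting $\St(\Phi,A) = \St(\Phi,A^{\circ}) \times P$.
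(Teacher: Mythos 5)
Your proposal is correct and matches the paper's argument: Theorem \ref{T:R-1} is presented in the paper precisely as a summary of the chain of results in \S\S 4.1--4.4 (Theorem \ref{T:ARR-1}, Proposition \ref{P:StG-1}, the finite-index passage to $A^{\circ}$ via Corollary \ref{C:StG-2} and Propositions \ref{P:AR-4}/\ref{P:AR-2}, Propositions \ref{P:FI-3} and \ref{P:FI-5}, and the rationality/lifting statements \ref{P:R-1} and \ref{P:R-2}), and your assembly reproduces that chain faithfully, including the uniform choice of $\Delta = \pi_R(\tilde{F}^{-1}(\St(\Phi,A^{\circ})))$.
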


\vskip5mm

\subsection{Applications} We now give two examples showing how Theorem \ref{T:R-1} can be used to recover a couple of previously known rigidity results.

\vskip2mm

\noindent {\bf Example 4.5.1.} If $H^{\circ}$ is reductive, then it follows from Lemma \ref{L:FI-1} and our construction that $B$ can be chosen to have trivial Jacobson radical, and therefore $B \simeq K \times \cdots \times K$ ($r$ copies). Then the homomorphism $f \colon R \to B$ in Theorem \ref{T:R-1} is of the form $f = (f_1, \dots, f_m)$, where each component is a homomorphism
$f_i \colon R \to K.$ In particular, if $R = \Z [x_1, \dots, x_k]$, then each $f_i$ is just a specialization map. So, in this case, we obtain from Theorem \ref{T:R-1} that
any representation $\rho \colon G(R)^+ \to GL_n (K)$ coincides on a subgroup of finite index $\Delta \subset G(R)^+$ with $\sigma \circ F$, where $F = (F_1, \dots, F_r)$ and each component $F_i \colon G(R)^+ \to G(K)$ is induced by a specialization homomorphism, and $\sigma \colon G(K) \times \cdots \times G(K) \to H$ is a morphism of algebraic $K$-groups. Thus, Theorem \ref{T:R-1} generalizes the result of Shenfeld \cite{Sh} which treats the case $G= SL_n,$ $R = \Z[x_1, \dots, x_k]$, and $K= \C,$ using the centrality of the congruence kernel of $G = SL_n (R)$ established in \cite{KN} and mimicking the argument of Bass-Milnor-Serre \cite{BMS}.

\vskip4mm

\noindent {\bf Example 5.5.2.} Now consider the case where the unipotent radical $U$ of $H^{\circ}$ is commutative and that $\mathrm{char} \: K = 0.$ Then it follows from Lemma \ref{L:FI-1} and our construction that one can choose $B$ so that its Jacobson radical $J = J(B)$ satisfies $J^2 = \{ 0 \}.$ Moreover, it follows from Proposition \ref{P:R-3}(ii) that we can write
$$
G(B) = G(B_1) \times \cdots \times G(B_r),
$$
where each $B_i$ is a finite dimensional local $K$-algebra of the form $B_i = K \oplus J_i$ with $J_i^2 = \{ 0 \}.$ Hence it is enough to analyze the case where $B = K \oplus J$ with $J^2 = \{ 0 \}.$
So now choose a $K$-basis $\{ v_1, \dots, v_d \}$ of $J$. Then a homomorphism $f \colon R \to B$ can be written in the form
$$
f(r) = (f_0 (r), f_1(r) v_1 + \cdots + f_d (r) v_d),
$$
where $f_0 \colon R \to K$ is a ring homomorphism and the $f_i$'s, for $i \geq 1$, satisfy $$
f_i (r_1 r_2) = f_0 (r_1) f_i (r_2) + f_i (r_1) f_0 (r_2).
$$
Thus, each $f_i,$ $i \geq 1,$ is a derivation (with respect to $f_0$), and we recover, in a slightly different form, the result of L.~Lifschitz and A.~Rapinchuk \cite{LR}, which was established when $R = k$ is a field of characteristic zero.

\vskip5mm

\section{Rigidity results over noncommutative rings}\label{S:NC}
In this section, we will indicate how the strategy used in the proof of Theorem 1 can be modified
%We will now indicate how the strategy described in the previous subsections can be modified
to obtain analogous rigidity results for the elementary groups $E_n (R)$ $(n \geq 3)$ over general associative rings.

\subsection{Reformulation of Theorem 2}
We would like to begin by giving an alternative  statement of Theorem 2, which can be generalized (in a somewhat weaker form) to (essentially) arbitrary associative rings.
First, note that if $B$ is a finite-dimensional algebra over an algebraically closed field $K$, then the elementary group $E_n (B)$ has the structure of a connected algebraic $K$-group. Indeed, using the regular representation of $B$ over $K$, it is easy to see that $GL_n (B)$ is a Zariski-open subset of $M_n (B),$ and hence an algebraic group over $K$. Now let us
view $B$ as a connected algebraic ring over $K$, and for $i, j \in \{1, \dots, n \},$ $i \neq j$, consider the regular maps
$$
\varphi_{ij} \colon B \to GL_n (B), \ \ \ b \mapsto e_{ij} (b)
$$
Set $W_{ij} = \mathrm{im} \ \varphi_{ij}.$ Then each $W_{ij}$ contains the identity matrix $I_n \in GL_n (B)$, and by definition $E_n (B)$ is generated by the $W_{ij}.$ So, $E_n (B)$ is a connected algebraic group by Proposition \ref{P:Connect}.
%(\cite{Bo}, Proposition 2.2).

\begin{thm}\cite[Theorem 3.1]{IR1}\label{NC:T-NonComm1}
Suppose $k$ and $K$ are fields of characteristic 0, with $K$ algebraically closed, $D$ is a finite-dimensional central division algebra over $k$, and $n$ is an integer $\geq 3.$ Let $\rho \colon E_n (D) \to GL_m (K)$ be a finite-dimensional linear representation and set $H = \overline{\rho (E_n (D))}$ (Zariski closure). Then there exists a finite-dimensional associative $K$-algebra $\B$, a ring homomorphism $f \colon D \to \B$ with Zariski-dense image, and a morphism $\sigma \colon E_n (\B) \to H$ of algebraic $K$-groups such that
$$
\rho = \sigma \circ F,
$$
where $F \colon E_n (D) \to E_n (\B)$ is the group homomorphism induced by $f.$
\end{thm}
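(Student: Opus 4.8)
The plan is to run, for type $A_{n-1}$ over the associative ring $D$, the same five-step argument that proves Theorem~\ref{T:R-1}; the essential simplification here is that the algebraic ring attached to $\rho$ turns out to be \emph{connected}, so there is no finite-index subgroup to discard and one gets $\rho=\sigma\circ F$ on all of $E_n(D)$. \emph{Step 1 (the associated algebraic ring).} This is the type-$A$, associative-ring version of Theorem~\ref{T:ARR-1}, which reduces to the $A_2$ case treated in \S\ref{S:T1, T2}: fixing the pair $(1,3)$, put $A=\overline{\rho(e_{13}(D))}$, let $\ba$ be the restriction to $A$ of matrix multiplication in $H=\overline{\rho(E_n(D))}$, and define $\bm(a_1,a_2)=[\rho(w_{23})a_1\rho(w_{23})^{-1},\rho(w_{12})^{-1}a_2\rho(w_{12})]$ using $w_{23}e_{13}(r)w_{23}^{-1}=e_{12}(r)$, $w_{12}^{-1}e_{13}(r)w_{12}=e_{23}(r)$ and $[e_{12}(r),e_{23}(s)]=e_{13}(rs)$. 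By Lemma~\ref{L:ARR-1}, $(A,\ba,\bm)$ is an associative algebraic ring with identity, $f\colon D\to A$, $t\mapsto\rho(e_{13}(t))$, is a unital ring homomorphism with $\overline{f(D)}=A$, and conjugating by the monomial matrices that realize the transitive $S_n$-action on the roots yields injective regular maps $\psi_{ij}\colon A\to H$ with $\rho(e_{ij}(t))=\psi_{ij}(f(t))$ for all $i\ne j$. Since $\mathrm{char}\,k=0$ the field $k$ is infinite, so $A$ is connected by Remark~2.3.5; hence, by Proposition~\ref{P:AR-2}, $\B:=A$ is a finite-dimensional associative $K$-algebra, and (since $f$ is unital and $2\in D^\times$) $2\in\B^\times$.

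\emph{Steps 2--3 (lifting to $\mathrm{St}_n$ and killing $K_2$).} As in Proposition~\ref{P:StG-1} --- the relations defining $\mathrm{St}_n$ are Zariski-closed conditions that hold after substituting $\psi_{ij}$ for the generators on the dense set $f(D)$, hence identically --- one obtains a group homomorphism $\tilde\tau\colon\mathrm{St}_n(\B)\to H$ with $\tilde\tau(\tilde x_{ij}(a))=\psi_{ij}(a)$, and by functoriality $\tilde\tau\circ\tilde F=\rho\circ\pi_D$, where $\tilde F\colon\mathrm{St}_n(D)\to\mathrm{St}_n(\B)$ is induced by $f$. By Proposition~\ref{KT:P-DA} --- together with Lemma~\ref{KT:L-DA}, which identifies $\B\simeq D\otimes_k C\simeq M_s(C)$ for $C=\overline{f(k)}$, with $L\otimes_k C$ the diagonal subalgebra --- the group $K_2(n,\B)=\ker\pi_{\B}$ is generated by the elements $c(u,v)$ with $u,v$ units in $\overline{f(L^\times)}$, for a fixed maximal subfield $L\subset D$; and $K_2(n,\B)$ is central in $\mathrm{St}_n(\B)$ (Proposition~\ref{KT:P-BR2}, applicable since $\B$ is a finite-dimensional $K$-algebra). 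Now $(u,v)\mapsto\tilde\tau(c(u,v))$ is a regular map $\B^\times\times\B^\times\to H$, because inversion on $\B^\times$ is regular (Proposition~\ref{P:AR-1}); and for $a,b\in L^\times$ the elements $f(a),f(b)$ commute, so $\pi_D(c(a,b))=\mathrm{diag}(aba^{-1}b^{-1},1,\dots,1)=I_n$, whence
$$
\tilde\tau(c(f(a),f(b)))=\tilde\tau(\tilde F(c(a,b)))=\rho(\pi_D(c(a,b)))=1 .
$$
Thus this regular map vanishes on the dense subset $f(L^\times)\times f(L^\times)$ of $(\overline{f(L^\times)}\cap\B^\times)^2$, hence identically, so $\tilde\tau$ kills all the generators of $K_2(n,\B)$. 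Therefore $\tilde\tau$ factors as $\tilde\tau=\sigma\circ\pi_{\B}$ for a unique group homomorphism $\sigma\colon E_n(\B)\to H$, and chasing $\pi_{\B}\circ\tilde F=F\circ\pi_D$ together with the surjectivity of $\pi_D$ gives $\sigma\circ F=\rho$ on all of $E_n(D)$.

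\emph{Step 4 (rationality).} As recalled just before the statement, $E_n(\B)$ carries the structure of a connected algebraic $K$-group (Proposition~\ref{P:Connect}). Exactly as in the proof of Lemma~\ref{L:R-2} one checks that $\sigma$ is regular on the big cell $\Omega(\B)\subset E_n(\B)$: under the coordinate isomorphism of $\Omega(\B)$ by lower/upper unipotent and diagonal entries, $\sigma$ becomes the regular map assembled from the $\psi_{ij}$ and from $H_{ij}(u)=W_{ij}(u)W_{ij}(-1)$, $W_{ij}(u)=\psi_{ij}(u)\psi_{ji}(-u^{-1})\psi_{ij}(u)$ (alternatively, one applies the geometric rationality argument of Lemma~\ref{NC:L-R1}). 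Since $\Omega(\B)$ is Zariski-dense open and $E_n(\B)$ is connected, Lemma~\ref{L:R-3} implies $\sigma$ is a morphism of algebraic groups, completing the proof.

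\emph{Main obstacle.} The whole argument hinges on the $K$-theoretic input packaged in Proposition~\ref{KT:P-DA} and Lemma~\ref{KT:L-DA}. In contrast to Stein's theorem in the commutative case, $K_2(n,\B)$ cannot be generated by symbols in arbitrary pairs of units, since the elements $\tilde h_{ij}(u)$ fail to be multiplicative; one is forced to express $K_2(n,\B)$ through the symbols $c(u,v)$ coming from (the closure of) a single maximal subfield $L$ --- and this is precisely what makes the vanishing computation $\tilde\tau(c(f(a),f(b)))=1$ available. Granting this input (and the Bak--Rehmann computations of relative $K_2$ that underlie it), the remaining steps are a faithful transcription of the commutative argument, streamlined by the connectedness of $\B$.
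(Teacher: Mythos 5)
Your proposal is correct and follows essentially the same route as the paper's outline in \S\ref{S:NC}: construct the associated associative algebraic ring via the $A_2$-type commutator computation (Proposition~\ref{NC:P-AR}), observe via Remark~2.3.5 that $A$ is connected because $D$ is a division algebra over the infinite field $k$ (so there is no finite-index passage and, by Proposition~\ref{P:AR-2}, $\B:=A$ is already a finite-dimensional $K$-algebra), lift to $\St_n(\B)$ (Proposition~\ref{NC:P-St1}), kill $K_2(n,\B)$ using Proposition~\ref{KT:P-DA}/Lemma~\ref{KT:L-DA} together with the density-of-units argument exactly as in the proof of Proposition~\ref{P:FI-3} (this is Proposition~\ref{NC:P-SL}), and finally deduce rationality. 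You also correctly isolate the crucial $K$-theoretic point: in the noncommutative setting one cannot use Stein-type generation of $K_2$ by symbols in arbitrary units, and the whole argument rests on generating $K_2(n,\B)$ by the $c(u,v)$ with $u,v$ in the closure of a maximal subfield $L^\times$, precisely so that the vanishing computation $\tilde\tau(c(f(a),f(b)))=1$ is available. The only stylistic divergence is in the rationality step: the paper uses the geometric Lemma~\ref{NC:L-R1} (and Proposition~\ref{NC:P-R2}) rather than a big-cell argument, explicitly because a direct big-cell parametrization would require first passing through the Morita identification $\B\simeq M_s(C)$; you mention both options, which is fine.
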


We also have the following result for general associative rings.

\begin{thm}\cite[Theorem 3.2]{IR1}\label{NC:T-NonComm2}
Suppose $R$ is an associative ring with $2 \in R^{\times},$ $K$ is an algebraically closed field of characteristic 0, and $n$ is an integer $\geq 3.$ Let $\rho \colon E_n (R) \to GL_m (K)$ be a finite-dimensional linear representation, set $H = \overline{\rho (E_n (R))}$, and denote by $H^{\circ}$ the connected component of $H$. If the unipotent radical of $H^{\circ}$ is commutative, there exists a finite-dimensional associative $K$-algebra $\B$, a ring homomorphism $f \colon R \to \B$ with Zariski-dense image, and a morphism $\sigma \colon E_n (\B) \to H$ of algebraic $K$-groups such that for a suitable finite-index subgroup $\Delta \subset E_n (R)$, we have
$$
\rho \vert_{\Delta} = (\sigma \circ F) \vert_{\Delta},
$$
where $F \colon E_n (R) \to E_n (\B)$ is the group homomorphism induced by $f.$
\end{thm}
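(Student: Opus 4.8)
The plan is to carry over the argument of \S\ref{S:T1, T2}, specialized to the root system of type $A_{n-1}$, with the noncommutative $K$-theory of \S\ref{S:KTNC} playing the role of Stein's theorem. \emph{First}, I would attach an associative algebraic ring to $\rho$. Since $n\geq 3$, the matrices $\rho(e_{ij}(r))$ satisfy the images of the relations (R1)--(R3) of \S\ref{S:KTNC}. Taking $A=\overline{\rho(e_{13}(R))}$, letting $\ba\colon A\times A\to A$ be the restriction of the matrix product in $H=\overline{\rho(E_n(R))}$, and defining $\bm$ by conjugating by $\rho(w_{12})$ and $\rho(w_{23})$ and invoking $[e_{12}(r),e_{23}(s)]=e_{13}(rs)$ --- exactly as in the type $A_2$ computation following Lemma \ref{L:ARR-1} --- one obtains, by Lemma \ref{L:ARR-1}, an associative algebraic ring $A$ with identity, a ring homomorphism $f\colon R\to A$ with $\overline{f(R)}=A$, and injective regular maps $\psi_{ij}\colon A\to H$ with $\rho(e_{ij}(t))=\psi_{ij}(f(t))$. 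The hypothesis $2\in R^{\times}$ and the density of $f(R)$ force $2\in A^{\times}$. A Zariski-density argument identical to that of Proposition \ref{P:StG-1} then shows that $\tilde{x}_{ij}(a)\mapsto\psi_{ij}(a)$ respects the defining relations of $\mathrm{St}_n(A)$, giving a group homomorphism $\tilde{\tau}\colon\mathrm{St}_n(A)\to H$ with $\tilde{\tau}\circ\tilde{F}=\rho\circ\pi_R$, where $\tilde{F}$ and $\pi_R$ are the usual maps.

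\emph{Next}, I would descend through $K_2$ and pass to a finite-index subgroup. Since $\mathrm{char}\:K=0$, Proposition \ref{P:AR-2} gives $A=A^{\circ}\oplus C$ with $A^{\circ}$ a finite-dimensional associative $K$-algebra and $C$ finite; as $2\in (A^{\circ})^{\times}$, Proposition \ref{NC:P-St2} yields $\mathrm{St}_n(A)=\mathrm{St}_n(A^{\circ})\times P$ with $P$ finite and $K_2(n,A^{\circ})$ central in $\mathrm{St}_n(A^{\circ})$. Hence $\tilde{\Delta}:=\tilde{F}^{-1}(\mathrm{St}_n(A^{\circ}))$ and $\Delta:=\pi_R(\tilde{\Delta})$ have finite index in $\mathrm{St}_n(R)$ and $E_n(R)$, $F(\Delta)\subset E_n(A^{\circ})$, and $\tilde{\sigma}:=\tilde{\tau}\vert_{\mathrm{St}_n(A^{\circ})}$ descends, by that centrality, to $\bar{\sigma}\colon E_n(A^{\circ})\to\bar{H}:=H^{\circ}/Z(H^{\circ})$. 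To lift $\bar{\sigma}$ to $\sigma\colon E_n(A^{\circ})\to H^{\circ}$ one reproves, now for $E_n$, the analogues of Propositions \ref{P:FI-1} and \ref{P:FI-2}: $H^{\circ}$ is perfect and equals $\tilde{\sigma}(\mathrm{St}_n(A^{\circ}))$, and commutativity of the unipotent radical $U$ of $H^{\circ}$ gives $Z(H^{\circ})\cap U=\{e\}$, whence $Z(H^{\circ})$ is finite and, in characteristic $0$, contained in every Levi subgroup of $H^{\circ}$. Using the Levi decomposition of the algebraic group $E_n(A^{\circ})$ --- the $E_n$-analogue of Proposition \ref{P:R-3}(i), obtained from the Wedderburn--Malcev splitting of the finite-dimensional $K$-algebra $A^{\circ}$ --- together with the fact that $\mathrm{St}_n(A^{\circ})$ equals its own commutator, the cocycle argument of Proposition \ref{P:R-2} produces $\sigma$ with $\sigma\circ\pi_{A^{\circ}}=\tilde{\sigma}$.

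The step I expect to be the main obstacle is \emph{rationality}: showing that $\sigma\colon E_n(A^{\circ})\to H^{\circ}$ (and likewise $\bar{\sigma}$) is a morphism of algebraic $K$-groups. Over a noncommutative algebra $E_n$ has no ``big cell'' of the simple product form $U^-\times T\times U$ used in Lemma \ref{L:R-2}, so the direct pullback argument there is unavailable. Instead I would argue geometrically: the $\psi_{ij}$ are regular, and by Proposition \ref{P:Connect} there is a fixed finite word in the root-subgroup images $W_{ij}$ (with the signs $e_i=\pm1$) equal to all of $E_n(A^{\circ})$; the corresponding product of the $\psi_{ij}$ is then a regular map agreeing with $\sigma$ on this word, and since the word is the image of a dominant morphism from an irreducible variety and $E_n(A^{\circ})$ is connected, one deduces that $\sigma$ is regular on a dense open subset and hence, by translation, that it is a morphism of algebraic groups (cf. Lemma \ref{NC:L-R1}); the same applies to $\bar{\sigma}$, which in case (i) then lifts to $\sigma$ as in Proposition \ref{P:R-2}. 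Finally, unwinding the constructions with $\B:=A^{\circ}$ and $f\colon R\to\B$ the composite of $R\to A$ with the projection $A\to A^{\circ}$ gives $\rho\vert_{\Delta}=(\sigma\circ F)\vert_{\Delta}$, where $F\colon E_n(R)\to E_n(\B)$ is induced by $f$, which is the assertion of the theorem.
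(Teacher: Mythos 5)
Your proposal is correct and follows essentially the same route as the paper: construct the associative algebraic ring via the type $A_2$ commutator computation, lift to $\mathrm{St}_n(A)$, split off the finite part and invoke the Bak--Rehmann centrality of $K_2(n,A^{\circ})$ to descend to $\bar{\sigma}$, lift via the Levi/Wedderburn--Malcev structure, and prove rationality. You correctly anticipated the one genuinely new difficulty in the noncommutative setting — that the big-cell argument of Lemma \ref{L:R-2} is unavailable — and your proposed fix (produce a dominant word map from Proposition \ref{P:Connect} and apply the geometric Lemma \ref{NC:L-R1} together with Lemma \ref{L:R-3}) is precisely what the paper does in Proposition \ref{NC:P-R2}.
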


\vskip2mm

Assuming Theorem \ref{NC:T-NonComm1}, let us now prove Theorem 2. Let $f \colon D \to \B$ be the ring homomorphism in Theorem \ref{NC:T-NonComm1}, and set $C = \overline{f(k)}.$ First note that by Lemma \ref{KT:L-DA}, we have $K$-algebra isomorphisms
$$
\B \simeq D \otimes_k C \simeq M_s (C)
$$
where $s^2 = \dim_k D.$ Consequently, $E_n (\B) \simeq E_n (M_s (C)) = E_{ns} (C).$ Moreover, since $C$ is a finite-dimensional $K$-algebra, in particular a semilocal commutative ring, $E_{ns} (C) = SL_{ns} (C)$ (see Lemma \ref{L:FI-100}). So, using the fact that $G = {\bf SL}_{n,D}$ is $K$-isomorphic to $SL_{ns}$ (\cite{PR}, 2.3.1),  we see that $E_n (\B) \simeq G(C).$ Letting $f_C \colon k \to C$ be the restriction of $f$ to $k$, we now obtain Theorem 2.

The proofs of Theorems \ref{NC:T-NonComm1} and \ref{NC:T-NonComm2} proceed along similar lines as the proof of Theorem 1 discussed above. The main new technical input consists of noncommutative analogues of Stein's result, which were discussed in \S \ref{S:KTNC}. Additionally, we adopt a somewhat different approach to establishing rationality statements (cf. Lemma \ref{NC:L-R1}).

For the sake of completeness, we now indicate the main points of the argument. Let $\rho \colon E_n (R) \to GL_m (K)$ be a representation. Recall that in our discussion of the case of $\Phi$ of type $A_2$ in Theorem \ref{T:ARR-1}, we showed that one can attach an (associative) algebraic ring $A$ to $\rho.$ The precise statement is as follows.

%starting with a representation
%$\rho \colon E_n (R) \to GL_m (K),$ one shows that it is possible to attach an algebraic ring $A$ to $\rho$.
%The precise statement is given by the following proposition, which is proved in exactly the same way as the case of $\Phi$ of type $A_2$ in Theorem \ref{T:ARR-1}.

\begin{prop}\label{NC:P-AR}
Suppose $R$ is an associative ring, $K$ an algebraically closed field, and $n \geq 3$. Given a representation $\rho \colon E_n (R) \to GL_m (K)$, there exists an associative algebraic ring $A$, together with a homomorphism of abstract rings $f \colon R \to A$ having Zariski-dense image such that for all $i, j \in \{ 1, \dots, n \},$ $i \neq j$, there is an injective regular map $\psi_{ij} \colon A \to H$ into $H := \overline{\rho (E_n (R))}$ satisfying
\begin{equation}\label{E:AR-1}
\rho (e_{ij} (t)) = \psi_{ij} (f(t))
\end{equation}
for all $t \in R.$
\end{prop}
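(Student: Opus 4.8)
\emph{Plan of proof.} The idea is to run the $SL_3$ construction carried out above in the proof of Theorem~\ref{T:ARR-1} using only three of the $n$ available indices, and then transport the resulting algebraic ring to the remaining root subgroups by conjugation with monomial matrices. Since $n \geq 3$, all of the relations (R1)--(R3) hold in $E_n(R)$, and the argument below shows that the passage from $SL_3$ to $E_n(R)$ requires no new input.

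\emph{Underlying variety and addition.} Fix the indices $1,2,3$. By relation (R1), $e_{13}(r)e_{13}(s)=e_{13}(r+s)$, so $\rho(e_{13}(R))$ is an abelian subgroup of $GL_m(K)$, and hence its Zariski closure $A := \overline{\rho(e_{13}(R))}\subset H$ is a closed commutative subgroup of $H$ (cf.\ \cite{Bo}, \S 2). Let $\ba\colon A\times A\to A$ be the restriction to $A$ of the matrix product in $H$; then $(A,\ba)$ is a commutative algebraic group. Define $f\colon R\to A$ by $f(t)=\rho(e_{13}(t))$. By (R1) we have $f(t_1+t_2)=\ba(f(t_1),f(t_2))$, and $\overline{f(R)}=A$ by construction.

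\emph{Multiplication.} As in the proof of Theorem~\ref{T:ARR-1}, let $w_{12},w_{23}\in E_n(R)$ be the monomial matrices $w_{kl}=e_{kl}(1)e_{lk}(-1)e_{kl}(1)$. The identities (\ref{E:ARR203}) and (\ref{E:ARR204}) are computations taking place inside the first three coordinates, so they hold in $E_n(R)$ for every $n\geq 3$: namely $w_{12}^{-1}e_{13}(r)w_{12}=e_{23}(r)$, $\ w_{23}e_{13}(r)w_{23}^{-1}=e_{12}(r)$, and $[e_{12}(r),e_{23}(s)]=e_{13}(rs)$ (the last being (R3)). Define the regular map
\[
\bm\colon A\times A\to H,\qquad \bm(a_1,a_2)=\bigl[\,\rho(w_{23})\,a_1\,\rho(w_{23})^{-1},\ \rho(w_{12})^{-1}\,a_2\,\rho(w_{12})\,\bigr].
\]
Since $\rho$ is a homomorphism, $\bm(f(t_1),f(t_2))=\rho\bigl([e_{12}(t_1),e_{23}(t_2)]\bigr)=\rho(e_{13}(t_1t_2))=f(t_1t_2)$. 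In particular $\bm(f(R)\times f(R))\subseteq f(R)\subseteq A$; as $\bm$ is regular, hence continuous, and $f(R)\times f(R)$ is dense in $A\times A$, we get $\bm(A\times A)\subseteq A$, so $\bm$ may be regarded as a regular self-map of $A$. Now Lemma~\ref{L:ARR-1}, applied with the abstract associative unital ring $R$, shows that $(A,\ba,\bm)$ is an associative algebraic ring with identity $1_A=f(1_R)$ and that $f$ is a ring homomorphism with $\overline{f(R)}=A$.

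\emph{The maps $\psi_{ij}$ and the main difficulty.} Take $\psi_{13}\colon A\hookrightarrow H$ to be the inclusion; it is injective, regular, and $\psi_{13}(f(t))=\rho(e_{13}(t))$. For an arbitrary pair $i\neq j$, use the $2$-transitivity of the action of $S_n$ on $\{1,\dots,n\}$: pick $\sigma\in S_n$ with $\sigma(1)=i$, $\sigma(3)=j$, and lift $\sigma$ to a product $w_\sigma\in E_n(R)$ of the monomial matrices $w_{kl}$. Then $w_\sigma e_{13}(r)w_\sigma^{-1}=e_{ij}(\varepsilon r)$ for all $r$, with $\varepsilon=\pm1$ independent of $r$; after replacing $w_\sigma$ by $dw_\sigma$ with $d\in E_n(R)$ a diagonal matrix with entries in $\{\pm1\}$ in case $\varepsilon=-1$, we obtain $g_{ij}\in E_n(R)$ with $g_{ij}e_{13}(r)g_{ij}^{-1}=e_{ij}(r)$ for all $r\in R$. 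Set $\psi_{ij}(a)=\rho(g_{ij})\,a\,\rho(g_{ij})^{-1}$; this is regular (conjugation by a fixed invertible matrix, composed with the inclusion $A\hookrightarrow H$), injective, and $\psi_{ij}(f(t))=\rho\bigl(g_{ij}e_{13}(t)g_{ij}^{-1}\bigr)=\rho(e_{ij}(t))$, which is (\ref{E:AR-1}). I do not expect any genuine obstacle here beyond the $SL_3$ computation already carried out for Theorem~\ref{T:ARR-1}; the only points needing (routine) care are the density argument guaranteeing that $\bm$ lands in $A$, and the elementary bookkeeping with monomial and diagonal matrices that realizes an arbitrary $e_{ij}$-subgroup as an honest conjugate of the $e_{13}$-subgroup inside $E_n(R)$.
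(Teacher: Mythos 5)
Your proof is correct and follows the same approach as the paper, which simply invokes the $A_2$/$SL_3$ construction from the proof of Theorem~\ref{T:ARR-1} (setting $A=\overline{\rho(e_{13}(R))}$, with $\ba$ matrix multiplication, $\bm$ defined via the commutator $[e_{12},e_{23}]=e_{13}$ after conjugation by $w_{12},w_{23}$, and Lemma~\ref{L:ARR-1}), and reduces the remaining $\psi_{ij}$ to the base case by conjugation with products of the monomial matrices $w_{kl}$. Your sign bookkeeping is sound (one can take $d=w_{ik}^2=h_{ik}(-1)$ for some $k\neq i,j$, which exists since $n\geq 3$), though it could be made marginally slicker by composing with the regular additive-inverse map $\iota\colon A\to A$ instead of adjusting $w_\sigma$ by a diagonal matrix.
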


Moreover, by Remark 2.3.5, in the situation of Theorem \ref{NC:T-NonComm1}, the algebraic ring $A$ is automatically connected.

Next, one uses the same argument as in Proposition \ref{P:StG-1} to show that $\rho$ lifts to a representation of the Steinberg group $\St_n (A).$

\begin{prop}\label{NC:P-St1}
Suppose $R$ is an associative ring, $K$ an algebraically closed field, and $n \geq 3$, and let
$\rho \colon E_n (R) \to GL_m (K)$ be a representation. Furthermore, let $A$ and $f \colon R \to A$ be the algebraic ring and ring homomorphism constructed in Proposition \ref{NC:P-AR}.
Then there exists a group homomorphism $\tilde{\tau} \colon \mathrm{St}_n (A) \to H \subset GL_m (K)$ such that $\tilde{\tau} \colon x_{ij} (a) \mapsto \psi_{ij} (a)$ for all $a \in A$ and all $i,j \in \{1, \dots, n \}, i \neq j.$ Consequently, $\tilde{\tau} \circ \tilde{F} = \rho \circ \pi_R,$ where $\tilde{F} \colon \mathrm{St}_n (R) \to \mathrm{St}_n (A)$ is the homomorphism induced by $f.$
\end{prop}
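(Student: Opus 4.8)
The plan is to adapt, essentially verbatim, the argument of Proposition~\ref{P:StG-1} to the noncommutative setting. Since $\mathrm{St}_n(A)$ is defined by the generators $\tilde{x}_{ij}(a)$ ($a \in A$, $i \neq j$) together with the relations (R1)--(R3), producing a group homomorphism $\tilde{\tau}\colon \mathrm{St}_n(A) \to H$ with $\tilde{\tau}(\tilde{x}_{ij}(a)) = \psi_{ij}(a)$ amounts to checking that the elements $\psi_{ij}(a) \in H$ satisfy those same relations. First I would observe that each of (R1)--(R3) asserts the equality of two maps $A \to H$ or $A \times A \to H$ assembled out of the regular maps $\psi_{ij}$ (Proposition~\ref{NC:P-AR}), the group operations of $H$, and --- in the case of (R3) --- the multiplication $\bm \colon A \times A \to A$ of the algebraic ring $A$; all of these are regular, so both sides of each relation are regular maps. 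By Proposition~\ref{NC:P-AR} the image $f(R)$ is Zariski-dense in $A$, hence $f(R) \times f(R)$ is Zariski-dense in $A \times A$, and it therefore suffices to verify each relation on pairs of the form $(f(s), f(t))$, $s, t \in R$.

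On such pairs the verification is immediate from the identity $\rho(e_{ij}(t)) = \psi_{ij}(f(t))$ of Proposition~\ref{NC:P-AR}, the fact that $\rho$ is a group homomorphism and $f$ a ring homomorphism, and the relations (R1)--(R3) among the elementary matrices $e_{ij}$ in $E_n(R)$, which hold since $n \geq 3$. For instance, for (R1) one has
$$
\psi_{ij}(f(s))\,\psi_{ij}(f(t)) = \rho(e_{ij}(s)e_{ij}(t)) = \rho(e_{ij}(s+t)) = \psi_{ij}(f(s)+f(t)),
$$
so the regular maps $(a,b) \mapsto \psi_{ij}(a)\psi_{ij}(b)$ and $(a,b)\mapsto \psi_{ij}(a+b)$ agree on the dense set $f(R)\times f(R)$, hence on all of $A \times A$; relation (R2) is handled the same way with the right-hand side the constant map to the identity; and for (R3), using that $\bm(f(s),f(t)) = f(st)$,
$$
[\psi_{ij}(f(s)),\psi_{jl}(f(t))] = \rho([e_{ij}(s),e_{jl}(t)]) = \rho(e_{il}(st)) = \psi_{il}(\bm(f(s),f(t))),
$$
so the regular maps $(a,b)\mapsto [\psi_{ij}(a),\psi_{jl}(b)]$ and $(a,b) \mapsto \psi_{il}(\bm(a,b))$ agree on $A \times A$. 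This produces $\tilde{\tau}$.

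For the final assertion, I would simply note that $\tilde{\tau}\circ\tilde{F}$ and $\rho\circ\pi_R$ are both group homomorphisms $\mathrm{St}_n(R) \to H$ sending each generator $\tilde{x}_{ij}(s)$, $s \in R$, to $\psi_{ij}(f(s)) = \rho(e_{ij}(s))$, and the $\tilde{x}_{ij}(s)$ generate $\mathrm{St}_n(R)$; hence the two composites coincide.

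I do not expect a genuine obstacle here: the substantive work --- constructing the associative algebraic ring $A$, the ring homomorphism $f$ with Zariski-dense image, and the injective regular maps $\psi_{ij}$ satisfying $\rho \circ e_{ij} = \psi_{ij} \circ f$ --- is already contained in Proposition~\ref{NC:P-AR}, and the present statement only repackages that data through the presentation of the Steinberg group. The one point that deserves a moment's attention is relation (R3), since its right-hand side involves the multiplication of $A$ rather than merely its additive structure; but regularity of $\bm$ is part of the definition of an algebraic ring, so $(a,b)\mapsto\psi_{il}(\bm(a,b))$ is regular and the density argument applies without change.
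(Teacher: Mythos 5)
Your proposal is correct and follows exactly the approach the paper indicates: the paper simply states that one uses the same argument as Proposition~\ref{P:StG-1}, and you have carried out that adaptation faithfully, checking the presenting relations (R1)--(R3) on the dense subset $f(R)\times f(R)$ and extending by regularity, with the (correct) observation that (R3) additionally requires regularity of the multiplication map $\bm$.
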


Now, given a representation $\rho \colon E_n (R) \to GL_m (K),$ we let $f \colon R \to A$ be the ring homomorphism associated to $\rho$ (Proposition \ref{NC:P-AR}), and
denote by
$\tilde{F} \colon \mathrm{St}_n (R) \to \mathrm{St}_n (A)$ and $F \colon E_n (R) \to E_n (A)$ the group homomorphisms induced $f.$
Then under the hypotheses of Theorems \ref{NC:T-NonComm1} and \ref{NC:T-NonComm2}, we have $\mathrm{St}_n (A) = \mathrm{St}_n (A^{\circ})$ (Remark 2.3.5) and $\mathrm{St}_n (A) = \mathrm{St}_n (A^{\circ}) \times P$ (Proposition \ref{NC:P-St2}), respectively,
so in both cases $\tilde{\Delta} := \tilde{F}^{-1} (\mathrm{St}_n (A^{\circ}))$ and $\Delta : = \pi_R (\tilde{\Delta})$ are finite-index subgroups of  $\mathrm{St}_n (R)$ and $E_n (R)$. Moreover, it is clear that
$F(\Delta) \subset E_n (A^{\circ})$.
Thus, letting
$\tilde{\sigma}$ denote the restriction of $\tilde{\tau}$ to $\mathrm{St}_n (A^{\circ})$, we see that the solid arrows in
\begin{equation}\label{NC:D-3}
\xymatrix{\tilde{\Delta} \ar[r]^{\tilde{F}} \ar[d]_{\pi_R} & \mathrm{St}_n (A^{\circ}) \ar[d]^{\pi_{A^{\circ}}} \ar[rrdd]^{\tilde{\sigma}} \\ \Delta \ar[rrrd]_{\rho} \ar[r]^{F} & E_n(A^{\circ}) \ar@{.>}[rrd]^{\sigma} \\ & & & H^{\circ} \\}
\end{equation}
form a commutative diagram. As in the commutative case, one completes the proof by showing that
there exists a group homomorphism $\sigma \colon E_n (A^{\circ}) \to H^{\circ}$ (in fact, a morphism of algebraic groups) making the full diagram (\ref{NC:D-3}) commute.

In the setting of Theorem \ref{NC:T-NonComm1}, the existence of the required {\it abstract} homomorphism $\sigma$ is given by the next proposition.

\begin{prop}\label{NC:P-SL}
Suppose $k$ and $K$ are fields of characteristic 0, with $K$ algebraically closed, $D$ is a finite-dimensional central division algebra over $k$, and $n$ is an integer $\geq 3.$ Let $$\rho \colon E_n (D) \to GL_m (K)$$ be a representation and denote by $A$ the algebraic ring associated to $\rho$.
Then $A = A^{\circ}$ is a finite-dimensional $K$-algebra and there exists a homomorphism of abstract groups $\sigma \colon E_n (A^{\circ}) \to H^{\circ}$ making the diagram $\mathrm{(\ref{NC:D-3})}$ commute.
\end{prop}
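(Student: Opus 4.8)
The plan is to follow the proof of Proposition~\ref{P:FI-3}, with Stein's theorem replaced by the description of $K_2(n,A)$ provided by Proposition~\ref{KT:P-DA}. First I would record the two easy structural facts. Since $D$ is a division algebra over the infinite field $k$ and $f\colon D\to A$ has Zariski-dense image, Remark 2.3.5 shows that $A$ is connected, i.e. $A=A^{\circ}$; and since $\mathrm{char}\:K=0$, Proposition~\ref{P:AR-2} together with Remark 2.3.7(ii) shows that $A$ is a finite-dimensional $K$-algebra. Because $A^{\circ}=A$, the maps $\tilde{\sigma}$ and $\pi_{A^{\circ}}$ in diagram~(\ref{NC:D-3}) are exactly $\tilde{\tau}$ and $\pi_A$, and since $\pi_A\colon\St_n(A)\to E_n(A)$ is surjective with kernel $K_2(n,A)$, producing the desired $\sigma$ is equivalent to showing that $\tilde{\tau}$ vanishes on $K_2(n,A)$.

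By Proposition~\ref{KT:P-DA}, $K_2(n,A)=U'_n(A)$ is generated by the Steinberg symbols $c(u,v)$ with $u,v\in\overline{f(L^{\times})}$ for a fixed maximal subfield $L\subset D$, so it suffices to prove $\tilde{\tau}(c(u,v))=1$ for all such $u,v$. Here I would run the density argument from the proof of Proposition~\ref{P:FI-3}: using Proposition~\ref{P:AR-1}(ii), the assignment
$$
\Theta\colon A^{\times}\times A^{\times}\to H,\qquad (u,v)\mapsto\tilde{\tau}(c(u,v))=H_{12}(u)H_{12}(v)H_{12}(vu)^{-1},
$$
is a regular map, where $H_{ij}(r)=W_{ij}(r)W_{ij}(-1)$ and $W_{ij}(r)=\psi_{ij}(r)\psi_{ji}(-r^{-1})\psi_{ij}(r)$ are built from the regular maps $\psi_{ij}$ of Proposition~\ref{NC:P-AR} just as $H_{\alpha},W_{\alpha}$ are built from the $\psi_{\alpha}$ in the proof of Proposition~\ref{P:FI-3}. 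For $u=f(\lambda)$, $v=f(\mu)$ with $\lambda,\mu\in L^{\times}$, the compatibility $\tilde{\tau}\circ\tilde{F}=\rho\circ\pi_D$ of Proposition~\ref{NC:P-St1} (with $\pi_D\colon\St_n(D)\to E_n(D)$) gives $\Theta(f(\lambda),f(\mu))=\rho\bigl(\pi_D(c(\lambda,\mu))\bigr)$, and $\pi_D(c(\lambda,\mu))$ is the diagonal matrix with $\lambda\mu\lambda^{-1}\mu^{-1}$ in the first slot and $1$'s elsewhere; since $\lambda$ and $\mu$ commute in the field $L$, this matrix equals the identity, so $\Theta(f(\lambda),f(\mu))=1$. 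Thus $\Theta$ is identically $1$ on $f(L^{\times})\times f(L^{\times})$, which is Zariski-dense in $\overline{f(L^{\times})}\times\overline{f(L^{\times})}$, and regularity forces $\Theta\equiv 1$ there.

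Consequently $\tilde{\tau}$ kills every generator of $U'_n(A)=K_2(n,A)=\ker\pi_{A^{\circ}}$, so it descends to an abstract group homomorphism $\sigma\colon E_n(A^{\circ})\to GL_m(K)$ with $\sigma\circ\pi_{A^{\circ}}=\tilde{\sigma}$; combined with $\tilde{\sigma}\circ\tilde{F}=\rho\circ\pi_R$ (Proposition~\ref{NC:P-St1}) and the functoriality of $\pi$, this makes all of diagram~(\ref{NC:D-3}) commute. Finally, to see that $\sigma$ takes values in $H^{\circ}$, note $\sigma(e_{ij}(a))=\psi_{ij}(a)$, so $\sigma(E_n(A^{\circ}))$ is generated by the irreducible subsets $\psi_{ij}(A^{\circ})\ni I$ and hence is a connected subgroup of $H$ by Proposition~\ref{P:Connect}. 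The genuinely substantial input here is Proposition~\ref{KT:P-DA} (resting on Lemma~\ref{KT:L-DA} and the Bak--Rehmann computations recalled in \S\ref{S:KTNC}); granting that, the only point demanding care is the regularity of $\Theta$ on $A^{\times}\times A^{\times}$ together with the bookkeeping around the closures $\overline{f(L^{\times})}$, which I expect to be the main — though essentially routine — obstacle.
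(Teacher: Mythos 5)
Your proposal is correct and matches the paper's intended argument: the paper's own proof is literally the one‑line instruction ``use Proposition~\ref{KT:P-DA} to imitate the argument given in the proof of Proposition~\ref{P:FI-3},'' and you carry that out faithfully — connectedness of $A$ via Remark~2.3.5, $A$ is a finite‑dimensional $K$‑algebra via Proposition~\ref{P:AR-2}, the regular map $\Theta(u,v)=\tilde\tau(c(u,v))$ vanishing on $f(L^\times)\times f(L^\times)$ because $L$ is commutative, and density in $\overline{f(L^\times)}\times\overline{f(L^\times)}$ to kill the Bak--Rehmann generators of $K_2(n,A)$, with Proposition~\ref{P:Connect} giving that the image lands in $H^\circ$. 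The only cosmetic slip is writing $\pi_R$ rather than $\pi_D$ near the end, and (since $A=A^{\circ}$) the subgroup $\Delta$ is all of $E_n(D)$, consistent with Theorem~\ref{NC:T-NonComm1} having no finite‑index restriction.
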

For the proof, one uses Proposition \ref{KT:P-DA} to imitate the argument given in the proof of Proposition \ref{P:FI-3}.

\vskip2mm
To handle the situation of Theorem \ref{NC:T-NonComm2}, one again first needs to show that
there exists a group homomorphism $\bar{\sigma} \colon E_n (A^{\circ}) \to \bar{H}$ such that $\bar{\sigma} \circ \pi_{A^{\circ}} = \nu \circ \tilde{\sigma}$, where $Z(H^{\circ})$ is the center of $H^{\circ}$, $\bar{H} = H^{\circ}/ Z(H^{\circ})$, and $\nu \colon H^{\circ} \to \bar{H}$ is the canonical map. As in the commutative case, this relies on the centrality of $K_2 (n, A^{\circ})$ in $\mathrm{St}_n (A^{\circ})$ (cf. Proposition \ref{KT:P-BR2}).
\begin{prop}\label{NC:P-FI}
Suppose $R$ is an associative ring with $2 \in R^{\times}$, $K$ is an algebraically closed field of characteristic 0,
and $n \geq 3.$ Let $\rho \colon E_n (R) \to GL_m (K)$ be a representation, set $H = \overline{\rho (E_n (R))}$, and denote by $A$ the algebraic ring associated to $\rho.$  Then $A^{\circ}$ is a finite-dimensional $K$-algebra and
there exists a homomorphism
$\bar{\sigma} \colon E_n (A^{\circ}) \to \bar{H}$ such that $\bar{\sigma} \circ \pi_{A^{\circ}} = \nu \circ \tilde{\sigma}.$
\end{prop}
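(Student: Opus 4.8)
The plan is to adapt, essentially verbatim, the commutative argument that produces the map $\bar\sigma$ of Proposition~\ref{P:FI-5}, replacing the Chevalley-theoretic inputs by their type~$A_{n-1}$ analogues from \S\ref{S:KTNC}.

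First I would dispose of the finite-dimensionality claim. Since $f\colon R\to A$ has Zariski-dense image and $2\in R^\times$, one has $f(2)=2_A\in A^\times$, so $2\in A^\times$; because $\mathrm{char}\,K=0$, Proposition~\ref{P:AR-2} applies and $A$ virtually comes from an algebra, whence by Remark~2.3.7(ii) the identity component $A^\circ$ is a finite-dimensional associative $K$-algebra (and $2\in(A^\circ)^\times$). This also guarantees that Propositions~\ref{KT:P-BR2} and~\ref{NC:P-St2} are available for $A^\circ$, and (as recalled in the construction preceding diagram~(\ref{NC:D-3})) that $\tilde\Delta=\tilde F^{-1}(\mathrm{St}_n(A^\circ))$ and $\Delta=\pi_R(\tilde\Delta)$ have finite index.

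Next I would identify $\tilde\sigma(\mathrm{St}_n(A^\circ))$ with $H^\circ$. Since $\tilde\sigma=\tilde\tau\vert_{\mathrm{St}_n(A^\circ)}$ sends $\tilde x_{ij}(a)$ to $\psi_{ij}(a)$ for $a\in A^\circ$, the group $\tilde\sigma(\mathrm{St}_n(A^\circ))$ is generated by the sets $\psi_{ij}(A^\circ)$, each the image of the irreducible variety $A^\circ$ under a regular map and each containing the identity $\psi_{ij}(0_A)=I_m$; by Proposition~\ref{P:Connect} it is therefore a closed connected subgroup of $H$, hence contained in $H^\circ$. Conversely, Proposition~\ref{NC:P-St1} gives $\tilde\sigma(\tilde F(\tilde\Delta))=\rho(\pi_R(\tilde\Delta))=\rho(\Delta)$ with $\tilde F(\tilde\Delta)\subset\mathrm{St}_n(A^\circ)$, and since $\Delta$ has finite index in $E_n(R)$ while $\rho(E_n(R))$ is Zariski-dense in $H$, the closure of $\rho(\Delta)$ is a finite-index closed subgroup of $H$ and thus contains $H^\circ$. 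Combining the two inclusions forces $\tilde\sigma(\mathrm{St}_n(A^\circ))=H^\circ$.

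The descent is then immediate: by Proposition~\ref{NC:P-St2}(ii) the kernel $K_2(n,A^\circ)=\ker\pi_{A^\circ}$ is central in $\mathrm{St}_n(A^\circ)$, so $\tilde\sigma(K_2(n,A^\circ))$ lies in the centre of $\tilde\sigma(\mathrm{St}_n(A^\circ))=H^\circ$, i.e.\ in $Z(H^\circ)=\ker\nu$; hence $\nu\circ\tilde\sigma$ is trivial on $\ker\pi_{A^\circ}$, and as $\pi_{A^\circ}\colon\mathrm{St}_n(A^\circ)\to E_n(A^\circ)$ is surjective it descends to a unique homomorphism $\bar\sigma\colon E_n(A^\circ)\to\bar H$ with $\bar\sigma\circ\pi_{A^\circ}=\nu\circ\tilde\sigma$. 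The genuine content here is all imported --- the structure theory of characteristic-zero algebraic rings (Proposition~\ref{P:AR-2}) and the centrality of $K_2(n,A^\circ)$, which is exactly where the hypothesis $2\in R^\times$ is used (through the Bak--Rehmann computations, Proposition~\ref{KT:P-BR2}, and Proposition~\ref{NC:P-St2}); the only step that takes a little care is the identification $\tilde\sigma(\mathrm{St}_n(A^\circ))=H^\circ$, where one must rule out the a priori possibility that the subgroup generated by the images of the elementary subgroups is merely dense in $H^\circ$, which Borel's connectedness result (Proposition~\ref{P:Connect}) prevents.
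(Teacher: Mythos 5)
Your proposal is correct and follows the same route the paper indicates: descent along $\pi_{A^{\circ}}$ via the centrality of $K_2(n, A^{\circ})$ in $\mathrm{St}_n(A^{\circ})$ (Proposition \ref{NC:P-St2}(ii), ultimately the Bak--Rehmann computation of Proposition \ref{KT:P-BR2}), with Proposition \ref{P:AR-2} supplying the finite-dimensionality of $A^{\circ}$. You have also correctly supplied the noncommutative analogue of Proposition \ref{P:FI-1} --- the identification $\tilde{\sigma}(\mathrm{St}_n(A^{\circ})) = H^{\circ}$, via Proposition \ref{P:Connect} for closedness/connectedness of the image and the finite-index-of-$\Delta$ argument for the reverse inclusion --- which is implicitly used but not spelled out in the paper's sketch.
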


Finally, to conclude the proof, we show that the homomorphisms $\sigma$ and $\bar{\sigma}$ are in fact morphisms of algebraic groups; in the latter case, this allows us to lift $\bar{\sigma}$ to a
morphism of algebraic groups $\sigma \colon E_n (A^{\circ}) \to H^{\circ}$ making the diagram (\ref{NC:D-3}) commute.
Since the lifting of $\bar{\sigma}$ is accomplished in essentially the same way as in the commutative case, we will only address the proof of rationality. The argument is
%In contrast to the approach involving the ``big cell'' that we used in the commutative case, the proof of rationality here is
based on the following geometric lemma.
\begin{lemma}\label{NC:L-R1}
Let $X, Y, Z$ be irreducible varieties over an algebraically closed field $K$ of characteristic 0. Suppose $s \colon X \to Y$ and $t \colon X \to Z$ are regular maps, with $s$ dominant, such that for any $x_1, x_2 \in X$ with $s(x_1) = s(x_2),$ we have $t(x_1) = t(x_2).$ Then there exists a rational map $h \colon Y \dashrightarrow Z$ such that $h \circ s= t$ on a suitable open subset of $X$.
\end{lemma}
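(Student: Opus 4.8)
\smallskip

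The plan is to pass to the graph of $(s,t)$ and reduce everything to showing that the projection to $Y$ is birational. Concretely, let $W \subseteq Y \times Z$ be the Zariski closure of the image of the regular map $(s,t) \colon X \to Y \times Z$; since $X$ is irreducible, $W$ is an irreducible closed subvariety of $Y \times Z$. Write $\pi \colon W \to Y$ and $q \colon W \to Z$ for the two projections. Because $s = \pi \circ (s,t)$ is dominant, $\pi$ is dominant as well. I claim $\pi$ is in fact \emph{birational} onto $Y$; granting this, let $\pi^{-1} \colon Y \dashrightarrow W$ be the inverse rational map and put $h := q \circ \pi^{-1} \colon Y \dashrightarrow Z$. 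Since $(s,t)(X)$ is dense in $W$, the composite $\pi^{-1}\circ \pi$ equals the identity as a rational self-map of $W$, so on a dense open subset of $X$ (where all the maps involved are defined) we get $\pi^{-1}\circ s = \pi^{-1}\circ \pi\circ (s,t) = (s,t)$, whence $h\circ s = q\circ\pi^{-1}\circ s = q\circ(s,t) = t$, as required.

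It remains to prove the claim. By Chevalley's theorem the constructible set $s(X)$ contains a dense open $V \subseteq Y$ and the constructible set $(s,t)(X)$ contains a dense open $U \subseteq W$. The hypothesis relating $s$ and $t$ translates into the following: for every $y \in s(X)$ all points of $s^{-1}(y)$ have the same image under $t$, so $(s,t)(X) \cap \pi^{-1}(y)$ is a single point. Using the theorem on the dimension of fibres (applied to $\pi$ and to $\pi\vert_{W\setminus U}$), one produces a dense open $V^{*} \subseteq Y$ such that for $y \in V^{*}$ every irreducible component of $\pi^{-1}(y)$ has dimension $\dim W - \dim Y$ while $U$ meets $\pi^{-1}(y)$ in a dense subset. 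Then for $y$ in the nonempty dense open $V \cap V^{*} \cap \pi(U)$ the fibre $\pi^{-1}(y)$ admits a dense subset equal to a single point, hence is itself a single point, forcing $\dim W = \dim Y$. Finally, since $\mathrm{char}\, K = 0$, the dominant morphism $\pi$ is separable, so $[K(W):\pi^{*}K(Y)]$ equals the cardinality of a general fibre of $\pi$, namely $1$; thus $K(W) = \pi^{*}K(Y)$ and $\pi$ is birational.

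The crux of the argument — and the only place where $\mathrm{char}\, K = 0$ is genuinely used — is this last step, the passage from ``the general fibre of $\pi$ is a single point'' to ``$\pi$ is birational''. In positive characteristic this can fail: if $s$ were, say, a relative Frobenius, it would be bijective on points, so the hypothesis would hold vacuously, yet no rational section $h$ would exist. Hence one genuinely needs generic separability of dominant morphisms of varieties in characteristic zero. The auxiliary facts invoked above — constructibility of images of morphisms, upper semicontinuity of fibre dimension, and the fact that a dense open subset of $W$ meets the general fibre of a dominant morphism in a dense subset — are standard and may be quoted from \cite{Mum} or any standard reference.
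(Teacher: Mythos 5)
Your proof is correct and takes essentially the same route as the paper's: pass to (the closure of) the image of the graph map $(s,t)$ in $Y\times Z$ and show the projection to $Y$ is birational, which is precisely where $\mathrm{char}\,K=0$ is used. The only cosmetic difference is in how birationality is established: the paper restricts to a dense open subset $P$ contained inside the constructible image of $(s,t)$, observes that the projection $p\colon P\to Y$ is injective (every point of $P$ has a preimage in $X$, so the hypothesis applies directly), and then invokes the criterion that a dominant, injective, separable morphism of irreducible varieties is birational, whereas you reach the same conclusion through the fibre-dimension theorem and the equality of the field-extension degree with the cardinality of the general fibre.
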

\begin{proof}
Let $W \subset X \times Y \times Z$ be the subset
$$
W = \{ (x,y, z) \colon s(x) = y, t(x) = z \}.
$$
Notice that $W$ is the graph of the morphism
$$
\varphi \colon X \to Y \times Z, \ \ \ x \mapsto (s(x), t(x)),
$$
so $W$ is an irreducible variety isomorphic to $X$. Now consider the projection $\mathrm{pr}_{Y \times Z} \colon X \times Y \times Z \to Y \times Z$, and let $U = \mathrm{pr}_{Y \times Z} (W)$ and $V = \bar{U}$, where the bar denotes the Zariski closure. Then $V$ is an irreducible variety. Moreover, $U$ is constructible by (\cite{H2}, Theorem 4.4), so in particular contains a dense open subset $P$ of $V$, which is itself an irreducible variety. Let now $p \colon P \to Y$ be the projection to the first component. We claim that $p$ gives a birational isomorphism between $P$ and $Y$. From our assumptions, we see that $p$ is dominant, and since $\mathrm{char} \: K = 0,$ $p$ is also separable. So, it follows from (\cite{H2}, Theorem 4.6) that to show that $p$ is birational, we only need to verify that it is injective. Suppose that $u_1 = (y_1, z_1), u_2 = (y_2, z_2) \in P$ with $y_1 = y_2.$ By our construction, there exist $x_1, x_2 \in X$ such that $s(x_1) = y_1$, $t(x_1) = z_1$ and $s(x_2) = y_2, t(x_2) = z_2.$ Since $s(x_1) = s(x_2),$ we have $t(x_1) = t(x_2),$ so $u_1 = u_2$, as needed.

Since $p$ is birational, we can now take $h = \pi_Z \circ p^{-1}\colon Y \dashrightarrow Z,$ where $\pi_Z \colon Y \times Z \to Z$ is the projection.
\end{proof}

\vskip2mm

Let $\rho \colon E_n (R) \to GL_m (K)$ be a representation as in Theorem \ref{NC:T-NonComm1} or \ref{NC:T-NonComm2} and denote by $A$ the algebraic ring associated to $\rho.$ Also let $M$ be the set of all pairs $(i,j)$ with $1 \leq i, j \leq n$, $i \neq j.$ Then, as we observed at the beginning of this section, $E_n (A^{\circ})$ is the connected algebraic group generated by the images $W_{q} = \mathrm{im} \ \varphi_{q}$ of the regular maps
$$
\varphi_{q} \colon A^{\circ} \to GL_n (A^{\circ}), \ \ \ a \mapsto e_{q} (a),
$$
for all $q \in M.$
In particular, Proposition \ref{P:Connect} implies that there exists a finite sequence $(\alpha(1), \dots, \alpha(v))$, with $\alpha(i) \in M$, such that
%and let $W_{ij} = \mathrm{im} \ \varphi_{ij}.$ Then each $W_{ij}$ contains the identity matrix $I_n \in GL_n (A^{\circ})$, and by definition, $E_n (A^{\circ})$ is generated by the $W_{ij}.$ So, by (\cite{Bo}, Proposition 2.2), $E_n (A^{\circ})$ is {\it connected},
%subgroup of $GL_n (A^{\circ})$,
$$
E_n (A^{\circ}) = W^{\varepsilon_1}_{\alpha(1)} \cdots W^{\varepsilon_v}_{\alpha(v)},
$$
where each $\varepsilon_i = \pm 1.$ Let
$$
X = \prod_{i=1}^v (A^{\circ})_{\alpha(i)}
$$
be the product of $v$ copies of $A^{\circ}$ indexed by the $\alpha(i)$ and define a regular map $s \colon X \to E_n (A^{\circ})$ by
\begin{equation}\label{NC:E-Reg}
s(a_{\alpha(1)}, \dots, a_{\alpha(v)}) = \varphi_{\alpha(1)}(a_{\alpha(1)})^{\varepsilon_1} \cdots \varphi_{\alpha(v)} (a_{\alpha(v)})^{\varepsilon_v}.
\end{equation}
Also let
\begin{equation}\label{NC:E-Reg1}
 t\colon X \to H^{\circ} \ \ \ t(a_{\alpha(1)}, \dots, a_{\alpha(v)}) =  \psi_{\alpha(1)}(a_{\alpha(1)})^{\varepsilon_1} \cdots \psi_{\alpha(v)} (a_{\alpha(v)})^{\varepsilon_v},
\end{equation}
where the $\psi_{\alpha(i)}$ are the regular maps from Proposition \ref{NC:P-AR}. With this set-up, we can now prove

\begin{prop}\label{NC:P-R2}
The homomorphisms $\sigma \colon E_n(A^{\circ}) \to H^{\circ}$ and $\bar{\sigma} \colon E_n (A^{\circ}) \to \bar{H}$
constructed in Propositions \ref{NC:P-SL} and \ref{NC:P-FI}, respectively, are morphisms of algebraic groups.
\end{prop}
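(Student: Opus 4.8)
The plan is to obtain the rationality of $\sigma$ and $\bar\sigma$ by feeding the regular maps $s \colon X \to E_n(A^{\circ})$ and $t \colon X \to H^{\circ}$ of (\ref{NC:E-Reg}) and (\ref{NC:E-Reg1}) into the geometric Lemma \ref{NC:L-R1}, and then finishing with Lemma \ref{L:R-3}. Recall that by Propositions \ref{NC:P-SL} and \ref{NC:P-FI} the ring $A^{\circ}$ is a finite-dimensional $K$-algebra, so $E_n(A^{\circ})$ is a connected affine algebraic $K$-group; moreover $H^{\circ}$ and $\bar H = H^{\circ}/Z(H^{\circ})$ are connected affine algebraic groups, and $X$ --- a product of copies of the connected algebraic ring $A^{\circ}$ --- is irreducible. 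Since we are in characteristic $0$, all the standing hypotheses of Lemma \ref{NC:L-R1} will be in place once its fibre condition is checked. I would first record that $s$ is surjective, hence dominant: by Proposition \ref{P:Connect} every element of $E_n(A^{\circ})$ has the form $\varphi_{\alpha(1)}(a_{\alpha(1)})^{\varepsilon_1}\cdots\varphi_{\alpha(v)}(a_{\alpha(v)})^{\varepsilon_v}$ with $a_{\alpha(i)} \in A^{\circ}$ (using $\varphi_q(a)^{-1} = e_q(-a) = \varphi_q(-a)$), which is precisely the value of $s$ at $(a_{\alpha(1)},\dots,a_{\alpha(v)})$.

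The crucial point is the identity $\sigma\circ s = t$. This is pure unwinding of the constructions: $\sigma$ is characterized by $\sigma\circ\pi_{A^{\circ}} = \tilde\sigma$, and $\tilde\sigma(\tilde x_{ij}(a)) = \psi_{ij}(a)$ by Proposition \ref{NC:P-St1}, so $\sigma(\varphi_q(a)) = \sigma(e_q(a)) = \psi_q(a)$ for every admissible index $q$ and every $a \in A^{\circ}$; applying the group homomorphism $\sigma$ to the product (\ref{NC:E-Reg}) then yields $\sigma(s(x)) = t(x)$ for all $x \in X$. Hence $s(x_1) = s(x_2)$ forces $t(x_1) = \sigma(s(x_1)) = \sigma(s(x_2)) = t(x_2)$, which is exactly the hypothesis of Lemma \ref{NC:L-R1}. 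For $\bar\sigma$ the same computation, now starting from $\bar\sigma\circ\pi_{A^{\circ}} = \nu\circ\tilde\sigma$, gives $\bar\sigma\circ s = \nu\circ t$; since $\nu$ is a morphism of algebraic groups, $\nu\circ t$ is again regular, so Lemma \ref{NC:L-R1} applies verbatim with $Z = \bar H$.

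Lemma \ref{NC:L-R1} then produces a rational map $h \colon E_n(A^{\circ}) \dashrightarrow H^{\circ}$ with $h\circ s = t$ on a dense open $X_0 \subseteq X$. Because $s$ is dominant, $s(X_0)$ is a dense constructible subset of $E_n(A^{\circ})$, so after intersecting with the domain of definition of $h$ we obtain a dense open set $V \subseteq E_n(A^{\circ})$ with $V \subseteq s(X_0)$ on which $h$ is regular; and on $V$ one has $\sigma = h$, since for $y = s(x)$ with $x \in X_0$ we get $\sigma(y) = \sigma(s(x)) = t(x) = h(s(x)) = h(y)$. Thus $\sigma$ agrees on a Zariski-open set with a regular map and, being an abstract homomorphism of the connected affine algebraic group $E_n(A^{\circ})$, it is a morphism of algebraic groups by Lemma \ref{L:R-3}. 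The identical argument with $H^{\circ}$ replaced by $\bar H$ (and $t$ by $\nu\circ t$) shows that $\bar\sigma$ is a morphism.

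The entire argument is formal once Lemma \ref{NC:L-R1} is in hand: the only steps requiring any care are the bookkeeping identity $\sigma\circ s = t$ (and its bar-version) and the passage from ``agrees generically with a rational map'' to ``regular on an honest Zariski-open set'', neither of which presents a real obstacle. The genuine geometric difficulty --- descending $t$ through the dominant but highly non-injective map $s$ --- has been deliberately isolated in Lemma \ref{NC:L-R1}, which takes the place of the ``big cell'' argument (Lemma \ref{L:R-2}) used in the commutative case, where no analogous parametrization of $E_n(\B)$ is available.
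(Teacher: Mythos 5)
Your proposal is correct and follows essentially the same route as the paper's own proof: the identity $\sigma\circ s = t$ (and its $\nu$-twisted analogue for $\bar\sigma$) is fed into Lemma \ref{NC:L-R1} to obtain a rational map, and Lemma \ref{L:R-3} then upgrades this to a morphism. The only difference is that you spell out explicitly the dominance of $s$, the verification that $\sigma\circ s = t$, and the passage from "there is a rational $h$ with $h\circ s = t$ generically" to "$\sigma$ agrees with a regular map on a dense open set," all of which the paper leaves as a one-line remark; these are genuine but routine fillers, not a different argument.
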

\begin{proof}
We will only consider $\sigma$ as the argument for $\bar{\sigma}$ is completely analogous. Set
$Y = E_n (A^{\circ})$ and $Z = H^{\circ}$, and let $s \colon X \to Y$ and $t \colon X \to Z$ be the regular maps defined in (\ref{NC:E-Reg}) and (\ref{NC:E-Reg1}).
From the construction of $\sigma,$ it is clear that $(\sigma \circ s) (x) = t(x),$ so in particular
$s(x_1) = s(x_2)$ for $x_1, x_2 \in X$ implies that $t(x_1) = t(x_2).$ Hence, by Lemma \ref{NC:L-R1}, $\sigma$ is a rational map. Therefore, there exists an open subset $V \subset E_n (A^{\circ})$ such that $\sigma \vert_V$ is regular. So, it follows from Lemma \ref{L:R-3} that $\sigma \colon E_n (A^{\circ}) \to H^{\circ}$ is a morphism.

\end{proof}

\section{Applications to character varieties and deformations of representations}\label{S:T-3}

In this section, we will discuss the proof of Theorem 3 (we refer the reader to \cite{IR1} for the full details). Recall that our goal is to estimate the dimension of the character variety $X_n (\Gamma)$, where $\Gamma = G(R)^+$, $G$ is the universal Chevalley-Demazure group scheme corresponding to a reduced irreducible root system of rank $\geq 2$, and $R$ is a finitely generated commutative ring such that $(\Phi, R)$ is a nice pair. Our strategy will be first to
exploit the well-known connection, going back to A.~Weil \cite{W}, between the tangent space of $X_n (\Gamma)$ at a given point and the space of 1-cohomology of $\Gamma$ with coefficients in a naturally associated representation. We can then apply the results on standard descriptions of representations of $\Gamma$ given by Theorem 1 to relate the latter space to a certain space of derivations of the ring $R$ (cf. Proposition \ref{D:P-1}), which yields the required bound on $\dim X_n (\Gamma).$ At the end of this section, we will also indicate some applications of our results to various classical forms of rigidity.
Throughout this section, we will work over a fixed algebraically closed field $K$ of characteristic 0.

\vskip3mm

\subsection{Background on representation and character varieties}
In this section, we summarize, for the reader's convenience, some key definitions and basic properties related to representation and character varieties (further details can be found in the monograph of Lubotzky and Magid \cite{LM}).

Let $\Gamma$ be a finitely generated group and fix an integer $n \geq 1.$ Recall that the $n^{\mathrm{th}}$ {\it representation scheme} of $\Gamma$ is the functor $\fR_n (\Gamma)$ from the category of commutative $K$-algebras to the category of sets defined by
$$
\fR_n (\Gamma) (A) = \Hom (\Gamma, GL_n (A)).
$$
More generally, if $\mathcal{G}$ is a linear algebraic group over $K$, we let the representation scheme of $\Gamma$ with values in $\G$ be the functor $\fR (\Gamma, \G)$ defined by
$$
\fR (\Gamma, \G)(A) = \Hom (\Gamma, \G(A)).
$$
Using the fact that for any commutative $K$-algebra $A$, a homomorphism $\rho \colon \Gamma \to GL_n (A)$ is determined by the images of the generators, subject to the defining relations of $\Gamma$, one shows
that $\fR_n (\Gamma)$ is an affine $K$-scheme represented by a finitely-generated $K$-algebra $\mathfrak{A}_n (\Gamma).$ Similarly, $\fR (\Gamma, \G)$ is an affine $K$-scheme represented by a finitely-generated $K$-algebra $\mathfrak{A} (\Gamma, \G)$ (cf. \cite{LM}, Proposition 1.2). The set $\fR_n (\Gamma)(K)$ of $K$-points of $\fR_n (\Gamma)$ is then denoted by $R_n (\Gamma)$, and is called the $n^{\mathrm{th}}$~{\it representation variety} of $\Gamma.$ It is an affine variety over $K$ with coordinate ring $A_n (\Gamma) = \mathfrak{A}_n (\Gamma)_{\mathrm{red}},$ the quotient of $\mathfrak{A}_n (\Gamma)$ by its nilradical. The representation variety $R(\Gamma, \G)$ is defined analogously.

Now let $\rho_0 \in R(\Gamma, \G).$ To describe the Zariski tangent space of
$\fR (\Gamma, \G)$ at $\rho_0$, denoted by $T_{\rho_0} (\fR (\Gamma, \G))$, we will use the algebra of dual numbers $K[\varepsilon]$ (where $\varepsilon^2 = 0$). More specifically, it is well-known that $\fR (\Gamma, \G) (K[\varepsilon])$ is the tangent bundle of $\fR (\Gamma, \G)$, and therefore $T_{\rho_0} (\fR (\Gamma, \G))$
can be identified with the fiber over $\rho_0$ of the map $\mu \colon \fR (\Gamma, \G) (K[\varepsilon]) \to \fR (\Gamma, \G) (K)$ induced by the augmentation homomorphism $K[\varepsilon] \to K, \varepsilon \mapsto 0$ (cf. \cite{Bo}, AG 16.2).
In other words, we have
$$
T_{\rho_0}(\fR (\Gamma, \G)) =  \{ \rho \in \mathrm{Hom} (\Gamma, \G(K[\varepsilon])) \mid \mu \circ \rho = \rho_0 \}.
$$
For our purposes, it will be useful to have the following alternative description of $T_{\rho_0} (\fR (\Gamma, \G)).$
Let $\tilde{\g}$ be the Lie algebra of $\G.$ Notice that $\tilde{\g}$ has a natural $\Gamma$-action given by
$$
\gamma \cdot x = \mathrm{Ad}(\rho_0 (\gamma)) x,
$$
for $\gamma \in \Gamma$ and $x \in \tilde{\g}$, where $\mathrm{Ad} \colon \G(K) \to GL(\tilde{\g})$ is the adjoint representation.
It is well-known that
$T_{\rho_0} (\fR (\Gamma, \G))$ can be identified with the space $Z^1 (\Gamma, \tilde{\g})$ of 1-cocycles (cf. \cite{LM}, Proposition 2.2). Indeed, an element $c \in Z^1 (\Gamma, \tilde{\g})$ is by definition a map $c \colon \Gamma \to \tilde{\g}$ such that
$$
c (\gamma_1 \gamma_2) = c(\gamma_1) + \mathrm{Ad}(\rho_0 (\gamma_1)) c(\gamma_2).
$$
On the other hand, we have an isomorphism
$\G (K[\varepsilon]) \simeq \tilde{\g} \rtimes \G$ given by
$$
B + C \varepsilon \mapsto (CB^{-1}, B).
$$
Hence an element $\rho \in  T_{\rho_0}(\fR(\Gamma, \G))$ is a homomorphism $\rho \colon \Gamma \to \tilde{\g} \rtimes \G$ whose projection to the second factor is $\rho_0.$ In other words, it arises from a map
$c \colon \Gamma \to \tilde{\g}$ such that the map
$$
\Gamma \to \tilde{\g} \rtimes \G, \ \ \ \  \gamma \mapsto (c (\gamma), \rho_0(\gamma))
$$
is a group homomorphism. With the above identification, the group homomorphism condition translates into the fact that
$$
c (\gamma_1 \gamma_2) = c(\gamma_1) + \mathrm{Ad}(\rho_0 (\gamma_1)) c(\gamma_2),
$$
giving the required isomorphism of $T_{\rho_0} (\fR (\Gamma, \G))$ with $Z^1 (\Gamma, \tilde{\g}).$
Also notice that for any finite-index subgroup $\Delta \subset \Gamma$ (which is automatically finitely generated), the natural restriction maps $\fR (\Gamma, \G) \to \fR (\Delta, \G)$ and $Z^1 (\Gamma, \tilde{\g}) \to Z^1 (\Delta, \tilde{\g})$ induce a commutative diagram
$$
\xymatrix{T_{\rho_0} (\fR (\Gamma, \G)) \ar[r] \ar[d] & Z^1 (\Gamma, \tilde{\g}) \ar[d] \\ T_{\rho_0} (\fR (\Delta, \G)) \ar[r] & Z^1 (\Delta, \tilde{\g})}
$$
where the horizontal maps are the isomorphisms described above.
%Also notice that since any finite-index subgroup of $\Delta \subset \Gamma$ is finitely-generated, the scheme $\fR (\Delta, \G)$ is defined, and the same argument gives an isomorphism between $T_{\rho_0} (\fR (\Delta, \G))$ and $Z^1 (\Delta, \tilde{\g}).$

Next, let us recall a characterization of the space $B^1 (\Gamma, \tilde{\g})$ of 1-coboundaries that will be used later; for this, we need to consider the action of $\G(K)$ on $R (\Gamma, \G).$ Given $\rho_0 \in R (\Gamma, \G)$,
let $\psi_{\rho_0} \colon \G(K) \to R (\Gamma, \G)$ be the orbit map, i.e. the map defined by
$$
\psi_{\rho_0}(T) = T \rho_0 T^{-1}, \ \ \  T \in \G (K).
$$
By direct computation, one shows that under the isomorphism
$T_{\rho_0} (\fR (\Gamma, \G)) \simeq Z^1 (\Gamma, \tilde{\g})$, the image of the differential
$(d \psi_{\rho_0})_e \colon T_{e} (\G) \to T_{\rho_0} (R (\Gamma, \G)) \subset T_{\rho_0} (\fR (\Gamma, \G))$
consists of maps $\tau \colon \Gamma \to \tilde{\g}$ such that there exists $A \in \tilde{\g}$ with
$$
\tau (\gamma) = A - \mathrm{Ad}(\rho_0 (\gamma)) A
$$
for all $\gamma \in \Gamma$, i.e. the image coincides with
$B^1 (\Gamma, \tilde{\g})$ (cf. \cite{LM}, Proposition 2.3). In fact, if $O (\rho_0)$ is the orbit of $\rho_0$ in $R(\Gamma, \G)$ under the action of $\G(K)$, then $B^1 (\Gamma, \tilde{\g})$ can be identified with $T_{\rho_0} (O (\rho_0)) \subset T_{\rho_0} (R(\Gamma, \G))$ (\cite{LM}, Corollary 2.4).

As a special case of the above constructions, we can consider
the action of $GL_n (K)$ on $R_n (\Gamma).$ The $n^{\mathrm{th}}$ {\it character variety} of $\Gamma$, denoted $X_n (\Gamma)$, is by definition the (categorical) quotient of $R_n (\Gamma)$ by $GL_n (K)$, i.e. it is the affine $K$-variety with coordinate ring $A_n (\Gamma)^{GL_n (K)}.$ Let $\pi \colon R_n (\Gamma) \to X_n (\Gamma)$ be the canonical map. Then each fiber $\pi^{-1} (x)$ contains a semisimple representation, and moreover if $\rho_1, \rho_2 \in R_n (\Gamma)$ are semisimple with $\pi (\rho_1) = \pi (\rho_2),$ then $\rho_1 = T \rho_2 T^{-1}$ for some $T \in GL_n (K).$ In particular, we see that $\pi$ induces a bijection between the isomorphism classes of semisimple representations and the points of $X_n (\Gamma)$ (cf. \cite{LM}, Theorem 1.28).

\vskip5mm

\subsection{A reduction step in the proof of Theorem 3} In this section, we would like to explain the reduction of the proof of Theorem 3 to the estimations of the dimensions of certain cohomology groups. The main technical statement will be given in Proposition \ref{D:P-1} below, and Theorem 3 easily follows from it.

As before, let $\Phi$ be a reduced irreducible root system of rank $\geq 2$, $G$ the universal Chevalley-Demazure group scheme of type $\Phi$, and $R$ a finitely generated commutative ring such that $(\Phi, R)$ is a nice pair. Throughout this section, we let $\Gamma = G(R)^+$ be the elementary subgroup of $G(R).$

Now, as we mentioned earlier, it has been shown by Ershov, Jaikin, and Kassabov \cite{EJK} that the group $\Gamma$ has Kazhdan's property (T). In particular, $\Gamma$ is finitely generated and satisfies the condition

\vskip2mm

\noindent (FAb) \ \ \parbox[t]{15cm}{{\it for any finite-index subgroup $\Delta \subset \Gamma,$ the abelianization $\Delta^{\mathrm{ab}} = \Delta / [\Delta, \Delta]$ is finite}}

\vskip2mm
\noindent (cf. \cite{HV}). This has the following consequence.
\begin{prop}\label{D:P-KG}
\cite[Proposition 2]{AR} Let $\Gamma$ be a group satisfying {\rm (FAb)}. For any $n \geq 1$, there exists a finite collection $G_1, \dots, G_d$ of algebraic subgroups of $GL_n (K)$, such that for any completely reducible representation $\rho \colon \Gamma \to GL_n (K)$, the Zariski closure $\overline{\rho (\Gamma)}$ is conjugate to one of the $G_i.$ Moreover, for each $i$, the connected component $G_i^{\circ}$ is a semisimple group.
\end{prop}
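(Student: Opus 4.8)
The plan is to combine the structure theory of Zariski closures of completely reducible linear groups with the two constraints imposed by (FAb): that every finite-index subgroup of $\Gamma$ has finite abelianization, and (since $\Gamma$ is finitely generated) that $\Gamma$ has only finitely many subgroups of each finite index. Throughout, fix a completely reducible $\rho\colon\Gamma\to GL_n(K)$ and set $H=\overline{\rho(\Gamma)}$.

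\emph{Step 1: $H$ is reductive and $H^{\circ}$ is semisimple.} Since $K^n$ is a semisimple module over $\rho(\Gamma)$ it is also semisimple over $H$, so the unipotent radical of $H^{\circ}$ (which is normal in $H$) has a nonzero fixed space in each irreducible summand, hence acts trivially on $K^n$, hence is trivial; thus $H^{\circ}$ is connected reductive. To promote this to semisimplicity, put $\Delta=\rho^{-1}(H^{\circ})$, a finite-index subgroup with $\overline{\rho(\Delta)}=H^{\circ}$, and let $T=H^{\circ}/[H^{\circ},H^{\circ}]$, a torus. The composite $\Delta\to H^{\circ}\to T$ has dense image and, being a homomorphism to an abelian group, factors through $\Delta^{\mathrm{ab}}$, which is finite by (FAb); hence $T$ is a finite connected group, i.e. trivial, so $H^{\circ}=[H^{\circ},H^{\circ}]$ is semisimple. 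This already establishes the last assertion of the proposition.

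\emph{Step 2 (the crux): $[H:H^{\circ}]$ is bounded by a constant depending only on $n$ and $\Gamma$.} The first observation is that $C:=Z_H(H^{\circ})$ is \emph{finite}: its identity component is a connected subgroup of $H$, so it lies in $H^{\circ}$, and it centralizes $H^{\circ}$, so it lies in the finite center $Z(H^{\circ})$. Conjugation gives a map $H\to\mathrm{Aut}(H^{\circ})$ with kernel $C$ and image containing $\mathrm{Inn}(H^{\circ})$; since $\mathrm{Out}(H^{\circ})$ is finite of order bounded in terms of $n$, we get $[H:C\cdot H^{\circ}]\le|\mathrm{Out}(H^{\circ})|$, so it suffices to bound $|C|$. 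For this one applies Jordan's theorem to the finite subgroup $C\subset GL_n(K)$ to extract an abelian normal subgroup of index at most $J(n)$, pulls the resulting subgroups of $H/H^{\circ}$ back to finite-index subgroups of $\Gamma$, and uses that on such a subgroup $\Delta'$ the image $\rho(\Delta')$ is Zariski-dense in a group whose abelianization is exactly its component group, so that this component group is a quotient of $(\Delta')^{\mathrm{ab}}$. As $\Gamma$ is finitely generated there are only finitely many such $\Delta'$ of bounded index, and all their abelianizations are finite by (FAb); the maximum of the resulting orders bounds $|C|$, hence $[H:H^{\circ}]$, in terms of $n$ and $\Gamma$ alone.

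\emph{Step 3: finiteness.} Fix the bound $M=M(n,\Gamma)$ from Step 2. It is classical that over an algebraically closed field of characteristic $0$ the group $GL_n$ has only finitely many conjugacy classes of connected semisimple subgroups (e.g. via Dynkin's classification of semisimple subalgebras); fix a representative $S$ of each. Any $H$ as above has $H^{\circ}$ conjugate to some $S$, and after conjugating, $H$ is the full preimage in $N_{GL_n}(S)$ of a finite subgroup of order $\le M$ of the linear algebraic group $N_{GL_n}(S)/S$. There are finitely many isomorphism types of finite groups of order $\le M$, and for each finite group $\Lambda$ the conjugation action on $\mathrm{Hom}(\Lambda,N_{GL_n}(S)/S)$ has finitely many orbits (in characteristic $0$ these homomorphisms are infinitesimally rigid, as $H^1(\Lambda,\cdot)=0$, so the orbits are open in a variety of finite type); hence only finitely many $H$ arise up to conjugacy, and one takes $G_1,\dots,G_d$ to be representatives of these classes. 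The one genuinely delicate point is Step 2: the quotient $H/H^{\circ}$ is not a priori a linear group, so (FAb) cannot be applied to it directly, and the argument must be routed through the honestly linear finite subgroup $Z_H(H^{\circ})\subset GL_n(K)$ together with Jordan's theorem and the finiteness of bounded-index subgroups of $\Gamma$; Steps 1 and 3 are comparatively routine modulo the standard finiteness theorem for conjugacy classes of semisimple subgroups.
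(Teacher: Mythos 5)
The paper itself does not give a proof of this proposition; it is quoted verbatim from \cite{AR} (Proposition 2 there), so there is no in-text argument to compare yours against. Judged on its own terms, your proof is correct. The three steps are sound: Step~1 is the standard argument that the unipotent radical of a connected normal subgroup acting completely reducibly is trivial, followed by killing the central torus of $H^\circ$ via (FAb) applied to $\rho^{-1}(H^\circ)$; Step~2 correctly isolates the one point that needs care, namely that $H/H^\circ$ is not itself a subgroup of $GL_n(K)$, and routes the Jordan bound through the honestly linear finite group $C=Z_H(H^\circ)$ (with $[H:CH^\circ]\le|\mathrm{Out}(H^\circ)|$, $C\cap H^\circ=Z(H^\circ)$ of bounded order, and the abelian Jordan subgroup $A\triangleleft C$ giving $AH^\circ/H^\circ$ as a quotient of $(\rho^{-1}(AH^\circ))^{\mathrm{ab}}$); Step~3 then follows from the finiteness of conjugacy classes of connected semisimple subgroups of $GL_n$ in characteristic $0$ together with Weil rigidity ($H^1(\Lambda,\mathfrak{n}/\mathfrak{s})=0$ for $\Lambda$ finite).

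Two small points worth flagging. First, Step~2 really does need $\Gamma$ to be finitely generated (you use that $\Gamma$ has only finitely many subgroups of a fixed bounded index); this is implicit in the statement via its intended context (in this paper $\Gamma=G(R)^+$ has property~(T)), but it is an extra hypothesis beyond (FAb) alone and should be stated. Second, in Step~2 the phrase ``a group whose abelianization is exactly its component group'' is slightly more than you need: it suffices, and is a bit cleaner to say, that the finite quotient $\overline{\rho(\Delta')}/\overline{\rho(\Delta')}^\circ=AH^\circ/H^\circ$ is abelian (being a quotient of $A$), hence a quotient of $(\Delta')^{\mathrm{ab}}$, which is finite by (FAb). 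With those caveats, the argument is complete and, to the extent I can reconstruct it, in the same spirit as \cite{AR}: reduce to $H^\circ$ semisimple, bound $\pi_0(H)$ using Jordan together with finite generation and (FAb), then enumerate via the finiteness theorem for semisimple subgroups and rigidity of finite subgroups.
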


%We begin by explaining the reduction of the proof of Theorem 3 to the estimations of the dimensions of certain cohomology groups. As we have already mentioned earlier, by work of Ershov, Jaikin, and Kassabov \cite{EJK}, the group $\Gamma$ is known to have Kazhdan's property (T).
%Using this, one shows that
Consequently, for any $n \geq 1$, we have
%there exists a finite collection $G_1, \dots, G_d$ of algebraic subgroups of $GL_n (K)$, with $G_i^{\circ}$ semisimple for all $i$, such that
\begin{equation}\label{E:D-SS1}
X_n (\Gamma) = \bigcup_{i=1}^d \pi (R' (\Gamma, G_i)),
\end{equation}
where $\pi \colon R_n (\Gamma) \to X_n (\Gamma)$ is the canonical map, and for all $i$, $R' (\Gamma, G_i) \subset R (\Gamma, G_i)$ is the open subvariety of representations $\rho \colon \Gamma \to G_i$ such that $\overline{\rho(\Gamma)} = G_i$ (with each $G_i^{\circ}$ semisimple).

Suppose now that $W \subset X_n (\Gamma)$ is an irreducible component of maximal dimension, so that $\dim X_n (\Gamma) = \dim W.$ Then it follows from (\ref{E:D-SS1}) that we can find an irreducible component $V$ of some $R' (\Gamma, G_i)$ such that $\overline{\pi (V)} = W.$ Since $\pi \vert_{V}$ is dominant and separable (as $\mathrm{char} \: K = 0$), it follows from (\cite{Bo}, AG 17.3) that there exists $\rho_0 \in V$ which is a simple point (of $R'(\Gamma, G_i)$) such that $\pi (\rho_0)$ is simple and the differential
\begin{equation}\label{E:D-Diff1}
(d \pi)_{\rho_0} \colon T_{\rho_0} (V) \to T_{\pi (\rho_0)} (W)
\end{equation}
is surjective. Next, let $\psi_{\rho_0} \colon G_i \to R(\Gamma, G_i)$ be the orbit map. By the construction of $\pi$, we have $(\pi \circ \psi_{\rho_0} )(T) = \pi (\rho_0)$ for any $T \in G_i$, so $d(\pi \circ \psi_{\rho_0})_e = 0.$
On the other hand, as we noted in the previous subsection,
the image of the differential $(d \psi_{\rho_0})_e$ is the space $B = B^1 (\Gamma, \tilde{\g}_i)$, where $\tilde{\g}_i$ is the Lie algebra of $G_i$ with $\Gamma$-action given by $\mathrm{Ad} \circ \rho_0$. Since $\rho_0$ is a simple point, it lies on a unique irreducible component of $R'(\Gamma, G_i)$, so it follows that the image of $\psi_{\rho_0}$ (i.e. the orbit of $\rho_0$) is contained in $V$. Consequently, (\ref{E:D-Diff1}) factors through
$$
T_{\rho_0} (V)/ B \to T_{\pi (\rho_0)} (W).
$$
Since obviously $\dim_K T_{\rho_0} (V) \leq \dim_K T_{\rho_0} (\fR(\Gamma, G_i))$ and $T_{\rho_0} (\fR (\Gamma, G_i)) \simeq Z^1 (\Gamma, \tilde{\g})$,
we therefore obtain that
\begin{equation}\label{E:D-TSpDim}
\dim X_n (\Gamma) = \dim W \leq \dim_K H^1 (\Gamma, \tilde{\g}_i).
\end{equation}

So, the proof of Theorem 3 reduces to considering the following situation. Let
$\rho_0 \colon \Gamma \to GL_n (K)$ be a completely reducible representation, set $\G = \overline{\rho_0 (\Gamma)}$ (note that the connected component $\G^{\circ}$ is semisimple), and let $\tilde{\g}$ be the Lie algebra of $\G$, considered as a $\Gamma$-module via $\mathrm{Ad} \circ \rho_0.$
We need to give an upper bound on $\dim_K H^1 (\Gamma, \tilde{\g}).$ This will be made more precise in Proposition \ref{D:P-1} below, after some preparatory remarks.

First, notice that for the purpose of estimating $\dim_K H^1 (\Gamma, \tilde{\g}),$ we may compose $\rho_0$ with the adjoint representation and assume without loss of generality that the group $\G$ is adjoint. Now, since $\G^{\circ}$ is semisimple, $\rho_0$ has a standard description by Theorem 1,
i.e.
there exists a commutative finite-dimensional $K$-algebra $A_0$,
a ring homomorphism
\begin{equation}\label{E:2}
f_0 \colon R \to A_0
\end{equation}
with Zariski-dense image, and a morphism of algebraic groups
\begin{equation}\label{E:3}
\theta \colon G(A_0) \to \G
\end{equation}
such that on a suitable finite-index subgroup $\Delta \subset \Gamma$, we have
\begin{equation}\label{E:1}
\rho_0 \vert_{\Delta} = (\theta \circ F_0) \vert_{\Delta},
\end{equation}
where $F_0 \colon \Gamma \to G(A_0)$ is the group homomorphism induced by $f_0$. Moreover, it follows from Proposition \ref{P:FI-1} that $\theta (G(A_0)) = \G^{\circ}$.

Next, let $\G_1, \dots, \G_r$ be the (almost) simple components of $\G^{\circ}$ (cf. \cite{Bo}, Proposition 14.10). Since $\G^{\circ}$ is adjoint, the product map
$$
\G_1 \times \cdots \times \G_r \to \G^{\circ}
$$
is an isomorphism.
\begin{lemma}\label{D:L-1}
{\rm (i)} The algebraic ring $A_0$ is isomorphic to the product
$
K^{(1)} \times \cdots \times K^{(r)},
$
with $K^{(i)} \simeq K$ for all $i$.

\vskip1mm

\noindent {\rm (ii)} \parbox[t]{16cm}{The morphism $\theta$ is an isogeny. Moreover, for each $i$, $\theta (G(K^{(i)})) = \G_i$, for a unique simple factor of $\G^{\circ}$, and the differential  $(d \theta)_{e} \colon \g \to \tilde{\g}_i$ is an isomorphism of Lie algebras.}
\end{lemma}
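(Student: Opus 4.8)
The plan is to combine two ingredients from the proof of Theorem~1: the structural description of $A^{\circ}$ when $H^{\circ}$ is reductive, and the fact that the morphism it produces is compatible with the root subgroups. Recall the set-up: $A_0 = A^{\circ}$, where $A$ is the algebraic ring attached to $\rho_0$ by Theorem~\ref{T:ARR-1}; $\G = H = \overline{\rho_0(\Gamma)}$ with $\G^{\circ}$ semisimple and (by our reduction) adjoint; and $\theta$ is the morphism furnished by the standard description of Theorem~1. In particular $\theta$ is surjective onto $\G^{\circ}$ (Proposition~\ref{P:FI-1}) and, by construction (see the proof of Lemma~\ref{L:R-2}), satisfies $\theta(e_{\alpha}(a)) = \psi_{\alpha}(a)$ for every $a \in A_0$ and every $\alpha \in \Phi$, where $\psi_{\alpha}$ is the \emph{injective} regular map from Theorem~\ref{T:ARR-1}.

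For part (i), I would first note that since $\G^{\circ}$ is semisimple, hence reductive, Lemma~\ref{L:FI-1} gives $J(A_0) = \{0\}$, so by Proposition~\ref{P:AR-20} there is an isomorphism of algebraic rings $A_0 \simeq K^{(1)} \times \cdots \times K^{(m)}$ with each $K^{(i)} \simeq K$; the whole content is then to prove $m = r$. Since $G$ is a group scheme, $G(A_0) \simeq G(K^{(1)}) \times \cdots \times G(K^{(m)})$ with each factor the simply connected, almost-simple Chevalley group of type $\Phi$, and under this identification the root subgroup $U_{\alpha}(A_0)$ corresponds to $U_{\alpha}(K^{(1)}) \times \cdots \times U_{\alpha}(K^{(m)})$. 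By the displayed relation above, $\theta$ is injective on $U_{\alpha}(A_0)$, hence nonconstant on each factor $U_{\alpha}(K^{(i)}) \cong \mathbb{G}_a$; therefore the restriction $\theta_i := \theta \vert_{G(K^{(i)})}$ is a nontrivial morphism of algebraic groups. As $G(K^{(i)})$ is almost simple, $\ker \theta_i$ is finite (central), so $\theta_i$ is an isogeny onto its image, which is a connected normal almost-simple subgroup of $\G^{\circ}$, i.e. one of the factors $\G_j$; write $\theta(G(K^{(i)})) = \G_{\pi(i)}$. The subgroups $\G_{\pi(1)}, \dots, \G_{\pi(m)}$ generate $\theta(G(A_0)) = \G^{\circ} = \G_1 \times \cdots \times \G_r$, which forces $\pi$ to be onto $\{1,\dots,r\}$; and $\pi$ is injective, since if $\pi(i_1) = \pi(i_2) = j$ with $i_1 \neq i_2$, then the commuting factors $G(K^{(i_1)})$ and $G(K^{(i_2)})$ of $G(A_0)$ would map onto commuting copies of $\G_j$, forcing $\G_j$ to centralize itself, which is impossible for an almost-simple group of positive dimension. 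Hence $\pi$ is a bijection and $m = r$.

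For part (ii), after relabelling the simple factors of $\G^{\circ}$ via $\pi$ we may assume $\theta(G(K^{(i)})) = \G_i$; this records the asserted uniqueness of the factor. Since $\G^{\circ} = \G_1 \times \cdots \times \G_r$ is a genuine direct product (as $\G^{\circ}$ is adjoint), the map $\theta$ equals the product $\theta_1 \times \cdots \times \theta_r$ of the isogenies $\theta_i \colon G(K^{(i)}) \to \G_i$ found above, and is therefore itself an isogeny. Finally, because $\mathrm{char}\, K = 0$ every isogeny is separable, so each $\theta_i$ is étale; hence $(d\theta_i)_e \colon \g \to \tilde{\g}_i$ is bijective, and being a Lie algebra homomorphism it is an isomorphism of Lie algebras, which is the remaining assertion (with $(d\theta)_e$ in the statement understood as $(d\theta_i)_e$).

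The only step that is not pure bookkeeping is the identity $m = r$ — concretely, ruling out that $\theta$ kills a factor of $G(A_0)$ or sends two factors onto the same simple component of $\G^{\circ}$. The first possibility is excluded by the injectivity of $\psi_{\alpha}$ in Theorem~\ref{T:ARR-1}, which propagates into nontriviality of $\theta$ on each factor; the second by the fact that distinct factors of $G(A_0)$ commute while an almost-simple group does not centralize itself. Everything afterwards uses only that in characteristic $0$ an isogeny of algebraic groups induces an isomorphism on Lie algebras.
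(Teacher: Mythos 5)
Your proof is correct, and it uses the right ingredients from the paper. The survey does not include a proof of this lemma (it refers to \cite{IR1}), so I can only judge the argument on its merits; it holds up. A few observations worth recording.

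The key point you isolate — that $\theta$ kills no factor $G(K^{(i)})$ and does not send two factors onto the same simple component of $\G^{\circ}$ — is exactly where the content lies, and both are handled correctly: the first via $\theta(e_{\alpha}(a))=\psi_{\alpha}(a)$ and the injectivity of $\psi_{\alpha}$ from Theorem~\ref{T:ARR-1} (so $\theta$ is nontrivial on each $U_{\alpha}(K^{(i)})$), the second via the fact that an almost-simple group of positive dimension cannot centralize itself. The chain $\theta_i$ has finite kernel (almost simplicity of $G(K^{(i)})$), its image is a connected normal subgroup of $\G^{\circ}=\theta(G(A_0))$ isogenous to an almost-simple group, hence equal to exactly one $\G_j$: all correct. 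The use of the adjointness reduction to upgrade the almost-direct product $\G_1\cdots\G_r$ to a genuine direct product, so that $\theta$ literally factors as $\theta_1\times\cdots\times\theta_r$, is also the right move and is what justifies both the isogeny claim and the identification $(d\theta)_e\vert_{\mathrm{Lie}\,G(K^{(i)})}=(d\theta_i)_e$.

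Two cosmetic points you might spell out. First, after invoking Lemma~\ref{L:FI-1} and Proposition~\ref{P:AR-20}(ii) to get $A_0 = A_0/J \simeq K^m \oplus C$ with $C$ finite, you should say explicitly that $C=0$ because $A_0=A^{\circ}$ is connected (a nontrivial finite summand would disconnect it); you use connectedness implicitly. Second, for the étaleness conclusion, it is worth noting that the reason $\ker\theta_i$ is finite is that it is a proper normal subgroup of the almost-simple $G(K^{(i)})$, hence central and therefore finite — you say ``finite (central)'' which is the right logical order but benefits from being made explicit, since the finiteness of $\ker\theta_i$ is what makes $\theta_i$ an isogeny in the first place. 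Neither of these is a gap; the argument is sound.
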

%\begin{proof}
%Let $J_0$ be the Jacobson radical of $A_0.$ Since $\G^{\circ}$ is semisimple (in particular, reductive), $J_0 = \{ 0 \}$ by Lemma \ref{L:FI-1}, and consequently by Proposition \ref{P:AR-20}, we have
%$$
%A_0 \simeq K^{(1)} \times \cdots \times K^{(s)},
%$$
%where $K^{(i)} \simeq K$ for all $i.$
%Thus $G(A_0) = G(K^{(1)}) \times \cdots \times G(K^{(s)}).$ As we observed above, the map $\theta$ is surjective, so, since $G(K)$ is an almost simple group, it follows that $s \geq r.$ On the other hand, by Theorem \ref{T:ARR-1}, for each root $\alpha \in \Phi$, there exists an injective map $\psi_{\alpha} \colon A_0 \to \G$ such that
%\begin{equation}\label{D:E-Inj1}
%\theta (e_{\alpha} (a)) = \psi_{\alpha} (a),
%\end{equation}
%where $e_{\alpha} (A_0)$ is the 1-parameter root subgroup of $G(A_0)$ corresponding to the root $\alpha$ (cf. Proposition \ref{P:StG-1}).
%Now if $s > r$, then $\theta$ would kill some simple component $G(K^{(i)})$ of $G(A_0).$ Since $G(K^{(i)})$ intersects each root subgroup $e_{\alpha} (A_0),$ the maps $\psi_{\alpha}$ would not be injective, a contradiction. So, $s = r$, as claimed.
%not be an isogeny; since $G(A_0)$ is generated by the $e_{\alpha} (A_0)$, it follows that in that case, some $\psi_{\alpha}$ would not be injective, a contradiction. Hence $s = r,$ as needed.
%\end{proof}

\vskip2mm

\noindent Thus, we can write $f_0 \colon R \to A_0$ as
\begin{equation}\label{D:E-hom}
f_0 (t) = (f_0^{(1)} (t), \dots, f_0^{(r)}(t)),
\end{equation}
for some ring homomorphisms $f_0^{(i)} \colon R \to K$.

\vskip2mm

%\noindent {\bf Remark 7.2.3.} Notice that for each $i$, the image $\theta(G(K^{(i)}))$ intersects a unique simple factor of $\G^{\circ}$, say $\theta(G(K^{(i)})) \cap \G_i \neq \{ e \}$, and then $\theta(G(K^{(i)})) = \G_i.$ Furthermore, it follows from the proof of Lemma \ref{D:L-1} that $\theta$ is an isogeny,
%so since char $K = 0$, the differential $(d \theta)_{e} \colon \g \to \tilde{\g}_i$ gives an isomorphism of Lie algebras. In particular, we see that the Lie algebras of all the simple factors $\G_i$ are isomorphic (in fact, they are isomorphic as $G(K)$-modules, with $G(K)$ acting via $\mathrm{Ad} \circ \theta$).

%\vskip2mm

Now, given any ring homomorphism $g \colon R \to K$, we will denote by $\mathrm{Der}^g (R,K)$ the space of $K$-valued derivations of $R$ with respect to $g$, i.e. an element $\delta \in \mathrm{Der}^g (R,K)$
is a map $\delta \colon R \to K$ such that for any $x_1, x_2 \in R$,
$$
\delta (x_1 + x_2) = \delta (x_1) + \delta(x_2) \ \ \ \mathrm{and} \ \ \  \delta (x_1 x_2) = \delta(x_1) g(x_2) + g(x_1) \delta (x_2).
$$

%\addtocounter{thm}{1}

We can now formulate the following proposition.

\begin{prop}\label{D:P-1}
Suppose $\rho_0 \colon \Gamma \to GL_n (K)$ is a linear representation and set $\G = \overline{\rho_0 (\Gamma)}$. Denote by $\tilde{\g}$ the Lie algebra of $\G$ and assume that $\G^{\circ}$ is semisimple. Then
$$
\dim_K H^1 (\Gamma, \tilde{\g}) \leq \sum_{i=1}^r \dim_K \mathrm{Der}^{f_0^{(i)}} (R,K),
$$
where the $f_0^{(i)}$ are the ring homomorphisms appearing in $\mathrm{(\ref{D:E-hom})}$.
\end{prop}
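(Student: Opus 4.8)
The plan is to feed the standard description of $\rho_0$ provided by Theorem 1 into the cohomological setup, and then to convert $1$-cocycles into square-zero deformations to which Theorem 1 applies again. As noted just before the statement, we may assume $\G = \overline{\rho_0(\Gamma)}$ is adjoint with $\G^{\circ}$ semisimple, so by Theorem 1 and Lemma \ref{D:L-1} there are a finite-index subgroup $\Delta \subset \Gamma$, an identification $A_0 \simeq K^{(1)} \times \cdots \times K^{(r)}$ ($K^{(i)} \simeq K$), ring homomorphisms $f_0^{(i)} \colon R \to K$ with Zariski-dense image, and an isogeny $\theta = (\theta_i) \colon G(A_0) \to \G^{\circ} = \G_1 \times \cdots \times \G_r$ with $\rho_0\vert_{\Delta} = (\theta \circ F_0)\vert_{\Delta}$; moreover each $(d\theta_i)_e \colon \g \to \tilde{\g}_i := \mathrm{Lie}(\G_i)$ is an isomorphism. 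Since $\rho_0(\Delta) \subseteq \theta(G(A_0)) = \G^{\circ}$ (Proposition \ref{P:FI-1}), the $\Delta$-action on $\tilde{\g} = \bigoplus_i \tilde{\g}_i$ preserves the summands, and $(d\theta_i)_e$ makes $\tilde{\g}_i$ isomorphic, as a $\Delta$-module, to $\g$ with $\Delta$ acting through $\mathrm{Ad} \circ F_0^{(i)}$. Because $\tilde{\g}$ is a $K$-vector space and $\mathrm{char}\,K = 0$, restriction $H^1(\Gamma, \tilde{\g}) \hookrightarrow H^1(\Delta, \tilde{\g}) = \bigoplus_i H^1(\Delta, \tilde{\g}_i)$ is injective, so it suffices to bound the image of $H^1(\Gamma, \tilde{\g})$ inside this direct sum.

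Next I would produce the comparison map. A derivation $\delta \in \mathrm{Der}^{f_0^{(i)}}(R, K)$ determines the ring homomorphism $R \to K[\varepsilon]$, $x \mapsto f_0^{(i)}(x) + \delta(x)\varepsilon$, which lifts $f_0^{(i)}$; the induced homomorphism $\Gamma \to G(K[\varepsilon]) = \g \rtimes G(K)$ lifts $F_0^{(i)}$, and its $\g$-component is a $1$-cocycle $c_\delta \colon \Gamma \to \g$ whose value on each generator $e_{\alpha}(t)$ is the image of $\delta(t)$ under the differential of $e_{\alpha}$. Since a cocycle for a fixed action is determined by its values on a generating set, $\delta \mapsto c_\delta$ is $K$-linear; transporting through $(d\theta_i)_e$ and summing over $i$ produces a $K$-linear map $E \colon \bigoplus_i \mathrm{Der}^{f_0^{(i)}}(R,K) \to \bigoplus_i H^1(\Delta, \tilde{\g}_i)$. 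The proposition will follow once we show that the image of $H^1(\Gamma, \tilde{\g})$ under restriction lies in $\mathrm{im}\,E$.

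The core of the proof is this last containment. Given a cocycle $c \colon \Gamma \to \tilde{\g}$, form $\tilde{\rho}_c \colon \Gamma \to \tilde{\g} \rtimes \G = \G(K[\varepsilon]) \hookrightarrow GL_N(K)$, $\gamma \mapsto (c(\gamma), \rho_0(\gamma))$; this is an honest representation of $G(R)^{+}$ reducing to $\rho_0$ modulo $\varepsilon$. Writing $\pi_\varepsilon \colon \G(K[\varepsilon]) \to \G$ for the reduction map, one checks (using $\mathrm{char}\,K = 0$ and $\G^{\circ}$ semisimple) that $\overline{\tilde{\rho}_c(\Gamma)}^{\circ} = V \rtimes L$, where $L$ maps isomorphically onto $\G^{\circ}$ and $V = \overline{\tilde{\rho}_c(\Gamma)}^{\circ} \cap \ker \pi_\varepsilon$ is a vector subgroup of the abelian group $\ker \pi_\varepsilon \simeq \tilde{\g}$; hence the unipotent radical $V$ is commutative and case (3) of Theorem 1 applies to $\tilde{\rho}_c$. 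Let $A_c$ (a finite-dimensional commutative $K$-algebra), $f_c \colon R \to A_c$ with Zariski-dense image, and $\sigma_c \colon G(A_c) \to \overline{\tilde{\rho}_c(\Gamma)}$ be the resulting data, with $\tilde{\rho}_c\vert_{\Delta_c} = (\sigma_c \circ F_c)\vert_{\Delta_c}$. Because the algebraic ring attached by Theorem \ref{T:ARR-1} to a representation is cut out by the images of the root subgroups, the morphism $\pi_\varepsilon$ induces a surjective homomorphism of algebraic rings $\pi_* \colon A_c \to A_0$ with $\pi_* \circ f_c = f_0$; its kernel $N = A_c \cap \ker \pi_\varepsilon$ satisfies $N^2 = 0$, since multiplication in $A_c$ is built from commutators of conjugates of elements of the normal abelian subgroup $\ker \pi_\varepsilon$ of $\G(K[\varepsilon])$. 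So by Wedderburn--Malcev, $A_c = A_0 \oplus N \simeq \prod_i K[N_i]$ with each $N_i$ a square-zero ideal and a finite-dimensional $K$-vector space, and $f_c = (f_0^{(i)} + \delta^{(i)})_i$ for derivations $\delta^{(i)} \colon R \to N_i$ over $f_0^{(i)}$. Finally, comparing $\tilde{\rho}_c = \sigma_c \circ F_c$ on a common finite-index subgroup of $\Gamma$ and passing to differentials, the restriction of $d\sigma_c$ to the $i$-th nilpotent block $\g \otimes N_i$ must, by equivariance under the factors $G(K^{(j)})$ with $j \neq i$ (which commute with the block) together with Schur's lemma, have the form $X \otimes v \mapsto (d\theta_i)_e(X)\,\ell_i(v)$ for a $K$-linear functional $\ell_i \colon N_i \to K$; tracing this through, the restriction of $[c]$ to that subgroup equals $E\big((\ell_i \circ \delta^{(i)})_i\big)$, and each $\ell_i \circ \delta^{(i)} \in \mathrm{Der}^{f_0^{(i)}}(R,K)$. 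Invoking injectivity of restriction once more yields $\mathrm{im}\big(H^1(\Gamma,\tilde{\g}) \to \bigoplus_i H^1(\Delta,\tilde{\g}_i)\big) \subseteq \mathrm{im}\,E$, and hence $\dim_K H^1(\Gamma, \tilde{\g}) \leq \dim_K \mathrm{im}\,E \leq \sum_i \dim_K \mathrm{Der}^{f_0^{(i)}}(R,K)$.

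I expect the delicate points to be two. The first is the bookkeeping of finite-index subgroups: the standard descriptions of $\rho_0$ and of $\tilde{\rho}_c$ are valid on a priori different subgroups $\Delta$ and $\Delta_c$, and one must pass to their intersection (repeatedly using injectivity of restriction in characteristic $0$) without losing the cohomology class. The second is the precise identification of $d\sigma_c$ on the nilpotent blocks --- that is, checking that the several splittings and lifts involved (the Wedderburn--Malcev splitting of $A_c$, a lift of $\theta$ through $\pi_\varepsilon$, the identification $\G(K[\varepsilon]) = \tilde{\g} \rtimes \G$) can be normalized compatibly so that the only contribution surviving in cohomology is that of the derivations $\ell_i \circ \delta^{(i)}$. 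The functoriality of the algebraic-ring construction under $\pi_\varepsilon$, which for non--simply-laced $\Phi$ also involves the matching of long- and short-root data, requires a little additional care.
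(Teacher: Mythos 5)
Your proposal is essentially the paper's argument: set up the restriction injection $H^1(\Gamma,\tilde{\g}) \hookrightarrow H^1(\Delta,\tilde{\g})$, build the comparison map from $\bigoplus_i \mathrm{Der}^{f_0^{(i)}}(R,K)$ to $H^1(\Delta,\tilde{\g})$ via Taylor-type homomorphisms into dual numbers, and for the key containment view a cocycle $c$ as giving a deformation $\tilde{\rho}_c\colon\Gamma\to\G(K[\varepsilon])$, apply Theorem 1 to $\tilde{\rho}_c$ (noting its closure has commutative unipotent radical), and read the derivations off the resulting ring homomorphism $f_c\colon R\to A_c$. The only local deviation is that you allow the nilpotent ideals $N_i$ of the local factors of $A_c$ to be higher-dimensional and then contract them with linear functionals $\ell_i$ produced by Schur's lemma, whereas the paper first shows (using the standard description of $\tilde{\rho}_c$) that each local factor is already $K$ or $K[\varepsilon]$; both versions yield the same bound, and the delicate points you flag at the end (intersecting the finite-index subgroups and normalizing the Wedderburn--Malcev/Levi/$\pi_\varepsilon$ splittings compatibly) are exactly the details the published proof also has to absorb.
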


\subsection{Proof of Theorem 3} Assuming Proposition \ref{D:P-1} for now, let us complete the proof of Theorem 3. In view of (\ref{E:D-TSpDim}) and Proposition \ref{D:P-1}, it remains to show that $r \leq n$ and to give a bound on the dimension of the space $\mathrm{Der}^g (R, K)$, for any ring homomorphism $g \colon R \to K,$ which is independent of $g$. Notice that since
$\G^{\circ} \subset GL_n (K)$ and $\G^{\circ} = \G_1 \times \dots \times \G_r,$ we have
$$
n \geq \mathrm{rk} \G^{\circ} = \sum_{i=1}^r \mathrm{rk} \G_i \geq r,
$$
as needed. For the second task, we have the following (elementary) lemma.
\begin{lemma}\label{D:L-Der}
Let $R$ be a finitely generated commutative ring, and denote by $d$ the minimal number of generators of $R$ (i.e. the smallest integer such that there exists a surjection $\Z[x_1, \dots, x_d] \twoheadrightarrow R$). Then for any field $K$ and ring homomorphism $g \colon R \to K$, $\dim_K \mathrm{Der}^g (R, K) \leq d.$ If, moreover, $K$ is a field of characteristic 0, $R$ is an integral domain with field of fractions $L$, and
$g$ is injective, then $\dim_K \mathrm{Der}^g (R, K) \leq \ell,$ where $\ell$ is the transcendence degree of $L$ over its prime subfield.
\end{lemma}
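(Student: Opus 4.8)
The plan is to treat both inequalities as standard facts about $K$-valued derivations, reducing each to a situation in which the space of derivations can be described explicitly. For the first bound I would fix a surjection $\pi \colon \Z[x_1, \dots, x_d] \twoheadrightarrow R$, set $P = \Z[x_1, \dots, x_d]$ and $h = g \circ \pi \colon P \to K$, and observe that precomposition with $\pi$ gives a $K$-linear map $\mathrm{Der}^g(R,K) \to \mathrm{Der}^h(P,K)$, $\delta \mapsto \delta \circ \pi$, which is injective because $\pi$ is surjective (if $\delta \circ \pi = 0$ then $\delta$ vanishes on every $r = \pi(p)$). So it suffices to show $\dim_K \mathrm{Der}^h(P,K) = d$, and here the key observation is that a derivation of a polynomial ring relative to a fixed ring homomorphism $h$ is uniquely determined by, and can take arbitrary prescribed values at, the variables $x_1, \dots, x_d$: every derivation vanishes on $\Z$, the Leibniz rule computes it on any monomial in terms of $h$ applied to the remaining factors, and conversely any assignment of values at the $x_i$ extends consistently. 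This identifies $\mathrm{Der}^h(P,K)$ with $K^d$, whence $\dim_K \mathrm{Der}^g(R,K) \leq d$.

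For the second bound I would first pass to the fraction field $L$. Since $g$ is injective and $K$ is a field, $g$ carries every nonzero element of $R$ to a unit of $K$, so it extends uniquely to a field embedding $\bar g \colon L \to K$, and the quotient rule $\bar\delta(a/b) = \bar g(b)^{-2}\bigl(\delta(a)\bar g(b) - \bar g(a)\delta(b)\bigr)$ shows that each $\delta \in \mathrm{Der}^g(R,K)$ extends uniquely to $\bar\delta \in \mathrm{Der}^{\bar g}(L,K)$; this extension map is injective (as $L = \mathrm{Frac}(R)$), so it is enough to bound $\dim_K \mathrm{Der}^{\bar g}(L,K)$. Because $R$ is a finitely generated $\Z$-algebra, $L$ is a finitely generated extension of its prime subfield $\Q$ of transcendence degree $\ell$; since $\Q$ is perfect and $\mathrm{char}\: K = 0$, the extension $L/\Q$ is separably generated, so there is a transcendence basis $t_1, \dots, t_\ell$ with $L$ finite and separable over $F := \Q(t_1, \dots, t_\ell)$. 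A derivation into $K$ vanishes on $\Q$, is determined on $F$ by its values at the $t_i$, and extends across $L/F$ in exactly one way: on a primitive element the value is forced by differentiating its separable minimal polynomial, whose derivative is a nonzero element of $L$ and hence maps to a unit of $K$ under the embedding $\bar g$. Thus $\mathrm{Der}^{\bar g}(L,K) \cong K^\ell$, giving $\dim_K \mathrm{Der}^g(R,K) \leq \ell$.

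I do not anticipate a genuine obstacle here, as the argument is essentially bookkeeping with derivations; the step that requires the most care is the reduction over the finite separable part $L/F$ — one needs $\Q$ perfect to obtain a separating transcendence basis, and must then check that the forced value of a derivation on a primitive element is well-defined, using that $\bar g$ sends the nonzero derivative of the minimal polynomial to a unit. Conceptually, the second part is simply the identity $\mathrm{Der}^{\bar g}(L,K) = \mathrm{Hom}_L(\Omega_{L/\Q}, K)$ (with $K$ an $L$-module via $\bar g$) together with the fact that $\Omega_{L/\Q}$ has dimension $\ell$ for a separably generated extension, and I would record this as the reason behind the elementary computation.
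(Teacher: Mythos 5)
Your argument is correct, and since the paper labels this lemma as elementary and supplies no proof, your write-up simply fills in the standard details. Both bounds go through as you claim: the first by lifting a derivation along a presentation $\Z[x_1,\dots,x_d]\twoheadrightarrow R$ and using the explicit $d$-dimensional description of derivations of a polynomial ring over $\Z$ (via formal partials applied pointwise by $h$); the second by passing to the fraction field, extending by the quotient rule, and using that in characteristic $0$ a finitely generated extension $L/\Q$ admits a separating transcendence basis, so derivations of $L$ into $K$ (as an $L$-module via $\bar g$) form $\mathrm{Hom}_L(\Omega_{L/\Q},K)\simeq K^\ell$. One small clarification worth making explicit: in the second part, injectivity of the extension map $\mathrm{Der}^g(R,K)\to\mathrm{Der}^{\bar g}(L,K)$ is because restriction to $R$ is a left inverse, not directly because $L=\mathrm{Frac}(R)$ (that fact is instead what ensures the extension formula reaches all of $L$ and is well-defined); and the forced value of $\bar\delta$ on a primitive element uses injectivity of $\bar g$ so that $\bar g(m'(\theta))\neq 0$, which you correctly identify.
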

%\begin{proof}
%Let $S= \{r_1, \dots, r_d\}$ be a minimal set of generators of $R$. Since any element $\delta \in \mathrm{Der}^g (R,K)$ is completely determined by its values on the elements of $S$, the map
%$$
%\delta \mapsto (\delta (r_1), \dots, \delta (r_d))
%$$
%defines an injection $\mathrm{Der}^g (R,K) \to K^d,$ so $\dim_K \mathrm{Der}^g (R, K) \leq d,$ as claimed.

%Now suppose that $R$ is a finitely generated integral domain and $g$ is injective. Since $\mathrm{char} \: K = 0$,
%after possibly localizing $R$ with respect to the multiplicative set $\Z \setminus \{ 0 \}$ (which does not affect the dimension of
%the space $\mathrm{Der}^g (R, K)$), we can use Noether's normalization lemma to write $R$
%as an integral extension of $S = \Q [x_1, \dots, x_{\ell}]$ so that the field of fractions of $R$ is a separable extension of that of $S$.
%Combining this with
%the assumption that $g$ is injective, one easily sees that any derivation $\delta$ of $R$ is uniquely determined by its restriction to $S$ (cf. \cite{La}, Ch. VII, Theorem 5.1), so, in particular,
%$$
%\dim_K \mathrm{Der}^g (R,K) \leq \dim_K \mathrm{Der}^g (S,K) =: s.
%$$
%On the other hand, the argument given in the previous paragraph paragraph shows that $s \leq \ell,$ which completes the proof.
%\end{proof}

\vskip2mm

\noindent {\bf Remark 6.3.2.} Notice that the estimate $\dim_K \mathrm{Der}^g (R,K) \leq \ell$, where $\ell$ is an in Lemma \ref{D:L-Der}, may not be true if $g$ is not injective. Indeed, take $K = \bar{\Q}$, and let $R_0 = \Z[X,Y]$ and $R = \Z[X,Y]/(X^3 - Y^2).$ Furthermore, let
$$
f \colon \Z[X,Y] \to \bar{\Q}, \ \ \varphi(X,Y) \mapsto \varphi (0,0)
$$
and denote by $g \colon R \to \bar{\Q}$ the induced homomorphism. The space $\mathrm{Der}^f (R_0, \bar{\Q})$ is spanned by the linearly-independent derivations $\delta_x$ and $\delta_y$ defined by
$$
\delta_x (\varphi(X,Y)) = \frac{\partial \varphi}{\partial X} (0,0) \ \ \ \delta_y (\varphi(X,Y)) = \frac{\partial \varphi}{\partial Y} (0,0),
$$
so $\dim_{\bar{\Q}} \mathrm{Der}^f (R_0, \bar{\Q}) = 2$. Now notice that the natural map
$$
\mathrm{Der}^g (R, \bar{\Q}) \to \mathrm{Der}^f (R_0, \bar{\Q})
$$
is bijective. Indeed, it is obviously injective, and since any $\delta \in \mathrm{Der}^f (R_0, \bar{\Q})$ vanishes on the elements of the ideal $(X^3 - Y^2) R_0$, it is also surjective.
%Moreover, if $\delta = a \delta_x + b \delta_y$ (with $a,b \in \bar{\Q}$) is any element of $\mathrm{Der}^f (R_0, \bar{\Q})$ and $t= (X^3 - Y^2) \psi (X,Y)$ is an element of the ideal $(X^3 - Y^2) R_0$, then it is clear that $\delta (t) = 0,$ so the map is also surjective.
Thus, $\dim_{\bar{\Q}} \mathrm{Der}^g (R, \bar{\Q}) = 2.$ On the other hand, if $L$ is the fraction field of $R$, then $\ell := \text{tr. deg.}_{\Q}L$ is 1.
%for $L = \mathrm{Frac} (R)$ we have
%$$
%\mathrm{tr. \ deg.}_{\Q} L = 1.
%$$

%\addtocounter{thm}{1}

\subsection{Proof of Proposition \ref{D:P-1}} We would now like to indicate the main elements of the proof of Proposition \ref{D:P-1}.

First, let us note that for any finite-index subgroup $\Lambda \subset \Gamma$,
%The following two facts are needed in the proof. Let $\Lambda \subset \Gamma$ be any finite-index subgroup. Then,
%as we have already seen,
the space of 1-cocycles $Z^1 (\Lambda, \tilde{\g})$ can be naturally identified with the tangent space
\begin{equation}\label{E:D-TSp}
T_{\rho_0}(\fR (\Lambda, \G)) =  \{ \rho \in \mathrm{Hom} (\Lambda, \G(K[\varepsilon])) \mid \mu \circ \rho = \rho_0 \}.
\end{equation}
Also observe that the restriction map
$$
\mathrm{res}_{\Gamma/ \Lambda} \colon H^1 (\Gamma, \tilde{\g}) \to H^1 (\Lambda, \tilde{\g})
$$
is injective.
Indeed, since $[\Gamma : \Lambda] < \infty,$ the corestriction map $\mathrm{cor}_{\Gamma/ \Lambda} \colon H^1 (\Lambda, \tilde{\g}) \to H^1 (\Gamma, \tilde{\g})$ is defined and the composition $\mathrm{cor}_{\Gamma/ \Lambda} \circ \mathrm{res}_{\Gamma/ \Lambda}$ coincides with multiplication by $[\Gamma : \Lambda].$ Since $\mathrm{char} \: K = 0,$ the injectivity of $\mathrm{res}_{\Gamma/ \Lambda}$ follows.

\vskip2mm

\noindent Now, set
$$
X = \mathrm{Der}^{f_0^{(1)}} (R, K) \oplus \cdots \oplus \mathrm{Der}^{f_0^{(r)}} (R, K).
$$
and let $\Delta \subset \Gamma$ be the finite-index subgroup appearing in (\ref{E:1}). In view of the injectivity of the restriction map, to prove Proposition \ref{D:P-1}, it suffices to show that there exists a linear map
$\psi \colon  X \to H^1 (\Delta, \tilde{\g})$ such that
\begin{equation}\label{E:ResInj}
\mathrm{res}_{\Gamma/ \Delta}(H^1 (\Gamma, \tilde{\g})) \subset \mathrm{im} (\psi).
\end{equation}

The map $\psi$ is constructed as follows. Choose derivations $\delta_{i} \in \mathrm{Der}^{f_0^{(i)}} (R,K)$, for $i = 1, \dots, r,$ and let
$$
B = \underbrace{K[\varepsilon] \times \cdots \times K[\varepsilon]}_{r \ \mathrm{copies}}
$$
(with $\varepsilon^2 = 0$).
Then
$$
f_{\delta_1, \dots, \delta_r} \colon R \to B, \ \ \ s \mapsto (f_0^{(1)} (s) + \delta_1 (s) \varepsilon, \dots, f_0^{(r)} (s) + \delta_r (s) \varepsilon)
$$
is a ring homomorphism, hence induces a group homomorphism
$$
F_{\delta_1, \dots, \delta_r} \colon \Gamma \to G(B)
$$
(recall that $\Gamma = G(R)^+ \subset G(R)$).
On the other hand, we have
$$
G(B) \simeq (\g \oplus \cdots \oplus \g) \rtimes (G(K) \times \cdots \times G(K)) \simeq \mathrm{Lie} (G(A_0)) \rtimes G(A_0)
$$
and
$$
\G (K [\varepsilon]) \simeq \tilde{\g} \rtimes \G,
$$
so we can define a group homomorphism $\tilde{\theta} \colon G(B) \to \G (K [\varepsilon])$ by the formula
$$
(x, g) \mapsto ((d \theta)_e (x), \theta (g)),
$$
where $\theta \colon G(A_0) \to \G$ is the morphism appearing in (\ref{E:3}). Notice that using Lemma \ref{D:L-1}(ii), the map
$\tilde{\theta}$ can also be described as follows: let $x_1, \dots, x_r \in \g$ and $g \in G(A_0).$ Then
$$
\tilde{\theta} (x_1, \dots, x_r, g) = \left( \sum_{i=1}^r (d \theta)_e (x_i), \theta (g) \right),
$$
Now, $\tilde{\theta} \circ F_{\delta_1, \dots, \delta_r}$ is a homomorphism $\Gamma \to \G (K[\varepsilon])$, and in view of (\ref{E:1}), we have $$\mu \circ (\tilde{\theta} \circ F_{\delta_1, \dots, \delta_r} \vert_{\Delta}) = \rho_0.$$
It follows from (\ref{E:D-TSp}) that
$$
c_{\delta_1, \dots, \delta_r} := \tilde{\theta} \circ \mathrm{pr} \circ F_{\delta_1, \dots, \delta_r} \vert_{\Delta},
$$
where $\mathrm{pr} \colon G(B) \to \mathrm{Lie}(G(A_0))$ is the projection, is an element of $Z^1 (\Delta, \tilde{\g}).$ Now put
$$
\psi((\delta_1, \dots, \delta_r)) = [c_{\delta_1, \dots, \delta_r}],
$$
where $[c_{\delta_1, \dots, \delta_r}]$ denotes the class of $c_{\delta_1, \dots, \delta_r}$ in $H^1 (\Delta, \tilde{\g}).$

Let us now sketch the proof of the inclusion (\ref{E:ResInj}). Suppose $\rho \colon \Gamma \to \G (K[\varepsilon])$ is a homomorphism with $\mu \circ \rho = \rho_0$. By
Proposition \ref{P:AR-2} and Theorem \ref{T:ARR-1}, we can associate to $\rho$ a commutative finite-dimensional $K$-algebra $A$ together with a ring homomorphism $f \colon R \to A$ with Zariski-dense image. Using the fact that $\rho$ admits a standard description by Theorem 1 (notice that $\overline{\rho (\Gamma)}^{\circ}$ has commutative unipotent radical), we then show that
$$
A \simeq \tilde{K}^{(1)} \times \cdots \times \tilde{K}^{(r)},
$$
where, as above, $r$ is the number of simple components of $\G^{\circ}$, and for each
$i$, $\tilde{K}^{(i)}$ is isomorphic to either $K$ or $K[\varepsilon]$ $($with $\varepsilon^2 = 0).$
So, viewing $A$ as a subalgebra of
$$
\tilde{A} := \underbrace{K[\varepsilon] \times \cdots \times K[\varepsilon]}_{r \ \mathrm{copies}}
$$
%Then, using the lemma and the assumption that $\mu \circ \rho = \rho_0$,
we can then write the homomorphism $f \colon R \to A$ in the form
$$
f(t) = (f_0^{(1)} (t) + \delta_1 (t) \varepsilon, \dots, f_0^{(r)} (t) + \delta_{r} (t)\varepsilon)
$$
with $(\delta_1, \dots, \delta_r) \in X$ and $\delta_i = 0$ for $i = r_2+1, \dots, r.$

Now let $\psi((\delta_1, \dots, \delta_r)) = d_{\delta_1, \dots, \delta_r} \in Z^1 (\Delta, \tilde{\g})$ and let $c_{\rho}$ be the element of $Z^1(\Gamma, \tilde{\g})$ corresponding to $\rho.$ A further analysis of the standard description of $\rho$ shows that
%Using the fact that $\rho$ admits a standard description by Theorem 1 (notice that $\overline{\rho (\Gamma)}^{\circ}$ has commutative unipotent radical), we show that
$$
\mathrm{res}_{\Gamma / \Delta} ([c_{\rho}]) = [d_{\delta_1, \dots, \delta_r}]
$$
as elements of $H^1 (\Delta, \tilde{\g})$. From this (\ref{E:ResInj}) follows.
%It follows that
%$$\mathrm{res}_{\Gamma/ \Delta}(H^1 (\Gamma, \tilde{\g})) \subset \mathrm{im} (\psi),$$
%as needed.
$\hfill \Box$

\vskip5mm

\subsection{Applications to rigidity} The final thing that we would like to discuss is
%We would now like to conclude by discussing
how Theorems 1 and 3 can be used to obtain various forms of classical rigidity for the elementary groups $G(\I)^+$, where $G$ is a universal Chevalley-Demazure group scheme corresponding to a reduced irreducible root system $\Phi$ of rank $> 1$ and $\I$ is a ring of algebraic integers (or $S$-integers) in a number field. It is worth mentioning that all of the rigidity statements we discuss here ultimately boil down to the fact that $\I$ does not admit nontrivial derivations.

To fix notations, let $\Phi$ be a reduced irreducible root system of rank $>1$, $G$ the universal Chevalley-Demazure group scheme of type $\Phi$, and $\I$ a ring of algebraic $S$-integers in a number field $L$ such that $(\Phi, \I)$ is a nice pair. %\footnotemark \footnotetext{Please refer to the introduction for the definition.}.
Furthermore, let $\Gamma = G(\I)^+$ be the elementary subgroup of $G(\I)$ (in fact, here $\Gamma$ actually coincides with $G(\I)$ (see \cite[Lemma 4]{Tav1}), but we will not need this). The following proposition gives a \emph{superrigidity} result in the present context. The more classical version of superrigidity will be discussed in Example 6.5.3 below.

\begin{prop}\label{SR:P-1}
Let $\rho \colon \Gamma \to GL_m (K)$ be an abstract linear representation over an algebraically closed field $K$ of characteristic 0. Then there exist

\vskip1mm

\noindent {\rm (i)} \parbox[t]{16cm}{a finite dimensional commutative $K$-algebra
$$
A \simeq K^{(1)} \times \cdots \times K^{(r)},
$$
with $K^{(i)} \simeq K$ for all $i$;}

\vskip1mm

\noindent {\rm (ii)} \parbox[t]{15cm}{a ring homomorphism $f = (f^{(1)}, \dots, f^{(r)}) \colon \I \to A$ with Zariski-dense image, where each $f^{(i)} \colon \I \to K^{(i)}$ is the restriction to $\I$ of an embedding $\varphi_i \colon L \hookrightarrow K$, and $\varphi_1, \dots, \varphi_r$ are all \emph{distinct}; and}

\vskip1mm

\noindent {\rm (iii)} \parbox[t]{13cm}{a morphism of algebraic groups $\sigma \colon G(A) \to GL_m (K)$}
%\footnotemark,
%\footnotetext{Notice that the algebraic group $G(B)$ can be described as follows. Let $\G = R_{L/\Q} (G_{L})$, where $G_L$ is the algebraic group obtained from $G$ by extending scalars from $\Q$ to $L$ and $R_{L/\Q}$ is the functor of restriction of scalars. Then $\G(K) \simeq G(K) \times \cdots \times G(K)$, with the factors corresponding to all of the distinct embeddings of $L$ into $K$ (cf. \cite{PR}, \S 2.1.2). Now $G(B)$ is obtained from $\G(K)$ by simply projecting to the factors corresponding to the $\varphi_1, \dots, \varphi_r.$}}

\vskip1mm

\noindent such that for a suitable subgroup
of finite index $\Delta \subset \Gamma$, we have
$$
\rho \vert_{\Delta} = \sigma \vert_{\Delta}.
$$
\end{prop}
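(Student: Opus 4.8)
The plan is to specialize the proof of Theorem \ref{T:R-1} to $R = \I$, the decisive extra input being that $\I$ admits no nonzero derivations into a $K$-vector space. First, by Theorem \ref{T:ARR-1} one attaches to $\rho$ a commutative algebraic ring $A$ over $K$, a ring homomorphism $f \colon \I \to A$ with Zariski-dense image, and injective regular maps $\psi_\alpha \colon A \to H := \overline{\rho(\Gamma)}$ with $\rho(e_\alpha(t)) = \psi_\alpha(f(t))$. Since $\mathrm{char}\, K = 0$, Proposition \ref{P:AR-2} gives $A = A^{\circ}\oplus C$ with $A^{\circ}$ a finite-dimensional $K$-algebra and $C$ finite; put $t = s\circ f\colon \I \to A^{\circ}$, where $s\colon A\to A^{\circ}$ is the projection, so $t$ still has Zariski-dense image.

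The heart of the argument is to show that $A^{\circ}$ is reduced, hence $A^{\circ}\simeq K^{(1)}\times\cdots\times K^{(r)}$ with each $K^{(i)}\simeq K$. Writing $J = J(A^{\circ})$, it suffices to show $J/J^{2} = 0$, since then $J = 0$ by Nakayama. Consider $A^{\circ}/J^{2}$: by the Wedderburn--Malcev theorem it is $\bar B \ltimes (J/J^{2})$ with $\bar B \simeq A^{\circ}/J \simeq K^{r}$ and $(J/J^{2})^{2} = 0$, so writing the idempotents $\bar B = \prod_j K e_j$ we get $A^{\circ}/J^{2} = \prod_j\bigl(K e_j \ltimes e_j(J/J^{2})\bigr)$. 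Composing $t$ with $A^{\circ}\to A^{\circ}/J^{2}$ and the $j$-th projection gives a ring homomorphism $\I \to K e_j \ltimes e_j(J/J^{2})$, $x\mapsto (g_j(x),\delta_j(x))$, where $g_j\colon \I \to K$ is a ring homomorphism and $\delta_j$ is a $g_j$-derivation of $\I$ into the $K$-vector space $e_j(J/J^{2})$. Now each $g_j$ is injective (a nonzero prime of the Dedekind domain $\I$ is maximal with finite residue field, which cannot be embedded into the characteristic-$0$ field $K$), so $\mathrm{Der}^{g_j}(\I, K) = 0$ by Lemma \ref{D:L-Der}, the number field $L$ having transcendence degree $0$; hence $\delta_j = 0$ for all $j$. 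Thus $t(\I) \subset \bar B$, and Zariski-density forces $J/J^{2} = 0$. Writing now $t = (g_1,\dots,g_r)$, each $g_i$ is the restriction to $\I$ of an embedding $\varphi_i\colon L\hookrightarrow K$, and the $\varphi_i$ are pairwise distinct, for otherwise $t(\I)$ would lie in a proper coordinate subspace. Taking $A := A^{\circ}$ and $f := t$ gives (i) and (ii).

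It remains to construct $\sigma$. By Proposition \ref{P:StG-1}, the $\psi_\alpha$ assemble into a homomorphism $\tilde\tau\colon \St(\Phi, A)\to H$ with $\tilde\tau(\tilde x_\alpha(a)) = \psi_\alpha(a)$; let $\tilde\sigma$ be its restriction to $\St(\Phi, A^{\circ}) = \St(\Phi,K)^{r}$. By Proposition \ref{P:FI-1}, $\tilde\sigma(\St(\Phi, A^{\circ})) = H^{\circ}$, so by Corollary \ref{C:StG-1} (note $A^{\circ}$ is connected) $\tilde\sigma(K_2(\Phi, A^{\circ}))$ is a central subgroup of $H^{\circ}$ generated by the elements $\Theta(u,v) := \tilde\tau(\{u,v\}_\alpha)$, $u,v\in(A^{\circ})^{\times}$, for a fixed long root $\alpha$. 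Since inversion on $(A^{\circ})^{\times}$ is regular (Proposition \ref{P:AR-1}), $\Theta$ is a regular map $(A^{\circ})^{\times}\times(A^{\circ})^{\times}\to Z(H^{\circ})^{\circ}$; as its source $(K^{\times})^{r}\times(K^{\times})^{r}$ is a torus and the Steinberg symbols are bimultiplicative (cf. \cite{Stb1}, \cite{St2}), $\Theta$ is trivial (for fixed $v$ it is a morphism of algebraic groups into a torus given by a cocharacter, and the dependence of that cocharacter on $v$ is a homomorphism from a divisible group to a lattice, hence zero). Therefore $\tilde\sigma$ kills $K_2(\Phi, A^{\circ}) = \ker\pi_{A^{\circ}}$ and descends to a homomorphism $\sigma\colon G(A^{\circ}) = G(K)^{r}\to H^{\circ}$ (using Lemma \ref{L:FI-100}) with $\sigma\circ\pi_{A^{\circ}} = \tilde\sigma$; the big-cell argument of Lemmas \ref{L:R-2} and \ref{L:R-3} shows $\sigma$ is a morphism of algebraic $K$-groups (in particular $H^{\circ} = \sigma(G(K)^{r})$ is semisimple). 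Setting $\Delta = \pi_R(\tilde F^{-1}(\St(\Phi,A^{\circ})))$, a finite-index subgroup of $\Gamma$, the commutativity of the diagram (\ref{D:FI-1}) then gives $\rho|_\Delta = (\sigma\circ F)|_\Delta$ with $F\colon\Gamma\to G(A)$ induced by $f$, which is (iii). The only genuinely new step beyond the proof of Theorem 1 is the reducedness of $A^{\circ}$, and this is precisely where the arithmetic of $\I$ enters; everything else is a direct specialization of that proof.
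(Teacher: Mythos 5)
Your proof is correct, but it takes a genuinely different route from the paper's in both of its two main steps.

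To show $A$ can be taken to be $K^r$, the paper argues by contradiction: it supposes the unipotent radical $U$ of $H^{\circ}$ is nontrivial, passes to $\check H = H/[U,U]$ (whose unipotent radical is commutative), re-applies Theorem~1 in case~(3) to $\check\rho$, uses Lemma~\ref{L:FI-1} to get $J^2 = 0$ for the associated algebra $\check A$, kills the derivations via Lemma~\ref{D:L-Der}, concludes $\check H^{\circ}$ is semisimple, hence $U=[U,U]$, hence $U=\{e\}$; only then does it invoke Theorem~1 in case~(1). You instead show $J(A^{\circ})=0$ directly: reduce modulo $J^2$, apply Wedderburn--Malcev, read off a $g_j$-derivation in each local factor, kill it by the same Lemma~\ref{D:L-Der} (using injectivity of $g_j$, argued via finiteness of residue fields of $\I$ rather than finiteness of quotients by nonzero ideals -- a cosmetic difference), and conclude $J/J^2=0$, hence $J=0$ by nilpotence/Nakayama. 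This is shorter, avoids the contradiction, and never needs to re-apply Theorem~1.

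For the construction of $\sigma$, the paper does not show $\tilde\sigma$ kills $K_2(\Phi,A^{\circ})$ directly; since $\I$ is not semilocal, the density argument of Proposition~\ref{P:FI-3} is unavailable, so the paper goes through $\bar H = H^{\circ}/Z(H^{\circ})$ and lifts across the central isogeny (Propositions~\ref{P:FI-5} and \ref{P:R-2}, using \cite{BT1}). You instead kill $K_2$ directly: $\Theta$ lands in $Z(H^{\circ})^{\circ}$, whose torus part absorbs the image of $(K^{\times})^r$ under each $\Theta(\cdot,v)$, so each $\Theta(\cdot,v)$ is an element of a lattice $\mathrm{Hom}((\mathbb{G}_m)^r,T)$, and the dependence on $v$ is a homomorphism from the divisible group $(K^{\times})^r$ to a lattice, hence trivial. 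This is a cleaner, more elementary replacement for the isogeny lift, though it does lean on the bimultiplicativity of long-root Steinberg symbols over a field (a standard fact, but worth an explicit citation to Matsumoto/Steinberg), and on the structure theorem for connected commutative algebraic groups in characteristic $0$; and the derivation $\delta_j$ in step one is $K$-vector-space-valued, so one should decompose it in a basis before applying Lemma~\ref{D:L-Der}. None of these is a genuine gap. Overall your argument is more self-contained; the paper's trades directness for modularity by reusing the full statement of Theorem~1.
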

\begin{proof}
Let $H = \overline{\rho (\Gamma)}$ be the Zariski closure of the image of $\rho.$
We begin by showing that under our assumptions, the connected component $H^{\circ}$ is automatically reductive. Suppose this is not the case and let $U$ be the unipotent radical of $H^{\circ}.$ Since the commutator subgroup $U' = [U,U]$ is a closed normal subgroup of $H$, the quotient $\check{H} = H/U'$ is affine, so we have a closed embedding $\iota \colon \check{H} \to GL_{m'} (K)$ for some $m'$.
Then, $\check{\rho} = \iota \circ \pi \circ \rho,$ where $\pi \colon H \to \check{H}$ is the quotient map, is a linear representation of $\Gamma$ such that $\overline{\check{\rho} (\Gamma)}^{\circ} = \check{H}^{\circ}$ has commutative unipotent radical. So, from Theorem 1, we obtain
a finite-dimensional commutative
$K$-algebra $\check{A}$, a ring homomorphism $\check{f} \colon \I \to \check{A}$ (which is injective as any nonzero ideal in $\I$ has finite index) with Zariski-dense image, and a morphism $\check{\sigma} \colon G(\check{A}) \to \check{H}$ of algebraic groups such that for a suitable finite-index subgroup $\check{\Delta} \subset \Gamma$, we have
$$
\check{\rho} \vert_{\check{\Delta}} = (\check{\sigma} \circ \check{F})\vert_{\check{\Delta}},
$$
where $\check{F} \colon \Gamma \to G(\check{A})$ is the group homomorphism induced by $\check{f}.$

Now let $J$ be the Jacobson radical of $\check{A}$. Since $\check{H}^{\circ}$ has commutative unipotent radical, $J^2 = \{ 0 \}$ by Lemma \ref{L:FI-1}. We claim that in fact $J = \{ 0 \}.$
Indeed, using the Wedderburn-Malcev Theorem as in the discussion preceding Proposition \ref{P:R-3},
we can write $\check{A} = \oplus_{i=1}^r \check{A}_i$, where for each $i$, $\check{A}_i = K \oplus J_i$ is a finite-dimensional local $K$-algebra with maximal ideal $J_i$ such that $J_i^2 = \{ 0 \}.$ Then it suffices to show that $J_i = \{ 0 \}$ for all $i$.
So, we may assume that $\check{A}$ is itself a local $K$-algebra of this form. Then, fixing a $K$-basis $\{\varepsilon_1, \dots, \varepsilon_s \}$ of $J$, we have
$$
\check{f}(x) = f_0(x) + \delta_1 (x) \varepsilon_1 + \cdots + \delta_s (x) \varepsilon_s,
$$
where $f_0 \colon \I \to K$ is an injective ring homomorphism and $\delta_1, \dots, \delta_s \in \mathrm{Der}^{f_0} (\I, K).$
On the other hand, since the fraction field of $\I$ is a number field, it follows from Lemma \ref{D:L-Der} that the
derivations $\delta_1, \dots, \delta_s$ are identically zero. So, the fact that
$\check{f}$ has Zariski-dense image forces $J = \{ 0 \}.$ Consequently,
$\check{A} \simeq K \times \cdots \times K.$

Next, Proposition \ref{P:FI-1} implies that $\check{\sigma} \colon G(\check{A}) \to \check{H}^{\circ}$ is surjective,
so $\check{H}^{\circ}$ is semisimple, in particular reductive (\cite{Bo}, Proposition 14.10). It follows that $U = [U, U]$ (cf. \cite{Bo}, Corollary 14.11), and hence, being a nilpotent group, $U = \{ e \}$, which contradicts our original assumption. Thus, $H^{\circ}$ must be reductive, as claimed.

We can now apply Theorem 1 to $\rho$ to obtain
a finite-dimensional commutative $K$-algebra $A$, a ring homomorphism $f \colon \I \to A$ with Zariski-dense image, and a morphism $\sigma \colon G(A) \to H$ of algebraic groups such that for a suitable subgroup of finite index $\Delta \subset \Gamma$, we have
$$
\rho \vert_{\Delta} = (\sigma \circ F) \vert_{\Delta}.
$$
Moreover, the fact that $H^{\circ}$ is reductive implies that
$A = K \times \cdots \times K$ (cf. Proposition \ref{P:AR-20} and Lemma \ref{L:FI-1}). So, we can write
$f = (f^{(1)}, \dots, f^{(r)})$, for some ring homomorphisms $$f^{(1)}, \dots, f^{(r)} \colon \I \to K.$$
It is easy to see that all of the $f^{(i)}$ are injective,
and since $L$ is the fraction field of $\I$, it follows that each homomorphism $f^{(i)}$ is a restriction to $\I$ of an embedding  $\varphi_i \colon L \hookrightarrow K.$ Finally, since $f$ has Zariski-dense image,
all of the $\varphi_i$ must be {\it distinct}. This completes the proof.
\end{proof}

\vskip2mm

\noindent Keeping the notations of the proposition, we have the following

\begin{cor}\label{SR:C-1}
Any representation $\rho \colon \Gamma \to GL_m (K)$ is completely reducible.
\end{cor}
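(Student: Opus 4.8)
The plan is to derive the corollary directly from Proposition \ref{SR:P-1}. Apply that proposition to $\rho$: we obtain a finite-index subgroup $\Delta \subset \Gamma$, a commutative $K$-algebra $A \simeq K^{(1)} \times \cdots \times K^{(r)}$ with each $K^{(i)} \simeq K$, a ring homomorphism $f \colon \I \to A$ with Zariski-dense image, and a morphism of algebraic groups $\sigma \colon G(A) \to GL_m(K)$ such that $\rho \vert_{\Delta} = (\sigma \circ F) \vert_{\Delta}$, where $F \colon \Gamma \to G(A)$ is the homomorphism induced by $f$. Since $A$ is a product of $r$ copies of $K$, we have $G(A) \simeq G(K) \times \cdots \times G(K)$, which is a connected semisimple algebraic group over $K$.

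The first step is to show that $\rho \vert_{\Delta}$ is completely reducible as an abstract representation of $\Delta$. For this I would argue that $\overline{\rho(\Delta)} = \sigma(G(A))$: the Zariski-density of $f(\I)$ in $A$ forces $F(\Gamma)$ to be Zariski-dense in $G(A)$, and since $[\Gamma : \Delta] < \infty$ and $G(A)$ is connected, $F(\Delta)$ is Zariski-dense in $G(A)$ as well; applying $\sigma$, whose image is a closed subgroup, then yields the claim. Now an abstract subgroup of $GL_m(K)$ and its Zariski closure have exactly the same invariant subspaces of $K^m$, so the $\rho(\Delta)$-submodules of $K^m$ are precisely the subrepresentations of the rational representation $\sigma$ of $G(A)$. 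As $\mathrm{char}\: K = 0$, every finite-dimensional rational representation of the semisimple group $G(A)$ is completely reducible; hence $K^m$ decomposes as a direct sum of $\sigma(G(A))$-irreducible subspaces, which are therefore also $\rho(\Delta)$-irreducible, and $\rho \vert_{\Delta}$ is completely reducible.

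The second step is the passage from $\Delta$ back to $\Gamma$, which is the standard averaging argument, valid because $\mathrm{char}\: K$ does not divide $[\Gamma : \Delta]$. Given a $\Gamma$-submodule $W \subset K^m$, choose a $\Delta$-equivariant projection $\pi \colon K^m \to W$ (which exists since $\rho \vert_{\Delta}$ is completely reducible), and set $\bar{\pi} = \frac{1}{[\Gamma : \Delta]} \sum_{g\Delta \in \Gamma/\Delta} \rho(g)\, \pi\, \rho(g)^{-1}$; this is well-defined (independent of the choice of coset representatives, as $\pi$ is $\Delta$-equivariant), it is $\Gamma$-equivariant, and it restricts to the identity on $W$ because $W$ is $\Gamma$-stable and $\pi \vert_W = \mathrm{id}$. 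Thus $\bar{\pi}$ is a $\Gamma$-equivariant projection onto $W$, so $K^m = W \oplus \ker \bar{\pi}$ as $\Gamma$-modules, which proves that $\rho$ is completely reducible. There is no serious obstacle in this argument; the only point requiring a little care is the identification $\overline{\rho(\Delta)} = \sigma(G(A))$ in the first step, which simply repackages the Zariski-density built into Proposition \ref{SR:P-1} together with the connectedness of $G(A)$.
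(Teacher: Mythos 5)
Your proof is correct and follows exactly the same route as the paper: apply Proposition \ref{SR:P-1} to reduce to the rational representation $\sigma$ of the semisimple group $G(A)$ in characteristic $0$, deduce that $\rho\vert_\Delta$ is completely reducible, and then pass from $\Delta$ to $\Gamma$ by finiteness of the index. The paper states this in two terse sentences; you have merely supplied the standard justifications (Zariski-density of $F(\Delta)$ in the connected group $G(A)$, coincidence of invariant subspaces for a subgroup and its Zariski closure, and the averaging argument for the finite-index step), all of which are sound.
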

\begin{proof}
By Proposition \ref{SR:P-1}, we have $\rho \vert_{\Delta} = \sigma \vert_{\Delta},$ so since $G(B)$ is a semisimple group and $\mathrm{char} \: K = 0,$ $\rho \vert_{\Delta}$ is completely reducible. Since $\Delta$ is a finite-index subgroup of $\Gamma$, it follows that $\rho$ is also completely reducible.
\end{proof}

\vskip5mm

\noindent {\bf $SS$-rigidity and local rigidity.} Notice that since by Lemma \ref{D:L-Der} there are no nonzero derivations $\delta \colon \I \to K$, Proposition \ref{D:P-1} and the estimate given in (\ref{E:D-TSpDim}) yield that for $\Gamma = G(\I)^+,$ we have $\dim X_n (\Gamma) = 0$ for all $n \geq 1$, i.e.
$\Gamma$ is $SS$-rigid. In fact, Corollary \ref{SR:C-1} implies that
$\Gamma$
is {\it locally rigid}, that is
$H^1 (\Gamma, \mathrm{Ad} \circ \rho) = 0$ for {\it any} representation $\rho \colon \Gamma \to GL_m (K)$.
This is shown in \cite{LM}, and we recall the argument for the reader's convenience. Let
$V = K^m.$ It is well-known that
$$
H^1 (\Gamma, \mathrm{End}_K (V,V)) = \mathrm{Ext}_{\Gamma}^1 (V,V)
$$
(cf. \cite{LM}, pg. 37), and
$\mathrm{Ext}_{\Gamma}^1 (V,V) = 0$ by Corollary \ref{SR:C-1}. But $\mathrm{Ad} \circ \rho$, whose underlying vector space is $M_m (K)$, can be naturally identified as a $\Gamma$-module with $\mathrm{End}_K (V,V),$ so $H^1 (\Gamma, \mathrm{Ad} \circ \rho) = 0$, as claimed.

\vskip4mm

\noindent {\bf Example 6.5.3.} (cf. \cite{BMS}, \S 16, and \cite{Mar}, Ch. VII). \ Let $\Gamma = SL_n (\Z)$ ($n \geq 3$) and consider an abstract representation $\rho \colon \Gamma \to GL_m (K).$ Then there exists a rational representation $\sigma \colon SL_n (K) \to GL_m (K)$ such that
$$
\rho \vert_{\Delta} = \sigma \vert_{\Delta}
$$
for a suitable finite-index subgroup $\Delta \subset \Gamma.$ Indeed, let $f \colon \Z \to A$ be the homomorphism associated to $\rho$. Since $A \simeq K^{(1)} \times \cdots \times K^{(r)}$ by Proposition \ref{SR:P-1}, we see that
$f$ is simply a diagonal embedding of $\Z$ into $K \times \cdots \times K.$ But $f$ has Zariski-dense image, so $r = 1,$ and the rest follows.

Notice that for a general ring of $S$-integers $\I$, the algebraic group $G(A)$ that arises in Proposition \ref{SR:P-1} can be described as follows. Let $\G = R_{L/\Q} (_{L}\!G)$, where $_{L}\!G$ is the algebraic group obtained from $G$ by extending scalars from $\Q$ to $L$ and $R_{L/\Q}$ is the functor of restriction of scalars. Then $\G(K) \simeq G(K) \times \cdots \times G(K)$, with the factors corresponding to all of the distinct embeddings of $L$ into $K$ (cf. \cite{PR}, \S 2.1.2). The group $G(A)$ is then obtained from $\G(K)$ by simply projecting to the factors corresponding to the embeddings $\varphi_1, \dots, \varphi_r$, so any representation of $G( \I)^+$ factors through $\G.$

\vskip5mm

\bibliographystyle{amsplain}

\end{document}